\documentclass[11pt]{amsart}
\usepackage[T1]{fontenc}
\usepackage{txfonts}
\usepackage{amssymb,verbatim}
\usepackage[all]{xy}
\usepackage{subfig}
\usepackage{tikz}
\usepackage{color}
\usepackage{a4wide}
\usepackage{mathrsfs}
\usepackage{hyperref}
\usepackage{wrapfig}
\usetikzlibrary{arrows}
\DeclareMathAlphabet{\mathpzc}{OT1}{pzc}{m}{it}

\newcommand{\R}{\mathbb{R}}
\newcommand{\C}{\mathbb{C}}
\renewcommand\P{\mathbf{P}}
\newcommand{\Hex}{\mathbf{H}}
\newcommand\Z{\mathbb{Z}}

\newcommand{\N}{\mathbb{N}}
\newcommand{\Q}{\mathbb{Q}}
\renewcommand{\H}{\mathcal{H}}
\newcommand{\T}{\mathrm{T}}
\newcommand{\Tb}{\mathbf{T}}

\newcommand{\SL}{{\rm SL}}
\newcommand{\GL}{{\rm GL}}

\newcommand{\MCG}{\mathrm{Mod}}
\newcommand{\MF}{\mathcal{MF}}
\newcommand{\PMF}{\mathcal{PMF}}
\newcommand{\UE}{\mathcal{UE}}
\newcommand{\Cur}{\mathscr{C}}
\newcommand{\Aa}{\textrm{Area}}
\renewcommand{\S}{\mathbb{S}}
\newcommand{\Cc}{\mathcal{C}}

\newcommand{\Mm}{{\mathcal M}}

\newcommand{\Aff}{\mathrm{Aff}}

\newcommand{\RP}{\mathbb{RP}}
\newcommand{\CP}{\mathbb{CP}}
\newcommand{\eps}{\epsilon}

\newcommand{\inter}{\mathrm{int}}
\newcommand{\Lc}{\mathcal{L}}
\newcommand{\diam}{\mathrm{diam}}

\newcommand{\Ecal}{\mathcal{E}}
\newcommand{\Xcal}{\mathcal{X}}
\newcommand{\Gp}{\mathpzc{G}}
\newcommand{\Vp}{\mathpzc{V}}
\newcommand{\Ep}{\mathpzc{E}}
\newcommand{\Pb}{\mathbf{P}}
\newcommand{\Ap}{\mathpzc{A}}
\newcommand{\hor}{\mathbf{h}}



\newcommand{\ol}{\overline}

\newcommand{\lra}{\longrightarrow}
\newcommand{\ra}{\rightarrow}

\newcommand{\dist}{\mathbf{d}}

\newcommand{\Tcal}{\mathcal{T}}
\newcommand{\Fcal}{\mathcal{F}}

\newcommand{\id}{\mathrm{id}}
\newcommand{\Id}{\mathrm{Id}}
\newcommand{\vide}{\varnothing}

\newcommand{\lbd}{\lambdaup}

\newcommand{\del}{\delta}
\newcommand{\Del}{\Delta}
\newcommand{\hinv}{\tauup}
\newcommand{\Extm}{\mathrm{Ext}}
\newcommand{\distc}{\mathbf{d}^{\mathcal{C}}}



%

\newtheorem{Theorem}{Theorem}[section]
\newtheorem{Corollary}[Theorem]{Corollary}
\newtheorem{Lemma}[Theorem]{Lemma}
\newtheorem{Proposition}[Theorem]{Proposition}
\newtheorem{Remark}[Theorem]{Remark}
\newtheorem{Definition}[Theorem]{Definition}

\newtheorem{Claim}[Theorem]{Claim}

\newtheorem{MainTheorem}{Theorem}

\setlength{\textheight}{20.0cm} 
\setlength{\headsep}{0.8cm}

\begin{document}

\title{Translation surfaces and the curve graph in genus two}
\author{Duc-Manh Nguyen}

\address{IMB Bordeaux-Universit\'e de Bordeaux \newline
351, Cours de la Lib\'eration \newline
33405 Talence Cedex \newline FRANCE}

\email{duc-manh.nguyen@math.u-bordeaux1.fr}
\date{\today}

\begin{abstract}
Let $S$ be a (topological) compact closed surface of genus two. We associate to each translation surface $(X,\omega) \in \Omega\Mm_2=\H(2)\sqcup\H(1,1)$ a subgraph $\hat{\Cc}_{\rm cyl}$ of the curve graph  of $S$. The vertices of this subgraph are free homotopy classes of curves which can be represented either by a simple closed geodesic, or by a concatenation of two parallel saddle connections (satisfying some additional properties) on $X$. The subgraph $\hat{\Cc}_{\rm cyl}$ is by definition  $\GL^+(2,\R)$-invariant. Hence it may be seen as the image of the corresponding Teichm\"uller disk in the curve graph.  We will show that $\hat{\Cc}_{\rm cyl}$ is always connected and has infinite diameter. The group ${\rm Aff}^+(X,\omega)$ of affine automorphisms  of $(X,\omega)$ preserves naturally $\hat{\Cc}_{\rm cyl}$, we show that ${\rm Aff}^+(X,\omega)$ is precisely the stabilizer of $\hat{\Cc}_{\rm cyl}$ in ${\rm Mod}(S)$. We also prove that $\hat{\Cc}_{\rm cyl}$ is Gromov-hyperbolic if  $(X,\omega)$ is
completely periodic in the sense of Calta.

It turns out that the quotient of $\hat{\Cc}_{\rm cyl}$ by $\Aff^+(X,\omega)$ is closely related to  McMullen's prototypes  in the case  $(X,\omega)$ is a Veech surface in $\H(2)$. We finally show that this quotient graph has finitely many vertices if and only if $(X,\omega)$ is a Veech surface for $(X,\omega)$ in both strata $\H(2)$ and $\H(1,1)$.
\end{abstract}

\maketitle

\section{Introduction}\label{sec:intro}

\subsection{Curve complex}
Let $S$ be compact surface. The {\em curve complex} of $S$ is a simplicial complex whose vertices are free homotopy classes of essential simple closed curves on $S$, and $k$-simplices are defined to be the sets of (homotopy classes of) $k+1$  curves that can be realized pairwise disjointly on $S$.  This complex was introduced by Harvey~\cite{Harvey81} in order to use its combinatorial structure to encode the asymptotic geometry of the Teichm\"uller space. It turns out that its geometry is intimately related to the geometry and topology of Teichm\"uller space (see {\em e.g.} \cite{Rafi14}). The curve complex has now become a central subject in Teichm\"uller Theory, Low Dimensional Topology, and Geometric Group Theory.  Note that this complex is quasi-isometric to its $1$-skeleton which is referred to as the {\em curve graph} of $S$. In  this paper we will denote the curve graph by $\Cc(S)$.\medskip

The Mapping Class Group $\MCG(S)$ naturally acts on the curve complex by isomorphisms. In most cases, all automorphisms of the curve complex are induced by elements of $\MCG(S)$ (see~\cite{Iva97,Luo00}). Based on this relation, topological and combinatorial properties of the curve complex have been used to study the mapping class group (\cite{Harer86, BestFuj02}). In~\cite{MasMin99}, Masur and Minsky showed that the curve graph  (and the curve complex) is Gromov-hyperbolic (see also~\cite{Bow06}). A stronger result, that is the hyperbolicity constant is independent of the surface $S$, has been proved recently simultaneously by several people~\cite{Aoug13,Bow14,CRS14,HPW15}. Its boundary at infinity has been studied by Klarreich~\cite{Kla99} and Hamenst\"adt~\cite{Ham06}. Those results have led to numerous applications and a fast growing literature on the subject. In particular, the hyperbolicity of the curve graph has been exploited in the resolution of the Ending Lamination Conjecture by Brock-Canary-
Minsky~\cite{BCM12}. For a nice survey on the curve complex and its applications we refer to~\cite{Bow-survey}.

\subsection{Teichm\"uller disk and translation surface}
Another important notion in Teichm\"uller theory  is the Teichm\"uller disks. These are  isometric embeddings of the hyperbolic disk $\mathbb{H}$ in the Teichm\"uller space. Such a disk can be viewed as a complex geodesic generated  by a quadratic differential $q$ on a Riemann surface $X$. This quadratic differential defines a flat metric structure on $X$ with conical singularities such that the holonomy of any closed curve on $X$ belongs to the subgroup $\{\pm\Id\}\times\R^2$ of $\mathrm{Isom}(\R^2)$.  If this quadratic differential is the square of a holomorphic one form $\omega$ on $X$, then the holonomy of any closed curve is a translation of $\R^2$, and we have a translation surface $(X,\omega)$. \medskip

Using the flat metric viewpoint, one can easily define  a natural action  of $\GL^+(2,\R)$ on the space of translation surfaces as follows: given a matrix $A \in \GL^+(2,\R)$ and an atlas $\{\phi_i, \, i\in I\}$ defining a translation surface structure, we get an atlas for another translation surface structure defined by $\{A\circ\phi_i, \ i \in I\}$. The Teichm\"uller disk generated by a holomorphic one-form $(X,\omega)$ is precisely the projection into the Teichm\"uller space of its $\GL^+(2,\R)$-orbit. Translations surfaces and their $\GL^+(2,\R)$-orbit also arise in different contexts such as dynamics of billiards in rational polygons, interval exchange transformations, pseudo-Anosov homeomorphisms...

The importance of the $\GL^+(2,\R)$-action is related to the fact that the $\GL^+(2,\R)$-orbit closure of a translation surface  encodes information on its geometric and dynamical properties. A remarkable illustration of this phenomenon is the famous Veech's Dichotomy, which states that if the stabilizer of $(X,\omega)$ for the action of $\GL^+(2,\R)$ is a lattice in $\SL(2,\R)$, then the linear flow in any direction on $X$ is either periodic or uniquely ergodic. Following a work of  Smillie, the stabilizer of  $(X,\omega)$, denoted by $\SL(X,\omega)$, is a lattice in $\SL(2,\R)$ if and only if the $\GL^+(2,\R)$-orbit of $(X,\omega)$ is a closed subset of the moduli space. For more details on translation surfaces and related problems we refer to the excellent surveys~\cite{MasTab02, Zor-survey}.\medskip

The group $\SL(X,\omega)$ is closely related to the subgroup of the mapping class group that stabilizes the Teichm\"uller disk generated by $(X,\omega)$. This subgroup consists of elements of $\MCG(S)$ that are realized by homeomorphisms of $X$ preserving the set of singularities (for the flat metric), and given by affine maps in local charts of the flat metric structure. This subgroup is denoted by $\Aff^+(X,\omega)$. There is a natural homomorphism from $\Aff^+(X,\omega)$ to $\SL(X,\omega)$ which associates to each element of $\Aff^+(X,\omega)$ its derivative. It is not difficult to see that this homomorphism is surjective and has finite kernel.   The study of $\Aff^+(X,\omega)$ and $\SL(X,\omega)$ is a recurrent theme in the theory of dynamics in Teichm\"uller space (see {\em e.g.} \cite{McM_flux, HubSch04, HubLan06, Mol09, Leh14}).

\subsection{Flat metric and curve complex}
Consider now the flat metric defined by a holormorphic one-form $\omega$ on a (compact) Riemann surface $X$. By compactness, there exists a curve of minimal length in the free homotopy class of any essential simple closed curve. In general this curve of minimal length may not be a geodesic as it may contain some singularity in its interior. Nevertheless, following a result by Masur~\cite{Mas86}, we know that there are infinitely many curves that can be realized as simple closed geodesics for $\omega$. Thus $(X,\omega)$ specifies a subset of vertices of $\Cc(S)$. Note that unlike the situation of hyperbolic surface, closed geodesics of minimal length are not unique in their homotopy class. They actually arise in family, that is simple closed geodesics in the same homotopy class fill out a subset of $X$ which is isometric to $(\R/{c\Z})\times(0,h)$. We will call such a subset a {\em geometric cylinder}, and the corresponding simple closed geodesics its {\em core curves}.

Mimicking the construction of the curve graph, we can add an edge between two vertices representing two cylinders if there exist two curves, one in each  homotopy class, that can be realized  disjointly (this condition is equivalent to requiring that the corresponding geodesics for the flat metric are disjoint). Thus, for each translation surface, we have a subgraph $\Cc_{\rm cyl}$ of the curve graph.\medskip

Let $A$ be a matrix in $\GL^+(2,\R)$, and consider the surface $(X',\omega'):=A\cdot(X,\omega)$. Since the action of $A$  preserves the affine structure, a geodesic on $X$ corresponds to a geodesic on $X'$ and vice-versa. Therefore, the subgraphs associated to $(X',\omega')$ and to $(X,\omega)$ are the same. This subgraph is actually associated to the Teichm\"uller disk generated by $(X,\omega)$. As $\Cc(S)$ can be viewed as the combinatorial model for the Teichm\"uller space, $\Cc_{\rm cyl}$ can be viewed as the counterpart of a Teichm\"uller disk in this setting. By definition, elements of $\Aff^+(X,\omega)$ preserve $\Cc_{\rm cyl}$ and act on $\Cc_{\rm cyl}$ by isomorphisms. As properties of the Mapping Class Group can be studied via its action on the curve complex, one can expect the knowledge about the combinatorial and geometric structure of $\Cc_{\rm cyl}$ to be useful for the study of $\Aff^+(X,\omega)$.

\subsection{Statement of results}
The main purpose of this paper is to investigate $\Cc_{\rm cyl}$ when $X$ is a surface of genus two.  The reason for this restriction is the technical difficulties for the general cases. Hopefully, the results and techniques used in this situation may inspire further results in higher genera.\medskip

Recall that the moduli space of translation surfaces is naturally stratified by the zero orders of the one-form $\omega$ (or equivalently, the cone angles at the singularities). In genus two, we have two strata: $\H(2)$ which contains pairs $(X,\omega)$ such that $\omega$ has a unique double zero, and $\H(1,1)$ which contains pairs $(X,\omega)$ such that $\omega$ has two simple zeros.
Our first result shows that the geometry of $\Cc_{\rm cyl}$ does depend on the stratum of $(X,\omega)$. \medskip


\begin{MainTheorem}[Theorem~\ref{thm:exist:mult:curv}]\label{MThm:A}
If $(X,\omega) \in \H(2)$ then $\Cc_{\rm cyl}$ contains no triangles, but if $(X,\omega) \in \H(1,1)$ then $\Cc_{\rm cyl}$ always contains some triangles.
\end{MainTheorem}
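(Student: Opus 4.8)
The plan is to translate the condition of having a triangle in $\Cc_{\rm cyl}$ into a statement about the flat geometry of $(X,\omega)$, and then to separate the two strata by a Gauss--Bonnet count.

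A triangle in $\Cc_{\rm cyl}$ consists of three pairwise distinct homotopy classes $c_1,c_2,c_3$, each represented by a cylinder core curve, that are pairwise realizable disjointly; by the remark recalled in the introduction, the last condition is equivalent to the corresponding flat core geodesics being disjoint. First I would verify that one may in fact choose core geodesics $\gamma_1,\gamma_2,\gamma_3$ of the three cylinders $C_1,C_2,C_3$ that are simultaneously pairwise disjoint: for $i\neq j$ the geodesic $\gamma_i$ is not homotopic to the core of $C_j$, so, being a simple closed geodesic missing the zeros of $\omega$, it either misses $C_j$ entirely or crosses every one of its core curves; since it is disjoint from at least one core curve of $C_j$, the first alternative holds, and then picking one core curve per cylinder works. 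The $\gamma_i$ are essential, pairwise disjoint and pairwise non-homotopic, hence form a pants decomposition of $S$; cutting $X$ along $\gamma_1\cup\gamma_2\cup\gamma_3$ produces two flat pairs of pants $P_1,P_2$ with geodesic boundary, and every zero of $\omega$ lies in the interior of one of them. Applying the Gauss--Bonnet formula for a flat surface with geodesic boundary to each $P_i$ (which has Euler characteristic $-1$) gives $\sum(\theta_k-2\pi)=2\pi$, where the sum runs over the cone angles $\theta_k$ of the zeros contained in $P_i$.

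For $(X,\omega)\in\H(2)$ this is immediately contradictory: $\omega$ has a single zero, of cone angle $6\pi$, hence of excess $4\pi$, so it lies in one of $P_1,P_2$, forcing that piece to have excess $4\pi$ and the other excess $0$ --- neither equal to $2\pi$. Therefore $\Cc_{\rm cyl}$ contains no triangle when $(X,\omega)\in\H(2)$. (By contrast, for $(X,\omega)\in\H(1,1)$ the count is consistent, with one simple zero, of excess $2\pi$, in each piece, which is exactly what leaves room for a triangle.)

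For the statement about $\H(1,1)$ I must go the other way and produce, for every such $(X,\omega)$, three cylinders with pairwise disjoint core curves --- equivalently, a pants decomposition of $X$ whose curves are all cylinder cores. The approach would be to start from a cylinder $C_1$ (which exists by Masur's theorem, cf.~\cite{Mas86}), cut along its core, and analyse the complement: Gauss--Bonnet dictates how the two simple zeros distribute among the complementary pieces, and the hyperelliptic involution $\tau$ --- with $\tau^*\omega=-\omega$, exchanging the two zeros and fixing the six Weierstrass points --- drastically limits the possible configurations of $C_1$ and of further cylinders; in each of the finitely many resulting pictures one then builds the remaining two cylinders inside the complementary subsurfaces. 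The hard part will be the uniformity of this last step across all of $\H(1,1)$: a general surface need not possess a completely periodic direction splitting it into three cylinders, so the three cylinders will in general lie in several distinct directions, and one must check that a valid configuration is always available. The cone-angle bookkeeping supplied by Gauss--Bonnet, together with the $\tau$-symmetry, is what I would use to keep this case analysis finite.
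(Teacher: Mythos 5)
Your first half is correct and is essentially the paper's own argument: three pairwise disjoint, pairwise non-homotopic core geodesics cut $X$ into two flat pairs of pants with geodesic boundary, and since $\omega$ has a single zero one of the two pieces carries no singularity, contradicting Gauss--Bonnet for a surface of Euler characteristic $-1$. Your preliminary step checking that the three core curves can be chosen simultaneously disjoint is a reasonable (and slightly more careful) version of what the paper takes for granted via its Lemma on intersection numbers of non-parallel cylinders.

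The second half, however, is not a proof but a declaration of intent, and the part you defer is exactly the substantive content of the theorem. Saying ``cut along a cylinder, let Gauss--Bonnet and the hyperelliptic involution constrain the configurations, and build two more cylinders in each case'' does not identify the finitely many configurations, does not construct the cylinders in any of them, and you yourself flag the uniformity of the construction over all of $\H(1,1)$ as ``the hard part.'' Two concrete obstacles: (i) Masur's theorem gives you \emph{some} cylinder, but the complement of the closure of an arbitrary cylinder in genus two can be of several quite different types (another cylinder, two cylinders, a slit torus, or nothing), and the two additional cylinders must avoid the whole closure of $C_1$, not merely its core curve, so the case analysis is genuinely delicate; (ii) nothing in your sketch explains why a suitable configuration always exists rather than sometimes. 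The paper avoids an open-ended case analysis by a specific reduction: it first produces a \emph{simple} cylinder $C_0$ invariant under the hyperelliptic involution (quoting \cite[Lem.~2.1]{Ng11}), cuts it off and reglues to obtain a surface in $\H(2)$ with a marked saddle connection $\hat{s}$ invariant under the induced involution, and then proves a separate geometric lemma (via the embedded parallelogram of Lemma~\ref{lm:embd:par} and an explicit hexagon analysis) showing that on any $\H(2)$ surface such an invariant saddle connection admits two disjoint cylinders avoiding it. Until you either carry out your case analysis in full or supply an equivalent reduction and key lemma, the $\H(1,1)$ statement remains unproved.
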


Note that a triangle in $\Cc_{\rm cyl}$ is a triple of simple closed curves pairwise disjoint that are simultaneously realized as core curves of three cylinders in $(X,\omega)$. \medskip

From its  definition, the geometric structure of the subgraph $\Cc_{\rm cyl}$ depends very much on the flat metric of $(X,\omega)$. It is not difficult to see that $\Cc_{\rm cyl}$ is not  connected in general (see Section~\ref{sec:cyl:graph:def}). To get a ``nicer'' subgraph of $\Cc(S)$, we enlarge $\Cc_{\rm cyl}$ by adjoin to it the vertices of $\Cc(S)$  representing {\em degenerate cylinders}. Roughly speaking, a degenerate cylinder on $X$ is a union of two saddle connections in the same direction such that there are deformations of $(X,\omega)$, on which this union is freely homotopic to the core curves of a geometric cylinder. We refer to Section~\ref{sec:cyl:graph:def} for a more precise definition. In particular, any degenerate cylinder is freely homotopic a simple closed curve. Thus it corresponds to a vertex of $\Cc(S)$.

We define $\hat{\Cc}^{(0)}_{\rm cyl}$ to be the set of vertices of $\Cc(S)$ that correspond to geometric cylinders and degenerate cylinders in $(X,\omega)$.  We then define $\hat{\Cc}^{(1)}_{\rm cyl}$ to be the set of the edges of $\Cc(S)$ whose both endpoints belong to $\hat{\Cc}^{(0)}_{\rm cyl}$.  We thus get a subgraph $\hat{\Cc}_{\rm cyl}$ of $\Cc(S)$. By a slight abuse of notation, we will also call $\hat{\Cc}_{\rm cyl}$ the {\em cylinder graph} of $(X,\omega)$. Subsequently, this subgraph is the main object of our investigation. We resume the results  concerning $\hat{\Cc}_{\rm cyl}$ in the following \medskip

\begin{MainTheorem}~\label{MThm:B}
 For any $(X,\omega) \in \H(1,1)\sqcup\H(2)$, the subgraph $\hat{\Cc}_{\rm cyl}$ is connected and has infinite diameter. The subgroup of $\MCG(S)$ that stabilizes $\hat{\Cc}_{\rm cyl}$ is precisely $\Aff^+(X,\omega)$.  Moreover, if $(X,\omega)$ is completely periodic in the sense of Calta then $\hat{\Cc}_{\rm cyl}$ is Gromov-hyperbolic.
\end{MainTheorem}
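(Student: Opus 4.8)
The plan is to establish the four assertions — connectedness, infinite diameter, identification of the stabilizer, and Gromov-hyperbolicity under complete periodicity — more or less in that order, since each builds on the structural understanding of cylinders in genus two provided by the theory of Veech, McMullen and Calta. First I would set up the basic dictionary: in genus two a geometric cylinder is bounded by saddle connections, and cutting $(X,\omega)$ along the core curve of a cylinder yields either one or two surfaces of lower complexity whose strata can be enumerated. The key elementary fact to record is that any $(X,\omega)$ has at least one cylinder in some direction (by Masur's theorem on periodic directions, or simply because a surface in $\H(2)\sqcup\H(1,1)$ always admits a cylinder decomposition in a dense set of directions), and that adjacent cylinders in a cylinder decomposition give edges of $\hat\Cc_{\rm cyl}$.

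For \textbf{connectedness}, the strategy is a ``rotate and stay connected'' argument. Given two cylinders $C, C'$ on $(X,\omega)$, I would first reduce to the case where $C$ and $C'$ appear in cylinder decompositions in directions $\theta$ and $\theta'$, then interpolate: as one rotates the direction continuously from $\theta$ to $\theta'$, the cylinder decomposition changes only at countably many directions, and at each ``wall'' one has a new cylinder (possibly a degenerate one — this is exactly why the vertices for degenerate cylinders must be adjoined) that is disjoint from cylinders on both sides. The crucial lemma here is that when a cylinder ``collapses'' as the direction varies, the limiting object is a degenerate cylinder in the precise sense of Section~\ref{sec:cyl:graph:def}, and it is adjacent in $\hat\Cc_{\rm cyl}$ both to a cylinder just before and just after the wall. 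Chaining these adjacencies produces a path in $\hat\Cc_{\rm cyl}$ from $C$ to $C'$. I expect the careful bookkeeping of degenerate cylinders at the walls, and checking that the relevant saddle-connection configurations really do satisfy the defining properties of degenerate cylinders, to be a somewhat delicate but routine part of the argument; the genus-two hypothesis keeps the combinatorics of cylinder decompositions finite and manageable (at most three cylinders in $\H(1,1)$, at most two in $\H(2)$).

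For \textbf{infinite diameter}, the natural approach is to produce an explicit sequence of cylinders $C_n$ with $\dist^{\hat\Cc_{\rm cyl}}(C_0, C_n) \to \infty$. Since $\hat\Cc_{\rm cyl}$ is a subgraph of $\Cc(S)$, it suffices to find such a sequence that is already far apart in $\Cc(S)$, i.e.\ to exhibit cylinders whose core curves have geometric intersection number growing fast; a standard way is to apply a pseudo-Anosov element $\phi \in \Aff^+(X,\omega)$ — these exist on any translation surface with enough symmetry, but more robustly one can take $\phi$ to be a product of two parabolics in transverse directions realized by cylinder shears (such always exist in genus two once one has two distinct periodic directions). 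Then $C, \phi(C), \phi^2(C), \dots$ are all cylinders, hence vertices of $\hat\Cc_{\rm cyl}$, and since $\phi$ acts as a pseudo-Anosov its action on $\Cc(S)$ has positive translation length (Masur–Minsky), so $\dist_{\Cc(S)}(\phi^n C, C)\to\infty$ and a fortiori the distance in the subgraph diverges. The only point requiring care is the existence of a pseudo-Anosov in $\Aff^+(X,\omega)$ for \emph{every} $(X,\omega)$ in the two strata; if $\SL(X,\omega)$ is too small one instead picks two independent parabolic directions directly on $(X,\omega)$ — using that a generic direction is not periodic but two periodic directions always exist — and forms a pseudo-Anosov from the corresponding multitwists via the Thurston–Veech construction.

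For the \textbf{stabilizer} statement, one inclusion is immediate: $\Aff^+(X,\omega)$ preserves the flat metric up to $\GL^+(2,\R)$ and hence sends cylinders to cylinders and degenerate cylinders to degenerate cylinders, so it stabilizes $\hat\Cc_{\rm cyl}$. For the reverse inclusion, suppose $g \in \MCG(S)$ preserves the vertex set $\hat\Cc^{(0)}_{\rm cyl}$; the idea is that the combinatorial data of which curves are cylinders on $(X,\omega)$ — together with the adjacency relation recording disjointness — ``remembers'' the flat structure rigidly enough that $g$ must be realized affinely. Concretely I would show that $g$ preserves cylinder decompositions (maximal cliques, or near-maximal ones, in $\hat\Cc_{\rm cyl}$ correspond to cylinder decompositions), hence preserves the set of periodic directions and their cyclic order, and that a mapping class preserving all this data on a genus-two translation surface is affine — this is where one invokes the structure theory for genus two (e.g.\ the classification of cylinder decompositions and the fact that an element of $\MCG(S)$ fixing the Teichmüller disk pointwise up to the $\GL^+(2,\R)$-action lies in $\Aff^+$, cf.\ the surjectivity of the derivative map discussed in the introduction). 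This is likely the step with the most moving parts: one must rule out a mapping class that permutes cylinders ``combinatorially correctly'' yet is not affine, and the argument will use that a translation surface is determined by its cylinders together with their moduli in enough directions.

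Finally, for \textbf{Gromov-hyperbolicity when $(X,\omega)$ is completely periodic} in Calta's sense — meaning every direction containing a cylinder is completely periodic, so the surface decomposes entirely into cylinders in that direction — the plan is to exploit the strong structure this imposes. Complete periodicity in genus two forces $(X,\omega)$ to lie in an eigenform locus (work of Calta and McMullen), so $\SL(X,\omega)$ is large and the periodic directions are organized by a real quadratic order. I would try to build a combinatorial model: associate to each periodic direction its cylinder decomposition, and to the whole of $\hat\Cc_{\rm cyl}$ a coarsely-Lipschitz map to a tree-like object (for instance, a Bass–Serre tree for the relevant arithmetic group, or the Farey-type graph of periodic directions) and verify it is a quasi-isometry, or alternatively verify Gromov's thin-triangles / the Masur–Minsky-style ``contraction'' criterion directly for geodesics built from cylinder decompositions. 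The hardest part of the whole theorem, I expect, is precisely this last one: one must leverage complete periodicity to get a clean enough geometric picture (uniform bounds on how cylinder decompositions in nearby directions overlap, a ``nested'' or ``unicorn-path''-type family of geodesics) to push through a hyperbolicity proof, whereas without complete periodicity the surface can have directions that are minimal but not uniquely ergodic and the control needed for hyperbolicity seems to break down — which is consistent with the theorem only claiming hyperbolicity in the completely periodic case.
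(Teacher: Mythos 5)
Your proposal has genuine gaps in three of the four parts, and the common source is that you implicitly assume $(X,\omega)$ has a large affine group or many periodic directions with controllable overlaps, which fails for a generic surface in $\H(2)\sqcup\H(1,1)$. For \emph{connectedness}, the ``rotate and stay connected'' argument has no foundation: for a non-Veech surface the set of directions containing any cylinder at all is only countable and a generic direction is minimal, so there is no family of cylinder decompositions to interpolate through, no ``walls'', and no reason whatsoever for a cylinder in direction $\theta_1$ to be disjoint from one in a nearby direction $\theta_2$. The paper instead proves the quantitative bound $\dist(C,D)\leq K_1\iota(C,D)+K_2$ (Theorem~\ref{thm:dist:n:inter}) by a surgery argument: using explicit symmetric-polygon presentations it produces, from any simple cylinder $C$ meeting $D$, another simple cylinder $C'$ with $\dist(C,C')\leq 3$ and $\iota(C',D)<\iota(C,D)$, and connectedness is a corollary. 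For \emph{infinite diameter}, your argument needs a pseudo-Anosov in $\Aff^+(X,\omega)$; for a generic surface $\Aff^+(X,\omega)$ is trivial (the cylinder moduli in a periodic direction are incommensurable, so not even a single parabolic is affine), and the Thurston--Veech fallback does not rescue this because it produces a pseudo-Anosov affine for a \emph{new} flat structure built from the two multicurves, not for $(X,\omega)$, so the images $\phi^n(C)$ need not be cylinders of $(X,\omega)$. The paper instead runs the Teichm\"uller geodesic of $\omega^2$ in a minimal vertical direction, uses Klarreich--Hamenst\"adt to get infinite diameter of its shadow in $\Cc(S)$, and uses Smillie--Vorobets to find wide cylinders uniformly close to the systoles along the ray.

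For the \emph{stabilizer}, the easy inclusion is fine, but your reverse inclusion appeals to ``cylinders together with their moduli in enough directions'' --- moduli are precisely the data that the graph $\hat{\Cc}_{\rm cyl}$ does \emph{not} record (it only knows which homotopy classes are cylinders and which pairs are disjoint), so the proposed rigidity step is circular as stated. The paper's actual argument (following Duchin--Leininger--Rafi) extracts from the vertex set of $\hat{\Cc}_{\rm cyl}$ the set of uniquely ergodic directions via geodesic currents and Smillie--Vorobets, shows $\UE(\omega')\subset\UE(\omega)$, and then invokes uniqueness of the Teichm\"uller geodesic joining two uniquely ergodic filling foliations to conclude that the two marked forms differ by a matrix. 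Your sketch for \emph{hyperbolicity} is the closest to the paper in spirit --- the paper does verify a guessing-geodesics criterion (Masur--Schleimer, following Bowditch) with candidate paths $\hat{\Lc}_{C,D}$ made of cylinders of bounded circumference along the Teichm\"uller geodesic interpolating between the two directions --- but a quasi-isometry to a tree or to a Farey-type graph of periodic directions is not established anywhere and is not obviously true, so that branch of your plan should be discarded in favor of verifying the local and slim-triangle conditions directly.
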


Theorem~\ref{MThm:B} actually comprises several statements which are proved in Corollary~\ref{cor:Cess:connect}, Proposition~\ref{prop:infty:diam}, Proposition~\ref{prop:aut:preserve:cyl}, and Theorem~\ref{thm:cper:hyperbolic}. The contexts and precise statements will be given in the corresponding sections. 

We finally consider the quotient of $\hat{\Cc}_{\rm cyl}$ by the action of $\Aff^+(X,\omega)$ in the case $(X,\omega)$ is a Veech surface (that is $\SL(X,\omega)$ is a lattice of $\SL(2,\R)$).\medskip

\begin{MainTheorem}\label{MThm:C}
 Let $\Gp$ be the quotient of $\hat{\Cc}_{\rm cyl}$ by the group of affine automorphisms. Then  $(X,\omega) \in \H(2)\sqcup\H(1,1)$ is a Veech surface if and only if $\Gp$ has finitely many vertices. For any Veech surface in $\H(2)$ the set of edges of $\Gp$ is also finite. There exist Veech surfaces in $\H(1,1)$ such that $\Gp$ has infinitely many edges.
\end{MainTheorem}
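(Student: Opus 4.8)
The plan is to prove the three parts of Theorem~C separately, using in each direction the structural description of $\hat{\Cc}_{\rm cyl}$ and the classification of Veech surfaces in genus two due to McMullen and Calta.

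\textbf{From Veech to finitely many vertices.} Suppose $(X,\omega)$ is a Veech surface, so that $\Gamma:=\SL(X,\omega)$ is a lattice in $\SL(2,\R)$. A vertex of $\Gp$ is an $\Aff^+(X,\omega)$-orbit of a vertex of $\hat{\Cc}_{\rm cyl}$; since the kernel of $\Aff^+(X,\omega)\to\SL(X,\omega)$ is finite, it suffices to bound the number of $\Gamma$-orbits. Every vertex of $\hat{\Cc}_{\rm cyl}$ is represented by a cylinder or degenerate cylinder, whose core direction is a parabolic (periodic) direction of $(X,\omega)$. For a Veech surface, the periodic directions form a single $\Gamma$-orbit of (finitely many, up to the stabilizer) cusps, so after acting by $\Gamma$ we may assume the core curve is horizontal. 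In a fixed periodic direction the surface decomposes into finitely many cylinders (and the finitely many saddle connections bounding them give the degenerate cylinders), so there are finitely many vertices in that direction; hence finitely many $\Gamma$-orbits in total. I would carry this out by first recalling the cylinder decomposition in a periodic direction, then using the fact that the affine group acts transitively on periodic directions up to the cusps of the lattice.

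\textbf{From finitely many vertices to Veech.} For the converse I would argue by contraposition: if $(X,\omega)$ is \emph{not} Veech, I must produce infinitely many $\Aff^+$-orbits of vertices. Here the dichotomy of Calta and McMullen in genus two is the key input: a translation surface in $\H(2)\sqcup\H(1,1)$ that is not Veech either is not completely periodic, or is completely periodic but its module/trace field data is not that of a lattice surface. In the completely periodic case one has infinitely many periodic directions, and McMullen's analysis shows the associated cylinder decompositions (encoded by prototypes) fall into infinitely many affine classes when the surface is not a Veech surface (e.g. the relevant invariant — the discriminant or the continued-fraction data of the module — takes infinitely many values); each gives a distinct orbit of cylinder-vertices. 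In the case where the surface is not completely periodic, I would exhibit directly an infinite family of periodic directions with pairwise non-equivalent cylinder data, using that $\Aff^+(X,\omega)$ is then not a lattice so its orbits on periodic directions cannot exhaust them with finite quotient. The cleanest way is to show the natural map $\{\text{periodic directions}\}/\Aff^+\to\{\text{vertices}\}/\Aff^+$ has infinite source precisely when not Veech, and is finite-to-one.

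\textbf{Finiteness of edges for Veech surfaces in $\H(2)$, and failure in $\H(1,1)$.} For a Veech surface in $\H(2)$, by Theorem~A the graph $\hat{\Cc}_{\rm cyl}$ (via $\Cc_{\rm cyl}$) has no triangles, and more to the point, in the stratum $\H(2)$ a periodic direction decomposes the surface into at most two cylinders. Thus any edge of $\hat{\Cc}_{\rm cyl}$ joins two vertices realized disjointly, hence with a common periodic direction (two disjoint core curves are parallel), and that direction has a bounded number of cylinders; after acting by $\Gamma$ we may assume the direction is horizontal, leaving finitely many vertices and hence finitely many edges among them. So $\Gp$ has finitely many edges. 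For $\H(1,1)$, the point is that Theorem~A gives triangles, i.e. periodic directions with three cylinders, and in an eigenform surface in $\H(1,1)$ one can have periodic directions decomposing into three cylinders with unboundedly varied combinatorics (the extra modulus in $\H(1,1)$ compared to $\H(2)$); concretely I would take a Veech surface in $\H(1,1)$ arising as a covering or eigenform construction, exhibit an infinite family of periodic directions each with a three-cylinder decomposition whose reduced combinatorial/metric data is pairwise inequivalent under $\Aff^+$, and note that each such direction contributes edges not equivalent to those of the others; this yields infinitely many edges in $\Gp$ even though the vertex set is finite.

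\textbf{Main obstacle.} The hard part will be the ``not Veech $\Rightarrow$ infinitely many vertices'' direction, specifically the completely periodic but non-lattice case: one must show that infinitely many of the periodic cylinder decompositions are genuinely inequivalent under the (possibly large, infinitely generated) group $\Aff^+(X,\omega)$, rather than merely that there are infinitely many periodic directions. This requires extracting from McMullen's prototype data an honest $\Aff^+$-invariant taking infinitely many values — I expect to use the spine/continued-fraction structure of the set of prototypes together with the action of the affine group on them, analogous to how the cusps of an infinite-covolume Fuchsian group fail to be finite in number.
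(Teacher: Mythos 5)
Your proposal contains several genuine gaps, and two of its steps rest on false premises. The most serious is the direction ``finitely many vertices $\Rightarrow$ Veech''. You propose to argue by contraposition via McMullen--Calta, but you never produce a concrete $\Aff^+$-invariant of a cylinder that takes infinitely many values on a non-Veech surface; the invariants you name (discriminant, prototype data) are either constant on an eigenform locus or only defined in the completely periodic case, and your treatment of the non-completely-periodic case (``orbits on periodic directions cannot exhaust them with finite quotient'') is circular. The paper instead proves the implication directly using the Smillie--Weiss criterion: assuming $|\Vp|<\infty$, the (finitely many) cylinder areas $a_1,\dots,a_n$ generate a positive constant $\eps$, and a case analysis on embedded triangles (using the hyperelliptic involution, Lemma~\ref{lm:sc:no:inv:types}, and a slit-torus lemma showing that a non-periodic splitting direction would force a convergent non-constant sequence of cylinder areas) shows every embedded triangle has area at least $\eps/2$, whence $(X,\omega)$ is Veech by Theorem~\ref{thm:V:no:small:triang}. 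This quantitative idea is absent from your plan, and without it the converse does not go through.

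The two edge statements also fail as written. For $\H(2)$ you assert that two disjoint core curves must be parallel, hence share a periodic direction; this is false --- Theorem~\ref{thm:exist:mult:curv} produces two disjoint simple cylinders on every surface in $\H(2)$ and the accompanying remark stresses they need not be parallel (e.g.\ one horizontal, one vertical). The paper's finiteness proof instead fixes a horizontal cylinder $C$, observes that every cylinder disjoint from $C$ lies in the complementary slit torus and is recorded by a homology class $md+ne$, bounds $|n|$ by the condition of avoiding the slit, and then uses the parabolic element $T_1^{m_1}\circ T_2^{m_2}\in\Aff^+(X,\omega)$ to reduce $m$ modulo $m_2 n$; none of this is in your sketch. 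For $\H(1,1)$ your mechanism --- infinitely many pairwise $\Aff^+$-inequivalent periodic directions --- cannot exist on a Veech surface, since periodic directions form finitely many cusp orbits. The actual source of infinitude is that the stabilizer of a fixed vertex can fail to act with finitely many orbits on its neighbours: the paper takes an explicit $6$-square surface, forms $E_n=T_{D_2}^{2n}(C_2)$ (all disjoint from $C_1$ and all in the class of $C_2$), and distinguishes the edges $(C_1,E_n)$ by the intersection number $2n$ of $E_n$ with $C_2$ after showing any affine map fixing the class of $C_1$ preserves $C_2$. You would need to supply an argument of this type.
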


The statements of Theorem~\ref{MThm:C} are proved in Theorem~\ref{thm:quotient:fin:ver} and Proposition~\ref{prop:Veech:q:infin:H11}.



\subsection{Outline} The paper is organized as follows:

In Section~\ref{sec:preliminary} we recall standard notions concerning translation surfaces. We show some geometric and topological features of translation surfaces of genus two. We end this section by the proof of Theorem~\ref{MThm:A}.


In Section~\ref{sec:cyl:graph:def}, we introduce the notion of degenerate cylinders and define the cylinder graphs $\Cc_{\rm cyl}$ and $\hat{\Cc}_{\rm cyl}$. We show that $\hat{\Cc}_{\rm cyl}$ is connected  and has infinite diameter in Section~\ref{sec:reduce:inters:nb} and Section~\ref{sec:inft:diam}. Those results follow from Theorem~\ref{thm:dist:n:inter} which gives a bound on the distance in $\hat{\Cc}_{\rm cyl}$ using the intersection number.

Section~\ref{sec:automorphisms} is devoted to the proof of the fact that the stabilizer subgroup of $\hat{\Cc}_{\rm cyl}$ in $\MCG(S)$ is precisely the group  of affine automorphisms.

In Section~\ref{sec:hyperbolic} we show that if $(X,\omega)$ is completely periodic in the sense of Calta then $\hat{\Cc}_{\rm cyl}$ is Gromov-hyperbolic. Our proof follows a strategy  of Bowditch and uses a hyperbolicity criterion by Masur-Schleimer.

We give the proof of Theorem~\ref{MThm:C} in Section~\ref{sec:quotient}. Finally, in Section~\ref{sec:prototype}, we give the connection between the quotient graph $\Gp=\hat{\Cc}_{\rm cyl}/\Aff^+$ and the set of prototypes for Veech surfaces in $\H(2)$, which were introduced by McMullen~\cite{McM_spin}.

\subsection{Acknowledgements} The author warmly thanks Arnaud Hilion for the very helpful and stimulating discussions.

\bigskip

\section{Preliminaries}\label{sec:preliminary}
In this section we will prove some topological properties of saddle connections and cylinders on translation surfaces in genus two. The main result of this section  is Theorem~\ref{thm:exist:mult:curv}. 

Let $(X,\omega)$ be a translation surface. A saddle connection on $X$ is a geodesic segments whose endpoints are singularities which contains no singularities in its interior. A (geometric) cylinder of $X$ is a subset $C$ isometric to $(\R/c\Z)\times(0,h)$, with $c,h \in \R_{>0}$, which is not properly contained in another subset with the same property. The parameter $c$ is called the {\em circumference} and $h$  the {\em width} or {\em height} of this cylinder.

The isometry from $(\R/c\Z)\times(0,h)$ to $C$ can be extended by continuity to a map from $(\R/c\Z)\times[0,h]$ to $X$.  We will call the images of $(\R/c\Z)\times\{0\}$ and $(\R/c\Z)\times\{h\}$ the boundary components of $C$. Each boundary component is a concatenation of some saddle connections. It may happen that the two boundary components coincide as subsets of $X$. We say that $C$ is {\em simple cylinder} if each of its boundary component is a single saddle connection. It is worth noticing that on a translation surface of genus two, every cylinder is invariant by  the hyperelliptic involution. Therefore the two boundary components of any cylinder contain the same number of saddle connections. \medskip

Throughout this paper, for any cycle $c \in H_1(X,\{\text{zeros of } \omega\};\Z)$, we will use the notation $\omega(c):=\int_c\omega$,  and for any saddle connection $s$, its euclidean length will be denoted by $|s|$. Let us start by the following elementary lemma.

\begin{Lemma}\label{lm:embd:par}
 Let $(X,\omega)$ be a translation surface in one of the hyperelliptic components $\H^{\rm hyp}(2g-2)$ or $\H^{\rm hyp}(g-1,g-1)$, and $s$ be a saddle connection invariant by the hyperelliptic involution $\hinv$ of $X$. We assume that $s$ is not vertical. Then there exist a parallelogram $\P=(P_1P_2P_3P_4)$ in $\R^2$, and a locally isometric mapping $\varphi: \P \ra X$ such that

 \begin{itemize}
  \item[a)]   The vertical lines through the vertices $P_3$ and $P_4$  intersect the diagonal $\ol{P_1P_2}$.

  \item[b)]   The vertices of $\P$ are mapped to the singularities of $X$, and $\ol{P_1P_2}$ is mapped isometrically to $s$.

  \item[c)]   The restriction of $\varphi$ into $\inter(\P)$ is an embedding.

  \item[d)]   Let $\eta>0$ be the length of the vertical segment from $P_3$ or $P_4$  to a point in $\ol{P_1P_2}$. Then for any vertical segment $u$ in $X$ from a singular point to a point in $s$, we have $|u| \geq \eta$, where $|u|$ is the euclidian length of $u$.
\end{itemize}
We will call $\P$ the {\em embedded parallelogram} associated to $s$.
\end{Lemma}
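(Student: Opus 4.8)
The plan is to develop a neighbourhood of $s$ and then thicken $s$ by the vertical flow, the amount of thickening being dictated by the hyperelliptic symmetry. Since $\hinv(s)=s$ and $\hinv$ acts as $-\Id$ on $H_1(X;\Z)$, the involution $\hinv$ fixes the midpoint $m$ of $s$ (a Weierstrass point) and, in a developing chart near $s$, is the central symmetry about $m$. Let $\phi_t$ denote the vertical flow and set
\[
\eta:=\inf\bigl\{\,|u| \ :\ u\subset X\text{ a vertical segment from a singularity to a point of }s,\ \inter(u)\text{ nonsingular}\,\bigr\}.
\]
I would first record that $\eta>0$: otherwise, among the finitely many singularities one would find a fixed singularity $z$ with points of $s$ on the vertical through $z$ at heights tending to $0$, forcing $z\in s$, i.e.\ $z$ an endpoint of $s$, which is impossible since $s$ leaves $z$ in a non-vertical direction. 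One also checks that $\eta<\infty$ and that it is realised: each vertical separatrix issued from a singularity meets the non-vertical segment $s$, if at all, at a discrete set of heights before reaching the next singularity, and there are finitely many such separatrices. Fix $q_0\in s$ so that the vertical segment of length $\eta$ issuing \emph{upward} from $q_0$ ends at a singularity $z_3$ (reverse the flow direction if needed).

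For the construction, develop a disk neighbourhood of $s$: this gives $P_1,P_2\in\R^2$, the developed endpoints of $s$, with $\ol{P_1P_2}$ non-vertical, and, continuing along the segment $\ol{q_0z_3}$, a point $P_3=Q_0+(0,\eta)$ where $Q_0\in\ol{P_1P_2}$ is the image of $q_0$. Put $P_4:=P_1+P_2-P_3$; by $\hinv$-invariance this is the developed image of $\hinv(z_3)$ coming from the downward segment $\hinv(\ol{q_0z_3})$ of length $\eta$. Let $\P$ be the centrally symmetric parallelogram (non-degenerate, as $\eta>0$) with diagonals $\ol{P_1P_2}$ and $\ol{P_3P_4}$, the union along $\ol{P_1P_2}$ of the triangles $T^+=P_1P_3P_2$ and $T^-=P_1P_4P_2$. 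Define $\varphi$ on $T^+$ by $\varphi(Q+(0,t)):=\phi_t(q)$, where $q\in s$ has image $Q$ and $0\le t<h^+(x)\le\eta$ is the cross-sectional height of $T^+$ over the horizontal coordinate $x$ of $Q$; this is well defined because $t<\eta$ keeps the vertical segment above $q$ nonsingular. Define $\varphi$ on $T^-$ symmetrically, using the downward flow, so that $\varphi$ intertwines the central symmetry of $\P$ with $\hinv$.

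Conditions (a), (b), (d) are then immediate: $P_3$ lies on the vertical through $Q_0\in\ol{P_1P_2}$ and $P_4$ on the vertical through its central image, giving (a); $P_1,P_2,P_3,P_4$ map to the endpoints of $s$ and to $z_3,\hinv(z_3)$, and $\varphi$ restricts to an isometry $\ol{P_1P_2}\to s$, giving (b); and (d) holds by the very definition of $\eta$. The substance is (c). The map $\varphi$ is a local isometry by construction, and the key geometric input for injectivity is that $\phi_t(\inter(s))\cap s=\varnothing$ for $0<t<\eta$. Indeed, if $\phi_t(q)\in s$ with $q\in\inter(s)$, then at that point the geodesic segments $\phi_t(s)$ and $s$ share a point and a direction (as $d\phi_t=\Id$ on tangent vectors), hence coincide on a neighbourhood; since $t<\eta$ forces $\phi_t$ to be defined and nonsingular on all of $\inter(s)$, propagating this gives $\phi_t(s)=s$, so $\phi_t$ restricts to an isometry of the segment $s$ onto itself. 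This cannot be the identity (that would put $s$ on a single vertical leaf, impossible as $s$ is not vertical), so it is the reflection fixing $m$; but then $\phi_t$ exchanges the endpoints of $s$, exhibiting a vertical segment of length $t<\eta$ from a singularity to a point of $s$, a contradiction. Granting this, $\varphi$ is injective on $\inter(T^+)$, on $\inter(T^-)$, and each of these has image disjoint from $\varphi(\inter(s))$, since any such collision $\phi_{t_1}(q_1)=\phi_{t_2}(q_2)$ with $0\le t_1<t_2<\eta$ yields $\phi_{t_2-t_1}(q_2)\in s$ with $0<t_2-t_1<\eta$.

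The remaining, and I expect hardest, part of (c) is to show $\varphi(\inter(T^+))\cap\varphi(\inter(T^-))=\varnothing$: such an overlap is equivalent to a vertical segment with both endpoints in $\inter(s)$, no singularity inside, and length $<2\eta$, whereas the elementary argument above only excludes length $<\eta$. This is exactly where the hyperelliptic structure of genus two must be used decisively. Such a chord $v$ cuts off a subarc of $s$, forming a loop $\g$ with $\int_\g\omega\neq0$, hence non-trivial in $H_1$; combining $\hinv_\ast=-\Id$ with the fact that $\hinv$ fixes $m$ should force $v$ into a $\hinv$-symmetric position — its midpoint a Weierstrass point — from which one extracts a vertical segment of length $<\eta$ from a singularity to $s$, contradicting the minimality of $\eta$. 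One should likewise rule out the degenerate possibility that $\inter(s)$ meets a vertical cylinder of too small circumference, which again rests on the genus-two geometry (a saddle connection crossing such a cylinder places a boundary singularity within short vertical distance of $s$). I would isolate this chord estimate as a separate lemma on $\hinv$-invariant saddle connections; with it in hand, (c) — and hence the whole statement — follows, everything else being routine bookkeeping with the developing map.
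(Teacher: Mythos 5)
Your construction of $\P$ and $\varphi$ is the same as the paper's (the paper takes $u^+$ to be a shortest vertical segment from the singularity down to $s$ and sets $u^-=\hinv(u^+)$, which yields the same parallelogram), and parts (a), (b), (d) are indeed routine once the map is defined. But the proposal has a genuine, and explicitly admitted, gap at the one point where the lemma has real content: you never prove that $\varphi(\inter(T^+))\cap\varphi(\inter(T^-))=\varnothing$, i.e.\ that there is no singularity-free vertical chord of $\inter(s)$ of length $<h^+(x_1)+h^-(x_2)$. The mechanism you sketch for it does not work as stated: $\hinv$ carries such a chord $v$ to \emph{another} vertical chord based at the $\hinv$-reflected points of $s$, and nothing forces $v=\hinv(v)$ or its midpoint to be a Weierstrass point (take $q_1$ near an endpoint of $s$). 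Since this disjointness is exactly assertion (c) restricted to the two half-triangles, deferring it to an unproven ``separate lemma'' leaves the proof incomplete. Two smaller issues: you assert but do not show that $\eta<\infty$, i.e.\ that some vertical separatrix actually meets $s$ (the paper gets this from the fact that otherwise one could embed a rectangle of infinite area into $X$); and in your argument that $\phi_t|_{\inter(s)}$ cannot be the identity, the parenthetical reason is wrong — the identity case means every point of $\inter(s)$ lies on a closed vertical leaf of length dividing $t$, not that $s$ is vertical, and it has to be excluded differently.

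The paper closes (c) by an argument that sidesteps any classification of vertical chords. First one checks, exactly as you do, that every non-vertex point of $\P$ maps to a regular point (a singular image would produce a vertical segment from a singularity to $s$ shorter than $\eta$). Then, if $\varphi(M_1)=\varphi(M_2)$ for distinct interior points, set $\vec v=\overrightarrow{M_1M_2}$: because $\varphi$ is a local isometry into a translation structure and the only singular values occur at the four vertices, the relation $\varphi(M)=\varphi(M+\vec v)$ propagates (open-and-closed) over the convex set $\P\cap(\P-\vec v)$. Since $\P$ is a parallelogram, some vertex $P_i$ satisfies $P_i+\vec v\in\P\setminus\{P_1,P_2,P_3,P_4\}$, forcing $\varphi(P_i+\vec v)$ to be the singularity — a contradiction. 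This single translation argument kills collisions within $T^+$, within $T^-$, and between the two half-triangles simultaneously, which is precisely the case your approach cannot reach. I would recommend replacing your chord analysis by this deck-transformation trick rather than trying to repair the symmetry heuristic.
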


\begin{Remark}\hfill
 \begin{itemize}
  \item[$\bullet$] Since $s$ in invariant by $\hinv$, we must have $\hinv(\varphi(\P))=\varphi(\P)$.

  \item[$\bullet$] The sides of $\P$ are mapped to saddle connections on $X$. Even though the restriction of $\varphi$ into $\inter(\P)$ is  one-to-one, it may happen that $\varphi$ maps the opposite sides of $\P$ to the same saddle connection.

  \item[$\bullet$] This lemma is also valid for  translation surfaces in $\H(0)$ and $\H(0,0)$.
  \end{itemize}
\end{Remark}

\begin{proof}[Proof of Lemma~\ref{lm:embd:par}]
We will only give the proof for the case $(X,\omega)\in \H^{\rm hyp}(2g-2)$ as the proof for $\H^{\rm hyp}(g-1,g-1)$ is the same. Using $U_-=\{\left(\begin{smallmatrix} 1 & 0 \\ t & 1 \end{smallmatrix}\right), \, t \in \R\}$, we can assume that $s$ is horizontal. Let $\Psi_t$ be the vertical flow on $X$ generated by the vertical vector field $(0,1)$, this flow moves regular points of $X$ vertically, upward if $t>0$.

Consider the vertical geodesic rays emanating from the unique zero $P_0$ of $\omega$ in direction $(0,-1)$. We claim that one of the rays in this  direction  must meet $s$. Indeed, if this is not the case then for any $t \in \R_{>0}$, $\Psi_t(s)$ does not contain $P_0$, and it follows that one can embed a rectangle of infinite area into $X$. Let $u^+$ be a vertical geodesic segment of minimal length from $P_0$ to a point in $s$ which is included in a ray in direction $(0,-1)$. Since $s$ is invariant by $\hinv$,  $u^-:=\hinv(u^+)$ is a  vertical segment of minimal length from $P_0$ to a point in $s$ which is included in a ray in direction $(0,1)$. Using the developing map, we can realize $s$ as a horizontal segment $\ol{P_1P_2} \subset \R^2$, $u^+$ (resp. $u^-$) as a vertical segment $\ol{P_3{P'}_3}$ (resp. $\ol{P_4{P'}_4}$) where ${P'}_3, {P'}_4 \in \ol{P_1P_2}$. Remark that the central symmetry fixing the midpoint of $\ol{P_1P_2}$ exchanges $\ol{P_3{P'}_3}$ and $\ol{P_4{P'}_4}$.

Let $\P$ denote the parallelogram $(P_1P_3P_2P_4)$. We define a map $\varphi: \P \ra X$ as follows: for any point $M\in \P$, let $M'$ be the orthogonal projection of $M$ in  $\ol{P_1P_2}$, and $t$ be the length of $\ol{MM'}$. Let $\hat{M}'$ be the point in $s$ corresponding to $M'$ by the identification between $\ol{P_1P_2}$ and $s$. We  then  define $\varphi(M):=\Psi_t(\hat{M}')$ if $M$ is above $\ol{P_1P_2}$, and $\varphi(M)=\Psi_{-t}(\hat{M}')$ if $M$ is below $\ol{P_1P_2}$.  By definition, $\varphi$ is a local isometry and maps the vertices of $\P$ to $P_0$.

Note that we have $|\ol{MM'}| \leq |\ol{P_3{P'}_3}|=|\ol{P_4{P'}_4}|$ and the equality only occurs when $M=P_3$, or $M=P_4$. Thus, for all $M \in \P \setminus\{P_1,P_2,P_3,P_4\}, \; \varphi(M)$ is a regular point in $X$ (otherwise we would have a vertical segment from $P_0$ to a point in $s$ of length smaller than $|u^+|$).

We now claim that $\varphi|_{\inter(\P)}$ is an embedding. Assume that there exist two points $M_1,M_2\in\inter(\P)$ such that $\varphi(M_1)=\varphi(M_2)$. Set $\overrightarrow{v}:=\overrightarrow{M_1M_2}$, then for any $M,M' \in \P$ such that $\overrightarrow{MM'}=\overrightarrow{v}$, we have $\varphi(M)=\varphi(M')$. Since $\P$ is a parallelogram, there exists a vertex $P_i \in \{P_1,P_2,P_3,P_4\}$ and a point $M'\in \P\setminus\{P_1,P_2,P_3,P_4\}$ such that $\overrightarrow{P_iM'}=\overrightarrow{v}$, which implies that $\varphi(M')=P_0$, and we have a contradiction to the observation above.

It is now straightforward to verify  that $\P$ and $\varphi$ satisfy all the required properties.
\end{proof}

In what follows, by a {\em slit torus} we will mean a triple $(X,\omega,s)$ where $X$ is an elliptic curve, $\omega$ a non-zero holomorphic one-form, and $s$ an embedded geodesic segment (with respect to the flat metric defined by $\omega$) on $X$.  The following lemma is useful for us in the sequel.


\begin{Lemma}\label{lm:slit:tor:decomp}
Let $(X,\omega,s)$ be a slit torus and $P_1,P_2$ be the endpoints of $s$. Assume that the segment (slit) $s$ is not vertical, that is $|{\rm Re}\omega(s)| >0$.  Then there exist a pair of parallel simple closed geodesics $c_1,c_2$ with $c_i$ passing through $P_i$ such that $c_i\cap \inter(s) =\vide$, and $0\leq |\mathrm{Re}\omega(c_i)| \leq |s|$. In particular,  the geodesics $c_1,c_2$ cut $X$ into two cylinders, one of which contains $\inter(s)$. Moreover, any leaf of the vertical foliation intersecting $c_i$ must intersect $s$, and if every leaf of the vertical foliation meets $s$, then we have $|\mathrm{Re}\omega(c_i)|>0$.
\end{Lemma}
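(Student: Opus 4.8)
The plan is to construct the pair of parallel simple closed geodesics directly, imitating the construction of the embedded parallelogram in Lemma~\ref{lm:embd:par}. First I would normalize by applying an element of $U_-$ so that $s$ is horizontal (this is allowed since $|\mathrm{Re}\,\omega(s)|>0$ means $s$ is not vertical, and the resulting statement is invariant under this shear up to identifying vertical foliations). On a flat torus the vertical foliation is a foliation by parallel closed leaves all of the same length, except possibly for the (at most two) singular leaves through $P_1$ and $P_2$. So I would consider the vertical geodesic ray emanating from $P_1$ in the downward direction $(0,-1)$: it either hits $P_2$ (or returns to $P_1$) after some length, or it returns to $P_1$ having first crossed $\inter(s)$, or it wanders without ever meeting $\{P_1,P_2\}$. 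In the last case an infinite-area vertical strip embeds, a contradiction on a torus of finite area; so the downward vertical ray from $P_1$ meets $\{P_1,P_2\}$ at some first time $t_1>0$. Similarly the upward vertical ray from $P_1$ meets $\{P_1,P_2\}$ at some first time.

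Next I would isolate the cylinder. Cutting the torus along $s$ produces a flat surface with boundary that is topologically an annulus (a torus minus an open disk's worth deformed — more precisely, cutting along an embedded arc whose endpoints are the two marked points $P_1,P_2$ yields a surface with a single boundary circle made of two copies of $s$). Inside this cut-open annulus, the vertical foliation restricted to it, together with the first-return structure, produces the desired closed geodesics: the leaf through $P_i$ in the cut surface, followed until its first return to a point of $s$, together with a sub-segment of $s$ between consecutive vertical hits, bounds. Concretely I would take $c_i$ to be the simple closed curve that runs along the vertical leaf segment issuing from $P_i$ until it first meets $s$ again, then closes up inside $s$. Since it stays in the complement of $\inter(s)$ except at its endpoint(s) on $\partial$(cut surface), and the complement of $c_1\cup c_2$ in $X$ has two components (one containing $\inter(s)$, one not), each component is a flat cylinder swept out by vertical leaves. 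The horizontal displacement of $c_i$ is precisely the length along $s$ traversed, which is at most $|s|$ (since $c_i$ lies in the cylinder and the cylinder's "width" in the horizontal $s$-direction is controlled by $|s|$), giving $0\le|\mathrm{Re}\,\omega(c_i)|\le|s|$.

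For the last two assertions: any leaf of the vertical foliation meeting $c_i$ lies in one of the two cylinders bounded by $c_1,c_2$; the one containing $c_i$ and adjacent to $\inter(s)$ is swept by leaves that cross $s$, so such a leaf meets $s$ — I would argue this from the fact that $c_i$ was built from a leaf-segment that hits $s$, and homotopic (in the cylinder) leaves do the same. Finally, if \emph{every} vertical leaf meets $s$, then in particular the generic (nonsingular) closed vertical leaves of $X$ all cross $s$ transversally; since a nonsingular vertical leaf is closed of length $=\mathrm{Area}(X)/(\text{horizontal width})$ and crossing $s$ forces it to span the full horizontal extent, the horizontal component of $c_i$ cannot be zero — if $\mathrm{Re}\,\omega(c_i)=0$ then $c_i$ would itself be a vertical leaf not crossing $\inter(s)$, and the cylinder on the $\inter(s)$-side would contain vertical leaves (parallel to $c_i$) missing $s$, contradicting the hypothesis. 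I expect the main obstacle to be the careful bookkeeping of the cut-open torus and verifying that the two curves $c_1,c_2$ are genuinely \emph{simple}, \emph{closed}, \emph{disjoint from} $\inter(s)$, and \emph{parallel} simultaneously — in degenerate configurations (e.g.\ when the first vertical return from $P_1$ lands on $P_1$ rather than $P_2$, or when $P_1$ and $P_2$ lie on the same vertical leaf) one has to check that the construction still yields two parallel curves rather than one, and to handle the boundary case $\mathrm{Re}\,\omega(c_i)=0$ correctly. This is exactly the kind of case analysis that the embedded-parallelogram lemma was set up to streamline, so I would lean on that picture throughout.
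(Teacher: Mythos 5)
Your overall picture is the right one, but two steps as written do not work, and they are exactly the steps that carry the content of the lemma. First, the trichotomy for the downward vertical ray from $P_1$ is false: on a generic flat torus the vertical direction is irrational for the lattice, so that ray is dense and never meets $\{P_1,P_2\}$ again, and no infinite-area strip arises from it (a single ray has zero area, so finiteness of area says nothing about it). The correct claim, and the one your later construction actually needs, is that some downward vertical separatrix from a marked point meets the \emph{slit} $s$; the finite-area argument must be run on the two-dimensional object obtained by flowing all of $\inter(s)$ vertically, as in the proof of Lemma~\ref{lm:embd:par} (and note it is one of the two downward rays, from $P_1$ or from $P_2$, that is guaranteed to hit $s$, not necessarily the one from $P_1$). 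Second, the curve you call $c_i$ --- a vertical leaf segment from $P_i$ closed up by a sub-arc of $s$ --- is a broken geodesic that passes through $\inter(s)$; it is only freely homotopic to the curve the lemma asks for. The whole point of the lemma is that the \emph{straight} geodesic representative through $P_i$ of that class avoids $\inter(s)$ and has horizontal holonomy at most $|s|$, and to see this you need the embedded triangle/parallelogram bounded by $s$, the vertical segments, and the straight sides joining marked points: those straight sides are $c_1$ and $c_2$. You gesture at this at the end but never carry it out.

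For comparison, the paper disposes of the lemma in one line: a flat torus with two marked points is a hyperelliptic translation surface whose involution $z\mapsto -z+P_1+P_2$ acts by $-\Id$ on homology, exchanges $P_1$ and $P_2$, and preserves $s$, so Lemma~\ref{lm:embd:par} applies verbatim; one pair of opposite sides of the resulting embedded parallelogram consists of two parallel closed geodesics through $P_1$ and $P_2$ with the required properties, and the remaining assertions (the two complementary cylinders, vertical leaves through $c_i$ crossing $s$, and $|\mathrm{Re}\,\omega(c_i)|>0$ when every vertical leaf meets $s$) are read off from that parallelogram. If you repair the two points above you are essentially reproving Lemma~\ref{lm:embd:par} in the torus case, which is legitimate, but the reduction is cleaner.
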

\begin{proof}
 Remark that  a flat torus with two marked points can be considered as hyperelliptic translation surface. Here, the hyperelliptic involution is the unique one that acts by $-\Id$ on $H_1(X,\Z)$ and exchanges the marked points. Therefore, this lemma is a particular case of Lemma~\ref{lm:embd:par}. 
\end{proof}

We now turn into translation surfaces in genus two.  Let $(X,\omega)$ be a translation surface in $\H(2)\sqcup\H(1,1)$.  We denote by $\hinv$ the hyperelliptic involution of $X$.

\begin{Lemma}\label{lm:sc:no:inv:types}
 Let $s_1, s_2$ be a pair of saddle connections in $X$ which are permuted by $\hinv$. If $(X,\omega) \in \H(2)$, then  $s_1$ and $s_2$ bound a simple cylinder. If $(X,\omega) \in \H(1,1)$ then we have two cases:
 
 \begin{itemize}
 \item[$\bullet$]  if $s_i$ joins a zero of $\omega$ to itself then $s_1$ and $s_2$ bound a simple cylinder,
 
 \item[$\bullet$] if $s_i$ joins two different zeros of $\omega$ then $s_1\cup s_2$ decomposes $X$ as a connected sum of two slit tori.
 \end{itemize}
\end{Lemma}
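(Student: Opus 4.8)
## Proof Plan for Lemma~\ref{lm:sc:no:inv:types}

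The plan is to analyze the surface obtained by cutting $X$ along $s_1 \cup s_2$ and to use the fact that $\hinv$ permutes the two saddle connections, so in particular it must fix the union $s_1 \cup s_2$ setwise. First I would deal with the case where the $s_i$ are invariant-as-a-pair but each $s_i$ joins a zero to itself; this includes the whole $\H(2)$ case, where there is only one zero. Since $\hinv$ has exactly $6$ fixed points (Weierstrass points), and $\hinv$ swaps $s_1$ with $s_2$ without fixing either pointwise, the midpoint of each $s_i$ is a Weierstrass point. The key observation is that $s_1 \cup s_2$ is a $\hinv$-invariant union of two arcs from the zero set to itself; cutting along it either disconnects $X$ or not. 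A homology/Euler characteristic count: in $\H(2)$, cutting a genus-two surface along two arcs (forming a graph with one vertex in $\H(2)$, or two vertices in the relevant $\H(1,1)$ sub-case) and checking connectivity via $H_1$ shows the complement must be an annulus, i.e. a simple cylinder, with $s_1, s_2$ as its two boundary saddle connections — this forces the ``simple cylinder'' conclusion. I would make this precise by noting that a cylinder in genus two is $\hinv$-invariant (stated in the preliminaries), its two boundary components each consist of the same number of saddle connections, and here that number must be $1$.

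For the remaining case in $\H(1,1)$, where $s_i$ joins the two distinct zeros $P, Q$ of $\omega$: now $s_1 \cup s_2$ is a closed curve $\gamma$ on $X$ (a cycle through $P$ and $Q$), invariant under $\hinv$ up to orientation. Because $\hinv$ swaps $s_1$ and $s_2$ and swaps $P$ with... no — actually $\hinv$ fixes the zero set $\{P,Q\}$ setwise; if it swapped $P$ and $Q$ that is one sub-possibility, if it fixes each it is another, and a short argument (e.g.\ via the $\hinv$-action on $H_1$, which is $-\mathrm{Id}$) pins down that $\gamma = s_1 \cup s_2$ is a simple closed curve whose class in $H_1(X;\Z)$ is sent to its negative, hence is a separating curve or bounds appropriately. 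The cleanest route: observe $\gamma$ is homologically trivial (a Weierstrass point argument, or: $\gamma$ passes through two Weierstrass points and is $\hinv$-anti-invariant, so $[\gamma] = -[\gamma]$, giving $2[\gamma]=0$, hence $[\gamma]=0$ as $H_1$ is torsion-free), so $\gamma$ separates $X$. Each side has genus one — this follows because $\hinv$ must preserve each side or swap them; analyzing the fixed-point count ($6$ Weierstrass points, two lying on $\gamma$, so $4$ split among the complementary pieces as $2+2$) shows $\hinv$ preserves each component and acts as the hyperelliptic involution of a genus-one piece with two marked points. Thus $X$ is the connected sum of two slit tori glued along $\gamma$, with the slit being $s_i$ on each side, which is exactly the asserted decomposition.

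The main obstacle I anticipate is the bookkeeping needed to rule out the ``wrong'' topological types: a priori cutting along $s_1\cup s_2$ in the first case could leave a one-holed torus rather than an annulus, and in the second case $\gamma$ could a priori be non-separating. Both are excluded by careful Euler-characteristic and Weierstrass-point counts, but one must be attentive to whether $\hinv$ fixes or swaps the zeros and to the precise combinatorics of how the $6$ fixed points distribute relative to $s_1 \cup s_2$. I would organize the proof around the rigid constraint that $\hinv$ acts as $-\mathrm{Id}$ on $H_1(X;\Z)$ together with the genus-two Riemann–Hurwitz data, treating $\H(2)$ and the two $\H(1,1)$ sub-cases in parallel, and invoking Lemma~\ref{lm:embd:par} / the earlier remark that every cylinder in genus two is $\hinv$-invariant to upgrade the topological annulus to an honest simple cylinder bounded by $s_1$ and $s_2$. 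A secondary subtlety is checking that in the simple-cylinder cases the two saddle connections really form the two \emph{distinct} boundary components and are not identified to a single one; this is handled by noting $\hinv$ permutes them nontrivially, so they cannot coincide as subsets of $X$.
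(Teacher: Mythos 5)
Your overall strategy --- use $\hinv_*=-\Id$ on $H_1(X,\Z)$ to show that $s_1$ and $s_2$ are homologous, so that $s_1\cup s_2$ separates, and then identify the complementary pieces --- is essentially the paper's own: its proof consists of exactly that homology remark followed by ``an inspection on the configurations of rays originating from the zero(s) of $\omega$ in the same direction.'' The problem is that the Weierstrass-point bookkeeping you present as ``the key observation'' is false. If $\hinv$ genuinely swaps $s_1$ and $s_2$ (with $s_1\neq s_2$), then $\hinv$ carries the interior of $s_1$ onto the interior of $s_2$ and therefore has \emph{no} fixed point on either interior; the midpoint of a saddle connection is a Weierstrass point precisely when that saddle connection is \emph{individually} invariant under $\hinv$ (the situation of Lemma~\ref{lm:embd:par}), not when the two are exchanged. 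Consequently the six Weierstrass points do not distribute as ``two on $\gamma$ and $2+2$ off it'': in the $\H(2)$ case exactly one of them (the zero itself) lies on $s_1\cup s_2$, with two in the interior of the simple cylinder and three in the complementary slit torus; in the $\H(1,1)$ case with distinct zeros, none lies on $\gamma$ (the two zeros are exchanged by $\hinv$, as are the midpoints of $s_1$ and $s_2$) and the six points split $3+3$ between the two tori. The conclusion you want from this count --- that $\hinv$ preserves each complementary component rather than swapping them --- is still true, but is better obtained from $\hinv_*=-\Id$ directly: an involution exchanging the two sides of a separating curve would exchange the two nonzero direct summands of $H_1(X,\Z)$, which $-\Id$ cannot do.

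The second, more substantive gap is the step you compress into ``an Euler characteristic count shows the complement must be an annulus.'' Euler characteristic only tells you the pieces have $\chi\in\{0,-1\}$ (or $\{-1,-1\}$ in the two-tori case); it does not by itself show that the $\chi=0$ piece is a cylinder whose two boundary circles are $s_1$ and $s_2$ \emph{separately} rather than concatenations of both, nor does it rule out the a priori non-separating or interlocking configurations of two loops based at the same cone point. Settling this requires examining how the angular sectors at the zero(s) cut out by the four ends of $s_1,s_2$ are distributed among the components --- exactly the ``inspection of the configurations of rays'' that the paper invokes and that carries the real content of the lemma. Your appeal to the fact that every cylinder in genus two is $\hinv$-invariant with boundary components containing equal numbers of saddle connections is a reasonable way to finish once a cylinder is known to be present, but it does not by itself produce the cylinder.
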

 
\begin{proof}
Since $\hinv$ acts by $-\Id$ on  $H_1(X,\Z)$, $s_1$ and $s_2$ must be homologous. This lemma follows from an inspection on the configurations of rays originating from the zero(s) of $\omega$ in the same direction. 
\end{proof}

\begin{Lemma}\label{lm:H2:inv:sc:2cyl}
Let $(X,\omega)$ be a surface in $\H(2)$ and $s$ be a saddle connection in $X$ invariant by the hyperelliptic involution $\hinv$. Then there exist two disjoint cylinders $C_1,C_2$ that do not intersect $s$ (that is, $C_1\cap C_2=\vide$, and the core curves of $C_1$ and $C_2$ do not meet $s$). Remark that $s$ may be contained in the boundary of $C_1$ or $C_2$. The possible configurations of $C_1$ and $C_2$ with respect to $s$ are shown in Figure~\ref{fig:s:inv:disj:cyls}.
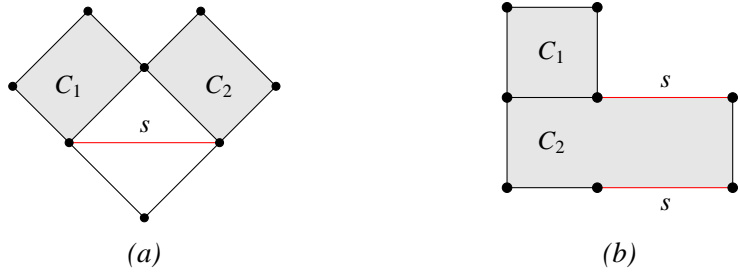
\begin{figure}[htb]
\begin{minipage}[t]{0.4\linewidth}
\centering
 \begin{tikzpicture}[scale=0.5]
 \fill[blue!50!yellow!20] (-3.5,1.5) -- (-2,0) -- (0,2) -- (-1.5,3.5) -- cycle;
 \fill[blue!50!yellow!20] (3.5,1.5) -- (2,0) -- (0,2) -- (1.5,3.5) -- cycle;
  \draw (0,2) -- (1.5,3.5) -- (3.5,1.5) -- (0,-2) -- (-3.5,1.5) -- (-1.5,3.5) -- cycle;
  \draw (-2,0) -- (0,2) -- (2,0);
  \draw[red] (-2,0) -- (2,0);
  \draw (0,0) node[above] {\small $s$} (-2,1.5) node {\small $C_1$} (2,1.5) node {\small $C_2$};
  \foreach \x in {(-3.5,1.5), (-2,0), (-1.5,3.5), (0,2), (0,-2),(1.5,3.5),(2,0),(3.5,1.5)}\filldraw[fill=black] \x circle (3pt);
  \draw (0,-3) node {(a)};
 \end{tikzpicture}
\end{minipage}
\begin{minipage}[t]{0.4\linewidth}
\centering
\begin{tikzpicture}[scale=0.6]
 \fill[blue!50!yellow!20] (0,0) -- (2,0) -- (2,2) -- (0,2) -- cycle;
 \fill[blue!50!yellow!20] (0,-2) -- (5,-2) -- (5,0) -- (0,0) --cycle;
 \draw (0,-2) -- (0,2) (2,0) -- (2,2) (5,-2) -- (5,0);
 \draw (0,2) -- (2,2) (0,0) -- (2,0) (0,-2) -- (2,-2);
 \draw[red] (2,0) -- (5,0) (2,-2) --  (5,-2);

 \foreach \x in {(0,2), (0,0), (0,-2), (2,2), (2,0), (2,-2), (5,0), (5,-2)} \filldraw[fill=black] \x circle (3pt);

 \draw (1,1) node {\small $C_1$} (1,-1) node {\small $C_2$} (3.5,0) node[above] {\small $s$} (3.5,-2) node[below] {\small $s$};
 \draw (2.5,-3.5) node {(b)};
\end{tikzpicture}
\end{minipage}
 \caption{Configurations of $C_1,C_2$ with respect to $s$: (a) none of $C_1,C_2$ contains $s$ in its boundary, (b) $s$ is contained in the boundary of $C_2$.}
 \label{fig:s:inv:disj:cyls}
\end{figure}
\end{Lemma}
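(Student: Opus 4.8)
The plan is to analyze directly the configurations of vertical rays emanating from the double zero $P_0$ of $\omega$, using the machinery of Lemma~\ref{lm:embd:par}. After applying an element of $\SL(2,\R)$ we may assume $s$ is horizontal, so ``vertical'' means perpendicular to $s$. First I would invoke Lemma~\ref{lm:embd:par} to obtain the embedded parallelogram $\P=(P_1P_3P_2P_4)$ associated to $s$, where $\ol{P_1P_2}$ maps isometrically onto $s$ and the sides $\ol{P_1P_3},\ol{P_3P_2},\ol{P_2P_4},\ol{P_4P_1}$ map to saddle connections. The vertical segments $u^+=\ol{P_3P'_3}$ and $u^-=\ol{P_4P'_4}$ constructed in the proof of that lemma are the shortest vertical segments from $P_0$ to $s$ on the two sides; they cut $\P$ into smaller pieces. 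The strategy is then a finite case analysis according to how the four sides of $\P$ are identified on $X$ — recall the Remark after Lemma~\ref{lm:embd:par} that opposite sides may be glued to the same saddle connection.

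The key geometric input is that the complement $X\setminus(s\cup u^+\cup u^-)$, once we also remove the interior of the embedded parallelogram, decomposes into flat pieces whose vertical sides are forced (by minimality of $u^\pm$ and by the hyperelliptic symmetry) to be glued to each other, producing cylinders. Concretely: continuing the vertical flow past the endpoints of $u^\pm$ along $s$, the first-return structure of the vertical foliation to $s$ must, because $g=2$ and because everything is $\hinv$-invariant, organize the rest of the surface into at most two horizontal-direction cylinders. In configuration (a) of Figure~\ref{fig:s:inv:disj:cyls}, $s$ lies in the interior of the union $\varphi(\P)$ and the two sides $\ol{P_1P_3}\cup\ol{P_3P_2}$ and $\ol{P_1P_4}\cup\ol{P_4P_2}$ bound, together with their $\hinv$-images, two disjoint simple cylinders $C_1,C_2$ neither of which has $s$ in its boundary; this is exactly the situation handled by Lemma~\ref{lm:sc:no:inv:types} applied to the pair of saddle connections forming $\partial\P$. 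In configuration (b), one pair of opposite sides of $\P$ is glued directly (so $\varphi(\P)$ is itself a cylinder $C_1$), and $s$ appears on the boundary of $C_1$; peeling off $C_1$ leaves a surface of genus one with boundary, on which $s$ is a slit, and Lemma~\ref{lm:slit:tor:decomp} (or a direct argument) produces the second cylinder $C_2$ disjoint from $s$ except possibly along its boundary. In both cases $C_1\cap C_2=\vide$ because they live on opposite sides of the separating data $s\cup u^+\cup u^-$, and their core curves avoid $s$ by construction.

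The main obstacle I anticipate is the bookkeeping of the identifications: one has to enumerate all ways the (at most four, possibly fewer after gluing) saddle connections $\ol{P_1P_3},\ol{P_3P_2},\ol{P_2P_4},\ol{P_4P_1}$ can be identified in pairs on a genus-two surface with a single cone point of angle $6\pi$, rule out the combinatorially possible ones that are incompatible with the minimality of $u^\pm$ or with $\hinv$-invariance, and check that the survivors are precisely the two pictures in Figure~\ref{fig:s:inv:disj:cyls}. An Euler-characteristic / cone-angle count keeps this list short: the total cone angle $6\pi$ at $P_0$ together with the fact that all of $s,u^+,u^-$ and the four sides emanate from $P_0$ severely constrains the cyclic order of directions around $P_0$, and this is where Lemma~\ref{lm:sc:no:inv:types} does most of the work for us. Once the configuration is pinned down, verifying properties (a)–(d) of the two cylinders is a routine inspection of the figure, so I would present the case analysis carefully and then be brief about the final verification.
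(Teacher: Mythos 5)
Your setup and your treatment of configuration (a) match the paper: reduce to a horizontal $s$, take the embedded parallelogram $\P=(P_1P_3P_2P_4)$ of Lemma~\ref{lm:embd:par}, and when each pair of $\hinv$-exchanged opposite sides bounds a simple cylinder (Lemma~\ref{lm:sc:no:inv:types}), those two cylinders are disjoint from each other and from $\varphi(\P)\supset s$. The gap is in your configuration (b). When one pair of opposite sides of $\P$ is self-identified, $\varphi(\P)$ is indeed a simple cylinder, but $s=\varphi(\ol{P_1P_2})$ is a \emph{diagonal} of $\P$, so it sits in the interior of that cylinder, not on its boundary: the core curves of $\varphi(\P)$ run from the side $\ol{P_4P_1}$ to the side $\ol{P_3P_2}$ and therefore each of them crosses $\inter(s)$ exactly once. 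So $\varphi(\P)$ cannot serve as one of the two cylinders $C_1,C_2$, and the subsequent step also fails: peeling off $\varphi(\P)$ removes $s$ along with it, and the slit of the complementary torus is the boundary pair $d_1^\pm$, not $s$. Applying Lemma~\ref{lm:slit:tor:decomp} to that complementary slit torus does legitimately produce \emph{one} cylinder disjoint from $s$ (this is the paper's $C_3$), but your proposal supplies no valid second cylinder.

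The missing idea is precisely what occupies the second half of the paper's proof: one must enlarge $\varphi(\P)$ to an embedded hexagon $\Hex=(P_1P_2Q_2P_5P_3Q_1)$ by shooting horizontal rays from the zero into the complement of the cylinder $\varphi(\P)$, then repeat the invariant-versus-cylinder dichotomy on the new sides $d_3^\pm,d_4^\pm$ of $\Hex$. A cone-angle count (the total angle at the zero is $6\pi$, so both new pairs cannot bound cylinders, and both cannot be invariant) forces exactly one pair, say $d_3^\pm$, to bound a simple cylinder $C_3$ outside $\Hex$, while the long diagonals $\ol{Q_1P_5}$ and $\ol{P_1Q_2}$ of $\Hex$ bound a second cylinder $D$ \emph{inside} $\varphi(\Hex)$ whose core curves avoid $s$; the pair $(C_3,D)$ is what realizes configuration (b). Your Euler-characteristic/cone-angle instinct is the right tool, but it has to be applied to the hexagon, not to the parallelogram, and without that extension the case where a side-pair of $\P$ is $\hinv$-invariant remains unproved.
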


\begin{proof}
Without loss of generality, we can assume that $s$ is horizontal. Let $\P=(P_1P_3P_2P_4)$ be the embedded parallelogram associated to $s$, and $\varphi: \P \ra X$ be the embedding map such that $s=\varphi(\ol{P_1P_2})$ (see Lemma~\ref{lm:embd:par}). We choose the labeling of the vertices of $\P$ such that $P_3$ is the highest vertex, and $P_4$ is the lowest one. Throughout the proof, we will refer to Figure~\ref{fig:inv:hex}.

Let $d_1^+=\varphi(\ol{P_3P_1}), d_2^+=\varphi(\ol{P_3P_2}), d^-_1=\varphi(\ol{P_4P_2}), d^-_2=\varphi(\ol{P_4P_1})$. We have $d_i^-=\hinv(d_i^+)$. By Lemma~\ref{lm:sc:no:inv:types}, either $d_i^+=d_i^-$ as subsets of $X$, or $d_i^\pm$ bound a simple cylinder. Remark that $d_1^+$ and $d_2^+$ cannot be both invariant by $\hinv$, otherwise we would have $X=\varphi(\P)$, and $X$ must be a torus. Thus we only have to consider two cases:
\begin{itemize}
\item[i)] Both $d_1^\pm$ and $d_2^\pm$ are respectively boundaries of two simple cylinders $C_1,C_2$ in $X$. In this case, it is not difficult to see that both $C_1$ and $C_2$ are disjoint from $\varphi(\P)$, and $C_1\cap C_2 =\vide$. We then get the configuration (a).

\item[ii)] One of  $d_1^+,d_2^+$ is invariant by $\hinv$, the other  bounds a simple cylinder. In this case, $\varphi(\P)$ is actually a simple cylinder. Without loss of generality, we can assume that $d_1^\pm$ bound the cylinder $C=\varphi(\P)$, and $\hinv(d_2^+)=d_2^-$.

Let $P_5$ be the point in $\R^2$ such that the triangle $(P_3P_5P_2)$ is the image of $(P_1P_2P_4)$ by the translation by $\overrightarrow{P_1P_3}$. Using the assumption that $\varphi(\ol{P_3P_2})=\varphi(\ol{P_1P_4})$, we see that $\varphi$ extends to a local isometric map from $\P'=(P_1P_2P_5P_3)$ to $X$ such that $\varphi(\P')=C$ and $\varphi_{|\inter(\P')}$ is an embedding (see Figure~\ref{fig:inv:hex}).

Consider the horizontal rays emanating from the unique zero $x_0$ of $\omega$ to the outside of $C$. By the same argument as in Lemma~\ref{lm:embd:par},  we see that one of the rays in direction $(1,0)$ reaches $d_1^+=\varphi(\ol{P_3P_1})$ from the outside of $C$. It follows that we can then extend $\varphi$ to a convex hexagon $\Hex:=(P_1P_2Q_2P_5P_3Q_1)$, which is the union of  $\P'$ and two triangles $(P_2Q_2P_5)$ and $(P_3Q_1P_1)$. Note that $(P_2Q_2P_5)$ and $(P_3Q_1P_1)$ are exchanged by the central symmetry fixing the midpoint of $\ol{P_2P_3}$, and all the vertices of $\Hex$ are mapped to $x_0$.

\begin{figure}[htb]
     \begin{tikzpicture}[scale=0.5]
     \fill[blue!50!yellow!20] (0,0) -- (10,2) -- (9,4) -- (-1,2) -- cycle;
      \draw (0,0) -- (7,0) -- (10,2) -- (9,4) -- (2,4) -- (-1,2) -- cycle;
      \draw (0,0) -- (10,2) (-1,2) -- (9,4);
      \draw (0,0) -- (5,-4) -- (7,0);
      \draw[dashed] (0,0) -- (2,4) -- (7,0) -- (9,4);
      \foreach \x in {(0,0), (7,0), (10,2), (9,4), (2,4), (-1,2), (5,-4)} \filldraw[fill=black] \x circle (2pt);

      \draw (0,0) node[below] {\small $P_1$} (7.2,0) node[below] {\small $P_2$} (10,2) node[right] {\small $Q_2$} (9,4) node[above] {\small $P_5$} (2,4) node[above] {\small $P_3$} (-1,2) node[left] {\small $Q_1$} (5,-4) node[below] {\small $P_4$};
      \draw (4,0) node[below] {\tiny $s$} (5,4) node[above] {\tiny $s$};
      \draw (-1,1) node {\tiny $d^+_4$} (10,3) node {\tiny $d^-_4$} (9,0.8) node {\tiny $d_3^-$} (0.5,3.5) node {\tiny $d^+_3$} (2.5,-2.5) node {\tiny $d_2^-$} (6.5,-2) node {\tiny $d_1^-$};
      \draw (5.5,2.5) node {$D$};
     \end{tikzpicture}
     \caption{Finding a cylinder disjoint from $s$.}
     \label{fig:inv:hex}
\end{figure}
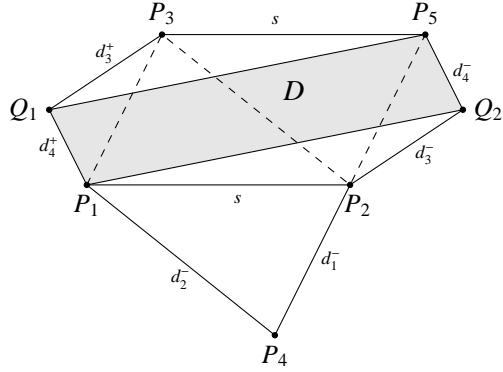

Let $d_3^+=\varphi(\ol{P_3Q_1}), d_4^+=\varphi(\ol{Q_1P_1}), d_3^-=\varphi(\ol{P_2Q_2}), d_4^-=\varphi(\ol{Q_2P_5})$. Again, for $i=3,4$, we have either $d_i^+=d_i^-$, or $d_i^\pm$ bound a simple cylinder.   If $d_i^+=d_i^-$ for both $i=3,4$, then $X=\varphi(\Hex)$ and $X$ must be a flat torus, so we have a contradiction. If both $d_i^\pm$ are the boundaries of simple cylinders, then these cylinders are disjoint, and also disjoint from $\varphi(\Hex)$. It follows that the total angle at $x_0$ is at least  $8\pi$ (the total angle of $\Hex$ plus $4\pi$), thus we have again a contradiction. We can then conclude that one of the pairs $d_3^\pm, d_4^\pm$ consist of a single saddle connection, and the other pair bound a simple cylinder. Without loss of generality, we can assume that $d_3^\pm$ bound a simple cylinder $C_3$, and $d_4^+=d_4^-=d_4$. Note that $C_3$ must be disjoint from $\varphi(\Hex)$, and in particular it is
disjoint from $s$.

Let $d^+=\varphi(\ol{Q_1P_5}), d^-=\varphi(\ol{P_1Q_2})$ then $d^\pm$ is the boundary of a cylinder $D$ whose core curves cross $d_4^\pm$. If $\Hex$ is  strictly convex then $D$ is a simple cylinder, but if $\ol{P_2Q_2}$ is parallel to $\ol{P_1P_2}$ then $D$ is not simple (in this case we actually have $\ol{D}=\varphi(\Hex)$). Nevertheless, in both cases the core curves of $D$ do not intersect $s$. Since $D$ is contained in $\varphi(\Hex)$, we have $C_3\cap D=\vide$. Since both $C_3$ and $D$ are disjoint from $s$, the lemma is proved.
\end{itemize}
\end{proof}

We are now ready to show

\begin{Theorem}\label{thm:exist:mult:curv}\hfill
\begin{itemize}
\item[a)] On any $(X,\omega)\in \H(2)$ there always exist two disjoint simple cylinders. There cannot exist  a triple of pairwise disjoint cylinders in $X$.

\item[b)] On any $(X,\omega) \in \H(1,1)$ there always exists a triple of cylinders which are pairwise-disjoint. 
\end{itemize}
\end{Theorem}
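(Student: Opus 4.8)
The plan is to prove the three assertions of Theorem~\ref{thm:exist:mult:curv} separately, feeding on the structural lemmas of this section: Lemma~\ref{lm:embd:par} (embedded parallelogram), Lemma~\ref{lm:slit:tor:decomp} (a slit torus splits into two cylinders), Lemma~\ref{lm:sc:no:inv:types} (the two possible shapes of an $\hinv$-conjugate pair of saddle connections), and Lemma~\ref{lm:H2:inv:sc:2cyl}. The existence statements will be \emph{constructions}, while the non-existence statement in (a) will be a short Gauss--Bonnet obstruction.

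\textbf{Part (a), existence of two disjoint simple cylinders.} First I would produce a saddle connection $s$ that is invariant under the hyperelliptic involution $\hinv$. This always exists: pick a regular Weierstrass point $W$ (there are five in $\H(2)$), let $u$ be a shortest geodesic segment from $W$ to the zero $P_0$ (it exists by compactness and is singularity-free in its interior, $P_0$ being the only singularity), and set $s=u\cup\hinv(u)$; since $\hinv$ is the rotation by $\pi$ about $W$, the segment $s$ runs straight through $W$ from $P_0$ to $P_0$, so it is a saddle connection, and it is $\hinv$-invariant by construction. Then apply Lemma~\ref{lm:H2:inv:sc:2cyl} to $s$: in configuration (a) the cylinders $C_1,C_2$ it produces are simple and disjoint, and we are done; in configuration (b) the cylinder $C_3$ is simple and, outside a codimension-one degeneracy of the hexagon $\mathbf{H}$, the cylinder $D$ is simple too. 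The remaining degenerate position (when $\overline{P_2Q_2}\parallel\overline{P_1P_2}$, i.e. $\bar D=\varphi(\mathbf{H})$) I would dispose of by a short separate argument: in that case the horizontal direction is a two-cylinder decomposition with $C_3$ simple, and one exhibits by hand a second simple cylinder disjoint from $C_3$ inside $\varphi(\mathbf{H})$ (alternatively, one can re-run the construction with a different invariant saddle connection).

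\textbf{Part (a), no three pairwise disjoint cylinders.} Suppose $C_1,C_2,C_3\subset X\in\H(2)$ were pairwise disjoint cylinders with core curves $\gamma_1,\gamma_2,\gamma_3$. The $\gamma_i$ are essential and pairwise non-isotopic: two disjoint cylinders with isotopic cores would both lie inside the maximal flat cylinder of that homotopy class, contradicting maximality of the $C_i$. Hence $\gamma_1\cup\gamma_2\cup\gamma_3$ is a pants decomposition of $S$, and $X$ cut along it is a union of two pairs of pants $Y_1,Y_2$. Since core curves avoid singularities, the unique zero $P_0$ lies in the interior of one of them, say $Y_1$, whose boundary, being a union of core curves, is a smooth geodesic. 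Gauss--Bonnet then forces
\[ 2\pi\,\chi(Y_1)=\int_{Y_1}K\,dA=2\pi-6\pi=-4\pi, \]
whereas $\chi(Y_1)=-1$; this contradiction proves the claim.

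\textbf{Part (b).} Take an $\hinv$-conjugate pair of saddle connections joining the two distinct zeros $P_1,P_2$ (such a pair exists; a set-wise $\hinv$-invariant saddle connection between the two zeros must pass through a Weierstrass point at its midpoint, which fails for a suitable choice). By Lemma~\ref{lm:sc:no:inv:types} the union $\sigma$ of this pair decomposes $X$ as a connected sum $T_1\#_\sigma T_2$ of two slit tori. After a rotation we may assume $\sigma$ is not vertical; then Lemma~\ref{lm:slit:tor:decomp} applied to each $T_i$ gives parallel simple closed geodesics $c_1^{(i)},c_2^{(i)}$ cutting $T_i$ into two cylinders, one of which, $B_i$, does not meet the interior of the slit. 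Being disjoint from the slit, each $B_i$ is a genuine cylinder of $X$, and $B_1,B_2$ are disjoint (they lie on opposite sides of $\sigma$). It remains to produce a third cylinder $C^{\ast}$ disjoint from $B_1\cup B_2$, which must live in the central region $M$ obtained by gluing along $\sigma$ the two ``slit-containing'' cylinders of $T_1$ and $T_2$; $M$ is a four-holed sphere carrying the two zeros and the two parallel saddle connections forming $\sigma$. Here I would analyse the planar picture of $M$ in the spirit of the proof of Lemma~\ref{lm:embd:par}: unrolling those two cylinders along $\sigma$ exhibits $M$ as two parallelograms glued cross-wise along the copies of $\sigma$, and the defining property of the $c_j^{(i)}$ in Lemma~\ref{lm:slit:tor:decomp} (every vertical leaf meeting $c_j^{(i)}$ meets $\sigma$) controls how the foliation parallel to $\sigma$ meets $\partial M$, forcing an embedded cylinder $C^{\ast}\subset M$ whose core avoids $\partial M$. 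Then $\{B_1,B_2,C^{\ast}\}$ is the desired triple. (When the chosen pair happens to join a zero to itself instead, Lemma~\ref{lm:sc:no:inv:types} yields a simple cylinder $C$ whose complement decomposes into one-holed tori exchanged by $\hinv$, and one finds a cylinder interior to each; this case is comparable and one could also reduce it to the connected-sum picture.)

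\textbf{Main obstacle.} The crux is the third cylinder $C^{\ast}$ in part (b): one must verify that the central region really contains a \emph{maximal} embedded flat cylinder disjoint from $B_1$ and $B_2$, which requires a careful, case-sensitive examination of how the two slit tori are glued, including the degenerate placements of the geodesics $c_j^{(i)}$ produced by Lemma~\ref{lm:slit:tor:decomp}. A secondary difficulty is the degenerate subcase of configuration (b) of Lemma~\ref{lm:H2:inv:sc:2cyl} in the existence part of (a); by contrast the non-existence part of (a) is immediate once the pants-decomposition/Gauss--Bonnet idea is in hand.
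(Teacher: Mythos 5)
Your Gauss--Bonnet argument for the non-existence of three pairwise disjoint cylinders in $\H(2)$ is correct and is essentially the paper's own proof (the paper applies Gauss--Bonnet to the singularity-free pair of pants, you to the one containing the $6\pi$-cone point; both give the same contradiction). For the existence of two disjoint simple cylinders in $\H(2)$, your construction of the invariant saddle connection $s$ is exactly the paper's, but you then feed $s$ into Lemma~\ref{lm:H2:inv:sc:2cyl} and must patch the degenerate subcase of configuration (b) where $D$ is not simple. The paper sidesteps this case analysis: it invokes \cite[Lem.~2.1]{Ng11} to get a simple cylinder $C_1$ \emph{containing} $s$, excises it to obtain a slit torus, and applies Lemma~\ref{lm:slit:tor:decomp} there. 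Note that your proposed patch (``the complement of the simple cylinder is a slit torus, find a simple cylinder in it avoiding the slit'') is precisely that argument, so this part of your proof is complete once you write the patch out.

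The genuine gap is in part (b), and it is exactly the step you flag as the crux. After splitting $X$ into two slit tori along $\sigma$ and extracting $B_1,B_2$, you still need a third cylinder inside the central four-holed sphere $M=\ol{A_1}\cup_\sigma\ol{A_2}$, disjoint from $\partial M$. This is not a routine unrolling: the core of such a cylinder must separate the four boundary circles of $M$ two-and-two (a curve parallel to $\partial M$ would be homotopic to a core of $B_1$ or $B_2$ and hence, by Lemma~\ref{lm:free:hom:same:cyl}, lie in $B_1$ or $B_2$ itself), and the directions of $A_1$ and $A_2$ produced by Lemma~\ref{lm:slit:tor:decomp} are unrelated to each other and to $\sigma$, so there is no foliation in which $M$ is visibly periodic. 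Nothing in the lemmas you cite produces such a geodesic, and this existence claim is essentially the whole content of part (b). The paper's route avoids the four-holed sphere entirely: it takes an $\hinv$-invariant \emph{simple} cylinder $C_0$ (again via \cite[Lem.~2.1]{Ng11}), collapses it to land on a surface $(\hat X,\hat\omega)\in\H(2)$ carrying a marked invariant saddle connection $\hat s$, and then applies Lemma~\ref{lm:H2:inv:sc:2cyl} -- whose entire purpose is to produce two disjoint cylinders whose core curves avoid a given invariant saddle connection -- so that the two cylinders survive the reinsertion of $C_0$ and give the triple $\{C_0,C_1,C_2\}$. I would recommend abandoning the connected-sum picture for part (b) and reducing to $\H(2)$ in this way; otherwise you must supply a genuine proof of the existence of $C^{\ast}$ in $M$, which your proposal does not contain.
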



\begin{Remark} \hfill
\begin{itemize}
\item[$\bullet$] The cylinders in Theorem~\ref{thm:exist:mult:curv}  are not necessarily parallel.

\item[$\bullet$] There cannot exist more than 3 simple closed curves pairwise disjoint on $S$. The statement b) means that  given  any holomorphic one-form in $\H(1,1)$, there always exists a family of three disjoint (simple closed) curves that are realized simultaneously as simple closed geodesics for the flat metric induced by this one-form.

\item The statement a) of the theorem is a direct consequence of \cite[Prop. A.1]{Ng14}.
\end{itemize}
\end{Remark}

\medskip

\begin{proof}[Proof of Theorem~\ref{thm:exist:mult:curv}, case $\H(2)$]
Lemma~\ref{lm:H2:inv:sc:2cyl} almost proves the statement for $\H(2)$ except that it does not guarantee that both cylinders are simple.  We will give here a proof  by using \cite[Lem. 2.1]{Ng11}. Let $s$ be a saddle connection on $X$ that is invariant by the hyperelliptic involution $\hinv$ (one can find such a saddle connection by picking a geodesic segment of minimal length $\hat{s}$ joining a regular Weierstrass point of $X$ to the unique zero of $\omega$, then take $s=\hat{s}\cup\hinv(\hat{s})$). By \cite[Lem. 2.1]{Ng11}, there exists a simple cylinder $C_1$ that contains $s$. Cut off $C_1$ from $X$ then identify the two geodesic segments on the boundary of the resulting surface, we obtain a flat torus $(X',\omega')$ with a marked geodesic segment $s'$.

We consider $(X',\omega',s')$ as a slit torus. By Lemma~\ref{lm:slit:tor:decomp}, we know that there exists a cylinder $C'$ in $X'$ that contains  $s'$. The complement of $C'$ in $X'$ is another cylinder $C_2$ whose core curves do not meet $s'$. By construction $C_2$ is a simple cylinder in $X$ and disjoint from $C_1$, hence the first assertion follows. \medskip

For the second assertion, we observe that any triple of pairwise disjoint simple closed curves disconnect $X$ into two thrice-holed spheres. If all the curves in this triple are simple closed geodesics (core curves of cylinders), then we get two flat surfaces with geodesic boundary. Since $X$ has only one singularity, one of the surfaces has no singularities in its interior. But the Euler characteristic of a thrice-holed sphere is $-1$, thus we have a contradiction to the Gauss-Bonnet formula. We can then conclude that  $X$ can not contain three disjoint cylinders.
\end{proof}

\medskip

\begin{proof}[Proof of Theorem~\ref{thm:exist:mult:curv}, case $\H(1,1)$]
By \cite[Lem. 2.1]{Ng11}, we know that there exists a simple cylinder $C_0$ on $(X,\omega)$ that is invariant by $\hinv$. Cut off $C_0$ and glue the two boundary components of the resulting surface, we obtain a surface $(\hat{X},\hat{\omega}) \in \H(2)$ with a marked saddle connection $\hat{s}$. Note that $\hat{s}$ is invariant by the hyperelliptic involution of $\hat{X}$.  By Lemma~\ref{lm:H2:inv:sc:2cyl}, we know that there exist two cylinders $C_1$ and $C_2$ on $\hat{X}$ disjoint from $\hat{s}$ such that $C_1\cap C_2 =\vide$. It follows immediately that $C_1$ and $C_2$ are actually cylinders in $X$ and disjoint from $C_0$, from which we get the desired conclusion.
\end{proof}

\section{Degenerate cylinders and cylinder graph}\label{sec:cyl:graph:def}

\subsection{Cylinder and the curve graph.}
Each cylinder in a translation surface is filled by simple closed geodesics in the same free homotopy class. The following elementary lemma shows that two (freely) homotopic closed geodesics must belong to the same cylinder.

\begin{Lemma}\label{lm:free:hom:same:cyl}
 Let $c_1$ and $c_2$ be two simple closed geodesics in $(X,\omega)$ which are freely homotopic. Then $c_1$ and $c_2$ are contained in the same cylinder.
\end{Lemma}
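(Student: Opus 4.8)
The plan is to show that two freely homotopic simple closed geodesics $c_1, c_2$ for the flat metric on $(X,\omega)$ must lie in a common cylinder. First I would dispose of the trivial case: if $c_1 = c_2$ as subsets of $X$ there is nothing to prove, and if they intersect transversally we get a quick contradiction, since two distinct geodesics in the same free homotopy class on a surface of nonpositive curvature cannot cross (a transverse intersection point would produce a geodesic bigon, which is impossible by the flat Gauss--Bonnet inequality / by a standard lifting argument to the universal cover, where geodesics are straight lines meeting at most once). So we may assume $c_1 \cap c_2 = \varnothing$. Since $c_1, c_2$ are disjoint, freely homotopic, and essential, they cobound an embedded annulus $A \subset X$ (cut along $c_1 \cup c_2$: the component of the complement carrying the homotopy is an annulus because the curves are homotopic and simple).

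The key step is then to analyze the flat geometry of this annulus $A$. Both boundary components $c_1$, $c_2$ are geodesics of the same length $\ell$ (they are freely homotopic closed geodesics, hence have equal flat length, being length-minimizers in their class), and $A$ has zero Euler characteristic. By Gauss--Bonnet applied to $A$, the total cone angle excess of the singularities in the interior of $A$ plus the boundary turning equals $2\pi\chi(A) = 0$; since each interior singularity of $(X,\omega)$ contributes a nonnegative amount ($2\pi(k+1) - 2\pi \geq 2\pi$ for a zero of order $k \geq 1$) and the geodesic boundary contributes zero turning, there can be no singularity in the interior of $A$. Hence $\mathrm{int}(A)$ is a flat annulus with geodesic boundary and no cone points. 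Such an annulus is isometric to a flat cylinder $(\R/\ell\Z) \times (0, h)$ for some $h > 0$: developing $A$ into $\R^2$, the two boundary geodesics lift to parallel straight lines (they are parallel because a closed geodesic through no singularities maintains a constant direction, and the holonomy around $c_i$ is a translation), and $A$ develops isometrically onto the strip between them.

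Finally I would upgrade "$A$ is a flat cylinder" to "$A$ is contained in a cylinder $C$ of $X$ in the sense of the paper's definition." By the definition of a cylinder as a maximal isometrically embedded subset of the form $(\R/c\Z)\times(0,h)$ not properly contained in another such subset, the open flat cylinder $\mathrm{int}(A)$ is contained in a unique maximal one, call it $C$; and then $c_1, c_2$, being the core geodesics of $\mathrm{int}(A)$, are core curves of $C$ (they are the closed geodesics filling $\mathrm{int}(A)$, hence also among those filling $C$). Therefore $c_1$ and $c_2$ lie in the same cylinder $C$, which is what we wanted.

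I expect the main obstacle to be the case analysis at the start — ruling out transverse intersections cleanly — and, relatedly, making precise why disjoint homotopic simple closed curves cobound an embedded annulus (this is purely topological but needs the surface to have genus $\geq 1$, which holds here). The flat-geometry part (Gauss--Bonnet forcing no interior singularities, hence a genuine flat cylinder) is the conceptual heart but is routine given the setup; one should be slightly careful that a closed geodesic not passing through singularities indeed has translation holonomy, so that its parallel translates stay parallel, but this is immediate for translation surfaces.
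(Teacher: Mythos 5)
Your proposal is correct and follows essentially the same route as the paper: disjointness, the cobounded annulus, Gauss--Bonnet forcing the absence of interior cone points, and maximality of the cylinder. The only (minor) divergence is at the disjointness step, where the paper argues via periods ($c_1,c_2$ homologous $\Rightarrow \omega(c_1)=\omega(c_2) \Rightarrow$ parallel $\Rightarrow$ disjoint) rather than your bigon/universal-cover argument; both work, and the period argument also gives for free that the two geodesics have the same direction and length.
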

\begin{proof}
 Since $c_1,c_2$ are freely homotopic, they are homologous, hence $\omega(c_1)=\omega(c_2)$. It follows that $c_1$ and $c_2$ are parallel, thus must be disjoint. The pair $c_1,c_2$ cut $X$ into two components, one of which must be an annulus denoted by $A$ (see~\cite[Prop. A.11]{Bus92}).  We have a flat metric on $A$ induced by the flat metric of $X$. Let $\theta_1,\dots,\theta_k$ be the cone angles at the singularities in $A$. Since the boundary of $A$ is geodesic, the Gauss-Bonnet formula gives
 $$
 \sum_{1\leq i \leq k} (2\pi -\theta_i)=2\pi\chi(A)=0.
 $$
 Since any singularity on a translation surface has cone angle at least $4\pi$, the equation above actually shows that $A$ contains no singularities. Thus $A$ is a flat annulus, which must be contained in a cylinder of $X$. Therefore, $c_1$ and $c_2$ are contained in the same cylinder.
\end{proof}

Let $S$ be a fixed topological compact closed surface of genus two. Let $\Cc(S)$ denote the curve graph of $S$. Let $\Omega\Tcal_2$ be the Abelian differential bundle over the Teichm\"uller space $\Tcal_2$. Elements of $\Omega\Tcal_2$ are equivalence classes of triples $(X,\omega,f)$, where $X$ is a Riemann surface of genus two, $\omega$ is a holomorphic one-form on $X$, and $f$ is a homeomorphism from $S$ to $X$; two triples $(X,\omega,f)$ and $(X',\omega',f')$ are identified if there exists an isomorphism $\varphi: X \ra X'$ such that $\varphi^*\omega'=\omega$ and ${f'}^{-1}\circ \varphi \circ f : S \ra S$ is isotopic to $\id_S$. The equivalence class of $(X,\omega,f)$ will be denoted by $[X,\omega,f]$.

Each element $[X,\omega,f]$ of $\Omega\Tcal_2$ defines naturally a subgraph $\Cc_{\rm cyl}(X,\omega,f)$ of $\Cc(S)$. The vertices of this subgraph are free homotopy classes of the core curves of all cylinders on the translation surface $(X,\omega)$. The set  $\Cc_{\rm cyl}^{(1)}(X,\omega,f)$ consists of the edges in $\Cc^{(1)}(S)$ whose both endpoints belong to $\Cc_{\rm cyl}^{(0)}(X,\omega,f)$.

\subsection{Degenerate Cylinders}
 If $C$ be a cylinder in $X$ that fills $X$ ({\em i.e.}  $\ol{C}=X$), then $C$ represents an isolated vertex in $\Cc_{\rm cyl}(X,\omega,f)$. This is because the core curve of any other cylinder in $X$ must cross $C$. So in general $\Cc_{\rm cyl}(X,\omega,f)$ is not a connected graph. To fix this issue we introduce the notion of {\em degenerate cylinders}. Roughly speaking, a degenerate cylinder in $X$ is a union of parallel saddle connections such that there exist deformations of $(X,\omega)$ where this union is freely homotopic to the core curves of a simple cylinder.

To be more precise, let $x_0$ be a singularity on a translation surface $(X,\omega)$. For any pair $(r_1,r_2)$ of geodesic rays emanating from $x_0$, we will denote the counterclockwise angle from $r_1$ to $r_2$ by $\vartheta(r_1,r_2)$.  If $s$ is an oriented  saddle connection from a singularity $x_1$ to a singularity $x_2$, then we denote by $s^+$ (resp. $s^-$) the intersection of $s$ with a neighborhood of $x_1$ (resp. a neighborhood of $x_2$). This definition also makes sense when $x_1=x_2$, in which case the orientation of $s$ is to start in $s^+$ and end in $s^-$. 

\begin{Definition}[Degenerate cylinder]\label{def:degen:cyl}
 We will call the union of two saddle connections $s_1,s_2$ in $(X,\omega) \in \H(2)\sqcup\H(1,1)$ a {\em degenerate cylinder} if they  are both invariant by the hyperelliptic involution, and up to an appropriate choice for the orientations of $s_1$ and $s_2$, we have
 $$
 \vartheta(s^-_1,s^+_2)=\vartheta(s^+_1,s^-_2)=\pi.
 $$
\end{Definition}

In Figure~\ref{fig:degen:cyl:config}, we represent the configurations of a degenerate cylinder at the singularities.

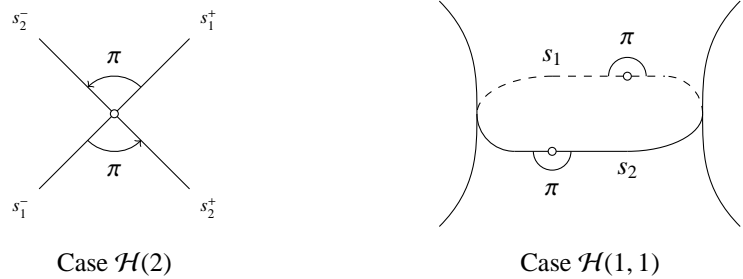
\begin{figure}[htb]
\begin{minipage}[t]{0.4\linewidth}
\centering
\begin{tikzpicture}[scale=0.5]
\draw (-2,2) -- (2,-2) (-2,-2) -- (2,2);

\filldraw[fill=white] (0,0) circle (3pt);

\draw[->, rotate=45] (1,0) arc (0:90:1);
\draw[->, rotate=45] (-1,0) arc (180:270:1);

\draw (0,1.5) node {\small $\pi$} (0,-1.5) node {\small $\pi$};

\draw (2.5,2.5) node {\tiny $s^+_1$} (-2.5,2.5) node {\tiny $s^-_2$} (-2.5,-2.5) node {\tiny $s^-_1$} (2.5,-2.5) node {\tiny $s^+_2$};

\draw (0,-4) node {\small Case $\H(2)$};
\end{tikzpicture}
\end{minipage}
\begin{minipage}[t]{0.4\linewidth}
\centering
\begin{tikzpicture}[scale=0.5]
\draw (-3,0) arc (180:270:1);
\draw (-2,-1) -- (1,-1);
\draw (1,-1) arc (270:360: 2 and 1);

\draw[dashed] (-1,1)  arc (90:180: 2 and 1);
\draw[dashed] (-1,1) -- (2,1);
\draw[dashed] (3,0) arc (0:90:1);

\draw (4,3) .. controls (3,2) and (3,1) .. (3,0);
\draw (4,-3) .. controls (3,-2) and (3,-1) .. (3,0);

\draw (-4,3) .. controls (-3,2) and (-3,1) .. (-3,0);
\draw (-4,-3) .. controls (-3,-2) and (-3,-1) .. (-3,0);

\draw (-1.5,-1) arc (180:360:0.5);
\draw (1.5,1) arc (0:180:0.5);

\filldraw[fill=white] (-1,-1) circle (3pt);
\filldraw[fill=white] (1,1) circle (3pt);

\draw (-1,-2) node {\small $\pi$} (1,2) node {\small $\pi$};
\draw (-1,1.5) node {\small $s_1$} (1,-1.5) node {\small $s_2$};
\draw (0,-4) node {\small Case $\H(1,1)$};
\end{tikzpicture}
\end{minipage}
\caption{Configuration of a degenerate cylinder at the singularities.}
\label{fig:degen:cyl:config}
\end{figure}

\begin{Remark}\hfill
\begin{itemize}
\item[$\bullet$] If $(X,\omega) \in \H(2)$, then a degenerate cylinder is not a simple curve, the zero of $\omega$ is its unique double point.

\item[$\bullet$] If $(X,\omega) \in \H(1,1)$ then the hyperelliptic involution $\hinv$ of $X$ permutes the zeros of $\omega$, thus a saddle connection  invariant by $\hinv$ must connect the two zeros of $\omega$. Therefore a degenerate cylinder must be a simple closed curve.
\end{itemize}
\end{Remark}


\noindent {\bf Examples:} Assume that $(X,\omega) \in \H(2)\sqcup\H(1,1)$ is horizontally periodic, and has a unique (geometric) horizontal cylinder $C$.  If $(X,\omega) \in \H(2)$ then it has $3$ horizontal saddle connections $s_1,s_2,s_3$, which are contained in the boundary of $C$ (see Figure~\ref{fig:degen:cyl:ex}). Note that all of them are invariant by the hyperelliptic involution. By definition $s_1\cup s_2, s_2\cup s_3, s_3\cup s_1$ are three degenerate cylinders.  Similarly, if $(X,\omega) \in \H(1,1)$, then we have $4$ horizontal saddle connections denoted by $s_1,\dots,s_4$ (see Figure~\ref{fig:degen:cyl:ex}) such that  $s_i\cup s_{i+1}$ is a degenerate cylinder, for $i=1,\dots,4$, with the convention $s_5=s_1$.

\begin{figure}[htb]
\begin{tikzpicture}[scale=0.5]
\draw (0,0) -- (6,0) -- (6,2) -- (0,2) -- cycle;
\draw (9,0) -- (17,0) -- (17,2) -- (9,2) -- cycle;

\foreach \x in {(0,0), (2,0), (4,0),(6,0), (6,2), (4,2), (2,2), (0,2), (9,0), (11,0), (13,0), (15,0), (17,0), (17,2), (15,2), (13,2), (11,2), (9,2)} \filldraw[fill=black] \x circle (2pt);

\foreach \x in {(1,2.3), (5,-0.3), (10,2.3), (16,-0.3)} \draw \x node {\tiny  $s_1$};

\foreach \x in {(3,2.3), (3,-0.3), (12,2.3), (14,-0.3)} \draw \x node {\tiny $s_2$};

\foreach \x in {(5,2.3), (1,-0.3), (14,2.3), (12,-0.3)} \draw \x node {\tiny $s_3$};

\foreach \x in {(16,2.3), (10,-0.3)} \draw \x node {\tiny $s_4$};

\draw (3,1) node {\small $C$} (13,1) node {\small $C$};

\draw (3,-1.5) node {\small $\omega\in \H(2)$} (13,-1.5) node {\small $\omega  \in \H(1,1)$};

\end{tikzpicture}
\caption{Degenerate cylinders on a horizontally periodic surface with a unique geometric horizontal cylinder.}
\label{fig:degen:cyl:ex}
\end{figure}
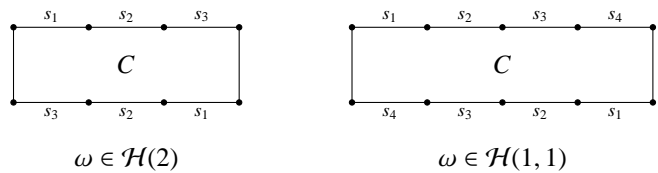

We will now prove some key properties of degenerate cylinders.

\begin{Lemma}\label{lm:degen:cyl:deform}
 Let $s:=s_1\cup s_2$ be a horizontal degenerate cylinder in $(X,\omega)  \in \H(2)\sqcup\H(1,1)$. Then there exists in a neighborhood of $(X,\omega)$ a continuous  family of translation surfaces $\{(X_t,\omega_t), \, t \in [0,\eps)\}$ in the same stratum as $(X,\omega)$,  with $\eps \in \R_{>0}$, such that
 \begin{itemize}
  \item $(X_0,\omega_0)=(X,\omega)$,

  \item for any $t\in (0,\eps)$, $(X_t,\omega_t)$ contains two saddle connections $s_{1,t}, s_{2,t}$ corresponding to $s_1, s_2$ and satisfy the following property: $s_{1,t}\cup s_{2,t}$ is freely homotopic to the core curves of a simple cylinder $C_t$ in $X_t$,

  \item as $t \ra 0$, the width of $C_t$ decreases to zero.
 \end{itemize}

 \noindent Moreover, for all $t\in (0,\eps)$, any vertical saddle connection (resp. regular geodesic) in $(X,\omega)$ corresponds to a vertical saddle connection (resp. regular geodesic) in $(X_t,\omega_t)$.
\end{Lemma}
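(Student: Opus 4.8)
The plan is to construct the deformation explicitly by cutting $(X,\omega)$ along the saddle connections $s_1$ and $s_2$ and regluing with a small vertical shift, using the embedded-parallelogram picture of Lemma~\ref{lm:embd:par} to control the local geometry. First I would invoke Lemma~\ref{lm:embd:par} for each of $s_1$ and $s_2$ (both are invariant by $\hinv$ and, after applying a lower-triangular unipotent, horizontal, hence not vertical), obtaining embedded parallelograms $\P_1, \P_2$. The angle condition $\vartheta(s_1^-,s_2^+)=\vartheta(s_1^+,s_2^-)=\pi$ from Definition~\ref{def:degen:cyl} says precisely that $s_1$ and $s_2$, together with the identifications at their endpoints, fit together into a flat "ribbon'': on a neighborhood of $s=s_1\cup s_2$ the surface looks like a horizontal strip of height zero, i.e. the two saddle connections are glued to each other top-to-bottom along their interiors in the configuration drawn in Figure~\ref{fig:degen:cyl:config}. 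The natural way to see this is to realize $s_1\cup s_2$ as the image of a horizontal slit of "zero width'' and then open it up.

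The key step is then the opening-the-slit deformation. Concretely, cut $X$ along $s_1$ and along $s_2$; by the angle-$\pi$ hypothesis the boundary of the resulting surface contains, near $s$, two pairs of horizontal segments of equal length $|s_1|$ and $|s_2|$ which were previously identified. Reglue them with a vertical offset: insert a parallelogram (in fact, for small $t$, a thin horizontal rectangle) of height $t$ and the appropriate horizontal width, i.e. perform the standard real-Rel or cut-and-glue surgery in the stratum. For $t$ small this surgery stays in a neighborhood of $(X,\omega)$ in the same stratum $\H(2)$ or $\H(1,1)$ (the zero orders are unchanged because the $\pi+\pi$ configuration means we are just separating the horizontal prongs, not merging singularities), and it produces a surface $(X_t,\omega_t)$ in which the inserted rectangle, possibly together with the strip swept out between the two displaced copies of $s_1$ and $s_2$, is exactly a simple cylinder $C_t$ whose core curves are freely homotopic to $s_{1,t}\cup s_{2,t}$; its width is a linear function of $t$ tending to $0$ as $t\to0$, and setting $(X_0,\omega_0)=(X,\omega)$ gives continuity. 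Because the surgery is supported on an arbitrarily small horizontal neighborhood of $s$ (shrink $\eps$ using property (d) of Lemma~\ref{lm:embd:par}, which bounds how close vertical saddle connections and the relevant geodesics come to $s$), any vertical saddle connection or regular vertical geodesic of $(X,\omega)$ that is disjoint from the surgery region persists unchanged in $(X_t,\omega_t)$; and a vertical object meeting the region near $s$ simply acquires an extra vertical segment of length $O(t)$ inside $C_t$, so it still corresponds to a vertical saddle connection (resp. regular geodesic). This gives the last sentence of the statement.

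The main obstacle I expect is bookkeeping the two topological types uniformly. In $\H(2)$ both $s_1$ and $s_2$ are loops at the unique zero and $s=s_1\cup s_2$ is a figure-eight-like curve with a single double point at $x_0$; in $\H(1,1)$ each $s_i$ joins the two distinct zeros and $s$ is already a simple closed curve (as noted in the Remark after Definition~\ref{def:degen:cyl}). In both cases one must check that after opening the slit the complement behaves correctly and that $C_t$ is genuinely \emph{simple} (each boundary component a single saddle connection), which is where the angle-$\pi$ condition is used a second time: it guarantees that no other saddle connection gets cut and that the two new boundary curves of $C_t$ are each a single concatenation-free segment. A careful way to organize this is to pass, as in the proof of Theorem~\ref{thm:exist:mult:curv}, to a local model: slit the figure in Figure~\ref{fig:degen:cyl:config} into the two embedded parallelograms of Lemma~\ref{lm:embd:par}, identify the resulting gluing pattern with that of a horizontal rectangle degenerated to height $0$, and then deform the height to $t>0$; the compatibility with the rest of $X$ is automatic because everything happens inside the fixed small neighborhood of $s$. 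The remaining verifications (continuity of $t\mapsto(X_t,\omega_t)$ in period coordinates, that the deformation stays in the stratum, the monotone vanishing of the width) are then routine.
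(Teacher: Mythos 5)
Your proposal takes a genuinely different route from the paper. You open up the degenerate cylinder by a cut-and-paste surgery performed directly on $X$ (cut along $s_1\cup s_2$, insert a flat horizontal cylinder of height $t$), i.e.\ the inverse of collapsing a simple cylinder. The paper instead descends to the quotient $Y=X/\hinv\simeq\CP^1$ equipped with the quadratic differential $\eta$ satisfying $\omega^2=p^*\eta$: there $s_1,s_2$ project to two segments joining the zero $P_0$ of $\eta$ to two simple poles and meeting only at $P_0$, with angle $\pi$ on one side. Slitting $Y$ open along this embedded tree yields a flat disc with geodesic boundary, one glues in a ``half cylinder'' of width $t$ (a disc with geodesic boundary and two interior cone points of angle $\pi$), and $(X_t,\omega_t)$ is the orienting double cover of the result, with $C_t$ the double cover of the half cylinder. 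What the quotient buys is that all the cone-angle bookkeeping is automatic and uniform over $\H(2)$ and $\H(1,1)$: the slit is an embedded tree with a single branch point, the half cylinder restores exactly the missing angle $\pi$ at $P_0$, the two lost boundary poles are replaced by the two interior angle-$\pi$ points of the half cylinder, and $\hinv$-equivariance is built in. Your construction buys a more concrete picture of $C_t$ sitting inside $X_t$, at the price of doing that bookkeeping by hand.

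That bookkeeping is the one place where your sketch is under-specified enough to go wrong. Cutting $X$ along $s_1\cup s_2$ produces two boundary circles, and the angle condition of Definition~\ref{def:degen:cyl} implies that each of them carries exactly one corner of angle $\pi$ and one corner of larger angle ($3\pi$ in $\H(1,1)$; in $\H(2)$ the curve has a double point at the unique zero, so even the cut itself must be defined by correctly distributing the four sectors at $x_0$, two of angle $\pi$ and two summing to $4\pi$). The inserted flat cylinder must be glued with an offset so that its straight boundary \emph{absorbs} each $\pi$-corner (turning it into a regular point) while the other corner on each circle becomes the zero; a naive ``match the corners'' gluing produces points of cone angle $2\pi$ where zeros should be and leaves the stratum. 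This offset is also precisely what makes $C_t$ simple, with one zero on each boundary component, so it cannot be left implicit. (Incidentally, the surgery changes the absolute period of any cycle crossing $s_1\cup s_2$, so it is not a Rel deformation.) Finally, your appeal to Lemma~\ref{lm:embd:par}(d) for the last assertion is a red herring: vertical leaves do in general cross $s$, and the correct, simpler reason that vertical saddle connections and regular vertical geodesics persist is that the surgery leaves $\mathrm{Re}\,\omega$ unchanged on all relative cycles, adding only vertical segments of length $O(t)$ to trajectories crossing $C_t$ --- which is in effect what your second clause already says.
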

\begin{proof}
 Let us define a {\em half cylinder} to be the quotient  $ (\R\times[0,h])/\Gamma$, where $\Gamma\simeq \Z_2\ltimes\Z$ is generated by $t: (x,y) \mapsto (x+\ell,y)$ and $s: (x,y) \mapsto (-x,h-y)$. We will call $h$ and $\ell$ respectively the {\em width} and {\em circumference} of the half disc. We will refer to the projection of $(0,0)$ as the marked point on its boundary. Equivalently, a half cylinder is a closed disc equipped with a flat metric structure with geodesic boundary and two singularities of angle $\pi$ in the interior.  
 
 Recall that all Riemann surfaces of genus two are hyperelliptic. Let $p: X \ra \CP^1$ be the hyperelliptic double cover of $X$. There exists a meromorphic quadratic differential $\eta$ on $\CP^1$ with at most simple poles such that $\omega^2=p^*\eta$. Note that $\eta$ has one zero, and $k$ poles, where $k=5$ if $\omega\in \H(2)$, and $k=6$ if $\omega\in \H(1,1)$. Let $P_0$ denote the unique zero of $\eta$, and $P_1,\dots,P_k$ its simple poles.  Let $Y$ be the flat surface defined by $\eta$ on $\CP^1$. Observe that the cone angle of $Y$ at $P_0$ is $3\pi$ if $\omega\in \H(2)$, and $4\pi$ if $\omega\in\H(1,1)$. The cone angle at $P_i$ is $\pi$, for $1,\dots,k$. 
 
 Since $s_i$, $i=1,2$, is invariant by $\hinv$, its projection in $Y$ is a geodesic segment $s'_i$ joining $P_0$ to a pole of $\eta$. By the definition of degenerate cylinder, one of the angles at $P_0$ specified by $s'_1$ and $s'_2$ is $\pi$. Let $\hat{Y}$ be the flat surface obtained by slitting open $Y$ along $s'_1$ and $s'_2$. By construction, $\hat{Y}$ is a flat disc with $k-2$ singularities (of cone angle $\pi$) in its interior, and whose boundary is a geodesic loop $c$ based at $P_0$. Note that $P_0$ is also a singular point of $\hat{Y}$.
 
 Let $c$ denote the boundary of $\hat{Y}$, and $\ell$ be the length  of $c$. Fix an $\eps >0$. For any $t \in (0,\eps)$, let $\hat{C}_t$ be the half cylinder of circumference equal to $\ell$, and width equal to $t$.  We can glue $\hat{C}_t$ to $\hat{Y}$ such that the marked point in the boundary of $\hat{C}_t$ is identified with $P_0$. Let $Y'_t$ denote the resulting flat surface. Observe that $Y'_t$ corresponds to a meromorphic differential $\eta'_t$ on $\CP^1$ which has a unique zero at $P_0$ and the same number of simple poles as $\eta$. It follows that the orienting double cover of $(\CP^1,\eta'_t)$ is an Abelian differential $(X_t,\omega_t)$ in the same stratum as $(X,\omega)$. Remark also that the double cover of $\hat{C}_t$ is a simple cylinder of width equal to $t$. We define $(X_0,\omega_0)$ to be  $(X,\omega)$.  It is now straightforward to check that the family $\{(X_t,\omega_t), \; t \in [0,\eps)\}$ satisfies the properties in the statement of the lemma.
\end{proof}

As a by product of Lemma~\ref{lm:degen:cyl:deform}, we also have

\begin{Lemma}\label{lm:degen:cyl:H2:H11}
Let $s:=s_1\cup s_2$ be a degenerate horizontal cylinder in $(X,\omega) \in \H(2)\sqcup \H(1,1)$. 
\begin{itemize}
\item[(i)] If $(X,\omega) \in \H(2)$, then there exist a pair of homologous saddle connections $r^\pm$ that cut out a slit torus  containing  $s$ satisfying the following condition: any vertical leaf crossing $r^\pm$  must intersect $s$.

\item[(ii)] If   $(X,\omega) \in \H(1,1)$, then either
        \begin{itemize}
        \item[a)] there exist a pair of homologous saddle connections $r^\pm$ that cut out a  a slit torus  containing  $s$ such that  any vertical leaf crossing $r^\pm$  must intersect $s$, or
        \item[b)] there are two simple cylinders  $C_1,C_2$ disjoint from $s$ such that any vertical leaf crossing $C_1$ or $C_2$ must intersect $s$.
        \end{itemize}
\end{itemize}
\end{Lemma}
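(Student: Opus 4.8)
The plan is to use Lemma~\ref{lm:degen:cyl:deform} to replace the degenerate cylinder $s=s_1\cup s_2$ by a genuine simple cylinder, to settle the statement for the deformed surface by invoking the structural lemmas about cylinders in genus two, and then to transport the conclusion back to $(X,\omega)$. Fix $t\in(0,\eps)$ as in Lemma~\ref{lm:degen:cyl:deform}, so that $(X_t,\omega_t)$ lies in the same stratum and carries a horizontal simple cylinder $C_t$ whose core curve is freely homotopic to $s_{1,t}\cup s_{2,t}$. Since the deformation is localized along $s$ — the surface obtained by cutting $X$ open along $s_1\cup s_2$ and that obtained by cutting $X_t$ open along $\partial C_t$ are canonically isometric — there is an isometry $\Phi$ from $X\setminus(s_1\cup s_2)$ onto $X_t\setminus C_t$ carrying the vertical foliation onto the vertical foliation. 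Under $\Phi$, every flat object of $X_t$ lying in $X_t\setminus\mathrm{int}(C_t)$ (a saddle connection, a cylinder, a subsurface) is transported to the corresponding object of $X$ lying in $X\setminus\mathrm{int}(s)$, with $s$ playing in $X$ the role played by $C_t$ in $X_t$; and a vertical leaf of $X$ meets $s$ as soon as the corresponding leaf of $X_t$ crosses $C_t$, since such a leaf enters $C_t$ through one of its horizontal boundary components, which under $\Phi$ is glued onto $s_1\cup s_2$. It therefore suffices to produce in $(X_t,\omega_t)$ the required configuration relative to the cylinder $C_t$, reading ``meets $s$'' as ``crosses $C_t$''.

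\emph{The stratum $\H(2)$.} Since $C_t$ is a simple cylinder, the operation used in the proof of Theorem~\ref{thm:exist:mult:curv} — cut $C_t$ off and reglue its two boundary saddle connections — turns $X_t$ into a flat torus $T_t$ carrying a horizontal marked segment $\delta_t$, the image of $\partial C_t$, and reinserting $C_t$ at $\delta_t$ recovers $X_t$. Viewing $(T_t,\delta_t)$ as a slit torus (legitimate, as $\delta_t$ is not vertical), Lemma~\ref{lm:slit:tor:decomp} provides parallel simple closed geodesics $c_1,c_2$ of $T_t$ through the endpoints of $\delta_t$, disjoint from $\mathrm{int}(\delta_t)$, cutting $T_t$ into two cylinders one of which contains $\mathrm{int}(\delta_t)$, with the property that every vertical leaf of $T_t$ meeting $c_i$ also meets $\delta_t$. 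On reinserting $C_t$, each $c_i$ — disjoint from $\mathrm{int}(\delta_t)$, and making at the zero an angle $\pi$ on the side away from $C_t$, hence at least $\pi$ on the other side — becomes a genuine saddle connection $r^\pm_t$ of $X_t$; the two are homologous (they cobound in $X_t$ the cylinder of $T_t$ disjoint from $\mathrm{int}(\delta_t)$), they cut out a slit torus containing $C_t$, and every vertical leaf of $X_t$ meeting $r^\pm_t$ must cross $C_t$. Transporting by $\Phi$ yields homologous saddle connections $r^\pm$ of $X$ that cut out a slit torus now containing $s$, with every vertical leaf meeting $r^\pm$ intersecting $s$; this is assertion~(i).

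\emph{The stratum $\H(1,1)$.} Here $s$, and by Lemma~\ref{lm:sc:no:inv:types} each boundary saddle connection of $C_t$, is a simple closed curve; the two boundary saddle connections are exchanged by $\hinv$ and bound $C_t$, so each is a loop at one of the two zeros. Cutting $C_t$ off and regluing these two loops identifies the two zeros into a single one of cone angle $6\pi$, producing a surface $(\hat X_t,\hat\omega_t)\in\H(2)$ with a marked horizontal $\hinv$-invariant saddle connection $\hat s_t$ (the image of $\partial C_t$), and cutting $\hat X_t$ open along $\hat s_t$ recovers $X_t\setminus\mathrm{int}(C_t)$, hence — through $\Phi$ — $X\setminus\mathrm{int}(s)$. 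Applying the arguments of the $\H(2)$ case to $(\hat X_t,\hat s_t)$, namely either Lemma~\ref{lm:H2:inv:sc:2cyl}, or, after enclosing $\hat s_t$ in a simple cylinder, the slit-torus construction of the previous paragraph, produces — according to the configuration that arises — either two disjoint simple cylinders disjoint from $\hat s_t$, or a pair of homologous saddle connections cutting out a slit torus containing $\hat s_t$, in both cases with the additional property that every vertical leaf meeting them meets $\hat s_t$; this additional property is extracted from the construction of those cylinders out of the embedded parallelogram and hexagon attached to $\hat s_t$ (Lemma~\ref{lm:embd:par}). Transporting by $\Phi$ gives assertion~(ii), with (b) and (a) corresponding to the two configurations.

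\emph{Expected main obstacle.} Two points will need genuine care. The first is the transfer of the vertical-intersection property: one must verify that ``crosses $C_t$'' really becomes ``meets $s$'' — a vertical leaf entering $C_t$ across a horizontal boundary component leaves across the opposite one, both identified under $\Phi$ with $s_1\cup s_2$, so the leaf meets the interior of $s_1$ or of $s_2$ (and if it happens to pass through a zero it still meets $s$). The second, and the real difficulty, concerns $\H(1,1)$: one has to pin down exactly which configuration of $(\hat X_t,\hat s_t)$ gives alternative (a) and which gives (b), and — above all — to strengthen Lemma~\ref{lm:H2:inv:sc:2cyl}, which as stated only records disjointness, so as to carry the vertical-leaf information; this forces one to reopen its proof and read the property off the embedded-parallelogram / hexagon construction. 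Once these are settled, the remaining bookkeeping (identifying the subsurface cut out by $r^\pm$, and checking that the geodesics $c_i$ straighten through the zero into honest homologous saddle connections) is routine.
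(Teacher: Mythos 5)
Your proposal is correct and follows essentially the same route as the paper: both reduce the degenerate cylinder to a hyperelliptic-invariant saddle connection $s'$ on an auxiliary surface $X'$ (a twice-marked torus in the $\H(2)$ case, an $\H(2)$-surface in the $\H(1,1)$ case) with $X'\setminus s'$ isometric to $X\setminus s$, and then read the conclusion off the boundary of the embedded parallelogram of Lemma~\ref{lm:embd:par}. The only difference is cosmetic: the paper constructs $X'$ directly by sewing up the slit quotient sphere from the proof of Lemma~\ref{lm:degen:cyl:deform}, whereas you reach the same surface by first opening $s$ into the cylinder $C_t$ and then excising it.
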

\begin{proof}
 Let us use the same notations as in the proof of Lemma~\ref{lm:degen:cyl:deform}. Recall that by slitting open $Y$ along the projections of $s_1$ and $s_2$, we obtain a flat surface $\hat{Y}$, whose boundary is a geodesic loop $c$ based at $P_0$. One can construct a new flat surface homeomorphic to the sphere $\CP^1$ by ``sewing up'' $c$. This operation produces an extra singular point of angle $\pi$ at the midpoint of $c$.  
 
 Let $Y'$ denote the resulting surface. On $Y'$, we have $k-1$  singularities of cone angles $\pi$ and a singularity at $P_0$ of cone angle $2\pi$ if $\omega\in\H(2)$, or $3\pi$ if  $\omega\in \H(1,1)$. The loop $c$ corresponds to a segment $c'$ on $Y'$ joining $P_0$ to a singularity of angle $\pi$. Let $(X',\omega')$ be the orienting double cover of $Y'$, then either $(X',\omega')\in \H(0,0)$, or $(X',\omega')\in \H(2)$. In both cases, $c'$ gives rise to a saddle connection $s'$ invariant by the hyperelliptic involution of $X'$. Note that by construction, we can identify $X'\setminus s'$ with $X\setminus s$. 
 
 Let $\varphi: \P \ra X'$ be the embedded parallelogram associated to $s'$ introduced in Lemma~\ref{lm:embd:par}. By construction, $\varphi$ maps the sides of $\P$ to saddle connections on $X'$ which do not intersect $s'$ in their interior. Thus those saddle connections correspond to some saddle connections on $X$. It follows that $\varphi(\P)\subset X'$ corresponds to  a subsurface of $X$ containing $s$. The conclusions  of the lemma then follow from a careful inspection on the boundary of $\varphi(\P)$. 
\end{proof}

\subsection{Cylinder graph}
We now define a new subgraph $\hat{\Cc}_{\rm cyl}(X,\omega,f)$ of $\Cc(S)$ as follows: the vertices of $\hat{\Cc}_{\rm cyl}(X,\omega,f)$ are free homotopy classes of core curves of cylinders, or free homotopy classes of degenerate cylinders in $X$. Elements of $\hat{\Cc}^{(1)}_{\rm cyl}(X,\omega,f)$ are the edges of $\Cc(S)$ whose both endpoints are in $\hat{\Cc}^{(0)}_{\rm cyl}(X,\omega,f)$. 

Let $\dist^{\Cc}$ denote the distance in $\Cc(S)$. Recall that by definition each edge of $\Cc(S)$ has length equal to one. Let $a,b$ be two simple closed curves on $S$, and  $[a],[b]$ be respectively their free homotopy classes considered as vertices of $\Cc(S)$. We have
$$
\dist^{\Cc}([a],[b])=\min\{{\rm leng}(\gamma), \, \gamma \hbox{ path in $\Cc(S)$ from $[a]$ to $[b]$} \}.
$$

\noindent We define a distance $\dist$ in $\hat{\Cc}_{\rm cyl}(X,\omega,f)$ in the same manner, that is, every edge has length equal to one, and  given $[a],[b] \in \hat{\Cc}_{\rm cyl}(X,\omega,f)$,
$$
\dist([a],[b])=\min\{{\rm leng}(\gamma), \, \gamma \hbox{ path in $\hat{\Cc}_{\rm cyl}(X,\omega,f)$ from $[a]$ to $[b]$}\}.
$$

\noindent By convention, if there are no paths in $\hat{\Cc}_{\rm cyl}(X,\omega,f)$ from $[a]$ to $[b]$, then we define $\dist([a],[b])=\infty$.  The subgraph $\hat{\Cc}_{\rm cyl}(X,\omega,f)$ will be the main subject of our investigation in the remaining of this paper. To alleviate the notations, when $(X,\omega)$ and a marking mapping $f: S \ra X$ are fixed, we will write $\Cc_{\rm cyl}$ and $\hat{\Cc}_{\rm cyl}$ instead of $\Cc_{\rm cyl}(X,\omega, f)$ and $\hat{\Cc}_{\rm cyl}(X,\omega,f)$.

\medskip

\noindent \underline{\bf Convention:} In the sequel, a ``cylinder'' could mean a usual geometric cylinder or a degenerate one. We will refer to geometric usual cylinders as {\em non-degenerate} cylinders. The term  ``a core curve'' will have the usual meaning for non-degenerate cylinder, for a degenerate one it just means the cylinder itself.

\subsection{Intersection numbers}
Let $\iota(.,.)$ denote the geometric intersection form on the set of free homotopy classes of simple closed curves on $S$.  Let $a$, $b$ be two simple closed curves in $S$, and $[a]$, $[b]$ their free homotopy classes respectively. Recall that  $[a]$ and $[b]$ are connected by an edge in $\Cc(S)$ if and only if $\iota([a],[b])=0$. 

Assume now that $a$ and $b$ are simple closed geodesics in $(X,\omega)$.  If $a$ and $b$ are parallel, then they do not have intersection, hence $\iota([a],[b])=0$. If they are not parallel, then they intersect transversally at every intersection point.  By using the bigon criterion (see~\cite[Section 1.2.4]{FM-primer11}), it is not difficult to show that $\iota([a],[b]) =\#\{a\cap b\}$. However, if $a$ or $b$ is a degenerate cylinder then we must be a little more careful since in this case $a$ or $b$ may be not a simple curve ({\em i.e.} in $\H(2)$), and their intersections are not always transversal.

To deal with this complication, if $a$ and $b$ are core curves of two cylinders in $X$ (possibly degenerate), we will fix some  parametrizations $\alpha: \S^1 \ra X$ for $a$, and $\beta: \S^1 \ra X$ for $b$ such that $\alpha$ and $\beta$ are locally homeomorphic onto their images, and the restriction of $\alpha$ (resp. of $\beta$) to $\S^1\setminus\alpha^{-1}(\{\text{singularities of } X\})$ (resp. to $\S^1\setminus\beta^{-1}(\{\text{singularities of } X\})$) is one-to-one.

By an {\em intersection} of $a$ and $b$, we will mean a pair $(t,t')\in \S^1\times\S^1$ such that $\alpha(t)=\beta(t')$. This intersection is said to be {\em transversal} if there exist $\eps>0$ and $\eps'>0$ such that $a_1:=\alpha((t-\eps,t+\eps))$ and $b_1:=\beta((t'-\eps',t'+\eps'))$ are two simple arcs in $X$,  $a_1$ intersects $b_1$ transversally at $p=\alpha(t)=\beta(t')$, and $a_1$ and $b_1$ have no other intersections. We denote by $a\cap b$ the set of intersections of $a$ and $b$ , and by $a\hat{\cap} b$ the subset of transversal intersections.


\begin{Lemma}\label{lm:2cyl:inters}
 Let $C$ and $D$ be two cylinders on $(X,\omega)$ (both possibly degenerate) that are not parallel. Let $c$ and $d$ be respectively a core curve of $C$ and a core curve of $D$. We denote by $[c]$ and $[d]$ the free homotopy classes of $c$ and $d$ respectively. Let $c\hat{\cap}d$ denote the set of transversal intersections of $c$ and $d$.  Then we have
 $$
\iota([c],[d])=\#\{c\hat{\cap} d\}.
 $$
Since a non-transversal intersection of $c$ and $d$ can only occur at a singularity, it follows  in particular that $\iota([c],[d])=\#\{c\cap d\}$ if  one of $c$ and $d$ is a regular geodesic.
\end{Lemma}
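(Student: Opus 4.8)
The plan is to reduce the statement to an application of the bigon criterion for geometric intersection number, being careful about the two places where the curves $c,d$ fail to be embedded or fail to meet transversally, both of which occur only at singularities of $X$.

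First I would fix flat geodesic representatives: by Lemma~\ref{lm:free:hom:same:cyl} every vertex of $\hat{\Cc}_{\rm cyl}$ has a canonical flat model, namely a core curve of a non-degenerate cylinder (a genuine simple closed geodesic) or a degenerate cylinder $s_1\cup s_2$ (a concatenation of two saddle connections, passing through a singularity, possibly non-simple in $\H(2)$). Choose the parametrizations $\alpha,\beta$ as in the paragraph preceding the lemma, so that $c$ and $d$ are locally injective and are injective away from the preimages of singularities. Since $C$ and $D$ are not parallel, $\omega(c)$ and $\omega(d)$ are non-proportional in $\C$, so any two regular geodesic arcs of $c$ and $d$ cross transversally; hence every non-transversal intersection of $c$ and $d$ forces the intersection point to be a singularity of $X$ (at a singularity the two "branches" of a degenerate cylinder can be tangent). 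This already gives the final sentence of the lemma: if one of $c,d$ is a regular geodesic it meets no singularity, so $c\hat\cap d=c\cap d$.

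Next I would prove $\iota([c],[d])=\#\{c\hat\cap d\}$. The inequality $\iota([c],[d])\le \#\{c\hat\cap d\}$ is automatic since $c$ and $d$ are representatives of their classes and $\#\{c\cap d\}$ only exceeds $\#\{c\hat\cap d\}$ by non-transversal points which can be removed by an arbitrarily small perturbation of $c$ near the singularity without creating new transversal intersections (push the tangential branch slightly off the singular point along the side where the two branches lie on the same side). Wait — more carefully: a non-transversal intersection at a singularity can be removed by such a local push, and because near the singularity the two branches of $c$ (or of $d$) and the arc of $d$ are straight segments in the flat chart, this push can be done so that the perturbed $c'$ meets $d$ in exactly $\#\{c\hat\cap d\}$ points, all transverse; thus $\iota\le\#\{c\hat\cap d\}$. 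For the reverse inequality $\iota([c],[d])\ge \#\{c\hat\cap d\}$, I would invoke the bigon criterion (\cite[Section 1.2.4]{FM-primer11}): it suffices to show that $c$ and $d$, as a pair of transverse arcs after the small perturbation above, form no bigons. Suppose they bounded an (innermost) bigon $B$; its two sides are a subarc of $c$ and a subarc of $d$, each of which is (essentially) a geodesic arc on $X$, so $B$ is a flat disc whose boundary consists of two geodesic segments meeting at two corners, plus possibly some singularities of $X$ lying on its sides or in its interior. Applying Gauss–Bonnet to $B$: the two corner angles are each less than $\pi$, so they contribute curvature $>0$ in total less than $2\pi$ is not immediately a contradiction, so I must use that interior/boundary singularities have cone angle $\ge 2\pi$ at the relevant flat structure — here the subtlety is that $c,d$ may pass through the unique singularity or the two simple zeros, which on $X$ have cone angle $\ge 4\pi$, so any singularity met by $\partial B$ contributes negatively. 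Since an innermost bigon contains no singularity in its interior, and the two geodesic sides contribute zero curvature, Gauss–Bonnet forces the sum of the two exterior corner turning angles to be $2\pi$, i.e. the two interior angles sum to $0$, which is impossible for an honest bigon; hence there is no bigon and the bigon criterion gives $\iota([c],[d])=\#\{c'\cap d\}=\#\{c\hat\cap d\}$.

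The main obstacle I expect is the bookkeeping at singularities in the degenerate case: when $c=s_1\cup s_2$ is a degenerate cylinder in $\H(2)$ it is not simple, so one must argue that the self-intersection point (the zero of $\omega$) does not spuriously interact with $d$, and that the local perturbation used to remove non-transverse intersections of $c$ with $d$ does not create a self-bigon of $c'$ or a new intersection with $d$. This is handled by working in the flat chart around the singularity, where every arc involved is a union of straight segments, and by using Lemma~\ref{lm:degen:cyl:deform}: pushing the degenerate cylinder slightly open realizes $c$ as (freely homotopic to) an embedded core curve $c_t$ of a genuine thin simple cylinder $C_t$, on which the preceding embedded-geodesic argument applies verbatim; one then checks that for small $t$ the intersection pattern $c_t\cap d$ equals $c\hat\cap d$, so the equality $\iota([c],[d])=\iota([c_t],[d])=\#\{c_t\cap d\}=\#\{c\hat\cap d\}$ follows. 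The same device covers the case where both $c$ and $d$ are degenerate, deforming them one at a time.
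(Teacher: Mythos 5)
Your proof is correct and follows essentially the same route as the paper's: both rule out bigons by applying Gauss--Bonnet to a flat disc bounded by two geodesic arcs (the two corner angles contribute strictly less than $2\pi$ while boundary and interior cone points contribute non-positively), then perturb $c$ and $d$ near the singularities and invoke the bigon criterion. The only real difference is presentational: the paper runs the Gauss--Bonnet argument in the universal cover, which simultaneously shows that lifts of the possibly non-simple degenerate core curves are embedded and pairwise meet at most once, whereas you delegate that case to the deformation of Lemma~\ref{lm:degen:cyl:deform}.
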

\begin{proof}
Let $\pi: \Delta=\{z \in \C, \quad |z| <1\} \rightarrow X$ denote the universal cover of $X$. The pull-back $\pi^*\omega$ of $\omega$ is a holomorphic one-form, which defines a flat metric with cone singularities on $\Delta$.

Fix a base point $x$ for $c$ and a base point  $y$ for $d$, which are not the singularities of $X$. Through any point in $\pi^{-1}(\{x\})$ (resp. any point in $\pi^{-1}(\{y\})$),  there is a unique lift of $c$ (resp. a unique lift $d$).  Since $c$ and $d$ are not necessarily simple curves, {\em a priori} each lift of $c$ and $d$ may not be a simple arc. But this actually does not happen.

\begin{Claim}\label{clm:inter:cyl:bigon}\hfill
\begin{itemize}
\item[(i)] Each lift of $c$ (resp. of $d$) is a simple arc in $\Delta$.
\item[(ii)] Two lifts of $c$ (resp. of $d$) can only meet  at at most one point (which is a non-transversal intersection).
\item[(iii)] A lift of $c$ and a lift of $d$ can only meet at at most one point.
\end{itemize}
\end{Claim}
\begin{proof}[Proof of the claim]
Since the argument for the three assertions are the same, we only give the proof of (iii). Let $\tilde{c}_0$ and $\tilde{d}_0$   be a lift of $c$ and a lift of $d$ in $\Delta$ respectively. Let us assume that $\tilde{c}_0$ and $\tilde{d}_0$ intersect at two points. There exists then a disc $B\subset \Delta$  bounded  by a subarc  $c_0 \subset \tilde{c}_0$ and  a subarc $d_0 \subset \tilde{d}_0$. Let $p,q$ be the common endpoints of $c_0$ and $d_0$, and $\alpha$ and $\beta$  be respectively the interior angles of $B$ at $p$ and $q$. Since $c_0$ and $d_0$ are geodesic segments for the flat metric on $\Delta$, we have $\alpha >0$ and $\beta>0$ ($\alpha=0$ or $\beta=0$ means that $c$ and $d$ are parallel).

Let $p_1,\dots,p_r$ be the  points in $\partial B$ that correspond to the zeros  of $\pi^*\omega$ and different from $p,q$. Let $\theta_i$ be the interior angle of $B$ at $p_i$.  By  definition of cylinders, we have $\theta_i\geq \pi$, for all $i=1,\dots,r$. Let $x_1, \dots, x_s$ be the zeros of $\pi^*\omega$ in $\inter(B)$, and $\hat{\theta}_i$ be the angles at $x_i$. The Gauss-Bonnet formula gives (see for instance~\cite[Prop. 1]{Troy91})
$$
\sum_{i=1}^s(2\pi-\hat{\theta}_i) + \sum_{i=1}^r(\pi-\theta_i) +2\pi-(\alpha+\beta) =2\pi\chi(B)=2\pi.
$$
Since $\alpha+\beta >0, \, \pi-\theta_i \leq 0$, and $2\pi-\hat{\theta}_i < 0$, we see that the equality above cannot be realized. Therefore, $B$ cannot exist, which means that $\tilde{c}_0$ and $\tilde{d}_0$ can only meet at at most one point.
\end{proof}

Since non-transversal intersections of $c$ and $d$ can only occur at the singularities of $X$ (zeros of $\omega$), we can deform $c$ and $d$  slightly in a neighborhood of each zero of $\omega$ to get simple closed curves $c'$ and $d'$ in the same free homotopy classes as $c$ and $d$ respectively such that $\#\{c \hat{\cap}d\}= \#\{c'\cap d'\}$.  Claim~\ref{clm:inter:cyl:bigon} then  implies that any lift of $c'$ in $\Delta$ intersects a lift of $d'$ at at most one point and all the intersections are transversal. It follows from the bigon criterion (see {\em e.g.} \cite[Prop. 1.7]{FM-primer11}) that
$$
\iota([c],[d])=\#\{c'\cap d'\}=\#\{c \hat{\cap}d\}.
$$
\noindent The lemma is then proved.
\end{proof}

\begin{Remark}\hfill
\begin{itemize}
\item[$\bullet$] If $C$ and $D$ are not parallel, we can assume that $C$ is horizontal and $D$ is vertical. In the case both $C$ and $D$ are degenerate, to compute their intersection number, one can use Lemma~\ref{lm:degen:cyl:deform} to get a deformation $(X_t,\omega_t)$  of $(X,\omega)$ in which $C$ corresponds to simple (horizontal) cylinder $C_t$. In $X_t$,  $D$ corresponds to a vertical cylinder $D_t$. Consequently, $c$ is freely homotopic to a regular horizontal geodesic $c_t$ in $X_t$, while $d$ is freely homotopic to a core curve $d_t$ of $D_t$.  It follows from Lemma~\ref{lm:2cyl:inters} that $\iota([c],[d])=\iota([c_t],[d_t])=\#\{c_t\cap d_t\}$.

\item[$\bullet$] It may happen that two degenerate cylinders in the same direction have a positive intersection number.
\end{itemize}
\end{Remark}

\section{Reducing  numbers of intersection}\label{sec:reduce:inters:nb}
In what follows, given two cylinders $C,D$ in $X$, by $\iota(C,D)$ we will mean the geometric intersection number $\iota([c],[d])$, where $c$ and $d$ are some core curves of $C$ and $D$ respectively. Our first goal is to estimate the distance in $\hat{\Cc}_{\rm cyl}$ by using intersection numbers. 

\begin{Theorem}\label{thm:dist:n:inter}
There exist two positive constants $K_1,K_2$ such that for any $[X,\omega,f] \in \Omega\Tcal_2$, and any cylinders $C$ and $D$ in $X$ (both possibly degenerate) considered as vertices of $\hat{\Cc}_{\rm cyl}(X,\omega,f)$, we have

\begin{equation}\label{eq:rel:dist:inter}
\dist(C,D) \leq K_1\iota(C,D)+K_2.
\end{equation}
\end{Theorem}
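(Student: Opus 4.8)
The natural strategy is induction on the intersection number $n = \iota(C,D)$. The base case $n = 0$ is immediate: if $\iota(C,D) = 0$ then $C$ and $D$ are joined by an edge (or are equal), so $\dist(C,D) \le 1 = K_2$. For the inductive step, assume $n \ge 1$ and suppose the bound holds for all pairs of cylinders (on any translation surface in $\Omega\Tcal_2$) whose intersection number is $< n$. The goal is to produce a cylinder $E$ — a vertex of $\hat{\Cc}_{\rm cyl}$ — with $\iota(C,E)$ and $\iota(E,D)$ both substantially smaller than $n$, say each at most $n - 1$ (or bounded by a fixed fraction of $n$), while $\dist(C,E)$ and $\dist(E,D)$ are each bounded by an absolute constant. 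Then
\[
\dist(C,D) \le \dist(C,E) + \dist(E,D) \le (\text{const}) + K_1(n-1) + K_2 + \cdots,
\]
and one chooses $K_1$ large enough to absorb the additive losses.

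\textbf{Producing the intermediate cylinder.} After applying an element of $\GL^+(2,\R)$ (which, as emphasized in the introduction, does not change $\hat{\Cc}_{\rm cyl}$ or intersection numbers) I may assume $C$ is horizontal and $D$ is vertical; the $n$ intersection points then partition $X$, or a relevant subsurface, in a controlled combinatorial way. The idea is to look at how $D$ passes through the horizontal cylinder $C$: it crosses $C$ in some number of vertical strips, and between consecutive strips one can look for a new cylinder. Concretely, I would use the structural results of Section~\ref{sec:preliminary} — Lemmas~\ref{lm:sc:no:inv:types}, \ref{lm:H2:inv:sc:2cyl}, and Theorem~\ref{thm:exist:mult:curv} — together with the degenerate-cylinder machinery (Lemmas~\ref{lm:degen:cyl:deform} and \ref{lm:degen:cyl:H2:H11}) to locate a cylinder or degenerate cylinder $E$ disjoint from $C$ (hence adjacent to $C$ in $\hat{\Cc}_{\rm cyl}$, so $\dist(C,E) \le 1$) such that $E$ is built from pieces of $C$ and $D$ and therefore meets $D$ in strictly fewer points. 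The hyperelliptic involution is crucial here: since every cylinder and every saddle connection of interest is $\hinv$-invariant or comes in $\hinv$-pairs, the combinatorics of genus two is rigid enough that such an $E$ always exists. One must be careful that $E$ is genuinely a vertex of $\hat{\Cc}_{\rm cyl}$ — i.e. either a geometric cylinder or a degenerate cylinder in the precise sense of Definition~\ref{def:degen:cyl} — which is exactly what Lemma~\ref{lm:degen:cyl:H2:H11} is designed to guarantee (it produces slit-torus or two-cylinder configurations disjoint from a given degenerate cylinder, with vertical leaves forced to cross the original).

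\textbf{Uniformity of the constants.} The two constants $K_1, K_2$ must be independent of $[X,\omega,f]$; this is automatic once the induction is set up purely in terms of intersection numbers and fixed combinatorial moves, because the number of topological configurations of horizontal/vertical cylinders in genus two is finite, so the "cost" of each reduction step (how many $\hat{\Cc}_{\rm cyl}$-edges it takes to pass from $C$ to $E$, and how much $n$ drops) is bounded by an absolute constant depending only on the stratum, not on the surface. I would phrase the reduction so that in one bounded-length step the intersection number strictly decreases, and then $K_1$ is just (bound on step length) and $K_2 = 1$, or a small variant thereof.

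\textbf{Main obstacle.} The genuinely hard part is the geometric core of the inductive step: showing that from a horizontal cylinder $C$ and a transverse vertical cylinder $D$ meeting in $n \ge 1$ points, one can always extract an intermediate (possibly degenerate) cylinder $E$ with $E \cap C = \varnothing$ and $\iota(E,D) < \iota(C,D)$. This requires a genuine case analysis over the possible boundary configurations of $C$ (number of saddle connections, whether $C$ fills $X$, whether $C$ is simple, which zeros are involved) and over how $D$'s saddle connections sit relative to those of $C$ — precisely the kind of analysis carried out in the lemmas of Section~\ref{sec:preliminary}. Handling the case where $C$ (or $D$) fills the surface, so that no geometric cylinder is disjoint from it, is where degenerate cylinders become essential, and verifying that the degenerate cylinder one writes down actually satisfies the angle condition $\vartheta(s_1^-,s_2^+) = \vartheta(s_1^+,s_2^-) = \pi$ is the delicate bookkeeping step. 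The strata $\H(2)$ and $\H(1,1)$ must be treated somewhat separately because of the different behaviour of the hyperelliptic involution on the zeros, but the $\H(1,1)$ case can likely be bootstrapped from the $\H(2)$ case via the cut-and-glue construction used in the proof of Theorem~\ref{thm:exist:mult:curv}.
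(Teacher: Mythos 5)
Your overall strategy --- induct on $\iota(C,D)$, reduce to transverse simple cylinders, and at each step find an intermediate cylinder at bounded distance from $C$ that meets $D$ fewer times --- is exactly the architecture of the paper's proof (Lemmas~\ref{lm:nsimple:cylinder}, \ref{lm:fill:hor:cyl}, \ref{lm:vleaf:miss:C} do the reduction, and Proposition~\ref{prop:reduce:inters} is the inductive step, yielding $\dist(C,D)\le 3\iota(C,D)+6$). But the proposal has a genuine gap precisely where you locate the ``main obstacle'': you never actually prove that the intersection-reducing cylinder exists. The Section~\ref{sec:preliminary} lemmas you invoke (Lemmas~\ref{lm:sc:no:inv:types}, \ref{lm:H2:inv:sc:2cyl}, Theorem~\ref{thm:exist:mult:curv}) only produce cylinders \emph{disjoint} from a given saddle connection or cylinder; they give no control whatsoever on how the new cylinder intersects $D$, which is the whole point. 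The paper's argument for this step is not a soft consequence of those lemmas: it requires representing $(X,\omega)$ as a centrally symmetric octagon or decagon with the horizontal cylinder as a distinguished parallelogram (Proposition~\ref{prop:polygon:construct} in the appendix, which in turn needs the hypothesis that every leaf in the direction of $D$ crosses $\ol{C}$, supplied by Lemma~\ref{lm:vleaf:miss:C}), and then a case analysis comparing the vertical projections of the polygon's vertices onto the horizontal diagonal. Without some substitute for that analysis, the induction does not close.

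A secondary point: the specific form you first ask for --- a cylinder $E$ \emph{disjoint} from $C$ (so $\dist(C,E)\le 1$) with $\iota(E,D)<\iota(C,D)$ --- is stronger than what is true. In the paper's case analysis the reducing cylinder is sometimes the cylinder $G$ obtained at the third remove ($G$ disjoint from $F$, $F$ disjoint from $E$, $E$ disjoint from $C$), so only $\dist(G,C)\le 3$ is guaranteed and $G$ may well cross $C$. Your hedged version (``$\dist(C,E)$ bounded by an absolute constant'') is the correct statement to aim for, but you should not expect the distance-one version to hold. The uniformity discussion and the treatment of degenerate cylinders via Lemmas~\ref{lm:degen:cyl:deform} and \ref{lm:degen:cyl:H2:H11} are on target and match the paper.
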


As a direct consequence of inequality \eqref{eq:rel:dist:inter}, we get

\begin{Corollary}\label{cor:Cess:connect}
The subgraph $\hat{\Cc}_{\rm cyl}(X,\omega,f)$ is connected.
\end{Corollary}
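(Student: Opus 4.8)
\textbf{Proof plan for Corollary~\ref{cor:Cess:connect}.}
The plan is to deduce connectedness of $\hat{\Cc}_{\rm cyl}(X,\omega,f)$ directly from Theorem~\ref{thm:dist:n:inter}. First I would observe that for any two vertices $C$ and $D$ of $\hat{\Cc}_{\rm cyl}$, inequality \eqref{eq:rel:dist:inter} gives $\dist(C,D) \leq K_1\iota(C,D) + K_2 < \infty$, since the geometric intersection number $\iota(C,D)$ of two simple closed curves on the compact surface $S$ is always a finite nonnegative integer. By the convention adopted after the definition of $\dist$, the distance $\dist(C,D)$ is finite precisely when there exists a path in $\hat{\Cc}_{\rm cyl}$ joining $C$ to $D$. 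Hence every pair of vertices is joined by a path, which is exactly the assertion that $\hat{\Cc}_{\rm cyl}$ is connected.

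One small point I would want to address carefully: the bound in Theorem~\ref{thm:dist:n:inter} is stated for cylinders $C$ and $D$ (possibly degenerate), and the vertices of $\hat{\Cc}_{\rm cyl}$ are exactly the free homotopy classes of core curves of (non-degenerate or degenerate) cylinders. So the hypothesis of Theorem~\ref{thm:dist:n:inter} applies to every pair of vertices, with no exceptions, and no separate treatment of the degenerate case is needed here — that work has already been absorbed into the statement of the theorem via Lemma~\ref{lm:2cyl:inters} and the surrounding discussion of intersection numbers for degenerate cylinders. I would simply remark that $\iota(C,D)$ is well-defined independently of the choice of core curves, which is recorded in the conventions of Section~\ref{sec:reduce:inters:nb}.

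I do not anticipate any real obstacle in this deduction: the content is entirely in Theorem~\ref{thm:dist:n:inter}, and the Corollary is a one-line consequence once that theorem is in hand. The only thing to be mildly watchful about is that $\hat{\Cc}_{\rm cyl}$ is non-empty, which follows from Masur's result (quoted in the introduction) that $(X,\omega)$ always carries infinitely many simple closed geodesics, hence has at least one cylinder and at least one vertex; connectedness of a non-empty graph in which every two vertices lie at finite distance is then immediate.
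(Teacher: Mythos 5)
Your proposal matches the paper exactly: the paper presents Corollary~\ref{cor:Cess:connect} as an immediate consequence of inequality \eqref{eq:rel:dist:inter}, since the geometric intersection number of any two vertices is finite and a finite distance in $\hat{\Cc}_{\rm cyl}$ means by convention that a connecting path exists. Your additional remarks on degenerate cylinders and non-emptiness are sound but not needed beyond what the paper already absorbs into Theorem~\ref{thm:dist:n:inter}.
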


\subsection{Reducing to simple cylinders}
In what follows, we will fix a point $[X,\omega,f] \in \Omega\Tcal_2$, and by  cylinders in $X$ we include degenerate ones.  Our first step is to reduce the problem to the case $C$ and $D$ are simple cylinders. 

\begin{Lemma}\label{lm:nsimple:cylinder}
Let $C$ be horizontal cylinder that does not fill $X$, {\em i.e.} $\ol{C}\neq X$, and  $D$ be a vertical cylinder. Assume that $\iota(C,D)>0$. Then there exists a simple cylinder $C'$  such that $\dist(C,C')\leq 1$ and $\iota(C',D) \leq \iota(C,D)$.
\end{Lemma}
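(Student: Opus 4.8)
The goal is to replace a non-filling horizontal cylinder $C$ by a \emph{simple} horizontal cylinder $C'$ that is disjoint from $C$ (so that $\dist(C,C')\le 1$), at no extra cost in intersection number with the vertical cylinder $D$. The natural strategy is to work inside the complement of $C$ in $X$. Since $\ol C\neq X$, cutting $X$ along the two boundary components of $C$ produces one or two flat subsurfaces with geodesic boundary; in genus two, using the hyperelliptic symmetry (every cylinder is $\hinv$-invariant, so $C$ and its complement are stabilized by $\hinv$), the Euler-characteristic bookkeeping forces the complement of $C$ to be either a single one-holed torus, or a pair of subsurfaces one of which is a slit torus, or — in $\H(1,1)$ — a union of two one-holed tori. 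In each case I want to produce a simple horizontal saddle connection (or pair of $\hinv$-exchanged saddle connections) living in the complement, bound a simple cylinder there, and then argue that such a simple cylinder can be chosen so its core curve is homotopic into the complement of $C$ and has controlled intersection with $D$.

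The key mechanism for controlling the intersection with $D$ is the following observation: a core curve $d$ of the vertical cylinder $D$ meets $c$ (a core curve of $C$) in $\iota(C,D)>0$ points, so $d$ passes through $C$; I want $C'$ to ``reuse'' part of the horizontal data already crossed by $D$. Concretely, I would look at the horizontal saddle connections appearing on the boundary of $C$. If $C$ is not simple, at least one boundary component is a concatenation of at least two saddle connections; the plan is to take an appropriate $\hinv$-invariant horizontal saddle connection $s$ on $\partial C$ (or adjacent to $C$) and invoke Lemma~\ref{lm:H2:inv:sc:2cyl} (in $\H(2)$) or the slit-torus decomposition Lemma~\ref{lm:slit:tor:decomp} together with Lemma~\ref{lm:sc:no:inv:types} (in $\H(1,1)$) to extract a simple cylinder $C'$ disjoint from $s$, hence disjoint from $C$, giving $\dist(C,C')\le 1$ immediately. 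For the intersection estimate, since $C'$ lies in a subsurface $W$ of $X$ disjoint from $C$, any vertical leaf crossing $C'$ either stays in $W$ or exits through $\partial W$; in the latter case it must re-enter through $\partial W$, and one shows that each such excursion of $d$ through $W\cap$(strip around $C'$) can be matched injectively with an intersection point of $d$ with $C$, or else is already an intersection of $d$ with $C'$ that was ``paid for'' — yielding $\iota(C',D)\le\iota(C,D)$. The homotopy/counting here is exactly the kind of bigon-free combinatorial argument underlying Lemma~\ref{lm:2cyl:inters}, applied in $\Delta$ or in the cut-open surface.

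I would organize the proof by the three (or four) topological types of the complement of $C$ listed above, handling $\H(2)$ first. In the case the complement is a one-holed torus $T$, I take inside $T$ a horizontal simple closed geodesic (it exists by the slit-torus lemma applied to $T$ with its boundary loop collapsed appropriately, or directly by Lemma~\ref{lm:slit:tor:decomp} since the boundary of $T$ is a concatenation of horizontal saddle connections), and check it is simple and disjoint from $C$; the intersection bound follows because $d\cap C'\subset d\cap T$ and each maximal arc of $d$ in $T$ that crosses $C'$ can be pushed, rel the horizontal foliation, to cross $\partial T$, hence to cross $C$. The $\H(1,1)$ cases are handled the same way, with the extra bookkeeping that the complement may split as two tori, in which case one of the two pieces already contains a simple horizontal cylinder disjoint from $C$ by Theorem~\ref{thm:exist:mult:curv}.

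\textbf{Main obstacle.} The delicate point is the intersection inequality $\iota(C',D)\le\iota(C,D)$, not the disjointness $\dist(C,C')\le 1$. One has to rule out that, in passing from $C$ to the new simple cylinder $C'$, the vertical curve $d$ acquires \emph{new} intersections faster than it loses old ones — equivalently, one must set up an injection from $d\,\hat\cap\,C'$ into $d\,\hat\cap\,C$. I expect this to require a careful choice of \emph{which} simple cylinder $C'$ to extract (there may be several, and only some have the matching property), together with the first-return map of the vertical flow to a horizontal transversal contained in $\partial C$, so that every vertical excursion hitting $C'$ is witnessed by a prior hit on $C$. The hyperelliptic involution should be used to keep the picture symmetric and to guarantee that the saddle connections produced by Lemmas~\ref{lm:sc:no:inv:types} and~\ref{lm:H2:inv:sc:2cyl} are genuinely on the correct side, i.e. truly disjoint from $C$ rather than merely sharing a boundary saddle connection (sharing a boundary saddle connection is fine for $\dist\le 1$ but must be tracked in the counting).
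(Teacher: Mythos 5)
Your overall skeleton for a non-degenerate $C$ --- classify the complement of $\ol{C}$, extract a simple cylinder $C'$ there, and bound $\iota(C',D)$ by showing that every crossing of $C'$ by a vertical leaf forces a crossing of $C$ --- is the strategy the paper uses. But three points need repair. First, your classification of the complement is off: in $\H(2)$ the complement of a non-simple, non-filling, non-degenerate horizontal cylinder is a horizontal \emph{simple cylinder} sharing its boundary with $C$ (the easiest case, absent from your list), and in $\H(1,1)$ it is one or two horizontal simple cylinders or a single slit torus; ``two one-holed tori'' cannot occur, because each boundary component of a cylinder is homologous to its core curve, which has nonzero period and is therefore non-separating. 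Second, the ``main obstacle'' you flag is already resolved by Lemma~\ref{lm:slit:tor:decomp}, provided you take the cylinder it produces rather than ``a horizontal simple closed geodesic in $T$'': an arbitrary horizontal cylinder inside the complementary slit torus will \emph{not} work, since a single maximal vertical arc of $d$ inside that torus can wind around and cross it many times while contributing only one crossing of $C$. The cylinder of Lemma~\ref{lm:slit:tor:decomp} is cut out by geodesics through the endpoints of the slit with the built-in property that any vertical leaf meeting them meets the slit; hence consecutive crossings of $C'$ by $d$ are separated by a crossing of the slit, i.e.\ of $\partial C$, i.e.\ of the core of $C$, and the injection from $d\,\hat{\cap}\,c'$ into $d\,\hat{\cap}\,c$ is immediate, with no first-return analysis needed.

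The genuine gap is the degenerate case. Under the convention of Sections 3--4, ``cylinder'' in this lemma includes degenerate cylinders, and the lemma is invoked in the proof of Theorem~\ref{thm:dist:n:inter} with $C$ and $D$ possibly degenerate. A degenerate cylinder is a union of two saddle connections: it has no interior, no boundary components, and no ``complement with geodesic boundary'', so your whole setup simply does not apply to it. The paper handles this with Lemma~\ref{lm:degen:cyl:H2:H11}, which places the degenerate cylinder inside a slit torus cut out by a pair of homologous saddle connections $r^\pm$ such that every vertical leaf crossing $r^\pm$ meets $C$ (the complementary simple cylinder bounded by $r^\pm$ then serves as $C'$), and, when $D$ is also degenerate so that intersection numbers must be interpreted via transversal intersections, with the deformation family of Lemma~\ref{lm:degen:cyl:deform} to reduce to the non-degenerate situation. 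Without this half of the argument the reduction behind Theorem~\ref{thm:dist:n:inter} does not close.
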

\begin{proof}
We first consider the case $C$ is non-degenerate. Let $c$ be a core curve of $C$ and $d$ a core curve of $D$. Since $c$ is a  regular simple closed geodesic, by Lemma~\ref{lm:2cyl:inters}, we have $ \iota(C,D)=\#\{c\cap d\}$.  Obviously, we only need to consider the case $C$ is not simple. 

If $(X,\omega)\in \H(2)$, then the complement of $\ol{C}$  is a simple cylinder $C'$ whose boundary is a pair homologous saddle connections contained in the boundary of $C$. In particular $C'$ is also horizontal, and we have $\iota(C,C')=0$, hence $\dist(C,C')=1$.  Any time $d$ crosses $C'$, it must cross $C$ before returning to $C'$. Therefore, we have $\iota(C',D) \leq \iota(C,D)$.

If $(X,\omega)\in \H(1,1)$ then  the complement of $\ol{C}$ is either: (a) horizontal simple cylinder, (b) two disjoint horizontal simple cylinders, or (c) a  torus  with a horizontal slit. In case (a) and case (b), the boundaries of the horizontal cylinders in the complement are contained in the boundary of $C$. Therefore, it suffices to choose one of them to be $C'$.  In case (c), let $(X',\omega',s')$ be the slit torus which is the complement of $\ol{C}$. Note that the slit  $s'$  corresponds to a pair of homologous saddle connections in  the boundary of $C$.   By Lemma~\ref{lm:slit:tor:decomp} we know that $X'$ contains a simple cylinder $C'$ disjoint from the slit $s'$ such that any vertical line crossing $C'$ must cross $s'$.  Since $C'$ is disjoint from $C$ we have $\dist(C,C')=1$.  Any time $d$ crosses $C'$, it must cross the slit $s'$ and hence $C$.  Therefore, we also have $\iota(C',D)\leq \iota(C,D)$. \medskip


We now turn to the case $C$  is degenerate. If $(X,\omega) \in \H(2)$, from Lemma~\ref{lm:degen:cyl:H2:H11}, we know that $C$ is contained in a slit torus cut out by a pair of homologous  saddle connections $r^\pm$ such that every vertical leaf crossing $r^\pm$ intersects $C$.  Since $(X,\omega) \in \H(2)$, the complement of the slit torus is a simple cylinder $C'$ bounded by $r^\pm$. Clearly, we have $\dist(C,C')=1$.  If the core curves of $D$ are  regular geodesics (that is $D$ is non-degenerate), then we can  immediately conclude that $\iota(C',D) \leq \iota(C,D)$. In case $D$ is degenerate, we consider the deformations  $\{(X_t,\omega_t), \;  t \in [0,\eps)\}$ of $(X,\omega)$ given by Lemma~\ref{lm:degen:cyl:deform}. For $t\in (0,\eps)$, in $(X_t,\omega_t)$, $D$ becomes a simple cylinder $D_t$, while the cylinders $C$ and $C'$ persist and have the same properties. Since $\iota(C',D)=\iota(C',D_t)$ and $\iota(C,D)=\iota(C,D_t)$, we also get $\iota(C',D) \leq \iota(C,D)$.

The case $(X,\omega) \in \H(1,1)$ also follows from  similar arguments.
\end{proof}

%
%

\begin{Lemma}\label{lm:fill:hor:cyl}
Assume that $C$ is a horizontal cylinder that fills $X$, and $D$ is a vertical cylinder. Then there exists a  simple cylinder $C'$ such that
$$
\left\{ \begin{array}{ccl}
         \dist(C',C) & = & 2, \\
         \iota(C',D) & \leq & \iota(C,D).
        \end{array}
\right.
$$
\end{Lemma}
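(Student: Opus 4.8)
Since $C$ fills $X$, Lemma~\ref{lm:nsimple:cylinder} does not apply directly; the complement of $C$ is empty. The idea is to first pass to an auxiliary cylinder that is adjacent to $C$ but does \emph{not} fill, then apply Lemma~\ref{lm:nsimple:cylinder} to that auxiliary one. I would proceed by cases according to the stratum. Assume as usual that $C$ is horizontal and $D$ is vertical, with $\iota(C,D)>0$.

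\textbf{Step 1: Produce a horizontal cylinder or degenerate cylinder adjacent to $C$ that does not fill $X$.} Since $C$ fills, its two boundary components together contain all the zeros of $\omega$ and all horizontal saddle connections. In $\H(2)$ there are exactly three horizontal saddle connections $s_1,s_2,s_3$ (all invariant by $\hinv$, as in the Examples following Definition~\ref{def:degen:cyl}), and each pair $s_i\cup s_j$ is a degenerate cylinder $C_1$; it satisfies $\iota(C,C_1)=0$ because $C_1$ lies in the closure of $\partial C$, so $\dist(C,C_1)=1$. Crucially $C_1$ is degenerate, hence $\overline{C_1}\neq X$. In $\H(1,1)$ one argues the same way using the four horizontal saddle connections $s_1,\dots,s_4$ and the degenerate cylinders $s_i\cup s_{i+1}$. (If $C$ does not fill we are in the situation of Lemma~\ref{lm:nsimple:cylinder} already and can take $\dist=1<2$; so the genuinely new case is exactly when $C$ fills.)

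\textbf{Step 2: Apply Lemma~\ref{lm:nsimple:cylinder} to the auxiliary cylinder.} Now $C_1$ is a horizontal cylinder (in the extended sense, possibly degenerate) with $\overline{C_1}\neq X$, and $D$ is vertical. I need to check that $\iota(C_1,D)\le\iota(C,D)$. For this I would use Lemma~\ref{lm:degen:cyl:H2:H11}: the degenerate cylinder $C_1$ is contained in a slit torus (in $\H(2)$) or in a slit torus / pair of simple cylinders (in $\H(1,1)$) with the property that every vertical leaf crossing the boundary objects must cross $C_1$; combined with the fact that $C_1\subset\overline{\partial C}$ and a vertical leaf entering that region must traverse $C$, one gets that each transversal intersection of $d$ with $C_1$ is ``charged'' to an intersection with $C$. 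Hence, after possibly invoking the deformation $(X_t,\omega_t)$ of Lemma~\ref{lm:degen:cyl:deform} to make $C$ simple and $D$ a genuine cylinder (so that intersection numbers equal geometric counts via Lemma~\ref{lm:2cyl:inters}), $\iota(C_1,D)\le\iota(C,D)$. Then Lemma~\ref{lm:nsimple:cylinder} applied to $(C_1,D)$ gives a simple cylinder $C'$ with $\dist(C_1,C')\le 1$ and $\iota(C',D)\le\iota(C_1,D)\le\iota(C,D)$. By the triangle inequality $\dist(C,C')\le\dist(C,C_1)+\dist(C_1,C')\le 1+1=2$.

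\textbf{Step 3: Ensure $\dist(C,C')=2$ exactly, not less.} The statement asserts equality, so I must rule out $\dist(C,C')\le 1$, i.e.\ that $C'$ is disjoint from (or equal to) $C$. But $C$ fills $X$, so the core curve of any cylinder other than $C$ itself must cross $C$; and $C'$ is a simple cylinder, hence $C'\neq C$. Therefore $\iota(C,C')>0$, so $\dist(C,C')\ge 2$, giving equality.

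\textbf{Main obstacle.} The delicate point is Step 2: verifying the intersection-number inequality $\iota(C_1,D)\le\iota(C,D)$ cleanly when $C_1$ is degenerate and $D$ may also be degenerate — one must correctly use Lemma~\ref{lm:degen:cyl:H2:H11} to locate $C_1$ inside a well-understood subsurface and argue that each crossing of $D$ with $C_1$ forces a crossing with $C$, being careful about non-transversal intersections at the zero(s) of $\omega$ and about the reduction to the deformed surface where Lemma~\ref{lm:2cyl:inters} applies. The case analysis $\H(2)$ versus $\H(1,1)$, and within $\H(1,1)$ the sub-cases (a)/(b)/(c) of the complement, is where the real work lies; the rest is bookkeeping with the triangle inequality and the filling property.
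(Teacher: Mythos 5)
Your argument is correct in substance, but it routes through different machinery than the paper. The paper's proof is a direct construction: using the one‑cylinder polygon picture, it exhibits a \emph{transverse} simple cylinder $C'$ containing $s_1$, disjoint from the degenerate cylinder $s_2\cup s_3$, whose core curves cross $c$ exactly once and have small horizontal holonomy; disjointness from $s_2\cup s_3$ gives $\dist(C,C')\le 2$, crossing $c$ gives $\dist\ge 2$, and ``every vertical geodesic crossing $C'$ crosses $C$'' gives $\iota(C',D)\le\iota(C,D)$ in one step. You instead pass to a boundary degenerate cylinder $C_1=s_i\cup s_j$ and invoke the degenerate case of Lemma~\ref{lm:nsimple:cylinder} as a black box, chaining $\iota(C',D)\le\iota(C_1,D)\le\iota(C,D)$. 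Both proofs use the degenerate cylinders on $\partial C$ as the distance‑one intermediary, and your Step~3 (a simple, hence non‑degenerate and non‑horizontal, cylinder must cross the filling cylinder $C$) correctly pins down $\dist(C,C')=2$. Your route is more modular and avoids redoing the slit‑torus construction, at the cost of two points you should make explicit: (i) Lemma~\ref{lm:nsimple:cylinder} assumes $\iota(C_1,D)>0$, so you must choose the pair $s_i,s_j$ to contain a horizontal saddle connection that $d$ actually crosses --- this is always possible because $d$ must exit $C$ through $\partial C=s_1\cup\dots\cup s_k$, and each $s_m$ lies in some admissible pair; (ii) the inequality $\iota(C_1,D)\le\iota(C,D)$ does not really need Lemma~\ref{lm:degen:cyl:H2:H11}; it follows (after the deformation of Lemma~\ref{lm:degen:cyl:deform} when $D$ is degenerate) from the observation that, since $\overline{C}=X$, each transversal crossing of $d$ with $s_i\cup s_j$ is a crossing of $\partial C$ and hence injects into the set of crossings of $d$ with the core curve $c$.
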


\begin{proof}
Let $c$ be a core curve of $C$.  If $(X,\omega) \in \H(2)$ then the complement of $C$ is the union of three horizontal saddle connections $s_1,s_2,s_3$, all are invariant by the hyperelliptic involution.  Remark that the union of any two of these saddle connections is a degenerate cylinder. One can easily find a transverse simple cylinder $C'$ containing $s_1$, disjoint from the union $s_2\cup s_3$,  whose core curves cross $c$ once. Furthermore, we can choose $C'$ such that the horizontal component of its core curves has length smaller than the length of $c$.  Clearly,  we have $\dist(C,C')=2$. Since any vertical geodesic crossing  $C'$ crosses also  $C$, we have $\iota(C',D) \leq \iota(C,D)$. Thus the lemma is proved for this case.

The case $(X,\omega) \in \H(1,1)$ follows from the same arguments.
\end{proof}

In what follows, a geodesic line on $X$ that does not contain any singularity is called {\em regular}.

\begin{Lemma}\label{lm:vleaf:miss:C}
Let $C$ be a horizontal cylinder and $D$ be a vertical cylinder in $X$. If there exists a regular vertical leaf  which does not cross $C$ then $\dist(C,D) \leq 2$.
\end{Lemma}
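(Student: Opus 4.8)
The plan is to produce a vertical cylinder $E$ in $X$ whose core curve is disjoint from that of $C$; then $\iota(C,E)=0$ forces $\dist(C,E)\le 1$, and $\iota(E,D)=0$ (both $E$ and $D$ being vertical) forces $\dist(E,D)\le 1$, so that $[C]-[E]-[D]$ is a path of length at most two in $\hat{\Cc}_{\rm cyl}$. To find such an $E$, note first that a regular vertical leaf cannot meet $\partial C$: crossing a horizontal saddle connection transversally it would enter $C$. Hence the given leaf $\ell$ lies in $X\setminus\overline{C}$; in particular $C$ does not fill $X$, and a short local analysis of vertical geodesics near the horizontal saddle connections and cone points of $\partial C$ shows that $\ell$ stays a definite distance away from $\partial C$, so that $\overline{\ell}$ is a compact subset of $X\setminus\overline{C}$.

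If $C$ is degenerate I would first invoke Lemma~\ref{lm:degen:cyl:deform} to pass to a nearby surface $(X_t,\omega_t)$ in which $C$ becomes a genuine simple cylinder $C_t$, while $\ell$ persists as a regular vertical leaf missing $C_t$ and $D$ persists (or, after one further application of that lemma, becomes) a vertical cylinder; since geometric intersection numbers depend only on the free homotopy classes on $S$, a path of length at most two between $[C_t]$ and $[D]$ found downstairs is also one between $[C]$ and $[D]$ in $\hat{\Cc}_{\rm cyl}(X,\omega)$. So I may assume $C$ is a genuine cylinder. Now I use the genus-two structure of the complement: as in the proofs of Lemma~\ref{lm:nsimple:cylinder} and Theorem~\ref{thm:exist:mult:curv}, each connected component of $X\setminus\overline{C}$ is either a horizontal simple cylinder, or becomes a flat torus once its horizontal boundary is collapsed (so it has genus at most one). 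The component $Y$ containing $\ell$ cannot be a horizontal cylinder, since every vertical leaf crosses such a cylinder and so reaches $\partial C\supseteq\partial Y$; therefore $Y$ closes up to a flat torus $T$, with the boundary of $Y$ collapsing to a subset lying over $\partial C$.

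On $T$ the vertical foliation extends, and $\ell$ becomes a complete vertical leaf avoiding a neighborhood of the collapsed boundary. The vertical foliation of a torus is either minimal or periodic, and it cannot be minimal here, since a dense leaf would come arbitrarily close to the collapsed boundary, i.e.\ $\ell$ would cross $\partial C$. Hence it is periodic and $\ell$ is a closed geodesic; the maximal flat vertical cylinder through $\ell$ lying in $Y\setminus\partial Y$ is then a vertical cylinder $E$ of $X$ with core curve $\ell$ disjoint from $\overline{C}$. By Lemma~\ref{lm:2cyl:inters} (one of the two curves being regular) $\iota(C,E)=0$, and $\iota(E,D)=0$ since $E$ and $D$ are both vertical. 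As $[C]\neq[E]$ and $[C],[E],[D]\in\hat{\Cc}^{(0)}_{\rm cyl}$, the concatenation $[C]-[E]-[D]$ is a path in $\hat{\Cc}_{\rm cyl}$ of length one or two, and therefore $\dist(C,D)\le 2$.

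The step requiring the most care is the structural claim that the component of $X\setminus\overline{C}$ meeting $\ell$ is a cylinder or closes up to a torus, rather than something of higher complexity: this is exactly where the restriction to genus two enters, and it is what rules out the a priori possibility of a complete vertical leaf recurrent to a minimal, non-periodic piece of the vertical foliation hidden inside $X\setminus\overline{C}$.
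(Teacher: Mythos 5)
Your argument coincides with the paper's in the case where the given vertical leaf is closed: then its maximal cylinder is a vertical cylinder disjoint from $C$ and from $D$, and you get a path of length two. The gap is in the structural claim that every component of $X\setminus\overline{C}$ is a horizontal simple cylinder or ``closes up to a flat torus,'' which you use to force the leaf $\ell$ to be closed. This is correct for $(X,\omega)\in\H(2)$ (the complement of a simple cylinder there is a genuine slit torus, so the vertical foliation on it is linear and your minimal/periodic dichotomy applies), but it fails for $(X,\omega)\in\H(1,1)$ with $C$ simple: the complement is then a genus-one surface with \emph{two} boundary circles which, after regluing its boundary components, is a translation surface in $\H(2)$ with a marked horizontal saddle connection --- exactly the operation used in the proof of Theorem~\ref{thm:exist:mult:curv}, case $\H(1,1)$, and case (5) in the proof of Lemma~\ref{lm:2djt:sim:cyl}. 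Collapsing its boundary loops to points is not a flat operation and does not yield a torus foliation. In particular the vertical foliation on this piece can have a minimal component disjoint from $\partial C$ (e.g.\ glue a horizontal cylinder into an $\H(2)$ surface along a horizontal saddle connection that crosses only a vertical simple cylinder, the rest of the vertical foliation being an irrational torus foliation); then $\ell$ is an infinite leaf dense in a proper subsurface, it is not closed, and there is \emph{no} vertical cylinder of $X$ disjoint from $C$ at all. So your concluding remark that genus two rules out a recurrent non-periodic piece of the vertical foliation inside $X\setminus\overline{C}$ is exactly backwards. Relatedly, the claim that $\ell$ stays a definite distance from $\partial C$ is false for an infinite leaf: its closure is bounded by vertical saddle connections and contains singularities, which lie on $\partial C$.

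To close the gap you need the paper's treatment of the infinite-leaf case, which uses a different mechanism and a different target: instead of a vertical cylinder disjoint from $C$, one looks for \emph{some} cylinder disjoint from both $C$ and $D$. Concretely, the closure of $\ell$ is a subsurface bounded by vertical saddle connections; choosing one such saddle connection $s$, the pair $s,\hinv(s)$ is homologous and splits $X$ into two $\hinv$-invariant pieces, and since neither $C$ nor $D$ can cross these vertical saddle connections transversally, both lie in the same piece $X_1$ (using $\iota(C,D)>0$); the other piece $X_2$ is necessarily a slit torus, and Lemma~\ref{lm:slit:tor:decomp} produces in it a simple cylinder $C'$ missing the slit, hence disjoint from $C$ and $D$, giving $\dist(C,D)\leq 2$. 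Note that this $C'$ is in general not vertical, which is why your stronger goal cannot be salvaged.
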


\begin{proof}
Obviously we only need to consider the case $\iota(C,D)>0$. Assume that there is a regular vertical closed geodesic that does not intersect $C$, then there exists another vertical cylinder $D'$ which is disjoint from both $C$ and $D$. Consequently, we have $\dist(C,D)=2$.

Assume now that there is an infinite regular  vertical leaf that does not intersect $C$. The closure of this  leaf is a subsurface $X'$ of $X$ bounded by some vertical saddle connections. Let $s$ be a saddle connection in the boundary of $X'$. Note that $s$ and $\hinv(s)$ are homologous. Thus they decompose $X$ into two subsurfaces $X_1$ and $X_2$ both invariant by $\hinv$. Since $C$ is invariant by $\hinv$, it must be contained in one of the subsurfaces, say $X_1$. Since $s$ and $\hinv(s)$ are vertical, the core curves of $D$ cannot  cross $s$ and $\hinv(s)$, which means that $D$ is also contained in one subsurface. Since we have assumed that $\iota(C,D)>0$, $D$ must be contained in $X_1$.

The subsurface $X_2$ must be either a slit torus, or a surface in $\H(2)$ with a marked saddle connection. Actually, the latter case does not occur because it would imply that $X_1$ is a vertical simple cylinder containing both $C$ and $D$, which is impossible. Now, by Lemma~\ref{lm:slit:tor:decomp},  one can find in the torus $X_2$ a simple cylinder $C'$ that does not meet the slit. Since $C'$ corresponds to  a simple cylinder of $X$ which is disjoint from both $C$ and $D$, and we have $\dist(C,D)=2$. The lemma is then proved.
\end{proof}

From the Lemmas~\ref{lm:nsimple:cylinder}, \ref{lm:fill:hor:cyl}, we know that if $C$ is not simple then there exists a simple cylinder $C'$ such that $\dist(C,C')\leq 2$ and $\iota(C',D) \leq \iota(C,D)$. Consequently, we can find simple cylinders $C',D'$ such that
$$
\left\{
\begin{array}{lcl}
\dist(D,D') & \leq & 2, \\
\dist(C,C') & \leq & 2,\\
\iota(C',D') &\leq & \iota(C,D).
\end{array}
\right.
$$
It follows in particular that $\dist(C,D) \leq \dist(C',D') +4$. Therefore we only need to prove \eqref{eq:rel:dist:inter} for the case $C$ and $D$ are simple cylinders. Moreover, by Lemma~\ref{lm:vleaf:miss:C}, we can further assume that all the leaves  of the foliation in the direction of $D$ intersect $\ol{C}$. Thus, Theorem~\ref{thm:dist:n:inter} is a consequence of the following

\medskip
\begin{Proposition}~\label{prop:reduce:inters}
Let $C$ and $D$ be two simple cylinders such that all the leaves of the foliation in the direction of $D$ intersect $\ol{C}$.  Then there always exists a simple cylinder $C'$ such that
\begin{equation}\label{eq:s:cyl:induct}
\dist(C',C) \leq 3 \text{ and } \iota(C',D) < \iota(C,D).
\end{equation}
\end{Proposition}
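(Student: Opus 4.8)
The plan is to find a new simple cylinder $C'$ obtained by "surgery'' on $C$ so that one of the strands of $D$ crossing $C$ no longer forces an intersection. Normalize so that $C$ is horizontal and $D$ is vertical. Since every vertical leaf meets $\ol C$, the vertical foliation restricted to $X\setminus C$ returns to $C$; in particular the first-return map of the vertical flow to the core curve $c$ of $C$ is defined and is an interval exchange transformation of the circle $c$ (of length equal to the circumference of $C$). The strands of $D$ passing through $C$ are among these return trajectories, and $\iota(C,D)=\#\{c\cap d\}$ equals the number of times a core curve $d$ of $D$ crosses $c$. The idea is to pick one such strand $\sigma$ of the vertical flow whose two endpoints lie on the horizontal saddle connections bounding $C$, and to build from $\sigma$ together with a horizontal arc in $C$ a simple closed curve $\gamma$ that is freely homotopic to the core curve of a new cylinder $C'$; this $C'$ will be essentially "$C$ with one $D$-crossing removed.''

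First I would analyze the local picture on the boundary of $C$: the two boundary components of $C$ are concatenations of horizontal saddle connections (by the genus-two structure and Lemma~\ref{lm:sc:no:inv:types}, these are invariant under or permuted by $\hinv$), and the vertical separatrices emanating from the zero(s) of $\omega$ cut $C$ into finitely many vertical rectangles. Choosing $\sigma$ to be one of the "outermost'' vertical segments from the boundary of $C$ into $X\setminus C$ and back — the analogue of the arc $u^\pm$ in Lemma~\ref{lm:embd:par} — I would cut $X$ along $\sigma$ and along $\hinv(\sigma)$. Using Lemma~\ref{lm:slit:tor:decomp} (when the complementary piece is a slit torus) or Lemma~\ref{lm:H2:inv:sc:2cyl} / the degenerate-cylinder machinery of Lemma~\ref{lm:degen:cyl:H2:H11}, I obtain a simple cylinder $C'$ transverse to $C$ whose core curve crosses $c$ exactly once (so $\dist(C,C')$ is $2$, or $3$ if one must first pass through a degenerate cylinder as in Lemma~\ref{lm:fill:hor:cyl}), and such that $C'$ is "nested'' inside the flow region of $\sigma$: every vertical leaf that crosses $C'$ must run along $\sigma$ and hence must cross $C$. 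The key gain is that $C'$ is positioned so that at least one strand of $d$ that crossed $c$ can be homotoped off $C'$, giving the strict inequality $\iota(C',D)<\iota(C,D)$.

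For the strict decrease, the point is combinatorial: $d$ crosses $c$ in $\iota(C,D)$ points, and the returns of the vertical flow to $c$ partition $c$ into subintervals; by construction $C'$ is built so that its core curve, pushed to be geodesic, "absorbs'' the horizontal displacement of one vertical strand of $d$, so that strand contributes $0$ instead of $1$ to $\iota(C',D)$, while no new intersections are created (every vertical leaf meeting $C'$ already met $C$, so it met $d$-crossings that were already counted). Bounding $\iota(C',D)\le\iota(C,D)-1$ this way is the substance of the proposition. The main obstacle I expect is exactly this bookkeeping: making rigorous the claim that the surgery removes at least one crossing \emph{without} introducing others — this requires a careful case analysis of how the chosen vertical segment $\sigma$ sits relative to the other vertical separatrices and to the strands of $d$, and using the hyperelliptic symmetry to control the complementary pieces. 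Handling the degenerate-cylinder cases uniformly (via the deformations $(X_t,\omega_t)$ of Lemma~\ref{lm:degen:cyl:deform}, where $C$ or $D$ becomes an honest cylinder and intersection numbers are preserved) is the second technical point, but it should reduce cleanly to the non-degenerate case.
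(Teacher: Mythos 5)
Your setup is right (normalize $C$ horizontal and $D$ vertical, count crossings of the vertical strands of $d$ with the core of $C$), but the proposal stops exactly where the proof has to begin. Two steps are asserted rather than established. First, the existence of the cylinder $C'$: a closed curve $\gamma$ built from a vertical strand $\sigma$ and a horizontal arc in $C$ is not automatically freely homotopic to the core curve of a cylinder — that is precisely the nontrivial property that defines vertices of $\hat{\Cc}_{\rm cyl}$ — and the lemmas you invoke do not apply as stated, since $\sigma$ is a vertical arc with endpoints on $\partial C$ rather than a saddle connection invariant under $\hinv$, so you cannot cut along $\sigma$ and $\hinv(\sigma)$ and quote Lemma~\ref{lm:slit:tor:decomp} or Lemma~\ref{lm:H2:inv:sc:2cyl}. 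Second, the strict inequality: your mechanism gives at best ``every vertical leaf crossing $C'$ crosses $C$,'' i.e.\ $\iota(C',D)\leq\iota(C,D)$, and the claim that the surgery ``absorbs'' one crossing without creating new ones is exactly the content of the proposition; you acknowledge this yourself, which means the proof is not yet there. (Also, since $C'$ is transverse to $C$ with $\iota(C,C')=1$, the bound $\dist(C,C')\leq 3$ itself requires exhibiting intermediate vertices, which you do not do.)

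For comparison, the paper makes both steps concrete at once by representing $(X,\omega)$ as a centrally symmetric octagon or decagon $\P$ built around the simple cylinder $C$ (Proposition~\ref{prop:polygon:construct}): the candidate cylinders $E$, $F$, $G$ are read off as projections of explicit sub-polygons (unions of triangles or embedded parallelograms), which settles their existence and their distance to $C$ in $\hat{\Cc}_{\rm cyl}$, and the strict decrease follows from a pigeonhole count — the crossings of the lift $\hat d$ with the horizontal diagonal of $\P$ dominate the sum of its crossings with two sub-segments, each controlling $\iota(\cdot,D)$ for one candidate, so when $\iota(C,D)>0$ at least one candidate does strictly better. If you want to salvage your route, you would essentially have to reprove this polygonal case analysis in the language of the first-return IET, which is the same work in different clothing.
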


To prove this proposition we will make use of the representation of translation surfaces as polygons in $\R^2$. In Section~\ref{sec:triangulation}, we give a uniform construction from symmetric polygons of translation surfaces in genus two satisfying the hypothesis of Proposition~\ref{prop:reduce:inters}.

\subsection{Proof of Proposition~\ref{prop:reduce:inters}, Case $\H(2)$}
\begin{proof}
By using $\GL^+(2,\R)$, we can assume that $C$ is a horizontal cylinder, and $D$ is vertical. From Proposition~\ref{prop:polygon:construct}(i), we can construct $(X,\omega)$ from a symmetric polygon  $\P:=(P_0\dots P_3 Q_0\dots Q_3)$ in $\R^2$. Note that by construction, the hyperelliptic involution of $X$ lifts to  the central symmetry fixing the midpoint of $\ol{P_0Q_0}$.

\begin{figure}[htb]
\begin{minipage}[t]{0.3\linewidth}

\centering
\begin{tikzpicture}[scale=0.35]
\fill[blue!60!yellow!10] (2,2) -- (7,6) -- (5,8) -- cycle;
\fill[blue!60!yellow!10] (4,-4) -- (6,-6) -- (9,0) -- cycle;
\fill[green!60!yellow!10] (1,0) -- (2,2) -- (4,-4) -- cycle;
\fill[green!60!yellow!10] (10,2) -- (7,6) -- (9,0) -- cycle;

\draw (1,0) -- (4,-4) -- (6,-6) -- (9,0) -- (10,2) -- (7,6) -- (5,8)  -- (2,2) -- cycle;
\draw (1,0) -- (9,0) (2,2) -- (10,2);
\draw (2,2) -- (7,6) -- (9,0) -- (4,-4) -- cycle;

\foreach \x in {(1,0), (4,-4), (6,-6), (9,0), (10,2), (7,6), (5,8), (2,2)} \filldraw[fill=black] \x circle (3pt);
\draw  (5,8) -- (5,2) (7,6) -- (7,2) (9,0) -- (9,2);
\draw (2,2) node[left] {\tiny $P_0$} (5,8) node[above] {\tiny $P_1$} (7,6) node[above] {\tiny $P_2$} (10,2) node[right] {\tiny $P_3$} (9,0) node[right] {\tiny $Q_0$} (6,-6) node[below] {\tiny $Q_1$} (4,-4) node[below] {\tiny $Q_2$} (1,0) node[left] {\tiny $Q_3$};
\draw (5,2) node[below] {\tiny $X_1$} (7,2) node[below] {\tiny $X_2$} (9,2.3) node {\tiny $Y$};
\draw (6,-8) node {\tiny Case $x_2 \leq y < x_3$};
\end{tikzpicture}

\end{minipage}
\begin{minipage}[t]{0.3\linewidth}

\centering
\begin{tikzpicture}[scale=0.35]
\fill[green!60!yellow!10] (0,0) -- (3,2) -- (1,-3) -- cycle;
\fill[green!60!yellow!10] (9,2) -- (8,5) -- (6,0) -- cycle;
\fill[red!60!yellow!10] (3,2) -- (5,8) -- (6,0) -- (4,-6) -- cycle;

\draw (0,0) -- (1,-3) -- (4,-6) -- (6,0) -- (9,2) -- (8,5) -- (5,8) -- (3,2) -- cycle;
\draw (0,0) -- (6,0) (3,2) -- (9,2);
\draw (1,-3) -- (3,2) -- (4,-6) (5,8) -- (6,0) -- (8,5);
\draw (5,8) -- (5,2) (6,0) -- (6,2) (8,5) -- (8,2);
\foreach \x  in {(0,0),(1,-3),(4,-6),( 6,0),(9,2), (8,5),(5,8),(3,2)} \filldraw[fill=black] \x circle (3pt);

\draw (3,2) node[left] {\tiny $P_0$} (5,8) node[above] {\tiny $P_1$} (8,5) node[above] {\tiny $P_2$} (9,2) node[right] {\tiny $P_3$} (6,0) node[right] {\tiny $Q_0$} (4,-6) node[below] {\tiny $Q_1$} (1,-3) node[below] {\tiny $Q_2$} (0,0) node[left] {\tiny $Q_3$};

\draw (5,2) node[below] {\tiny $X_1$} (6,2) node[above] {\tiny $Y$} (7.5,2.3) node {\tiny $X_2$};

\draw (4.5,-8) node {\tiny Case $x_1\leq y < x_2$};
\end{tikzpicture}

\end{minipage}
\begin{minipage}[t]{0.3\linewidth}

\centering
\begin{tikzpicture}[scale=0.35]
\fill[red!60!yellow!10] (4,2) -- (7,8) -- (6,0) -- (3,-6) -- cycle;

 \draw (0,0) -- (1,-4) -- (3,-6) -- (6,0) -- (10,2) -- (9,6) -- (7,8) -- (4,2) -- cycle;

 \draw (0,0) -- (6,0) (4,2) --(10,2) (4,2) -- (3,-6) (7,8) -- (6,0);

 \draw (4,2) -- (4,0) (6,0) -- (6,2) (7,8) -- (7,2) (9,6) -- (9,2);

 \draw[red] (4.25,2.5) -- (4.25,0) (7.25,7.75) -- (7.25,2) (9.25,5) -- (9.25,2);

 \foreach \x in {(0,0),(1,-4),(3,-6),( 6,0), (10,2),(9,6),(7,8),(4,2)} \filldraw[fill=black] \x circle (3pt);

 \draw (4,2) node[left] {\tiny $P_0$} (7,8) node[above] {\tiny $P_1$} (9,6) node[above] {\tiny $P_2$} (10,2) node[right] {\tiny $P_3$} (6.5,-0.5) node {\tiny $Q_0$} (3,-6) node[below] {\tiny $Q_1$} (1,-4) node[below] {\tiny $Q_2$} (0,0) node[left] {\tiny $Q_3$};

 \draw (4,0) node[below] {\tiny $Z$} (6,2) node[above] {\tiny $Y$} (7,2) node[below] {\tiny $X_1$} (8.5,2.3) node {\tiny $X_2$};

 \draw (5,-8) node {\tiny Case $0 < y < x_1$};

\end{tikzpicture}

\end{minipage}

\caption{Finding simple cylinders having less intersections with $D$, case $\H(2)$.}
\label{fig:H2:polygon}
\end{figure}
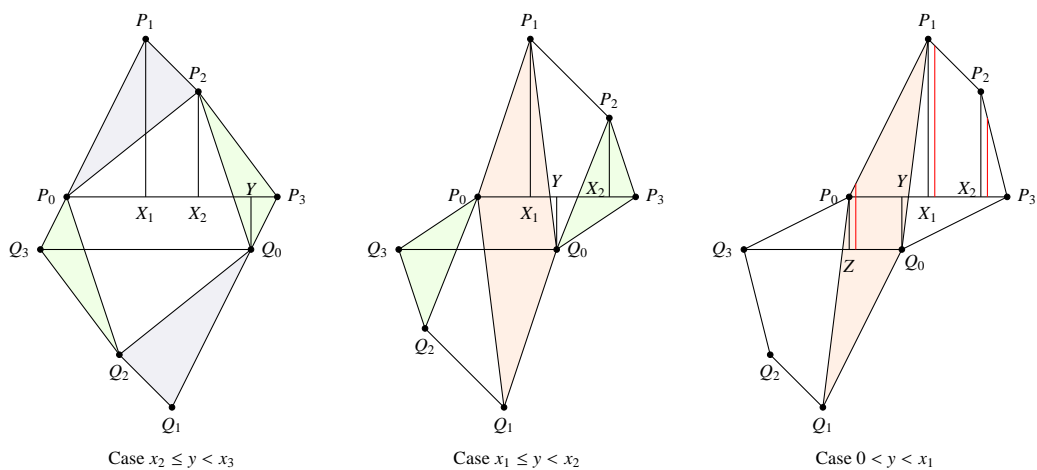

Let $X_1,X_2$, and $Y$  be respectively the vertical projections of $P_1,P_2$, and $Q_0$ on $\ol{P_0P_3}$. Let $x_1,x_2,x_3,y$ be respectively the lengths of $\ol{P_0X_1}, \ol{P_0X_2}, \ol{P_0P_3}, \ol{P_0Y}$. Clearly, we have $0\leq x_1\leq x_2 \leq x_3$ and $0\leq y \leq x_3$. Remark that by cutting and gluing, the cases $y=0$ ($Y\equiv P_0$) and $y=x_3$ ($Y \equiv P_3$) are equivalent. Therefore we can   always suppose $ 0 < y \leq x_3$.

By symmetry, we can assume that $|\ol{P_1X_1}| \geq |\ol{P_2X_2}|$ (see Figure~\ref{fig:H2:polygon}). Observe that the union of the projections of $(P_0P_1P_2)$ and $(Q_0Q_1Q_2)$ in $X$ is a cylinder $E$ which is disjoint from $C$. Similarly, the union of the projections of $(P_2P_3Q_0)$ and $(Q_2Q_3P_0)$ is also a cylinder $F$ in $X$, which is disjoint from $E$. Observe that by assumption, $E$ is always a simple cylinder, but $F$ can be a degenerate one (that is when both $\ol{P_2P_3}$ and $\ol{P_3Q_0}$ are vertical). Note that we have $\dist(C,E)=1$ and  $\dist(C,F)=2$.

Let $d$ be a core curve  of $D$ and $\hat{d}$ be the pre-image of $d$ in $\P$. Remark that $\hat{d}$ is a (finite) union of vertical segments with endpoints in the boundary of $\P$ and none of the vertices of $\P$ is contained in $\hat{d}$. We first consider the generic case, where none of the sides of $\P$ is vertical. By assumption, we have
$$
0< x_1 <x_2 < x_3 \text{ and } 0 < y < x_3.
$$
We have three possibilities:
\begin{itemize}
\item[(a)] $ x_2 \leq y < x$.  We observe that if a vertical line  intersects $\ol{P_0P_2}$ or $\ol{P_2Q_0}$ then it must intersect $\ol{P_0X_2}$ or $\ol{X_2Y}$ respectively. Thus, we have
    $$
    \#\{\hat{d}\cap\ol{P_0P_3}\}\geq \#\{\hat{d}\cap\ol{P_0P_2}\}+\#\{\hat{d}\cap\ol{P_2Q_0}\}.
    $$
    It follows that at least one of the following inequalities is true
    $$
    \left[ \begin{array}{lcl}
    \#\{\hat{d}\cap\ol{P_0P_2}\} < \#\{\hat{d}\cap\ol{P_0P_3}\} & \Rightarrow &   \iota(E,D) < \iota(C,D), \\
    \#\{\hat{d}\cap\ol{P_2Q_0}\} < \#\{\hat{d}\cap\ol{P_0P_3}\} & \Rightarrow &   \iota(F,D) < \iota(C,D).
    \end{array}
    \right.
    $$
Therefore, in this case, we can  choose $C'$ to be either $E$ or $F$.\medskip

\item[(b)] $x_1 \leq y < x_2$. Remark that in this case the parallelogram $(P_0P_1Q_0Q_1)$ is contained in $\P$, thus it projects to a simple cylinder $G$ in $X$, which is disjoint from $F$. In particular, we have $\dist(G,C) \leq 3$. We now observe that
    $$
    \#\{\hat{d}\cap \ol{X_1X_2}\}=\#\{\hat{d}\cap\ol{P_1Q_0}\}+\#\{\hat{d}\cap\ol{P_2Q_0}\} \leq \#\{\hat{d}\cap\ol{P_0P_3}\}.
    $$
    Therefore, at least one of the following inequalities is true $\iota(F,D) < \iota(C,D)$  or $\iota(G,D) < \iota(C,D)$. Hence we can choose $C'$ to be either $F$ or $G$.\medskip

\item[(c)] $0<y< x_1$. We will show that in this case $\iota(G,D) < \iota(C,D)$. Let $Z$ be the vertical projection of $P_0$ to $\ol{Q_0Q_3}$. We choose a core curve $d$ of $D$ which is  contained in the $\eps$-neighborhood of the left boundary of $D$, with $\eps >0$ small. The left boundary of $D$ is a vertical saddle connection, thus it contains (the projection of) one of   the following segments $\ol{P_0Z}, \ol{P_1X_1},\ol{P_2X_2}$. It follows that, $\hat{d}$ contains a vertical segment $\hat{d}_0$ which is $\eps$-close to one of $\ol{P_0Z}, \ol{P_1X_1},\ol{P_2X_2}$ from the right. Observe that $\hat{d}_0$ always intersects $\ol{P_0P_3}$, but when $\eps$ is chosen to be small enough, $\hat{d}_0$ does not intersect $\ol{P_1Q_0}$. Since any vertical segment in $\P$ intersecting $\ol{P_1Q_0}$ must intersect $\ol{YX_1} \subset \ol{P_0P_3}$, it follows that $\iota(G,D) < \iota(C,D)$, and we can choose $C'$ to be $G$.
\end{itemize}

It remains to show that the same arguments work in the degenerating situations, that is when one of the sides of $\P$ is vertical. First, let us suppose that $\ol{P_2P_3}$ is vertical, ({\em i.e.} $x_2=x_3$).
\begin{itemize}
\item[$\bullet$] If $y=x_3$ then $F$ becomes a degenerate cylinder. Clearly $F$ and $D$ are disjoint since they are both vertical. Therefore $\dist(C,D) \leq \dist(C,F)+1  \leq 3$, hence we can choose $C'$ to be $D$.

\item[$\bullet$] If $0<y< x_3$ then  Case (a) and Case (b) then follow from the same arguments. For Case (c),  we observe that the left boundary of $D$ is not invariant by the hyperelliptic involution, and $\ol{P_2P_3}$ projects to an invariant saddle connection. Therefore $\hat{d}_0$ is either $\eps$-close to $\ol{P_0Z}$ or $\ol{P_1X_1}$. Hence we can use the same argument to conclude that $\iota(G,D) < \iota(C,D)$ and we can choose $C'$ to be $G$.
\end{itemize}

Other degenerations are easy to deal with in similar manner, details are left for the reader.
\end{proof}

\subsection{Proof of Proposition~\ref{prop:reduce:inters}, Case $\H(1,1)$.}
\begin{proof}
 Using the notations in Proposition~\ref{prop:polygon:construct}(ii), we know that $(X,\omega)$ is obtained from a  decagon $\P:=(P_0\dots P_4Q_0\dots Q_4) \subset \R^2$.  Our arguments depend on the properties of this decagon. We have three different models for $\P$ (see Figure~\ref{fig:H11:triang:models}):  (I) both $\inter(\ol{P_0P_2})$ and $\inter(\ol{P_2P_4})$ are contained in $\inter(\P)$, (II) only one of $\inter(\ol{P_0P_2})$ and $\inter(\ol{P_2P_4})$ is contained in $\inter(\P)$, and (III) none of  $\inter(\ol{P_0P_2})$ and $\inter(\ol{P_2P_4})$ is contained in $\inter(\P)$.

Let $X_1,X_2,X_3$, and $Y$ be respectively the vertical projections of $P_1,P_2,P_3$, and $Q_0$ on $\ol{P_0P_4}$. The lengths of $\ol{P_0X_i}$, $\ol{P_0P_4}$, and $\ol{P_0Y}$ are denoted by $x_i$, $x_4$, and $y$ respectively.  As in the previous case, we have $0 \leq x_i \leq x_{i+1}, \; i=1,2,3$, and $0 < y \leq x_4$.  Let $d$ be a core curve of $D$, and $\hat{d}$ its pre-image in $\P$.


\begin{figure}[htb]
\begin{minipage}[t]{0.3\linewidth}

\centering
\begin{tikzpicture}[scale=0.35]
\fill[blue!60!yellow!10] (1,2) -- (3,7) -- (5,6) -- cycle;
\fill[blue!60!yellow!10]  (8,0) -- (6,-5) -- (4,-4) -- cycle;
\fill[green!60!yellow!10] (5,6) -- (8,7) -- (9,2) -- cycle;
\fill[green!60!yellow!10] (0,0) -- (1,-5) -- (4,-4) -- cycle;

\draw (0,0) -- (1,-5) -- (4,-4) -- (6,-5) -- (8,0) -- (9,2) -- (8,7) -- (5,6) -- (3,7) -- (1,2) -- cycle;

\draw (0,0) -- (8,0) (1,2) -- (9,2);

\draw (1,2) -- (5,6) -- (9,2) (0,0) -- (4,-4) -- (8,0);

\foreach \x in {(0,0),(1,2),(9,2),(8,0), (3,7), (5,6), (8,7), (1,-5), (4,-4),(6,-5)} \filldraw[fill=black] \x circle (3pt);

\draw (1,2) node[left] {\tiny $P_0$} (9,2) node[right] {\tiny $P_4$} (0,0) node[left] {\tiny $Q_4$} (8,0) node[right] {\tiny $Q_0$};

\draw (3,7) node[above] {\tiny $P_1$} (5,6) node[above] {\tiny $P_2$} (8,7) node[above] {\tiny $P_3$};

\draw (1,-5) node[below] {\tiny $Q_3$} (4,-4) node[below] {\tiny $Q_2$} (6,-5) node[below] {\tiny $Q_1$};

\draw (4.5,-6.5) node {\tiny Model I};
\end{tikzpicture}

\end{minipage}
\begin{minipage}[t]{0.3\linewidth}

\centering
\begin{tikzpicture}[scale=0.35]
\fill[blue!60!yellow!10] (1,2) -- (3,7) -- (5,6) -- cycle;
\fill[blue!60!yellow!10]  (8,0) -- (6,-5) -- (4,-4) -- cycle;
\fill[green!60!yellow!10] (6,4) -- (8,0)  -- (9,2) -- cycle;
\fill[green!60!yellow!10] (0,0) -- (3,-2) -- (1,2) -- cycle;

\draw (0,0) -- (3,-2) -- (4,-4) -- (6,-5) -- (8,0) -- (9,2) -- (6,4) -- (5,6) -- (3,7) -- (1,2) --  cycle;

\draw (0,0) -- (8,0) (1,2) -- (9,2);

\draw (3,-2) -- (1,2) -- (5,6) (6,4) -- (8,0) -- (4,-4) ;

\draw (3,7) -- (3,2) (5,6) -- (5,2) (6,4) -- (6,2) (8,0) -- (8,2);

\foreach \x in {(0,0),(1,2),(9,2),(8,0), (3,7), (5,6), (6,4), (3,-2), (4,-4),(6,-5)} \filldraw[fill=black] \x circle (3pt);

\draw (1,2) node[left] {\tiny $P_0$} (9,2) node[right] {\tiny $P_4$} (0,0) node[left] {\tiny $Q_4$} (8,0) node[right] {\tiny $Q_0$};

\draw (3,7) node[above] {\tiny $P_1$} (5,6) node[above] {\tiny $P_2$} (6,4) node[above] {\tiny $P_3$};

\draw (3,-2) node[below] {\tiny $Q_3$} (4,-4) node[below] {\tiny $Q_2$} (6,-5) node[below] {\tiny $Q_1$};

\draw (3,2) node[below] {\tiny $X_1$} (5,2) node[below] {\tiny $X_2$} (6,2) node[below] {\tiny $X_3$} (7.75,2) node[below] {\tiny $Y$};

\draw (4.5,-6.5) node {\tiny Model II};

\end{tikzpicture}

\end{minipage}
\begin{minipage}[t]{0.3\linewidth}

\centering
\begin{tikzpicture}[scale=0.35]
\fill[blue!60!yellow!10] (3,3) -- (5,7) -- (6,4) -- cycle;
\fill[blue!60!yellow!10] ( 3,-2) -- (6,-1) -- (4,-5) -- cycle;
\fill[green!60!yellow!10] (6,4) -- (8,0)  -- (9,2) -- cycle;
\fill[green!60!yellow!10] (0,0) -- (3,-2) -- (1,2) -- cycle;

 \draw (0,0) -- (3,-2) -- (4,-5) -- (6,-1) -- (8,0) -- (9,2) -- (6,4) -- (5,7) -- (3,3) -- (1,2) --  cycle;

 \draw (0,0) -- (8,0) (1,2) -- (9,2);

 \draw (3,3) -- (6,4) -- (8,0)  (1,2) -- (3,-2) -- (6,-1);

\draw (3,3) -- (3,2) (5,7) -- (5,2) (6,4) -- (6,2) (8,0) -- (8,2);

\foreach \x in {(0,0),(1,2),(9,2),(8,0), (3,3), (5,7), (6,4), (3,-2), (4,-5),(6,-1)} \filldraw[fill=black] \x circle (3pt);

\draw (1,2) node[left] {\tiny $P_0$} (9,2) node[right] {\tiny $P_4$} (0,0) node[left] {\tiny $Q_4$} (8,0) node[right] {\tiny $Q_0$};

\draw (3,3) node[above left] {\tiny $P_1$} (5,7) node[above] {\tiny $P_2$} (6,4) node[above right] {\tiny $P_3$};

\draw (3,-2) node[below left] {\tiny $Q_3$} (4,-5) node[below] {\tiny $Q_2$} (6,-1) node[below right] {\tiny $Q_1$};

\draw (3,2) node[below] {\tiny $X_1$} (5,2) node[below] {\tiny $X_2$} (6,2) node[below] {\tiny $X_3$} (7.75,2) node[below] {\tiny $Y$};

\draw (4.5,-6.5) node {\tiny Model III};
\end{tikzpicture}
\end{minipage}
\caption{Constructing $(X,\omega)$ from a decagon.}
\label{fig:H11:triang:models}
\end{figure}
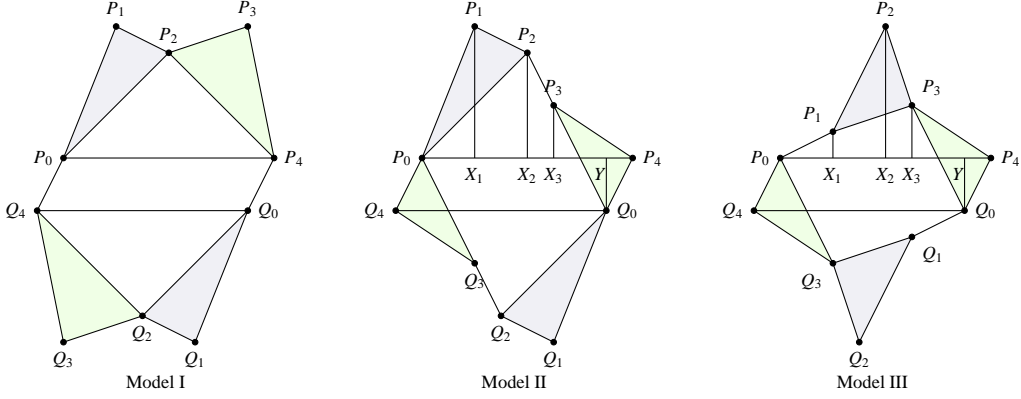

\subsubsection{Model I} In this model, the sets $(P_0P_1P_2)\cup(Q_0Q_1Q_2)$ and $(P_2P_3P_4)\cup(Q_2Q_3Q_4)$ project to two disjoint simple cylinders in $X$ which will be denoted by $E$ and $F$ respectively. Note that $\dist(C,E)=\dist(C,F)=1$.  Clearly, we have
    $$
    \#\{\hat{d}\cap\ol{P_0P_4}\}=\#\{\hat{d}\cap\ol{P_0P_2}\}+\#\{\hat{d}\cap\ol{P_2P_4}\} \Rightarrow \iota(C,D)=\iota(E,D)+\iota(F,D).
    $$
\noindent Therefore, we can pick $C'$ to be $E$ or $F$.

\subsubsection{Model II} By symmetry, we only need to consider the case $\inter(\ol{P_0P_2}) \subset \inter(\P)$, and $\inter(\ol{P_2P_4})\not\subset \inter(\P)$.  Let $E$ be  the simple cylinder on $X$ which is the projection of $(P_0P_1P_2)\cup(Q_0Q_1Q_2)$.  Let $F$ be the cylinder which is the projection of  $(P_3P_4Q_0)\cup(Q_3Q_4P_0)$. We have $\dist(C,E)=1$ and $\dist(C,F)=2$.

We first consider the generic situation, that is  $0< x_i < x_{i+1}, \, i=1,2,3,$ and $ 0< y < x_4$. Note that in this situation $F$ is a simple cylinder.  We have three cases: (a) $ x_2\leq y <x_4$, (b) $x_1\leq y < x_2$, (c)  $0<y<x_1$. In all of these cases, one can find a simple cylinder having  the desired  property by the same arguments as the case $(X,\omega) \in \H(2)$.

%
%

Consider now the degenerating situations: (1) $\ol{P_0P_1}$ is vertical $ \Leftrightarrow  x_1=0$, (2) $\ol{P_1P_2}$ is vertical $\Leftrightarrow x_1=x_2$, (3) $\ol{P_2P_3}$ is vertical $\Leftrightarrow x_2=x_3$, (4) $\ol{P_3P_4}$ is vertical $\Leftrightarrow x_3=x_4$, (5) $Y \equiv P_4 \Leftrightarrow y=x_4$.  If (4) or (5) does not occur then $F$ is always a simple cylinder, hence the arguments above apply. If (4) and (5) hold then $F$ is a vertical degenerate cylinder.  Since $F$ must be disjoint from $D$, we have $\dist(C,D) \leq 3$. Therefore, we can choose $C'$ to be $D$.

\subsubsection{Model III} In this case $P_2$ must be the highest point of $\P$, and $\ol{P_1P_3}$ must be contained in $\P$. Consequently, the  union $(P_1P_2P_3)\cup(Q_1Q_2Q_3)$ projects to a simple cylinder $E$ in $X$.  Let  $F$ denote the cylinder in $X$ which is the projection of $(P_3P_4Q_0)\cup(Q_3Q_4P_0)$. Remark that $\dist(C,E)=1$ and $\dist(C,F)=2$. It is not difficult to see that the same arguments as the previous cases also  allow us to get the desired conclusion.
\end{proof}

\subsection{Proof of Theorem~\ref{thm:dist:n:inter}}
\begin{proof}
By the Lemmas~\ref{lm:nsimple:cylinder},  \ref{lm:fill:hor:cyl}, we know that there exist two simple cylinders $C'$ and $D'$ such that
$$
\left\{
 \begin{array}{lcl}
\iota(C',D') & \leq & \iota(C,D) \\
\dist(C,D) & \leq & \dist(C',D')+4.
\end{array}
\right.
$$
It follows from Lemma~\ref{lm:vleaf:miss:C} and  Proposition~\ref{prop:reduce:inters}  that $\dist(C',D') \leq 3\iota(C',D')+2$. Therefore
$$
\dist(C,D) \leq 3\iota(C,D)+6.
$$
\end{proof}

\section{Infinite diameter}\label{sec:inft:diam}

In this section we  prove

\begin{Proposition}\label{prop:infty:diam}
 For any $(X,\omega)\in \H(2)\sqcup\H(1,1)$, the diameter of $\hat{\Cc}_{\rm cyl}(X,\omega,f)$ is infinite.
\end{Proposition}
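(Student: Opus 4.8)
The starting point is that, since $\hat{\Cc}_{\rm cyl}(X,\omega,f)$ is by construction a subgraph of $\Cc(S)$, every edge-path in it is an edge-path in $\Cc(S)$; hence $\dist^{\Cc}(u,v)\le\dist(u,v)$ for all vertices $u,v$ of $\hat{\Cc}_{\rm cyl}$. Consequently it is enough to produce a sequence of cylinders $D_0,D_1,D_2,\dots$ on $(X,\omega)$ with $\dist^{\Cc}([D_0],[D_k])\to\infty$. Using the $\GL^+(2,\R)$-invariance of $\hat{\Cc}_{\rm cyl}$ (so that the diameter is really an invariant of the Teichm\"uller disk) we are free to normalize $(X,\omega)$ by any matrix before starting.

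I would first choose the normalization so that the Teichm\"uller ray $g_t\cdot(X,\omega)$, $g_t=\mathrm{diag}(e^{t},e^{-t})$, $t\to+\infty$, is recurrent to a compact subset $K$ of the stratum; by ergodicity of the Teichm\"uller geodesic flow this holds for almost every choice of horizontal direction, and by Masur's criterion it forces the vertical foliation $\mathcal F_v$ of $(X,\omega)$ to be uniquely ergodic, hence minimal and filling. Pick times $t_k\to\infty$ with $g_{t_k}\cdot(X,\omega)\in K$. By Theorem~\ref{thm:exist:mult:curv} every translation surface in genus two contains a cylinder; let $D_k$ be the cylinder of $(X,\omega)$ which is a \emph{shortest} cylinder on $g_{t_k}\cdot(X,\omega)$. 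Compactness of $K$ gives a uniform two-sided bound $\mathrm{sys}(K)\le\ell_k\le B$ on the flat length $\ell_k$ of $D_k$ measured on $g_{t_k}\cdot(X,\omega)$.

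The rest is the standard dictionary between Teichm\"uller geodesics and the curve graph (Masur-Minsky \cite{MasMin99}, Klarreich \cite{Kla99}): curves of flat length $\le B$ on a translation surface lying in the compact set $K$ have intersection number, hence $\Cc$-distance, uniformly bounded in terms of $B$ and $K$ from the systole of that surface; the systole of $g_t\cdot(X,\omega)$ traces an unparametrized quasi-geodesic in $\Cc(S)$; and since $\mathcal F_v$ is minimal and filling this quasi-geodesic converges to the point $[\mathcal F_v]\in\partial\Cc(S)$, hence leaves every ball of $\Cc(S)$. Combining, $\dist^{\Cc}([D_0],[D_k])\to\infty$, so $\dist([D_0],[D_k])\to\infty$ and $\hat{\Cc}_{\rm cyl}$ has infinite diameter.

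I expect the main points requiring care to be: (i) the uniform cylinder estimate — that on a compact family of translation surfaces there is a cylinder whose circumference is bounded above (and, automatically, bounded below by the systole), so that $D_k$ really has bounded flat length at time $t_k$; and (ii) assembling the curve-complex input cleanly, in particular the fact that the systole along the recurrent Teichm\"uller ray with uniquely ergodic vertical foliation escapes to infinity in $\Cc(S)$. Everything else is bookkeeping. (When $\SL(X,\omega)$ happens to contain a hyperbolic element, one can bypass all of this: the corresponding pseudo-Anosov $\varphi\in\Aff^+(X,\omega)$ permutes cylinders and acts loxodromically on $\Cc(S)$, so $\{\varphi^n[D_0]\}$ already has infinite $\Cc$-diameter; but this shortcut is not available for a general $(X,\omega)$, whose Veech group may be trivial.)
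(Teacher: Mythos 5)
Your proposal is correct and follows essentially the same route as the paper: normalize so the vertical foliation is minimal (uniquely ergodic), use the Masur--Minsky/Klarreich/Hamenst\"adt fact that the systole map along the Teichm\"uller ray is an unparametrized quasi-geodesic of infinite diameter in $\Cc(S)$, produce a cylinder of bounded flat geometry at each time whose core curve has bounded intersection with (hence bounded $\Cc$-distance from) the systole, and conclude via $\dist^{\Cc}\le\dist$. The only substantive difference is that the paper obtains the bounded-geometry cylinder at \emph{every} time $t$ from the Smillie--Vorobets bound (Theorem~\ref{thm:cyl:large:width}), so it never needs recurrence of the ray to a compact set or ergodicity of the Teichm\"uller flow, which your version invokes to get the two-sided length bounds at the recurrence times.
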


The geometry of the curve complex is closely related to the Teichm\"uller space $\Tcal(S)$. Recall that given a simple closed curve $\gamma$ on $S$, for any $x \in \Tcal(S)$ the extremal length $\Extm_x(\gamma)$  of $\gamma$ is defined to be
$$
\Extm_x(\gamma)=\sup_h |\gamma^*|^2_h,
$$

\noindent where $h$ ranges over the set of Riemannian metrics of area one in the conformal class of $x$, and $|\gamma^*|_h$ is the length of the shortest curve (with respect to $h$) in the homotopy class of $\gamma$. Alternatively, one can define $\Extm_x(\gamma)$ to be the inverse of the largest modulus of an annulus  homotopic to $\gamma$ on $S$.  There is a natural coarse mapping $\Phi$ from $\Tcal(S)$ to $\Cc(S)$ defined as follows, we assign to each $x \in \Tcal(S)$ a curve of minimal $x$-extremal length on $S$. It is a well known fact (see \cite[Lem. 2.4]{MasMin99}) that there is a universal constant $c$ depending only the topology of $S$, such that the diameter of the subset of $\Cc(S)$ consisting of simple curves having minimal $x$-extremal length   is at most $c$ for any $x \in \Tcal(S)$. 

Teichm\"uller geodesics in $\Tcal(S)$ through $x$ are the projections of the lines $a_t\cdot q$, where $q$ is a holomorphic quadratic differential on $S$ equipped with the conformal structure $x$, and $a_t=\left(\begin{smallmatrix} e^t & 0 \\ 0 & e^{-t} \end{smallmatrix} \right),  \, t \in \R$.  It is proven in \cite{MasMin99} that if $L_q: \R \ra \Tcal(S)$ is a Teichm\"uller geodesic, then $\Phi(L_q(\R))$ is an un-parametrized quasi-geodesic in $\Cc(S)$. It may happen that this quasi-geodesic has finite diameter. \medskip

The curve graph $\Cc(S)$ has infinite diameter (see~\cite{MasMin99}). In \cite{Kla99}, Klarreich shows that the boundary at infinity $\partial_\infty\Cc(S)$ of $\Cc(S)$ can be  identified with the space of topological minimal foliations $\Fcal_{\rm min}(S)$ on $S$. Recall that a foliation on $S$ is minimal if it has  no leaf which is a simple closed curve, here we consider foliations up to isotopy and Whitehead moves. A characterization of sequences of curves  converging to a foliation  in $\partial_\infty \Cc(S)$ is given by Hamenst\"adt \cite{Ham06}. It follows from this result  that if the vertical  of $q$  are minimal then $\Phi\circ L_q([0,\infty))$ is a  quasigeodesic of infinite diameter in $\Cc(S)$ (see \cite{Ham07,Ham10}). \medskip

Recall that a geometric (non-degenerate) cylinder on a translation surface is modeled by $\R\times (0,h)/((x,y)\sim (x+c,y))$, where $c >0$ is its circumference and $h$ is its width. In~\cite{Vor03}, developing Smillie's ideas in~\cite{Smi00}, Vorobets showed the following

\begin{Theorem}[Smillie-Vorobets]\label{thm:cyl:large:width}
 Given any stratum $\H(\kappa)$ of translation surface,  there exists a constant $K>0$ depending on $\kappa$ such that, on every translation surface of area one in $\H(\kappa)$, one can find a geometric cylinder of width bounded below by $K$.
\end{Theorem}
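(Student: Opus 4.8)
The plan is to reduce Theorem~\ref{thm:cyl:large:width} to the following claim: there are constants $m_0=m_0(\kappa)>0$ and $a_0=a_0(\kappa)>0$ such that every area-one $(X,\omega)\in\H(\kappa)$ carries a geometric cylinder $C$ which is simultaneously \emph{thick}, in the sense that its modulus $\mathrm{mod}(C):=h(C)/c(C)$ is at least $m_0$, and \emph{large}, in the sense that $\Aa(C)\geq a_0$; here $c(C)$ and $h(C)$ denote the circumference and the width of $C$. Granting this, one simply notes that $h(C)^2=\mathrm{mod}(C)\cdot\Aa(C)\geq m_0a_0$, so the theorem holds with $K=\sqrt{m_0a_0}$. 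The claim itself I would prove by induction on a complexity parameter of the stratum (for instance the number $N(\kappa)$ of triangles in a geodesic triangulation of a surface in $\H(\kappa)$), splitting the argument according to the systole $\mathrm{sys}(X,\omega)$, i.e.\ the length of a shortest saddle connection.

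First I would treat the thick case. Fix a threshold $\veps_0>0$. If $\mathrm{sys}(X,\omega)\geq\veps_0$ then $(X,\omega)$ lies in the subset $\H_{\veps_0}(\kappa)$ of area-one surfaces with systole at least $\veps_0$, which is compact. By Masur's theorem~\cite{Mas86} every translation surface has a periodic direction, hence at least one cylinder, so the function $(X,\omega)\mapsto\sup_C\sqrt{\mathrm{mod}(C)\Aa(C)}$ is everywhere strictly positive; since an embedded cylinder persists under small deformations with continuously varying circumference and width, and since $\Aa(C)\leq1$ controls the relevant cylinders locally, this function is lower semicontinuous, hence bounded below by a positive constant $K_{\veps_0}$ on the compact set $\H_{\veps_0}(\kappa)$. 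This settles every surface with $\mathrm{sys}\geq\veps_0$.

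Next I would treat the thin case, $\mathrm{sys}(X,\omega)=\ell<\veps_0$; let $\g$ be a shortest saddle connection, which after a rotation we take to be horizontal. If $\g$ is a core curve of a geometric cylinder $C$, then $\mathrm{mod}(C)=h(C)/\ell$ and $\Aa(C)=\ell\,h(C)$, so either $h(C)=\sqrt{\mathrm{mod}(C)\Aa(C)}$ is already bounded below by a fixed constant (and we are done), or $C$ is uniformly small in circumference, width and area; in the latter case I would cut $C$ out of $X$ and reglue its two boundary geodesics, obtaining a surface in a stratum of strictly smaller complexity and of area close to $1$, to which the induction hypothesis applies, and, because the removed $C$ is tiny, the cylinder it produces gives, up to $o(1)$ corrections in $\ell$, a cylinder of $X$ with modulus $\geq m_0/2$ and area $\geq a_0/2$. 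If instead $\g$ joins two distinct singularities, one collapses $\g$ to merge those singularities, landing again in a lower-complexity stratum, and pulls the resulting thick-and-large cylinder back across the (small) surgery. Since there are only finitely many strata of complexity below a given bound, only finitely many such reductions occur and the constants may be fixed once and for all.

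The step I have glossed over is also the main obstacle, and is the real content of the Smillie--Vorobets argument: the direction of the shortest saddle connection $\g$ need not be periodic, so there may be \emph{no} cylinder parallel to $\g$ at all, and the heuristic ``cut out the thin cylinder around $\g$'' then has nothing to cut. What one actually does --- following Smillie's ideas --- is either to perturb $\g$ to a nearby periodic direction while keeping quantitative control of the modulus and area of the resulting cylinder (a Siegel--Veech / first-return-time estimate guaranteeing a periodic direction close to $\g$ with a cylinder whose core is not much longer than $\ell$ and whose width is definite), or, equivalently, to analyze the first-return map of the vertical flow to a transversal built out of $\g$ and run the complexity induction on its pieces. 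Carrying this out so that \emph{all} constants depend only on $\kappa$, and not on the individual surface, is where the bulk of the work lies; the routine estimates in the surgeries above are comparatively harmless.
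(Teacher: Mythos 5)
This statement is not proved in the paper at all: it is imported verbatim from Vorobets~\cite{Vor03} (building on Smillie~\cite{Smi00}), so there is no internal argument to compare yours against. Judged on its own terms, your proposal has a genuine gap, and it is exactly the one you flag yourself. The compactness argument for surfaces with systole bounded below is fine (lower semicontinuity of the maximal cylinder width plus Masur's existence of a periodic direction does give a uniform bound on the thick part). But in the thin case your induction never gets started: the shortest saddle connection $\g$ need not lie in a periodic direction, there need not be any cylinder parallel or nearly parallel to $\g$, and so there is nothing to cut out and no lower-complexity surface to recurse to. Producing a cylinder with quantitative control near a degenerating surface is precisely the content of the theorem, so what you have written is a reduction of the statement to itself on the thin part of the stratum. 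The alternative you mention --- a Siegel--Veech or first-return estimate guaranteeing a periodic direction close to $\g$ with definite width --- is indeed the kind of input needed, but stating that such an estimate exists with constants depending only on $\kappa$ is again the theorem, not a proof of it. (Vorobets in fact proves the stronger statement that there is a cylinder of area bounded below and circumference bounded above, uniformly over the stratum, by a direct quantitative analysis rather than a systole dichotomy.)

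Two secondary problems in the thin-case surgeries, even where a thin cylinder does exist. A saddle connection cannot be a core curve of a geometric cylinder (core curves are regular closed geodesics), so the case split as written is vacuous; presumably you mean $\g$ lies on the boundary of a cylinder, but then the two boundary components are in general concatenations of several saddle connections and ``regluing the two boundary geodesics'' is not a well-defined operation landing in a stratum of smaller complexity unless the cylinder is simple. And in both surgeries (cutting a cylinder, collapsing $\g$ between distinct zeros) the step ``pull the thick-and-large cylinder back across the surgery'' requires the cylinder produced on the simpler surface to be disjoint from the surgery locus; arranging that with uniform constants is not a routine estimate but essentially the same difficulty in a different guise.
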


Proposition~\ref{prop:infty:diam} is an easy consequence of this result and the results of Klarreich and Hamenst\"adt.

\begin{proof}[Proof of Proposition~\ref{prop:infty:diam}]
Using the action of $\GL^+(2,\R)$, we can always assume that $\Aa(X,\omega)=1$ and the vertical foliation of $(X,\omega)$ is minimal. Let $L: \R \ra \Tcal(S)$ be the Teichm\"uller geodesic defined by $q=\omega^2$. By the results of Klarreich and Hamenst\"adt, the quasi-geodesic $\Phi\circ L(\R_{>0})$ has infinite diameter.

Denote by $\distc$ the distance in $\Cc(S)$, and by $\dist$ the distance in $\hat{\Cc}_{\rm cyl}(X,\omega,f)$. For any pair $(\alpha,\beta)$ in $\hat{\Cc}_{\rm cyl}(X,\omega,f)$, we have $\distc(\alpha,\beta) \leq \dist(\alpha,\beta)$.  

For each $t \in \R$, let $(X_t,\omega_t):=a_t\cdot (X,\omega)$.  Given any $R \in \R_{>0}$ there exist $t_1,t_2 \in (0,+\infty)$ such that $\distc(\Phi\circ L(t_1), \Phi\circ L(t_2)) \geq R$. Let $\alpha_i:=\Phi\circ L(t_i)$. By Theorem~\ref{thm:cyl:large:width} we know that there is a geometric cylinder $C_i$ of width bounded below by $K$ in $(X_{t_i},\omega_{t_i})$. Let $\beta_i$ be a core curve of $C_i$.

The extremal length of $\alpha_i$ in $X_i$   is bounded  by a universal constant $e_0(S)$ (see e.g \cite[Lem. 2.1]{Min93}). Thus by definition, the length of the shortest curve $\alpha^*_i$ in the homotopy class of $\alpha_i$ with respect to $\omega_{t_i}$ is bounded by $e_0(S)$. Since the width of $C_i$ is at least $K$, we have $\#\{\alpha^*_i\cap \beta_i\} \leq e_0(S)/K$, which implies that $\iota([\alpha_i],[\beta_i]) \leq e_0(S)/K$.

It is well know that the distance in $\Cc(S)$ is bounded by a linear function of the intersection number (see for instance \cite[Lem. 2.1]{MasMin99} or \cite[Lemma 1.1]{Bow06}). Thus there is a constant $M$ depending only on $S$ such that $\distc([\alpha_i],[\beta_i])\leq M$. Therefore, we have
$$
\distc([\beta_1],[\beta_2]) \geq \distc([\alpha_1],[\alpha_2])-\distc([\alpha_1],[\beta_1])-\distc([\alpha_2],[\beta_2]) \geq R-2M
$$
Since $\dist(C_1,C_2)=\dist([\beta_1],[\beta_2]) \geq \distc([\beta_1],[\beta_2])$, the proposition follows.
\end{proof}

\section{Automorphisms of the cylinder graph}\label{sec:automorphisms}

Let $\mathrm{Aff}^+(X,\omega)$ denote the group of affine automorphisms of $(X,\omega)$. Recall that elements of $\mathrm{Aff}^+(X,\omega)$ are orientation preserving homeomorphisms of $X$ that preserve the zero set of $\omega$, and are given by affine maps in local charts of the flat metric out side of this set (see~\cite{KenSmi00, MasTab02}). Remark that the differential of such a map (in local chart associated to the flat metric) is a constant matrix in $\SL(2,\R)$. Thus we have a group homomorphism $D: \mathrm{Aff}^+(X,\omega) \rightarrow \SL(2,\R)$ which associates to each element of $\mathrm{Aff}^+(X,\omega)$ its differential (derivative). The image of $D$ in $\SL(2,\R)$  is called the {\rm Veech group} of $(X,\omega)$ and usually denoted by $\SL(X,\omega)$. Note that the kernel  of $D$ is contained in the group $\mathrm{Aut}(X)$ of automorphisms of $X$, thus must be finite. The group $\SL(X,\omega)$ can also be viewed as the stabilizer of $(X,\omega)$  for the action of $\SL(2,\R)$.

Given a point $[X,\omega,f] \in \Omega\Tcal_2$, via the marking $f:S \ra X$, one can identify $\mathrm{Aff}^+(X,\omega)$ with a subgroup of the Mapping Class Group $\MCG(S)$ of $S$ (see \cite[Section 5]{MasTab02}). An element of $\MCG(S)$ induces naturally an automorphism of the curve graph $\Cc(S)$. It is a well known fact every automorphism  of $\Cc(S)$ arises from  an element of $\MCG(S)$ (\cite{Iva97,Luo00}). Since an affine homeomorphism maps cylinders into cylinders, and saddle connections into saddle connections, it is clear that any element of $\mathrm{Aff}^+(X,\omega)$ induces an automorphism of the subgraph $\hat{\Cc}_{\rm cyl}(X,\omega,f)$. The aim of this section is to show

\begin{Proposition}\label{prop:aut:preserve:cyl}
Let $\phi$ be an element of $\MCG(S)$ which preserves the subgraph $\hat{\Cc}_{\rm cyl}(X,\omega,f)$, that is $\phi(\hat{\Cc}_{\rm cyl}(X,\omega,f)) \subset \hat{\Cc}_{\rm cyl}(X,\omega,f)$. Then $\phi$ is induced by an affine automorphism in $\mathrm{Aff}^+(X,\omega)$. In particular,  $\phi$ realizes an automorphism of $\hat{\Cc}_{\rm cyl}(X,\omega,f)$.
\end{Proposition}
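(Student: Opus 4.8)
The plan is to show that any $\phi \in \MCG(S)$ preserving $\hat{\Cc}_{\rm cyl}$ must in fact preserve the set of (isotopy classes of) core curves of \emph{non-degenerate simple cylinders} of $(X,\omega)$, and more precisely the additional combinatorial data of which pairs of such curves bound a subsurface on which $(X,\omega)$ restricts to an immersed flat structure with prescribed singularity data. Once $\phi$ is known to map ``geometric configurations'' of $(X,\omega)$ to geometric configurations of $(X,\omega)$ in a way compatible with the flat metric, one reconstructs an affine automorphism realizing $\phi$ by a rigidity/uniqueness argument: two translation surface structures in genus two that induce the same decomposition data and the same marked combinatorial model must differ by an affine map, and this affine map represents $\phi$ in $\MCG(S)$.

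First I would set up the combinatorial invariants that distinguish, inside $\hat{\Cc}_{\rm cyl}$, the vertices coming from cylinders of $(X,\omega)$ from arbitrary curves. The key point is that $\hat{\Cc}_{\rm cyl}$ is not all of $\Cc(S)$ — it is a \emph{proper} subgraph carrying a rich local structure — and $\phi$ preserves that structure. Concretely, I would use Theorem~\ref{thm:exist:mult:curv} (and its proof): in $\H(2)$ the graph $\hat{\Cc}_{\rm cyl}$ has no triangles among non-degenerate cylinders, but degenerate cylinders \emph{can} sit in triangles, so $\phi$ must at least respect the partition into curves that lie in some triangle versus those that do not; combined with the (stratum-dependent) count of how many cylinders can simultaneously be realized disjointly, and with the fact (Lemmas~\ref{lm:sc:no:inv:types}, \ref{lm:H2:inv:sc:2cyl}, \ref{lm:degen:cyl:H2:H11}) that cylinders and degenerate cylinders are all invariant under the hyperelliptic involution $\hinv$, one pins down the $\hinv$-invariant structure. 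A clean way to organize this: the hyperelliptic involution is intrinsic to genus two, so $\phi$ commutes with $\hinv$ up to isotopy, and hence descends to an action on the $6$-punctured (or $5$-punctured-plus-marked) sphere $Y = X/\hinv$; there $\phi$ permutes the projections of cylinder curves, which are arcs/loops with controlled combinatorics, and one can appeal to the structure of arc complexes on the sphere.

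Next I would promote this combinatorial control to metric control. Using the action of $\GL^+(2,\R)$ I can normalize so that a chosen maximal collection of disjoint cylinders is horizontal; $\phi$ sends this collection to another such collection, which after applying the appropriate element of $\MCG(S)$ I may also assume is realized by cylinders in \emph{some} direction (periodicity of that direction is forced because the image curves are again core curves of cylinders). The moduli of these cylinders — circumferences, heights, and the gluing (twist) data — are recorded by the periods $\omega(c)$ for $c$ ranging over the relevant relative homology classes, and the constraint ``$\phi$ maps cylinders of $(X,\omega)$ exactly onto cylinders of $(X,\omega)$ for \emph{every} periodic direction'' is extremely rigid: it forces $\phi_*$ on $H_1(X;\Z)$ (equivalently on $H_1(Y;\Q)$ with the pole data) to act compatibly with the developing map, i.e. by a single matrix $A \in \SL(2,\R)$ on periods. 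This is the Thurston–Veech-type step: a mapping class that preserves the set of core curves of cylinders in infinitely many directions of a translation surface is affine. I would either invoke a known version of this (e.g. the characterization of $\Aff^+(X,\omega)$ via its action on periodic directions, as in the references to \cite{MasTab02, KenSmi00}) or prove the genus-two case directly: produce two transverse cylinder decompositions, note $\phi$ preserves each, deduce $\phi_*$ preserves two transverse sets of homology classes with their period data, and conclude $\phi_*$ is given by a matrix; then the corresponding affine map $\psi \in \Aff^+(X,\omega)$ has the same action on $H_1$ and on the hyperelliptic quotient as $\phi$, hence $\psi^{-1}\phi$ acts trivially on $H_1(X;\Z)$ and commutes with $\hinv$, so by the genus-two Torelli/hyperelliptic rigidity it is isotopic to the identity, giving $\phi = \psi$ in $\MCG(S)$.

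The main obstacle I anticipate is the \emph{degenerate cylinders}: they are the part of $\hat{\Cc}^{(0)}_{\rm cyl}$ that has no intrinsic flat-geometric meaning until one deforms $(X,\omega)$ (Lemma~\ref{lm:degen:cyl:deform}), so a priori $\phi$ could do something wild on degenerate-cylinder vertices while behaving well on genuine cylinders. I would handle this by first establishing the result on the non-degenerate part — showing $\phi$ restricted to core curves of honest cylinders is induced by an affine map $\psi$ — and then arguing that $\psi$ automatically accounts for the degenerate vertices too: a degenerate cylinder is determined by a pair of $\hinv$-invariant saddle connections with the angle condition of Definition~\ref{def:degen:cyl}, and saddle connections are intrinsic, so once $\psi$ is known to be affine it permutes $\hinv$-invariant saddle connections and hence degenerate cylinders, matching $\phi$'s action by the combinatorial rigidity already in hand. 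A secondary technical nuisance is ensuring enough transverse periodic directions exist with genuinely disjoint cylinder decompositions on an arbitrary (not necessarily Veech) surface in $\H(2)\sqcup\H(1,1)$; here I would use Masur's density of periodic directions together with Theorem~\ref{thm:exist:mult:curv} to guarantee, in each such direction, the required disjoint pair or triple, which supplies the two transverse families needed for the rigidity step.
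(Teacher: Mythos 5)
Your strategy --- extract combinatorial and homological control from the hypothesis, then invoke a Thurston--Veech type rigidity for mapping classes preserving transverse cylinder decompositions --- is genuinely different from the paper's, but it has a gap I do not see how to close. The rigidity step requires you to ``produce two transverse cylinder decompositions'' of $(X,\omega)$. The proposition is asserted for \emph{every} $(X,\omega)\in\H(2)\sqcup\H(1,1)$, and a surface that is not completely periodic need not admit even one direction in which it decomposes entirely into cylinders: a period count shows that surfaces in $\H(2)$ possessing some periodic direction form a positive-codimension (hence measure-zero) subset of the stratum, and Masur's theorem only gives density of directions \emph{containing} a cylinder, not of periodic directions. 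Relatedly, your parenthetical ``periodicity of that direction is forced because the image curves are again core curves of cylinders'' does not hold: $\phi$ sends a family of disjoint parallel core curves to a family of disjoint core curves of cylinders which need not be parallel (see the remark after Theorem~\ref{thm:exist:mult:curv}), so there is no well-defined image direction, and a maximal disjoint family (two or three curves) spans only a small isotropic subspace of $H_1(X;\Z)$, far too little to control $\phi_*$. Finally, even granting two transverse decompositions, the implication ``$\phi_*$ preserves two transverse families of cylinder classes with their period data, hence acts on periods by a single matrix $A\in\SL(2,\R)$, hence is isotopic to an affine map'' is asserted rather than proved; by Remark~\ref{rk:auto:prop:equiv} this is essentially the statement of the proposition, so the argument is circular at its key step.

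The paper's proof (following the argument of \cite[Lemma 22]{DLR10}) sidesteps periodic directions entirely by using uniquely ergodic ones, which exist in abundance on \emph{every} translation surface by Kerckhoff--Masur--Smillie. Writing $\phi\cdot[X,\omega,f]=[X',\omega',f']$, one shows $\UE(\omega')\subset\UE(\omega)$: along the Teichm\"uller ray in a uniquely ergodic direction of $\omega'$, Theorem~\ref{thm:cyl:large:width} supplies cylinders of bounded circumference whose core curves converge, as geodesic currents, to a foliation topologically equivalent to the directional one; since these curves are vertices of $\hat{\Cc}_{\rm cyl}(X,\omega,f)$ they also have well-defined directions with respect to $\omega$, and a limiting argument identifies the two classes in $\PMF(S)$. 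One then picks a jointly filling pair in $\UE(\omega')$ and uses the uniqueness of the Teichm\"uller geodesic joining two uniquely ergodic filling foliations to produce the affine representative of $\phi$. If you want to salvage your approach, you would need a substitute for the transverse-decomposition input that is available on arbitrary surfaces; the uniquely ergodic directions are exactly that substitute, and your treatment of degenerate cylinders, while reasonable, is downstream of the step that fails.
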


\begin{Remark}\label{rk:auto:prop:equiv}
Proposition~\ref{prop:aut:preserve:cyl} is equivalent to the following statement: if $\psi: X \ra X$ is a homeomorphism satisfying the condition:  for any regular simple closed geodesic or degenerate cylinder $c$,   $\psi(c)$ is freely homotopic to the core curves of a cylinder (possibly degenerate) on  $X$, then $\psi$ is isotopic to an affine automorphism of $(X,\omega)$.
\end{Remark}

The proof of this proposition essentially follows from the arguments of \cite[Lemma 22]{DLR10}. Before getting into the proof, let us recall some basic notions of Thurston's compactification of the Teichm\"uller space. Let $\MF(S)$ denote the space of {\em measured foliations} on $S$. The space of {\em projective measured foliations} denoted by $\PMF(S)$ is naturally the quotient of $\MF(S)$ by $\R_+^*$. Thurston showed that $\PMF(S)$ can be identified with the boundary of $\Tcal(S)$. A foliation is {\em minimal} if none of its leaves is a closed curve. A (measured) foliation is {\em uniquely ergodic} if it is minimal and there exists a unique transverse measure up to scalar multiplication. 

The set of (free homotopy classes of) simple closed curves in $S$ (that is the vertex set of $\Cc(S)$) is naturally embedded in $\MF(S)$ with the transverse measure being the counting measure of intersections. The geometric intersection number $\iota(.,.)$ defined on the set of pairs of simple closed curves extends to a continuous symmetric function $\iota : \MF(S)\times \MF(S) \ra [0,+\infty)$ which satisfies $\iota(a\lambda,b\mu)=ab\iota(\lambda,\mu)$, for all $a,b \in [0,+\infty)$ and $\lambda,\mu \in \MF(S)$. It has been shown by  Thurston that the set
$$
\{(0,+\infty)\cdot \alpha,  \quad \alpha \hbox{ is a simple closed curve}\}
$$
\noindent is dense in $\MF(S)$.

Two measured foliations are {\em topologically equivalent} if the corresponding topological foliations are the same up to isotopy and Whitehead move. The following result was proved in \cite{Rees81}

\begin{Proposition}\label{prop:min:fol:inters}
 If $\lambda$ is a minimal measured foliation, and $\iota(\lambda,\mu)=0$, then $\lambda$ and  $\mu$ are topologically equivalent.
\end{Proposition}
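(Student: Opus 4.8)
The plan is to transport the statement into the language of geodesic laminations, where the condition $\iota=0$ has a clean geometric meaning. Fix an auxiliary hyperbolic metric on $S$; this gives the standard $\iota$-preserving homeomorphism $\MF(S)\cong\mathcal{ML}(S)$ onto the space of measured geodesic laminations, under which minimal (arational) foliations go to minimal filling laminations, and under which two measured foliations are topologically equivalent exactly when the underlying geodesic laminations (their supports) coincide. It therefore suffices to prove: if $L$ is the support of a minimal filling measured lamination and $M$ is the support of any nonzero measured lamination with $\iota(L,M)=0$, then $M=L$.

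First I would extract the structure forced by $\iota(L,M)=0$: no leaf of $L$ meets a leaf of $M$ transversally, so $L\cup M$ is again a geodesic lamination, and since two distinct complete geodesics that do not cross are disjoint, $L\cap M$ is a closed union of leaves of $L$, i.e.\ a sublamination of $L$. By minimality of $L$, either $L\cap M=\vide$, or $L\cap M=L$, i.e.\ $L\subseteq M$.

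Next I would use that $L$ fills $S$: the complement $S\setminus L$ has finitely many components, each an open ideal polygon with finitely many ideal vertices whose sides are leaves of $L$. Suppose $\ell$ is a leaf of $M$ not contained in $L$. Since $\ell$ is a complete geodesic sharing no point with $L$ (a geodesic meeting $L$ without crossing it would itself be a leaf of $L$), $\ell$ lies inside one such ideal polygon; the only complete geodesics there avoiding the sides are the finitely many diagonals joining pairs of ideal vertices. Hence $\ell$ is such a diagonal: it is isolated in $M$ (a nearby leaf of $M$ would have to leave the polygon and cross $L$), and it is not a closed curve. But a measured lamination has no isolated non-closed leaves, since the transverse mass carried by such a leaf would have to escape to infinity along it. This contradiction shows $M$ has no leaf outside $L$; so the case $L\cap M=\vide$ is impossible (as $M\neq\vide$), and in the case $L\subseteq M$ we get $M=L$, which is topological equivalence of $\lambda$ and $\mu$.

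I expect the delicate points to be the bookkeeping for the complementary ideal polygons of $L$ together with the fact that a measured lamination admits no isolated non-closed leaf; this is precisely where both hypotheses (minimal and filling) are genuinely used, since a minimal lamination that failed to fill could sit disjointly inside a subsurface alongside an unrelated foliation and the conclusion would break. A more self-contained alternative, closer to \cite{Rees81}, is to work directly with measured foliations: put $\lambda$ in normal form with only trivalent singularities and no saddle connections, and show that $\iota(\lambda,\mu)=0$ lets one isotope $\mu$ so that, off a set of zero transverse measure, its leaves run along leaves of $\lambda$, which forces the two topological foliations to agree after Whitehead moves.
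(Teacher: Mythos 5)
Your argument cannot be compared against an internal proof because the paper does not give one: Proposition~\ref{prop:min:fol:inters} is simply quoted from \cite{Rees81}. Your lamination proof is a correct, essentially self-contained substitute, modulo two standard imported facts that you use implicitly (Levitt's dictionary identifying Whitehead classes of measured foliations with supports of measured geodesic laminations, and the fact that $\iota(\lambda,\mu)=0$ for measured geodesic laminations forces the supports to have no transverse intersection). Two remarks on substance. First, you have quietly strengthened the hypothesis from the paper's ``minimal'' (no closed leaves) to ``minimal and filling''. This is not cosmetic: with the paper's literal definition the statement is false --- take $\lambda$ with every leaf dense but admitting a cycle of saddle connections forming an essential simple closed curve $c$, and $\mu=c$; then $\iota(\lambda,\mu)=0$ but $\mu$ is not equivalent to $\lambda$. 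You are right that filling is genuinely needed, and the upgrade is harmless for the paper, since in the only place the proposition is invoked (the proof of Proposition~\ref{prop:aut:preserve:cyl}) the foliation $\lambda$ is uniquely ergodic, hence arational: an essential leaf-cycle would carry a singular invariant transverse measure. Second, your justification that a measured lamination has no isolated non-closed leaf --- ``the transverse mass would escape to infinity'' --- is the right idea but the wrong picture on a compact surface. The correct mechanism is that the isolated leaf carries an atom of the transverse measure, the atom propagates along the leaf by holonomy invariance, and a non-compact leaf recurs to some compact transversal infinitely often, which would give that transversal infinite mass. With that phrasing repaired, and granting the two standard facts above, the proof is complete; it is arguably preferable to the paper's bare citation, and your closing observation that minimality without filling is insufficient is exactly the point a careful reader of the statement should worry about.
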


Measured foliations are a special case of  more general objects called {\em geodesic currents} which were introduced by Bonahon (see \cite{Bon_Ann86, Bon_Invent88}). We refer to \cite{DLR10} for an introduction to this concept with  more details.  While the space of measure foliations is the completion of the set of {\em simple } closed curve, the space of geodesic currents, denoted by $\Cur(S)$, can be viewed as the completion of closed curves on $S$. In particular, we have a continuous extension of the intersection number function $\iota$ to $\Cur(S)\times \Cur(S)$. A characterization of measured foliations in the space of current geodesics was given Bonahon in \cite[Prop. 4.8]{Bon_Ann86}:

\begin{Proposition}[Bonahon]\label{prop:Bon:MF:in:Current}
$\MF(S)$ is exactly the set of geodesic current with zero self-intersection, that is
$$
\MF(S)=\{\lambda \in \Cur(S), \, \iota(\lambda,\lambda)=0\}.
$$
\end{Proposition}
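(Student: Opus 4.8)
The plan is to realize $\Cur(S)$ concretely and to show that the self-intersection form detects exactly whether the support of a current is a geodesic lamination. Fix an auxiliary hyperbolic metric on $S$ and identify the universal cover with $\mathbb{H}$, so that a geodesic current is a $\pi_1(S)$-invariant positive Radon measure on the space $\mathcal{G}$ of unoriented complete geodesics of $\mathbb{H}$. Recall that $\iota$ is defined geometrically on currents: on the $\pi_1(S)$-invariant set $\mathcal{G}^{(2)}\subset\mathcal{G}\times\mathcal{G}$ of pairs of geodesics meeting transversally in a single point, the map sending a pair to its intersection point is $\pi_1(S)$-equivariant into $\mathbb{H}$, so the diagonal action of $\pi_1(S)$ on $\mathcal{G}^{(2)}$ is properly discontinuous; one then sets $\iota(\mu,\nu)=(\mu\times\nu)(\mathcal{D})$, where $\mathcal{D}$ is a fundamental domain for this action. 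With this description the statement becomes: $\iota(\lambda,\lambda)=0$ if and only if $\lambda$ is a transverse invariant measure carried by a geodesic lamination, which is the same thing as an element of $\MF(S)$.

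First I would treat the easy inclusion $\MF(S)\subset\{\lambda:\iota(\lambda,\lambda)=0\}$. Under the embedding $\MF(S)\hookrightarrow\Cur(S)$ referred to above, a measured foliation is sent to the associated measured geodesic lamination obtained by straightening leaves to geodesics; its support in $\mathcal{G}$ then consists of the lifted leaves, and the full lift to $\mathbb{H}$ of a lamination is by definition a $\pi_1(S)$-invariant family of \emph{pairwise disjoint} complete geodesics. Hence $\mathrm{supp}(\lambda)\times\mathrm{supp}(\lambda)$ is disjoint from $\mathcal{G}^{(2)}$, so $(\lambda\times\lambda)(\mathcal{G}^{(2)})=0$ and a fortiori $\iota(\lambda,\lambda)=0$.

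The substance is the reverse inclusion. Assume $\iota(\lambda,\lambda)=0$; I claim $\mathrm{supp}(\lambda)$ is a geodesic lamination, i.e.\ no two of its geodesics (including distinct $\pi_1(S)$-translates of a single one, which encode self-crossings downstairs) cross transversally. Argue by contradiction: if $g,g'\in\mathrm{supp}(\lambda)$ crossed transversally at a point $p\in\mathbb{H}$, then, since transverse crossing is an open condition on $\mathcal{G}\times\mathcal{G}$, there would be neighborhoods $U\ni g$ and $U'\ni g'$ in $\mathcal{G}$ such that every pair in $U\times U'$ lies in $\mathcal{G}^{(2)}$ with intersection point in a small ball $B\ni p$. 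Because $g,g'$ lie in the support, $\lambda(U)>0$ and $\lambda(U')>0$, so $(\lambda\times\lambda)(U\times U')>0$. Shrinking $B$ so that it injects into $\mathbb{H}/\pi_1(S)$ (possible by proper discontinuity), the crossing pairs in $U\times U'$ contribute positive mass to a fundamental domain $\mathcal{D}$, whence $\iota(\lambda,\lambda)>0$, a contradiction. I expect this bookkeeping with the $\pi_1(S)$-quotient --- ensuring the local positive crossing mass survives in the quotient rather than being counted outside $\mathcal{D}$ --- to be the main technical obstacle; it is precisely controlled by the properness of the $\pi_1(S)$-action on $\mathcal{G}^{(2)}$ coming from the equivariant intersection-point map.

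Once $\mathrm{supp}(\lambda)$ is known to be a geodesic lamination, I would finish by disintegrating $\lambda$ transversally: the $\pi_1(S)$-invariance together with the laminar support exhibits $\lambda$ as an invariant transverse measure on the lamination, i.e.\ a measured geodesic lamination. Invoking the standard identification of measured geodesic laminations with measured foliations (up to isotopy and Whitehead moves), $\lambda$ lies in the image of $\MF(S)$ in $\Cur(S)$. Combining the two inclusions yields $\MF(S)=\{\lambda\in\Cur(S):\iota(\lambda,\lambda)=0\}$.
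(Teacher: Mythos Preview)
Your argument is correct and is essentially Bonahon's original proof. Note, however, that the paper does not prove this proposition at all: it is quoted as \cite[Prop.~4.8]{Bon_Ann86} and used as a black box in the proof of Proposition~\ref{prop:aut:preserve:cyl}, so there is no ``paper's own proof'' to compare against. Your write-up is a faithful reconstruction of the standard argument --- the support-is-a-lamination step via openness of transverse crossing, followed by the identification of a current supported on a lamination with a transverse measure --- and the one technical point you flag (that the positive local mass of crossing pairs survives in a fundamental domain for the $\pi_1(S)$-action on $\mathcal{G}^{(2)}$) is indeed the only place requiring care, handled exactly as you indicate via the properness of the intersection-point map.
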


Another important feature of geodesic currents we will need is the following

\begin{Proposition}[Bonahon~\cite{Bon_Invent88}, Prop. 4]\label{prop:Bon:biding:current}
 Let $\alpha$ be a geodesic current with the following property: every geodesic in $\widetilde{S}$ transversely meets another geodesic in the support of $\alpha$. Then the set $\beta \in \Cur(S)$ such that $\iota(\alpha,\beta) \leq 1$ is a compact in $\Cur(S)$.
\end{Proposition}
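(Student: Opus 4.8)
The plan is to deduce the compactness of $\{\beta \in \Cur(S) : \iota(\alpha,\beta) \le 1\}$ from two ingredients: the (foundational) compactness of ``length balls'' in $\Cur(S)$ with respect to an auxiliary hyperbolic metric, and the positivity of $\iota(\alpha,\cdot)$ on every nonzero current, which is exactly what the binding hypothesis buys. So I would first fix a hyperbolic metric $\rho$ on $S$ and let $L_\rho \in \Cur(S)$ be its Liouville current, so that $\iota(L_\rho,\beta) = \ell_\rho(\beta)$ for every $\beta \in \Cur(S)$, where $\ell_\rho$ denotes $\rho$--length; recall (see \cite{Bon_Ann86}) that for each $M \ge 0$ the length ball $\mathcal B_M := \{\beta \in \Cur(S) : \ell_\rho(\beta) \le M\}$ is compact.

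The key step is to show that, $\alpha$ being binding, $\iota(\alpha,\gamma) > 0$ for every $\gamma \in \Cur(S) \setminus \{0\}$. Since $\gamma \neq 0$, its support contains some geodesic $g$ of $\widetilde{S}$, and by hypothesis there is a geodesic $g'$ in the support of $\alpha$ meeting $g$ transversally in a single point. As transversal intersection in one point is an open condition on pairs of geodesics of $\widetilde{S}$, there are neighbourhoods $g \in U$ and $g' \in U'$ such that every geodesic of $U$ meets every geodesic of $U'$ transversally in exactly one point. Since $g \in \mathrm{supp}\,\gamma$ and $g' \in \mathrm{supp}\,\alpha$ we have $\gamma(U) > 0$ and $\alpha(U') > 0$; shrinking $U, U'$ if necessary so that $U' \times U$ embeds into a fundamental domain for the (free, properly discontinuous) action of $\pi_1(S)$ on the set of transversally intersecting pairs, the defining integral of $\iota(\alpha,\gamma)$ is bounded below by $\alpha(U')\,\gamma(U) > 0$.

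With this in hand I would run a normalisation argument. The ``length sphere'' $\Sigma := \{\gamma \in \Cur(S) : \ell_\rho(\gamma) = 1\}$ is a closed subset of the compact set $\mathcal B_1$, hence compact, and $\iota(\alpha,\cdot)$ is continuous on $\Cur(S)$ and, by the key step, strictly positive on $\Sigma$; so it attains there a minimum $c > 0$. By homogeneity of $\iota$ and $\ell_\rho$ in the second variable this gives $\iota(\alpha,\gamma) \ge c\,\ell_\rho(\gamma)$ for all $\gamma \in \Cur(S)$, whence
\[
\{\beta \in \Cur(S) : \iota(\alpha,\beta) \le 1\} \ \subseteq\ \{\beta \in \Cur(S) : \ell_\rho(\beta) \le 1/c\} \ =\ \mathcal B_{1/c}.
\]
The left-hand set is closed by continuity of $\iota$, hence is a closed subset of the compact set $\mathcal B_{1/c}$, hence compact.

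The main obstacle, and the step I would treat most carefully, is the strict positivity $\iota(\alpha,\gamma) > 0$: in the chosen model of $\Cur(S)$ one must verify that the set of transversally intersecting pairs of geodesics of $\widetilde{S}$ is open, that $\pi_1(S)$ acts on it freely and properly discontinuously (the intersection point furnishing an equivariant reference map to $\widetilde{S}$), and that the product measure $\alpha \times \gamma$ therefore contributes positive mass to the region over which $\iota$ is integrated. The remaining steps are formal consequences of the bilinearity and continuity of $\iota$, the identity $\iota(L_\rho,\cdot) = \ell_\rho(\cdot)$, and the compactness of length balls, all standard in the geodesic currents formalism recalled above.
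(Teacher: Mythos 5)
The paper does not actually prove this proposition — it quotes it verbatim from Bonahon's 1988 paper — so there is no internal argument to compare against; your proof is correct and is in substance Bonahon's original one: strict positivity of $\iota(\alpha,\cdot)$ on nonzero currents via the binding hypothesis and the local product structure of the intersection pairing, followed by a normalisation against the Liouville current on the compact length sphere to obtain the linear bound $\iota(\alpha,\cdot)\geq c\,\ell_\rho(\cdot)$, which traps the sublevel set inside a compact length ball. The only implicit point worth making explicit is that the rescaling onto $\Sigma$ requires $\ell_\rho(\gamma)>0$ for every $\gamma\neq 0$; this follows from your own key positivity step applied to $L_\rho$, whose support is the whole space of geodesics of $\widetilde{S}$.
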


Remark that if $\lambda$ is a minimal foliation, then the corresponding geodesic current satisfies the hypothesis of Proposition~\ref{prop:Bon:biding:current}. \medskip

Every holomorphic one-form $(X,\omega)$ (or more generally every holomorphic quadratic differential) defines naturally two measured foliations on $X$. The leaves of these foliations are respectively vertical and horizontal geodesic lines with the transverse measures  given by $|\mathrm{Re}\omega|$ and $|\mathrm{Im}\omega|$. It is also a well-known fact that, if $\lambda$ and $\mu$ are two uniquely ergodic measured foliations jointly filling up $S$, that is for any $\nu \in \MF(S)$, we have $\iota(\nu,\lambda)+\iota(\nu,\mu) >0$, then there is a unique Teichm\"uller geodesic $g$ that joins $[\lambda]$ and $[\mu]$, where $[\lambda]$ and $[\mu]$ are the projections of $\lambda$ and $\mu$ in $\PMF(S)$. As a consequence, assume that $(X_1,\omega_1)$ and $(X_2,\omega_2)$ are two holomorphic one-forms both satisfy the following condition: the vertical  foliation of $\omega_i$ is topologically equivalent to $\lambda$, and the horizontal foliation is topologically equivalent to $\mu$. Then there exists a
diagonal matrix $A=\left(\begin{smallmatrix}e^{t} & 0 \\ 0 & e^s \end{smallmatrix} \right) \in \GL^+(2,\R)$ such that $(X_2,\omega_2)=A\cdot(X_1,\omega_1)$. 

\subsection*{Proof of Proposition~\ref{prop:aut:preserve:cyl}}
\begin{proof}
By definition, $\phi\cdot[X,\omega,f]=[X,\omega,f\circ\phi^{-1}]$. Equivalently, we can write $\phi\cdot[X,\omega,f]=[X',\omega',f']$, where $f': S \ra X'$ satisfies the following condition: there exists an isomorphism $\hat{\phi}:X' \ra X$ such that $\hat{\phi}^*\omega=\omega'$, and $f\circ\phi^{-1}$  is isotopic to $\hat{\phi}\circ f'$.  Using this identification, we have
$$
\hat{\Cc}_{\rm cyl}(X',\omega',f')=\phi(\hat{\Cc}_{\rm cyl}(X,\omega,f)).
$$
\noindent Thus, by assumption, we have $\hat{\Cc}_{\rm cyl}(X',\omega',f') \subset \hat{\Cc}_{\rm cyl}(X,\omega,f)$.

Via the maps $f:S \ra X$, $f': S \ra X'$, for any direction $\theta \in \RP^1$, we denote  by ${\nu}^\theta$ and ${\nu'}^\theta$ the measured foliations on $S$ corresponding to the vertical foliations defined by $e^{\imath\theta}\omega$ and $e^{\imath\theta}\omega'$ respectively.
The leaves of $\nu^\theta$ and ${\nu'}^\theta$ are geodesic lines in the direction of $\pm(\pi/2-\theta)$.  Observe that if $\{\theta_k\}$ is a sequence of angles converging to $\theta$, then $\nu^{\theta_k}$ converges to $\nu^\theta$, and ${\nu'}^{\theta_k}$ converges to ${\nu'}^\theta$ in $\MF(S)$. 

It follows from a classical result of Kerckhoff-Masur-Smillie~\cite{KMS_Annals86} that for almost all directions $\theta \in \RP^1$, $\nu^\theta$ (resp. ${\nu'}^\theta$) is uniquely ergodic. Set
$$
\UE(\omega):=\{[\nu^\theta] \in \PMF(S), \, \hbox{$\nu^\theta$ is uniquely ergodic, $\theta\in \RP^1$}\} \subset \PMF(S).
$$
\noindent We define $\UE(\omega')$ in the same manner.  

We will show that $\UE(\omega')\subset\UE(\omega)$. Let $\theta$ be a direction such that ${\nu'}^\theta$ is uniquely ergodic. Without loss of generality, we can assume that $\Aa(X)=1$. For any $t \in \R$,  set
$$
({X'_t}^\theta,{\omega'_t}^\theta):=\left(\begin{smallmatrix} e^t & 0 \\ 0 & e^{-t} \end{smallmatrix} \right)\cdot(X',e^{i\theta}\omega').
$$
It follows from Theorem~\ref{thm:cyl:large:width} that there exists a constant $R>0$ such that for any $t\in \R$,  ${X'_t}^\theta$ has a cylinder $C'_t$ with  circumference bounded by $R$.  Let $c'_t$ be a core curve of $C'_t$, and consider the sequence $\{c'_k\}_{k\in \N}$.  By definition, the length of $c'_k$ with respect to ${\omega'_k}^\theta$, denoted by $\ell_{{\omega'_k}^\theta}(c'_k)$, is bounded by $R$. Thus we have
$$
\iota(e^k{\nu'}^\theta,c'_k) = e^k\iota({\nu'}^\theta,c'_k) \leq \ell_{{\omega'_k}^\theta}(c'_k) \leq R.
$$
It follows that
$$
\lim_{k\ra +\infty}\iota({\nu'}^\theta, c'_k)=0.
$$
\noindent By Proposition~\ref{prop:Bon:biding:current}, up to extracting a subsequence, we can assume that $\{c'_k\}$ converges to a geodesic current $\mu'\in \Cur(S)$.  Since $c'_k$ has zero self-intersection, it follows that $\iota(\mu',\mu')=0$, hence $\mu' \in \MF(S)$ by Proposition~\ref{prop:Bon:MF:in:Current}. By continuity of $\iota$ we have $\iota({\nu'}^\theta,\mu')=0$.   Since ${\nu'}^\theta$ is uniquely ergodic (so in particular, it is minimal), it follows from Proposition~\ref{prop:min:fol:inters} that $\mu'$ and ${\nu'}^\theta$ are topologically equivalent. Hence $\mu'$ is also uniquely ergodic. 

By definition, $\{c'_k\}_{k\in \N}$ are vertices of $\hat{\Cc}_{\rm cyl}(X',\omega',f')$. By assumption, we have $\hat{\Cc}_{\rm cyl}(X',\omega',f')\subset \hat{\Cc}_{\rm cyl}(X,\omega,f)$.  Therefore, $\{c'_k\}_{k\in \N}$ are also vertices of $\hat{\Cc}_{\rm cyl}(X,\omega,f)$, which means that  $c'_k$ is freely homotopic to either a simple closed geodesic, or a degenerate cylinder in $X$. In particular, we see that  each $c'_k$ has a well defined direction $\theta_k \in \RP^1$ with respect to $\omega$. Again, by extracting a subsequence, we can assume that  $\{\theta_k\}$ converges to $\hat{\theta}$. Thus, $\{\nu^{\theta_k}\}$ converges to $\nu^{\hat{\theta}}$.  Since we have $\iota(\nu^{\theta_k},c'_k)=0$, by continuity, it follows that $\iota(\nu^{\hat{\theta}},\mu')=0$. Since $\mu'$ is uniquely ergodic, so is $\nu^{\hat{\theta}}$, and we have $[{\nu'}^{\theta}]=[\mu']=[\nu^{\hat{\theta}}] \in \PMF(S)$. We can then conclude that $\UE(\omega')\subset \UE(\omega)$. 

Now pick a pair of projective uniquely ergodic measured foliations $([\lambda], [\mu])$ in $\UE(\omega')\subset\UE(\omega)$ that jointly fill up $S$. There exist two matrices $M$ and $M'$ such that the vertical and horizontal foliations of $M\cdot[X,\omega,f]$ (resp. of $M'\cdot[X',\omega',f']$) are topologically equivalent to $\lambda$ and $\mu$ respectively. Since there is a unique Teichm\"uller geodesic joining $[\lambda]$ and $[\mu]$, there must exist a diagonal matrix $A \in \GL^+(2,\R)$ such that $M'\cdot[X',\omega',f']=AM\cdot[X,\omega,f]$, which implies that $\phi$ is represented by an affine automorphism of $(X,\omega)$.
\end{proof}

\begin{Remark}
This proof actually works for translation surfaces in any genus with $\hat{\Cc}_{\rm cyl}$ replaced by the subgraph consisting of non-degenerate cylinders.
\end{Remark}

\section{Hyperbolicity}\label{sec:hyperbolic}

A translation surface $(X,\omega)$ is said to be {\em completely periodic} (in the sense of Calta) if the direction of any non-degenerate cylinder in $X$ is periodic, which means that whenever  we find a simple closed geodesic on $X$, the surface decomposes as union of (finitely many) cylinders in the same direction (see~\cite{Cal04, CalSmi08}). It stems out from \cite{Cal04} and \cite{McM07} that, in $\H(2)$, a surface is completely periodic if and only if it is a Veech surface. In $\H(1,1)$, a surface is completely periodic if and only if it is an eigenform for a real multiplication. In particular, there are completely periodic surfaces in $\H(1,1)$ that are not Veech surfaces.  

Let us denote by $\Ecal_D$, where $D$ is a natural number such that  $D \equiv 0$ or $1 \mod 4$,  the locus of eigenforms for the real multiplication by the quadratic order $\mathcal{O}_D$ in $\Omega\mathcal{M}_2$. Each $\Ecal_D$ is a 3 dimensional irreducible (algebraic) subvariety of $\Omega\mathcal{M}_2$ which is
invariant by the $\SL(2,\R)$-action.   The set of eigenforms in $\Omega\mathcal{M}_2$ is then (see \cite{McM07})
$$
\Ecal =\underset{D \equiv 0,1 \mod 4}{\bigcup} \Ecal_D.
$$
Even though complete periodicity is initially defined for directions of non-degenerate cylinders, it is not difficult to show that in the case of genus two, this property actually implies the periodicity for directions of degenerate cylinders (see Lemma~\ref{lm:cp:degen:cyl}). Alternatively, one can also use the argument in~\cite{Wri15} to get the same result in more general contexts (see~\cite{Wri14-survey}). In what follows, by a {\em completely periodic} surface we will mean a surface for which the direction of any cylinder (degenerate or not) is periodic. By Lemma~\ref{lm:cp:degen:cyl}, this apparently new definition agrees with the usual one by Calta. Our goal in this section is to show

\begin{Theorem}\label{thm:cper:hyperbolic}
 If $(X,\omega)\in \H(2)\sqcup\H(1,1)$ is completely periodic then $\hat{\Cc}_{\rm cyl}(X,\omega,f)$ is Gromov hyperbolic.
\end{Theorem}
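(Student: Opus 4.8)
The plan is to verify the combinatorial hyperbolicity criterion of Masur--Schleimer (the \emph{guessing geodesics} criterion): a connected graph is Gromov-hyperbolic provided one can choose, for each pair of vertices $u,v$, a connected path $\Pi(u,v)$ joining them, together with a constant $K$, so that (i) $\diam\big(\Pi(u,v)\big)\le K$ whenever $\dist(u,v)\le 1$, and (ii) $\Pi(u,v)\subseteq N_K\big(\Pi(u,w)\cup\Pi(w,v)\big)$ for every triple $u,v,w$. Here the graph $\hat{\Cc}_{\rm cyl}(X,\omega,f)$ is connected by Corollary~\ref{cor:Cess:connect}. Following Bowditch's strategy, I would build the paths $\Pi$ from Teichm\"uller geodesics in the Teichm\"uller disk of $(X,\omega)$ and deduce (i)--(ii) from the uniform thinness of triangles in that (hyperbolic) disk, together with the estimate $\dist(C,D)\le K_1\iota(C,D)+K_2$ of Theorem~\ref{thm:dist:n:inter}.

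Construction of the paths. After normalizing by $\GL^+(2,\R)$ (which leaves $\hat{\Cc}_{\rm cyl}$ unchanged), let $\mathbb{D}$ be the Teichm\"uller disk, identified with the hyperbolic plane, and $X_x$ the surface over $x\in\mathbb{D}$. For cylinders $C,D$ (genuine or degenerate), complete periodicity --- via Lemma~\ref{lm:cp:degen:cyl} in the degenerate case --- makes the directions $\theta_C,\theta_D$ periodic; let $\mathcal{G}_{C,D}\subset\mathbb{D}$ be the geodesic with ideal endpoints $\theta_C,\theta_D$. By Theorem~\ref{thm:cyl:large:width} every $X_x$ carries a cylinder $Z_x$ of width bounded below by a uniform constant, hence, areas being normalized, of circumference at most some $R$. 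A euclidean count then gives: if $x,y$ lie at bounded distance in $\mathbb{D}$, cores of $Z_x$ and $Z_y$ cross at most boundedly many times --- each passage of one core through the thick cylinder $Z_y$ uses up at least its (bounded below) width, while the (bounded) circumference of the first core, recomputed on $X_y$, stays bounded --- so $\iota(Z_x,Z_y)$, and hence $\dist(Z_x,Z_y)$ by Theorem~\ref{thm:dist:n:inter}, is bounded. Let $p_C$ (resp.\ $p_D$) be the point of $\mathcal{G}_{C,D}$ at which $C$ (resp.\ $D$) has modulus $1$, pick a unit net $x_0,\dots,x_n$ on the subarc $[p_C,p_D]$, and let $\Pi(C,D)$ be the concatenation of $\hat{\Cc}_{\rm cyl}$-geodesics (available by Corollary~\ref{cor:Cess:connect}) realizing the sequence
$$
C\ \to\ Z_{x_0}\ \to\ Z_{x_1}\ \to\ \cdots\ \to\ Z_{x_n}\ \to\ D.
$$
The same intersection estimate applied to $C$ at $p_C$ and to $D$ at $p_D$ bounds the two end-arrows, so $\Pi(C,D)$ is an honest path which tracks the arc $[p_C,p_D]$ up to bounded error. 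One fixes all the choices involved once and for all.

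Verification of (ii). The geodesics $\mathcal{G}_{uv},\mathcal{G}_{uw},\mathcal{G}_{wv}$ bound a uniformly thin ideal triangle in the $\delta$-hyperbolic disk $\mathbb{D}$. A vertex of $\Pi(u,v)$ lies at bounded distance from some $Z_x$ with $x$ on the arc $[p_u,p_v]$; by thinness $x$ is $\delta$-close to a point $x'$ of $\mathcal{G}_{uw}\cup\mathcal{G}_{wv}$. If $x'$ lies on the arc used to define $\Pi(u,w)$ or $\Pi(w,v)$, the intersection estimate for the pair $(x,x')$ produces a vertex of those paths at bounded distance; if $x'$ has escaped its finite arc --- which can happen only near a shared ideal endpoint, say $\theta_u$ --- then $x$ too lies near $\theta_u$, so $Z_x$ is close to $u$, which lies on both $\Pi(u,v)$ and $\Pi(u,w)$. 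This yields (ii).

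Verification of (i), and the main obstacle. If $\dist(C,D)\le 1$ then $\iota(C,D)=0$, so $C$ and $D$ are parallel or can be realized disjointly; the issue then reduces to bounding the length of the arc $[p_C,p_D]$, equivalently the Teichm\"uller distance between the modulus-$1$ loci of two disjoint cylinders. I expect this to be the crux of the whole argument, and it is precisely where complete periodicity is indispensable: the rigid structure of completely periodic surfaces --- cylinder decompositions with at most three cylinders whose moduli satisfy fixed relations, governed in $\H(2)$ by the Veech/prototype picture of McMullen and in $\H(1,1)$ by the real-multiplication (eigenform) structure --- must be exploited to forbid two disjoint cylinders from being simultaneously arbitrarily thin, which is exactly what would force $[p_C,p_D]$ (hence $\Pi(C,D)$) to be long. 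Once (i) is established, the Masur--Schleimer criterion gives that $\hat{\Cc}_{\rm cyl}(X,\omega,f)$ is Gromov-hyperbolic. The remaining work is bookkeeping: handling degenerate cylinders uniformly alongside genuine ones (using Lemma~\ref{lm:degen:cyl:deform} to pass, when convenient, to a deformation in which a given degenerate cylinder becomes a thin simple cylinder), and making the choice of nets and the escaping-arc case precise.
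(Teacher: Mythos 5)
Your overall architecture coincides with the paper's: the Masur--Schleimer criterion (Theorem~\ref{thm:hyp:crit:Ma-Sc}), paths $\Pi(C,D)$ made of bounded-circumference cylinders along the Teichm\"uller geodesic joining the two periodic directions, the coarse estimate $\dist\le K_1\iota+K_2$ of Theorem~\ref{thm:dist:n:inter} to compare nearby nodes, and a slim-triangle verification that ultimately rests on the hyperbolic geometry of the disk (the paper carries this out by an explicit computation with the cylinder decompositions in the directions of $C$ and $E$ --- which is where complete periodicity is really used --- rather than by quoting thinness of ideal triangles, but the content is comparable). The genuine gap is item (i), which you leave open, and for which the reduction you propose rests on a false expectation.

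Concretely, you reduce (i) to ``bounding the length of the arc $[p_C,p_D]$'' and expect complete periodicity to ``forbid two disjoint cylinders from being simultaneously arbitrarily thin.'' Neither holds. In $\H(1,1)$ one can take a connected sum of two flat tori along a very short slit, each torus being essentially a single very long, very thin cylinder, the two transverse to one another; such surfaces occur in every eigenform locus, hence among completely periodic surfaces. Normalizing the two disjoint cylinders $C,D$ to equal circumference $\ell$ only forces the splitting saddle connections to satisfy $|s_i|\le\sqrt2/\ell$, and $\ell$ is unbounded; the balance points $p_C,p_D$ are then arbitrarily far apart in the Teichm\"uller disk, so your path has arbitrarily many nodes and no diameter bound follows from the construction alone. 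What the paper actually proves (Proposition~\ref{prop:LCD:dist:1}) is that even when this arc is long, every bounded-circumference cylinder at every time along it lies at uniformly bounded \emph{graph} distance from $C$ or from $D$: when $\ell$ is small one finds an embedded square forcing $\ell\le1$; when $\ell$ is large one uses the splitting of Lemma~\ref{lm:2djt:sim:cyl} into two slit tori together with Lemma~\ref{lm:s:tor:cyl:large:wd} to produce, at each intermediate time, a short cylinder disjoint from the slit and hence disjoint from $D$ (or from $C$); degenerate cylinders are treated via the deformation of Lemma~\ref{lm:degen:cyl:deform}. This genus-two case analysis is the core of the proof and is absent from your proposal; without it the local condition of the criterion is unverified and the argument does not close.
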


To prove this theorem, we will use   the following hyperbolicity criterion by Masur-Schleimer~\cite[Theorem 3.13]{MasSch13} (see also \cite[Prop. 3.1]{Bow14}, and \cite{Gil02}), and follow Bowditch's approach in~\cite{Bow06}.

\begin{Theorem}[Masur-Schleimer]\label{thm:hyp:crit:Ma-Sc}
 Suppose that $\mathcal{X}$ is a graph with all edge lengths equal to one. Then $\mathcal{X}$ is Gromov hyperbolic if there is a constant $M \geq 0$, and for all unordered pair of vertices $x,y$ in $\mathcal{X}^0$, there is a connected subgraph $g_{x,y}$ containing $x$ and $y$ with the following properties

 \begin{itemize}
  \item[$\bullet$] (Local) If  $d_\mathcal{X}(x,y) \leq 1$ then $g_{x,y}$ has diameter at most $M$,

  \item[$\bullet$] (Slim triangle) For any $x,y,z \in \mathcal{X}^0$, the subgraph $g_{x,y}$ is contained in the $M$-neighborhood of $g_{x,z}\cup g_{z,y}$.

 \end{itemize}
\end{Theorem}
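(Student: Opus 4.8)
Throughout, view $\mathcal{X}$ as a geodesic length space: since all edges have length one, its vertex set with the path metric $d_{\mathcal{X}}$ is geodesic, a geodesic $[x,y]$ being an edge-path of minimal length. Write $N_r(A)=\{p\in\mathcal{X}^0 : d_{\mathcal{X}}(p,A)\le r\}$. The plan is to deduce Gromov hyperbolicity from the \emph{Rips thin-triangles condition} (a geodesic space is hyperbolic iff there is a constant $\delta$ such that in every geodesic triangle each side lies in the $\delta$-neighbourhood of the union of the other two). The whole strategy is to compare an honest geodesic $[x,y]$ with the ``guessed'' connector $g_{x,y}$, and then to transport the Slim triangle axiom for the connectors into a slim-triangle statement for geodesics.

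\textbf{Step 1 (connectors lie near geodesics, logarithmically).} First I would establish, by dyadic bisection, a preliminary one-sided bound $g_{x,y}\subseteq N_{\rho(d_{\mathcal{X}}(x,y))}([x,y])$. If $m$ is a midpoint vertex of the geodesic $[x,y]$, the Slim triangle axiom applied to the triple $(x,m,y)$ gives $g_{x,y}\subseteq N_M(g_{x,m}\cup g_{m,y})$, where $[x,m]$ and $[m,y]$ are sub-geodesics of $[x,y]$. Iterating the bisection $k=\lceil\log_2 d_{\mathcal{X}}(x,y)\rceil$ times reduces matters to connectors $g_{u,v}$ with $d_{\mathcal{X}}(u,v)\le 1$; by the Local axiom each such $g_{u,v}$ has diameter $\le M$ and contains a vertex of $[x,y]$, hence sits in $N_M([x,y])$. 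Tracking the additive $M$ lost at each of the $k$ levels yields $\rho(n)\le M(\log_2 n+2)$, i.e. a logarithmic bound.

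\textbf{Step 2 (promotion to a uniform constant).} The crux, and the step I expect to be the main obstacle, is to upgrade Step~1 to a uniform two-sided Hausdorff estimate: a constant $k=k(M)$ with $g_{x,y}\subseteq N_k([x,y])$ and $[x,y]\subseteq N_k(g_{x,y})$ for all $x,y$. The route I would attempt, following Bowditch, is an extremal argument exploiting that geodesics realise distances. Let $p$ be a point of $[x,y]$ maximising $d_{\mathcal{X}}(p,g_{x,y})=:r$; truncate $[x,y]$ to a sub-geodesic of length comparable to $r$ centred at $p$, whose endpoints are then within $r$ of $g_{x,y}$ by maximality. Playing the Local axiom and the Slim triangle axiom for the sub-connectors against the a priori bound of Step~1 and the additivity of length along the geodesic, a point lying too far from $g_{x,y}$ forces the connectors joining the truncation endpoints to make a detour incompatible with the geodesic length once $r$ exceeds a bound depending only on $M$. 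This self-improving estimate caps $r$ by a constant, and a symmetric argument caps the deviation of $g_{x,y}$ from $[x,y]$; equivalently it shows the connectors are uniform reparametrised quasigeodesics. This is the genuinely hyperbolic-geometry input, whereas Steps~1 and~3 are essentially formal.

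\textbf{Step 3 (thin triangles and conclusion).} Granting Step~2, the Rips condition follows at once. Given a geodesic triangle on $x,y,z$ and a point $p\in[x,y]$, Step~2 gives $p\in N_k(g_{x,y})$, the Slim triangle axiom gives $g_{x,y}\subseteq N_M(g_{x,z}\cup g_{z,y})$, and Step~2 again gives $g_{x,z}\subseteq N_k([x,z])$ and $g_{z,y}\subseteq N_k([z,y])$. Chaining these inclusions places $p$ in the $(2k+M)$-neighbourhood of $[x,z]\cup[z,y]$, so every geodesic triangle is $(2k+M)$-thin. By the standard equivalences among the definitions of Gromov hyperbolicity (Rips thin triangles, the four-point condition), $\mathcal{X}$ is then Gromov hyperbolic with a constant depending only on $M$.
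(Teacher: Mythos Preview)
The paper does not give its own proof of this statement: Theorem~\ref{thm:hyp:crit:Ma-Sc} is quoted as a known hyperbolicity criterion, with attribution to Masur--Schleimer~\cite[Theorem~3.13]{MasSch13} (see also \cite[Prop.~3.1]{Bow14} and \cite{Gil02}), and is then applied as a black box to prove Theorem~\ref{thm:cper:hyperbolic}. So there is no in-paper argument to compare against.

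Your sketch follows exactly the standard ``guessing geodesics'' route found in those references. Steps~1 and~3 are correct and, as you say, essentially formal. Step~2 is the real content, and your description captures the right mechanism but is vague at the key moment: the sentence ``forces the connectors \dots\ to make a detour incompatible with the geodesic length'' hides the actual inequality. One clean way to close it: with $p\in[x,y]$ at maximal distance $r$ from $g_{x,y}$, take $a,b\in[x,y]$ at distance $2r$ from $p$ on either side; apply the Slim triangle axiom twice to get $g_{x,y}\subset N_{2M}(g_{x,a}\cup g_{a,b}\cup g_{b,y})$; the point of $g_{x,y}$ nearest $p$ is then within $2M$ of one of these three connectors, and Step~1 applied to $g_{a,b}$ (whose endpoints are $4r$ apart) together with the fact that $[x,a]$ and $[b,y]$ lie at distance $\ge 2r$ from $p$ yields an inequality of the form $r\le 2M + M\log_2(4r)+O(M)$, bounding $r$ in terms of $M$ alone. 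The reverse inclusion $g_{x,y}\subset N_k([x,y])$ then follows from Step~1 once you feed this bound back in. If you want a fully rigorous write-up to model, Proposition~3.1 of \cite{Bow14} is the most direct.
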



Let us fix $[X,\omega,f] \in \Omega\Tcal_2$, where $(X,\omega)\in \Ecal$ and $\Aa(X,\omega)=1$. We will  write $\hat{\Cc}_{\rm cyl}$ instead of  $\hat{\Cc}_{\rm cyl}(X,\omega,f)$. We know from  Corollary~\ref{cor:Cess:connect} that $\hat{\Cc}_{\rm cyl}$ is connected, and  by definition the edges of  $\hat{\Cc}_{\rm cyl}$ have length equal to one. Let $K$ be the constant in Theorem~\ref{thm:cyl:large:width}, and $C$ be a cylinder of width bounded below by $K$ in $X$. Note that the circumference of $C$ is bounded above by $1/K$. Recall that from Theorem~\ref{thm:dist:n:inter}, we know that there are two constants $K_1,K_2$ such that for any pair of cylinders $C,D$ in $X$, we have
$$
\dist(C,D) \leq K_1\iota(C,D) +K_2
$$
\noindent where $\dist$ is the distance in $\hat{\Cc}_{\rm cyl}(X,\omega,f)$, and $\iota(C,D)$ is the number of intersections of a core curve of $C$ and a core curve of $D$.

\subsection{Construction of subgraphs connecting pairs of vertices}
We will now construct for each unordered pair of cylinders $C,D$ a subgraph $\hat{\Lc}_{C,D}$ of $\hat{\Cc}_{\rm cyl}$ that satisfies the conditions of Theorem~\ref{thm:hyp:crit:Ma-Sc} with a constant $M$ which will be derived along the way.

Let us first consider the case $C$ and $D$ are parallel. If $C$ or $D$ is non-degenerate then  $\iota(C,D)=0$ hence $\dist(C,D)=1$, which means that $C$ and $D$ are connected by an edge in $\hat{\Cc}_{\rm cyl}$. We define $\hat{\Lc}_{C,D}$ to be this edge. If both $C$ and $D$ are degenerate then it may happen that $\iota(C,D) >0$. Since $(X,\omega)$ is completely periodic, there is a non-degenerate cylinder $E$ parallel to $C$ and $D$. Since  $\iota(C,E)=\iota(D,E)=0$, there are  in $\hat{\Cc}_{\rm cyl}$ two edges connecting $E$ to $C$ and to $D$. In this case, we define $\hat{\Lc}_{C,D}$ to be the union of these two edges. \medskip

Assume from now on that $C$ and $D$ are not parallel. By applying an appropriate element of $\SL(2,\R)$, we can assume that $C$ is horizontal, $D$ is vertical, and $C$ and $D$ have the same circumference. For any $t\in \R$,  set
$$
a_t=\left(\begin{array}{cc} e^t & 0 \\ 0 & e^{-t} \end{array} \right) \text { and } (X_t,\omega_t)=a_t\cdot(X,\omega).
$$
For any saddle connection $s$ in $(X,\omega)$, we will denote by $\ell_t(s)$ its Euclidean length in $(X_t,\omega_t)$. If $E$ is a cylinder in $(X,\omega)$, then $c_t(E)$ and $w_t(E)$ are respectively its circumference and width in $(X_t,\omega_t)$.

For any $R \in \R_{>0}$, let $\Lc^*_{C,D}(t,R)$ denote set of cylinders (possibly degenerate) of circumference bounded above by $R$ in $(X_t,\omega_t)$. Note that this set is finite.  Let us choose a constant $L_1$ such that

\begin{equation}\label{eq:defn:L1}
 L_1 > \max\{\frac{1}{K}, 9\},
\end{equation}

\noindent and  define
$$
\Lc^*_{C,D}(L_1)= \bigcup_{t \in \R} \Lc^*_{C,D}(t,L_1).
$$
\noindent We regard $\Lc^*_{C,D}(t,R)$ and $\Lc^*_{C,D}(L_1)$ as subsets of $\hat{\Cc}_{\rm cyl}^{(0)}$. Observe that $\Lc^*_{C,D}(t,L_1)$ contains $C$ when $t$ tends to $-\infty$, and contains  $D$ when $t$ tends to $+\infty$, therefore $\Lc^*_{C,D}$ contains $C$ and $D$. \medskip

For each $t \in \R$, consider now the set $\Lc^*_{C,D}(t,2L_1)$. From Theorem~\ref{thm:cyl:large:width}, $\Lc^*_{C,D}(t,2L_1)$ contains a vertex corresponding to a cylinder  $C_{0,t}$ of width bounded below by $K$.  Set
\begin{equation}\label{eq:def:hyp:const:M}
M_1:=\max\{2(2\frac{K_1L_1}{K}+K_2), 2\}
\end{equation}
Then we have

\begin{Lemma}~\label{lm:LCD:t:diam:bound}
 As subset of $\hat{\Cc}_{\rm cyl}$,  $\Lc^*_{C,D}(t,2L_1)$ has diameter bounded by $M_1$.
\end{Lemma}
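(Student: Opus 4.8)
The plan is to show that every vertex of $\Lc^*_{C,D}(t,2L_1)$ lies within $\hat{\Cc}_{\rm cyl}$-distance $M_1/2$ of the distinguished cylinder $C_{0,t}$ (which, recall, was chosen inside $\Lc^*_{C,D}(t,2L_1)$, is non-degenerate, and satisfies $w_t(C_{0,t})\geq K$); the stated diameter bound is then immediate from the triangle inequality for $\dist$.

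The main step is the geometric estimate
$$
\iota(E,C_{0,t})\ \leq\ \frac{2L_1}{K}\qquad\text{for every }E\in\Lc^*_{C,D}(t,2L_1).
$$
If $E$ is parallel to $C_{0,t}$ this is clear, so assume it is not. Fix an isometric model $(\R/\ell\Z)\times(0,h)$ for $C_{0,t}$, with $\ell=c_t(C_{0,t})$ and $h=w_t(C_{0,t})\geq K$, let $c_{0,t}$ be the core curve $(\R/\ell\Z)\times\{h/2\}$, and let $e$ be a core curve of $E$; when $E$ is degenerate we take $e=s_1\cup s_2$, the concatenation of its two defining saddle connections. Since $c_{0,t}$ is a regular geodesic, Lemma~\ref{lm:2cyl:inters} gives $\iota(E,C_{0,t})=\#\{e\cap c_{0,t}\}$. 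Now $C_{0,t}$ consists of regular points, so any maximal sub-arc $\gamma$ of $e$ contained in $C_{0,t}$ avoids the singular set of $(X_t,\omega_t)$, hence is a straight segment (a sub-arc of $s_1$, of $s_2$, or of the closed geodesic $e$ when $E$ is non-degenerate), and it is not parallel to $c_{0,t}$ because $E$ and $C_{0,t}$ are not parallel. Thus $\gamma$ is strictly monotone in the transverse coordinate; being maximal, its endpoints lie on $\partial C_{0,t}$, and monotonicity forces these two endpoints onto the two distinct boundary components $(\R/\ell\Z)\times\{0\}$ and $(\R/\ell\Z)\times\{h\}$. Therefore $\gamma$ meets $c_{0,t}$ exactly once and has length at least $h\geq K$. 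Distinct points of $e\cap c_{0,t}$ lie on distinct such sub-arcs, so
$$
\iota(E,C_{0,t})=\#\{e\cap c_{0,t}\}\ \leq\ \frac{\ell_t(e)}{K}\ \leq\ \frac{c_t(E)}{K}\ \leq\ \frac{2L_1}{K}.
$$

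Combining this with Theorem~\ref{thm:dist:n:inter} yields $\dist(E,C_{0,t})\leq K_1\iota(E,C_{0,t})+K_2\leq 2K_1L_1/K+K_2\leq M_1/2$ by the definition \eqref{eq:def:hyp:const:M} of $M_1$. Hence for any two $E,E'\in\Lc^*_{C,D}(t,2L_1)$ we get $\dist(E,E')\leq\dist(E,C_{0,t})+\dist(C_{0,t},E')\leq M_1$, which is the desired bound. I expect the main obstacle to be the treatment of degenerate cylinders $E$ in the estimate above: then $e=s_1\cup s_2$ is not simple and may meet $c_{0,t}$ non-transversally, but both issues are harmless — a non-transversal intersection of two core curves can only occur at a singularity and is therefore discarded by Lemma~\ref{lm:2cyl:inters}, while the singular ``corners'' of $e$ are vertices lying on $\partial C_{0,t}$, so the maximal sub-arcs of $e$ inside $C_{0,t}$ remain straight and the length-counting goes through verbatim. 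One should also check, when invoking Lemma~\ref{lm:2cyl:inters}, that $c_{0,t}$ is in general position with respect to $e$, which is automatic here since the sub-arcs of $e$ inside $C_{0,t}$ are transverse to every level $(\R/\ell\Z)\times\{y\}$.
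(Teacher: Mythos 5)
Your proof is correct and follows the same route as the paper: bound $\iota(E,C_{0,t})\leq 2L_1/K$ by playing the circumference bound on $E$ against the width bound on $C_{0,t}$, convert to a distance bound via Theorem~\ref{thm:dist:n:inter}, and conclude by the triangle inequality through $C_{0,t}$. You simply supply the detailed justification (monotone crossing arcs, treatment of degenerate $E$) of the inequality $K\,\iota(E,C_{0,t})\leq \ell_t(E)$ that the paper states without proof.
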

\begin{proof}
Let $E$ be a cylinder in $\Lc^*_{C,D}(t,2L_1)$. If $\iota(E,C_{0,t})=0$, then we have $\dist(C_{0,t},E)=1$. Otherwise we have $K \iota(E,C_{0,t}) \leq \ell_t(E) \leq 2L_1$. Hence, from \eqref{eq:rel:dist:inter} we get
$$
\dist(C_{0,t},E) \leq 2\frac{K_1L_1}{K}+ K_2,
$$
\noindent and the lemma follows.
\end{proof}

Moreover, we have

\begin{Lemma}\label{lm:T:geod:infty:cyl}
Assume that the surface $(X,\omega)$ admits cylinder decompositions in both vertical and horizontal directions. Then there exists a constant $T>0$ such that
\begin{itemize}
 \item[$\bullet$] if $t>T$ then $\Lc^*_{C,D}(t,2L_1)$ only contains the vertical cylinders in $(X,\omega)$ and $\Lc^*_{C,D}(t,2L_1)$ has diameter at most $2$,

 \item[$\bullet$] if $t<-T$ then $\Lc^*_{C,D}(t,2L_1)$ only contains the horizontal cylinders in $(X,\omega)$ and $\Lc^*_{C,D}(t,2L_1)$ has diameter at most $2$.
\end{itemize}
\end{Lemma}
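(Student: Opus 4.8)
The plan is as follows. Let $V_1,\dots,V_k$ (resp. $H_1,\dots,H_m$) be the non-degenerate vertical (resp. horizontal) cylinders of $(X,\omega)$, which exist by hypothesis, and put $w_{\min}=\min_j\mathrm{width}(V_j)>0$ and $h_{\min}=\min_i\mathrm{width}(H_i)>0$. The key point is a uniform estimate: \emph{every non-vertical cylinder $E$ of $(X,\omega)$, degenerate or not, satisfies $|\mathrm{Re}\,\omega(E)|\ge w_{\min}$, and symmetrically every non-horizontal cylinder $E$ satisfies $|\mathrm{Im}\,\omega(E)|\ge h_{\min}$}. Here, for a degenerate cylinder $E=s_1\cup s_2$ we set $\omega(E):=\omega(s_1)+\omega(s_2)$ with $s_1,s_2$ oriented coherently so that $s_1\ast s_2$ is a closed curve; since $s_1$ and $s_2$ are parallel this gives $|\omega(E)|=|s_1|+|s_2|$, consistent with the circumference, and "$E$ vertical" means $\mathrm{Re}\,\omega(E)=0$.

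To prove the estimate I would choose, for a non-vertical cylinder $E$, a closed geodesic $\gamma_E$ representing its core: a regular core curve if $E$ is non-degenerate, and $\gamma_E=s_1\ast s_2$ if $E=s_1\cup s_2$ is degenerate, the latter being a (possibly singular) closed geodesic of direction $\theta_E\neq\pi/2$ because the condition $\vartheta(s_1^-,s_2^+)=\vartheta(s_1^+,s_2^-)=\pi$ makes the concatenation locally straight at the zeros. Lifting $\gamma_E$ through the developing map gives a straight segment in $\R^2$ from $p$ to $p+\omega(E)$; after orienting $\gamma_E$ so that $\mathrm{Re}\,\omega(E)>0$, the $x$-coordinate is \emph{strictly increasing} along this segment since $\theta_E\neq\pi/2$. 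The set $\bigcup_j\partial V_j$ cuts $\gamma_E$ into a cyclic sequence of arcs, each contained in the interior of a single $\overline{V_{j_i}}$ together with its two endpoints on $\partial\overline{V_{j_i}}$; since the two boundary components of $\overline{V_{j_i}}$ develop to the two vertical lines bounding a strip of width $\mathrm{width}(V_{j_i})$, and since the singular points of $\partial\overline{V_{j_i}}$ lie on those two lines, monotonicity of $x$ forces each arc to enter one side and exit the other, contributing exactly $\mathrm{width}(V_{j_i})$ to the total $x$-displacement. As $\mathrm{Re}\,\omega(E)>0$ the sequence is non-empty, so $|\mathrm{Re}\,\omega(E)|=\sum_i\mathrm{width}(V_{j_i})\ge w_{\min}$. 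The statement for non-horizontal cylinders is obtained by exchanging $\mathrm{Re}\leftrightarrow\mathrm{Im}$ and $V_\bullet\leftrightarrow H_\bullet$.

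Granting the estimate, set $T:=\max\big(\log(2L_1/w_{\min}),\ \log(2L_1/h_{\min}),\ 0\big)$. For any cylinder $E$, $c_t(E)^2=e^{2t}(\mathrm{Re}\,\omega(E))^2+e^{-2t}(\mathrm{Im}\,\omega(E))^2\ge e^{2t}(\mathrm{Re}\,\omega(E))^2$. Thus if $t>T$ and $E\in\Lc^*_{C,D}(t,2L_1)$, then $e^t|\mathrm{Re}\,\omega(E)|\le c_t(E)\le 2L_1$, so $|\mathrm{Re}\,\omega(E)|\le 2L_1e^{-t}<2L_1e^{-T}\le w_{\min}$, which by the estimate forces $\mathrm{Re}\,\omega(E)=0$, i.e. $E$ is vertical. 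Hence $\Lc^*_{C,D}(t,2L_1)$ consists only of vertical cylinders of $(X,\omega)$ (degenerate ones included). All of these are parallel to $V_1$, and a core curve of $V_1$ is a regular vertical geodesic, which meets no other vertical cylinder transversally and meets no singularity; so by Lemma~\ref{lm:2cyl:inters} $\iota(E,V_1)=0$ and $\dist(E,V_1)\le 1$ for every such $E$, giving $\dist(E,E')\le 2$ for any two of them, i.e. $\Lc^*_{C,D}(t,2L_1)$ has diameter at most $2$ in $\hat{\Cc}_{\rm cyl}$. The case $t<-T$ is identical, exchanging the two coordinate directions (equivalently, replacing $a_t$ by $a_{-t}$ and $C$ by $D$).

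The main obstacle I expect is making the developing-map picture watertight in the degenerate case: verifying that $s_1\ast s_2$ really is a local geodesic through the zero(s) with the correct straightness coming from the angle-$\pi$ condition, and that an arc of $\gamma_E$ entering a vertical cylinder at a \emph{singular} boundary point still traverses the full width (this is what the observation that singular points of $\partial\overline{V_j}$ lie on the two bounding developed lines is meant to handle). Closely related is fixing the right conventions for a degenerate cylinder — the value $\omega(E)$, the notion of "vertical", and the fact (from Definition~\ref{def:degen:cyl}) that $s_1$ and $s_2$ are parallel. Everything else is routine.
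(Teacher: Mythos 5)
Your proof is correct and takes essentially the same route as the paper's: the paper likewise observes that any non-vertical cylinder must cross one of the vertical cylinders $D_1,\dots,D_k$, so its circumference in $(X_t,\omega_t)$ is bounded below by $e^t\min_i w_0(D_i)$, and then concludes the diameter bound from the fact that every vertical cylinder is at distance one from $D_1$. Your developing-map discussion merely makes explicit, for degenerate cylinders, the crossing estimate that the paper leaves implicit.
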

\begin{proof}
We only give the proof of the first assertion as the second one follows from the same argument. By assumption, $X$ decomposes into the union of some non-degenerate vertical cylinders $D_1,\dots,D_k$. Let $w_t(D_i)$ denote the width of $D_i$ in $(X_t,\omega_t)$. Let $w_t = \min \{w_t(D_i), \,  i=1,\dots,k\}$.  A non-vertical cylinder must cross one of $D_i$, thus its circumference is bounded below by $w_t$ in $(X_t,\omega_t)$. Since   we have $w_t=e^tw_0$, if $t$ is large enough any non-vertical cylinder has circumference at least $2L_1$ in $(X_t,\omega_t)$. Hence $\Lc^*_{C,D}(t,2L_1)$ only contains  the vertical cylinders.  Since any vertical cylinder is of distance one from $D_1$ in $\hat{\Cc}_{\rm cyl}$, $\Lc^*_{C,D}(t,2L_1)$ has diameter at most two.
\end{proof}

\begin{Lemma}\label{lm:a_t:action:length}
If $t-\log(2) \leq t' \leq t+\log(2)$ then $\Lc^*_{C,D}(t',R) \subset \Lc^*_{C,D}(t,2R)$ for any $R \in \R_{>0}$.   In particular $C_{0,t'} \in \Lc^*_{C,D}(t,2L_1)$.
\end{Lemma}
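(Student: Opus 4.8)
The plan is to exploit the fact that the Teichm\"uller geodesic flow acts linearly in the flat charts, so that for $|t-t'|$ small the Euclidean lengths of all geodesic segments — and hence the circumferences of all cylinders — are distorted only by a uniformly bounded factor. Concretely, writing $s:=t-t'$ one has $(X_t,\omega_t)=a_s\cdot(X_{t'},\omega_{t'})$ and $|s|\le\log 2$, and the linear map $a_s=\left(\begin{smallmatrix}e^s & 0\\ 0 & e^{-s}\end{smallmatrix}\right)$ has Euclidean operator norm $\max\{e^s,e^{-s}\}=e^{|s|}\le 2$. Thus $\|a_s v\|\le 2\|v\|$ for every $v\in\R^2$, so $a_s$ multiplies the length of any saddle connection of $(X_{t'},\omega_{t'})$ by a factor at most $2$.

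The core step is then the estimate $c_t(E)\le 2\,c_{t'}(E)$ for every cylinder $E$ in $(X,\omega)$, degenerate or not. If $E$ is non-degenerate, its circumference is the length of the holonomy vector of a core curve, which gets left-multiplied by $a_s$ when passing from $(X_{t'},\omega_{t'})$ to $(X_t,\omega_t)$, so the bound follows from $\|a_s v\|\le 2\|v\|$. If $E=s_1\cup s_2$ is degenerate, its circumference is $\ell_{t'}(s_1)+\ell_{t'}(s_2)$ and again each $\omega(s_i)$ is left-multiplied by $a_s$, so $c_t(E)=\ell_t(s_1)+\ell_t(s_2)\le 2\bigl(\ell_{t'}(s_1)+\ell_{t'}(s_2)\bigr)=2\,c_{t'}(E)$. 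Hence a cylinder of circumference at most $R$ in $(X_{t'},\omega_{t'})$ has circumference at most $2R$ in $(X_t,\omega_t)$, which is exactly the asserted inclusion $\Lc^*_{C,D}(t',R)\subset\Lc^*_{C,D}(t,2R)$.

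For the last clause I would recall that, by Theorem~\ref{thm:cyl:large:width}, $C_{0,t'}$ was chosen to be a non-degenerate cylinder of width at least $K$ in $(X_{t'},\omega_{t'})$; since $\Aa(X_{t'},\omega_{t'})=1$ and an embedded cylinder has area equal to circumference times width, this forces $c_{t'}(C_{0,t'})\le 1/K$, i.e. $C_{0,t'}\in\Lc^*_{C,D}(t',1/K)$. Applying the inclusion just established with $R=1/K$ gives $C_{0,t'}\in\Lc^*_{C,D}(t,2/K)$, and since $L_1>1/K$ by \eqref{eq:defn:L1} we have $2/K<2L_1$, so $C_{0,t'}\in\Lc^*_{C,D}(t,2L_1)$. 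I do not expect a genuine obstacle here; the only point needing mild care is the treatment of degenerate cylinders in the length estimate, which is why I would phrase everything directly in terms of the linear action of $a_s$ rather than in terms of individual core curves.
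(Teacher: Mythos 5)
Your proof is correct and follows essentially the same route as the paper: the paper likewise computes that the period $(x,y)$ of any saddle connection or regular geodesic becomes $(e^{t-t'}x,e^{t'-t}y)$ and bounds $\sqrt{e^{2(t-t')}x^2+e^{2(t'-t)}y^2}\leq 2\sqrt{x^2+y^2}$, which is exactly your operator-norm estimate for $a_s$. Your explicit handling of degenerate cylinders and of the final clause via the area bound $c_{t'}(C_{0,t'})\leq 1/K < L_1$ only spells out details the paper leaves implicit.
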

\begin{proof}
Let $s$ be a saddle connection or a regular geodesic in $(X_{t'},\omega_{t'})$. Let $x +\imath y$ be the period of $s$ in $(X_{t'},\omega_{t'})$. Note that $(X_t,\omega_t)=a_{t-t'}\cdot(X_{t'},\omega_{t'})$. Thus the period of $s$ in $(X_t,\omega_t)$ is $(e^{t-t'}x,e^{t'-t}y)$.  Therefore,
$$
\ell_{t}(s) =\sqrt{e^{2(t-t')}x^2 + e^{2(t'-t)}y^2} \leq  2\sqrt{x^2+y^2} =2\ell_{t'}(s).
$$
\end{proof}

Set
$$
\bar{\Lc}_{C,D}(2L_1):= \bigcup_{k \in \Z} \Lc^*_{C,D}(k\log(2),2L_1) \subset \hat{\Cc}_{\rm cyl}^{(0)}.
$$
It follows from Lemma~\ref{lm:T:geod:infty:cyl} that if $n \in \N$ is large enough then for any $m>n$, $\Lc^*_{C,D}(m,L_1)=\Lc^*_{C,D}(n,2L_1)$, and $\Lc^*_{C,D}(-m,2L_1)=\Lc^*_{C,D}(-n,2L_1)$. Therefore, the set $\bar{\Lc}_{C,D}(2L_1)$ is actually finite.  For each unordered pair $(x,y)$ of vertices in $\bar{\Lc}_{C,D}(2L_1)$, let $\Gamma(x,y)$ be a path of minimal length in $\hat{\Cc}_{\rm cyl}$ joining $x$ to $y$. Set
$$
\hat{\Lc}_{C,D}(2L_1)=\bigcup_{x,y\in \bar{\Lc}_{C,D}(2L_1)} \Gamma(x,y).
$$
As a direct consequence of Lemma~\ref{lm:a_t:action:length}, we get
\begin{Corollary}\label{cor:LCD:in:hLCD}\hfill
\begin{itemize}
 \item[a)]  If $x \in \Lc^*_{C,D}(t,2L_1)$ and $y \in \Lc^*_{C,D}(t',2L_1)$, then $\dist(x,y) \leq M_1(2+\frac{|t-t'|}{\log(2)})$.

 \item[b)]  The set $\Lc^*_{C,D}(L_1)$ is contained in $\bar{\Lc}_{C,D}(2L_1)$ and $\bar{\Lc}_{C,D}(2L_1)$ is contained in the $M_1$-neighborhood of $\Lc^*_{C,D}(L_1)$.

 \item[c)]  For any pair of vertices $(x,y) \in\Lc^*_{C,D}(L_1)\times \Lc^*_{C,D}(L_1)$, there is a path  $\Gamma(x,y)$ in $\hat{\Lc}_{C,D}(2L_1)$ from $x$ to $y$ of length equal to $\dist(x,y)$.

 \end{itemize}
\end{Corollary}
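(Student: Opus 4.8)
The plan is to deduce all three items formally from Lemma~\ref{lm:LCD:t:diam:bound}, Lemma~\ref{lm:a_t:action:length} and Theorem~\ref{thm:cyl:large:width} (Smillie--Vorobets), by chaining along the Teichm\"uller geodesic $t\mapsto(X_t,\omega_t)$. No genuine obstacle is expected; the work is bookkeeping.

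\textbf{Item (a).} I would assume $t<t'$, the case $t=t'$ being exactly Lemma~\ref{lm:LCD:t:diam:bound}. Set $n=\lceil(t'-t)/\log 2\rceil$ and subdivide $[t,t']$ by $t=s_0<s_1<\dots<s_n=t'$ with each gap $s_{i+1}-s_i=(t'-t)/n\le\log 2$. By Theorem~\ref{thm:cyl:large:width} each area-one surface $(X_{s_i},\omega_{s_i})$ carries a cylinder $C_{0,s_i}$ of width $\ge K$, hence of circumference $\le 1/K<L_1$, so $C_{0,s_i}\in\Lc^*_{C,D}(s_i,L_1)$. For $1\le i\le n-1$, since $|s_i-s_{i+1}|\le\log 2$, the ``in particular'' clause of Lemma~\ref{lm:a_t:action:length} (applied at scale $L_1$) gives $C_{0,s_{i+1}}\in\Lc^*_{C,D}(s_i,2L_1)$, and as $C_{0,s_i}\in\Lc^*_{C,D}(s_i,2L_1)$ too, Lemma~\ref{lm:LCD:t:diam:bound} yields $\dist(C_{0,s_i},C_{0,s_{i+1}})\le M_1$. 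Similarly $x,C_{0,s_1}\in\Lc^*_{C,D}(s_0,2L_1)$ (the hypothesis gives $x\in\Lc^*_{C,D}(t,2L_1)$, and Lemma~\ref{lm:a_t:action:length} gives $C_{0,s_1}\in\Lc^*_{C,D}(s_0,2L_1)$) and $C_{0,s_n},y\in\Lc^*_{C,D}(s_n,2L_1)$, so $\dist(x,C_{0,s_1})\le M_1$ and $\dist(C_{0,s_n},y)\le M_1$. Concatenating $x\to C_{0,s_1}\to\dots\to C_{0,s_n}\to y$ ($n+1$ hops, each of $\hat{\Cc}_{\rm cyl}$-distance $\le M_1$) gives $\dist(x,y)\le(n+1)M_1\le M_1\bigl(2+|t-t'|/\log 2\bigr)$.

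\textbf{Item (b).} For the inclusion, given $E\in\Lc^*_{C,D}(t,L_1)$ I would pick $k\in\Z$ with $|t-k\log 2|\le\frac{1}{2}\log 2<\log 2$; then Lemma~\ref{lm:a_t:action:length} gives $E\in\Lc^*_{C,D}(k\log 2,2L_1)\subset\bar{\Lc}_{C,D}(2L_1)$, hence $\Lc^*_{C,D}(L_1)\subset\bar{\Lc}_{C,D}(2L_1)$. Conversely, if $E\in\Lc^*_{C,D}(k\log 2,2L_1)$, then the width-$\ge K$ cylinder $C_{0,k\log 2}$, of circumference $\le 1/K<L_1$, lies in $\Lc^*_{C,D}(k\log 2,L_1)\subset\Lc^*_{C,D}(L_1)$; since $E$ and $C_{0,k\log 2}$ both lie in $\Lc^*_{C,D}(k\log 2,2L_1)$, Lemma~\ref{lm:LCD:t:diam:bound} gives $\dist(E,C_{0,k\log 2})\le M_1$, so $E$ lies in the $M_1$-neighborhood of $\Lc^*_{C,D}(L_1)$.

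\textbf{Item (c).} This is immediate from (b): $x,y\in\Lc^*_{C,D}(L_1)\subset\bar{\Lc}_{C,D}(2L_1)$, so the minimal-length path $\Gamma(x,y)$ in $\hat{\Cc}_{\rm cyl}$ is one of the paths in the union defining $\hat{\Lc}_{C,D}(2L_1)$; hence $\Gamma(x,y)\subset\hat{\Lc}_{C,D}(2L_1)$ and it has length $\dist(x,y)$ by construction. The only place requiring care is (a), where the path must be routed through the cylinders $C_{0,s_i}$ produced by Smillie--Vorobets and Lemma~\ref{lm:a_t:action:length} must be invoked at scale $L_1$ rather than $2L_1$; this is what keeps the estimate linear in $|t-t'|$ with slope $M_1/\log 2$ instead of merely finite on each segment.
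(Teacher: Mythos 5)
Your proof is correct and is exactly the argument the paper intends: the paper states this corollary without proof as a ``direct consequence'' of Lemma~\ref{lm:a_t:action:length}, and your chaining of the Smillie--Vorobets cylinders $C_{0,s_i}$ along $\log 2$-spaced times, combined with Lemma~\ref{lm:LCD:t:diam:bound} and the scale-$L_1$ use of Lemma~\ref{lm:a_t:action:length}, is the standard way to fill it in. All three items check out, including the count $(n+1)M_1\le M_1(2+|t-t'|/\log 2)$ in (a) and the observation that (c) is essentially definitional once (b) is known.
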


\subsection{Local property for $\hat{\Lc}_{C,D}$}
We will now show that the subgraphs $\hat{\Lc}_{C,D}(2L_1)$ constructed above satisfy the first condition of Theorem~\ref{thm:hyp:crit:Ma-Sc}.
\begin{Proposition}\label{prop:LCD:dist:1}
There exists a constant $M_2$ such that if $(X,\omega) \in \Ecal$ then for any pair of cylinders $C,D$ in  $(X,\omega)$ such that $\iota(C,D)=0$, we have $\diam \hat{\Lc}_{C,D}(2L_1) \leq M_2$.
\end{Proposition}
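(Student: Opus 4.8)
The plan is to follow the skeleton already set up for the non-degenerate case: reduce $\diam\hat{\Lc}_{C,D}(2L_1)$ to the diameter of the finite vertex set $\bar{\Lc}_{C,D}(2L_1)$, and then exploit both the $a_t$-flow structure of $\bar{\Lc}_{C,D}(2L_1)$ and the rigidity of genus-two surfaces. First, if $C$ and $D$ are parallel, $\hat{\Lc}_{C,D}(2L_1)$ was defined directly as a single edge or a union of two edges, of diameter at most $2$, so I may assume $C\not\parallel D$ and (applying $\SL(2,\R)$) that $C$ is horizontal and $D$ vertical. Here $\iota(C,D)=0$ forces the open cylinders $C$ and $D$ to be disjoint in $X$: a point of $C\cap D$ would lie on a horizontal leaf freely homotopic to the core of $C$ and on a vertical leaf freely homotopic to the core of $D$, and these two arcs would meet at a right angle without bounding a bigon --- a Gauss--Bonnet computation identical to the one in Lemma~\ref{lm:free:hom:same:cyl} and the Claim inside Lemma~\ref{lm:2cyl:inters} rules out a bigon bounded by a horizontal and a vertical geodesic arc --- so the two leaves cannot be homotoped apart, contradicting $\iota(C,D)=0$. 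Since $(X,\omega)$ is completely periodic and both the horizontal and the vertical directions carry a cylinder, the surface decomposes into cylinders in each of these directions, so Lemma~\ref{lm:T:geod:infty:cyl} is available. Finally, as a path of minimal length between two vertices at distance $r$ has diameter $r$, one has $\diam\hat{\Lc}_{C,D}(2L_1)\le 3\diam\bar{\Lc}_{C,D}(2L_1)$, and it suffices to bound the right-hand side by a universal constant.

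Next I would analyse the slices along the geodesic $\{(X_t,\omega_t)\}$. For a cylinder $E$ with $(X,\omega)$-holonomy $x+\imath y$, its circumference in $(X_t,\omega_t)$ is $\sqrt{e^{2t}x^2+e^{-2t}y^2}$, a unimodal function of $t$; hence $I_E:=\{t: E\in\Lc^*_{C,D}(t,2L_1)\}$ is an interval, and along it $e^{t}|x|\le 2L_1$ and $e^{-t}|y|\le 2L_1$, so $|x|\,|y|\le 4L_1^2$, a bound \emph{independent of $t$}. Thus $I_C=(-\infty,a]$ and $I_D=[b,+\infty)$ are complementary rays except for a gap $G=(a,b)$ of length $\log\!\big(c_Cc_D/4L_1^2\big)$, where $c_C,c_D$ are the circumferences of $C,D$ in the normalised surface. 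If $I_E$ meets $I_C$ at some parameter $t$, then $E$ and $C$ both lie in $\Lc^*_{C,D}(t,2L_1)$, which has diameter at most $M_1$ by Lemma~\ref{lm:LCD:t:diam:bound}; similarly with $D$. Since every slice $\Lc^*_{C,D}(t,2L_1)$ is nonempty --- it contains a Smillie--Vorobets cylinder, of circumference $\le 1/K<2L_1$, whose interval is therefore nondegenerate --- the intervals $\{I_E\}$ cover $\R$, and consequently every $E\in\bar{\Lc}_{C,D}(2L_1)$ whose interval is not contained in $G$ lies within $M_1$ of $\{C,D\}$. Together with $\dist(C,D)\le1$, such cylinders span a set of diameter at most $2M_1+1$.

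It remains to bound the distance from $\{C,D\}$ of those $E$ with $I_E\subseteq G$. If $G$ had length bounded in terms of the universal constants, a chaining argument through the finitely many slices $\Lc^*_{C,D}(k\log 2,2L_1)$ with $k\log 2\in G$, using Corollary~\ref{cor:LCD:in:hLCD}(a), would finish the proof; but $c_Cc_D$ is not bounded over $\Ecal$, so $G$ may be long and these slices contain neither $C$ nor $D$. This is where the genus-two structure enters. For such an $E$ one has the exact identities $\sum_i\iota(E,C_i)\,w(C_i)=|y|$ and $\sum_j\iota(E,D_j)\,w(D_j)=|x|$, summed over the horizontal and vertical cylinder decompositions $\{C_i\}$, $\{D_j\}$ (finite since $(X,\omega)$ is completely periodic), obtained by adding up the vertical, resp.\ horizontal, displacements of the successive crossing segments of the core of $E$. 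Because $C$ and $D$ are disjoint, the structural results on cylinders and saddle connections in genus two (Lemmas~\ref{lm:sc:no:inv:types}, \ref{lm:nsimple:cylinder}, \ref{lm:H2:inv:sc:2cyl}, \ref{lm:degen:cyl:H2:H11}, together with Theorem~\ref{thm:exist:mult:curv}) restrict severely how $E$ can wind around $X$: in $\H(2)$ there is no cylinder disjoint from both $C$ and $D$, and the complements of $C$ and of $D$ are of very restricted topological type, so an $E$ crossing both must cross one of them a bounded number of times; in $\H(1,1)$ any $E$ not already close to $C$ or $D$ is forced to be parallel to, or disjoint from, a third cylinder that is itself disjoint from $C$ and $D$. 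In every case one gets $\min\{\iota(E,C),\iota(E,D)\}\le N_0$ for a universal $N_0$, whence $\min\{\dist(E,C),\dist(E,D)\}\le K_1N_0+K_2$ by Theorem~\ref{thm:dist:n:inter}.

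Putting these together, every vertex of $\bar{\Lc}_{C,D}(2L_1)$ lies within a distance depending only on $K,K_1,K_2,L_1$ of $\{C,D\}$, and since $\dist(C,D)\le1$ this bounds $\diam\bar{\Lc}_{C,D}(2L_1)$, hence $\diam\hat{\Lc}_{C,D}(2L_1)\le M_2$, for a universal $M_2$. I expect the main obstacle to be the analysis in the previous paragraph: controlling the long ``middle'' stretch $G$ of the Teichm\"uller geodesic along which neither $C$ nor $D$ is short. This is exactly where the hypothesis $\iota(C,D)=0$, the low genus, and complete periodicity have to be used simultaneously, and it requires the careful case analysis of the ways two disjoint cylinders --- and a third short cylinder crossing them --- can sit inside a genus-two translation surface.
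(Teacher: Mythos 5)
Your setup (reduction to non-parallel $C,D$, the interval structure of $I_E$ along the geodesic $a_t$, and the identification of the ``gap'' $G$ where neither $C$ nor $D$ is short) matches the skeleton of the argument, but the step that actually carries the proof --- your claim that every $E$ with $I_E\subseteq G$ satisfies $\min\{\iota(E,C),\iota(E,D)\}\le N_0$ for a universal $N_0$ --- is not proved and is almost certainly false. The identities $\sum_i\iota(E,C_i)w(C_i)=|y|$ and $\sum_j\iota(E,D_j)w(D_j)=|x|$ together with $|x||y|\le 4L_1^2$ only give $\iota(E,C)\iota(E,D)\le 4L_1^2/(w(C)w(D))$, and the widths $w(C),w(D)$ are not bounded below over $\Ecal$; moreover, on a Veech surface one can push a cylinder $E_0$ by high powers of the affine parabolic fixing a third direction that crosses both $C$ and $D$ and make both intersection numbers grow without bound, so no topological ``restricted winding'' argument in genus two can yield a universal $N_0$ for arbitrary cylinders. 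You flag this paragraph as the main obstacle, and it is: as written the proof does not close.

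The paper resolves the gap by a different mechanism, in two parts. First, after normalizing $C$ and $D$ to have equal circumference $\ell$ and $\Aa(X,\omega)=1$, it shows that in $\H(2)$ (and in several $\H(1,1)$ configurations) the disjointness of $C$ and $D$ forces an embedded parallelogram of area $\ell^2$ (or two triangles of total area $\ell^2$), hence $\ell\le 1<2L_1$; in your language the gap $G$ is then \emph{empty}, since $C$ is short for all $t\le 0$ and $D$ for all $t\ge 0$. Second, in the one remaining configuration in $\H(1,1)$ --- a splitting along homologous saddle connections into two slit tori, one containing $C$ and one containing $D$ --- the area constraint bounds the slit by $|s_i|\le\sqrt2/\ell$, and Lemma~\ref{lm:s:tor:cyl:large:wd} then produces, at \emph{each} time $t$ in the gap, an auxiliary cylinder $E'_t$ of circumference at most $L_1$ that is disjoint from the slit and hence from $D$; any $E\in\Lc^*_{C,D}(t,2L_1)$ is then within $M_1$ of $E'_t$ by Lemma~\ref{lm:LCD:t:diam:bound} and so within $M_1+1$ of $D$, with no control on $\iota(E,D)$ needed. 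Replacing your third paragraph by this two-pronged argument (empty gap via the area bound $\ell\le1$, else routing through an auxiliary cylinder disjoint from $D$ supplied by the slit-torus lemma) is what is required.
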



To prove this proposition, we make use of an elementary result on slit tori (cf. Lemma~\ref{lm:s:tor:cyl:large:wd}), and the fact that if $C$ and $D$ are not parallel, then there always exists a splitting of $X$ into two subsurfaces, each of which contains one of $C$ and $D$. Those auxiliary results are proved in the appendix. The main technical difficulties arise when we have to deal with degenerate cylinders.  We  split the proof of Proposition~\ref{prop:LCD:dist:1} into two cases $(X,\omega) \in \H(2)$ and $(X,\omega) \in \H(1,1)$.  

\subsection*{Proof of Proposition~\ref{prop:LCD:dist:1}, Case $\H(2)$}
\begin{proof}
If $C$ and $D$ are parallel then $\hat{\Lc}_{C,D}(2L_1)$ has diameter bounded by $2$ and we have nothing to prove. Suppose from now on that $C$ is horizontal, $D$ is vertical, $C$ and $D$ have the same circumference equal to $\ell$, and $\hat{\Lc}_{C,D}(2L_1)$ is the graph constructed above. Note that in this case $(X,\omega)$ is a Veech surface, thus both horizontal and vertical  directions are periodic. \medskip

\noindent \underline{Case 1:} one of $C$ or $D$ is non-degenerate. Assume that $C$ is non-degenerate. Let $c$ be a core curve of $C$ and $d$ a core curve of $D$. Note that $c$ is a regular simple closed geodesic. By Lemma~\ref{lm:2cyl:inters}, the condition $\iota(C,D)=0$ implies that $c\cap d=\vide$. Clearly, $C$ cannot fill $X$. If $C$ is not simple then the complement of $\ol{C}$ is a horizontal simple cylinder $C'$ whose boundary is contained in the boundary of $C$. Since $D$ is disjoint from $C$, it must be contained in $C'$. But this  is impossible since $C'$ is horizontal and $D$ is vertical. Therefore, $C$ must be a simple cylinder.

The complement of $C$ is then a slit torus with the slit corresponding to the boundary of $C$. Remark that a core curve of $D$ must be disjoint from the interior of the slit, otherwise it would cross $C$ entirely. Thus, we have in the slit torus an embedded square bounded by the boundary of $D$ and the slit (which is actually the boundary of $C$) (see Figure~\ref{fig:LCD:d1:H2}). By assumption, the length of the sides of this square is $\ell$.  Since this square has area less than one,  we must have $\ell < 1$. Therefore $C \in \Lc^*_{C,D}(t,L_1)$ for all $t \leq 0$, and $D\in \Lc^*_{C,D}(t,L_1)$ for any $t \geq 0$. Hence any $E\in \bar{\Lc}_{C,D}(2L_1)$ is of distance at most $M_1$ from $C$ or from $D$. Thus $\diam\hat{\Lc}_{C,D}(2L_1) \leq 2M_1+1$.

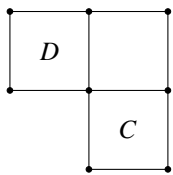
\begin{figure}[htb]
\begin{minipage}[t]{0.45\linewidth}
\centering
\begin{tikzpicture}[scale=0.35]
\draw[thin] (0,6) -- (0,3) -- (3,3) -- (3,0) -- (6,0) -- (6,6) -- cycle;
\draw[thin] (3,6) -- (3,3) -- (6,3);
\foreach \x in {(0,6),(0,3),(3,6),(3,3),(3,0),(6,6),(6,3),(6,0)} \filldraw[fill=black] \x circle (3pt);
\draw (1.5,4.5) node{\small $D$} (4.5,1.5) node {\small $C$};
\end{tikzpicture}
\end{minipage}
%
%

\caption{Disjoint simple cylinders on surfaces in $\H(2)$}
\label{fig:LCD:d1:H2}
\end{figure}

\medskip

\noindent \underline{Case 2:} both of  $C$ and $D$ are degenerate.  From Lemma~\ref{lm:degen:cyl:deform}, for any $\eps >0$ small enough, we can deform $(X,\omega)$ into another surface $(X',\omega')$ such that
\begin{itemize}
 \item[$\bullet$] $C$ corresponds to a simple horizontal cylinder $C'$ in $X'$ of width $\eps$,

 \item[$\bullet$] $D$ corresponds to a vertical cylinder in $X'$.
\end{itemize}
Since $\iota(C',D')=\iota(C,D)=0$, it follows from Lemma~\ref{lm:2cyl:inters} that $D'$ must be disjoint from $C'$. It follows in particular that $D$ and $D'$ have the same circumference $\ell$. By construction $C'$ has the same  circumference as $C$, and $\Aa(X',\omega')=\Aa(X,\omega)+\eps\ell=1+\eps\ell$. Applying the same arguments as above to $(X',\omega')$, we see that $X'$ contains an embedded square of size $\ell$ disjoint from $C'$. Therefore we have $\ell^2 < 1+\eps\ell$.  Since $\eps$ can be chosen arbitrarily, we derive that $\ell \leq 1$. We can then conclude by the same arguments as the previous case.
\end{proof}

\subsection*{Proof of Proposition~\ref{prop:LCD:dist:1}, Case $\H(1,1)$}\hfill

\begin{proof}
Again, we only have to consider the case $C$ and $D$ are not parallel. Thus we can assume that $C$ is horizontal and $D$ is vertical. We first choose a positive real number $L > \sqrt{2}$ such that
\begin{equation}\label{eq:L1:case:H11}
L_1 \geq 3f(\sqrt{2}L)
\end{equation}
\noindent where $f(x)=\sqrt{x^2+1/x^2}$ (see Lemma~\ref{lm:s:tor:cyl:large:wd}).

\medskip

\noindent \underline{Case 1:} one of $C$ and $D$ is a simple cylinder. By Lemma~\ref{lm:2djt:sim:cyl}, we need to consider two cases  (see Figure~\ref{fig:H11:CD:sim:split})

\begin{figure}[htb]
\begin{minipage}[t]{0.4\linewidth}
\centering
\begin{tikzpicture}[scale=0.4]
\draw[thin] (0,10) -- (0,6) -- (2,6) -- (2,4) -- (6,0) -- (6,6) -- (7,7) -- (3,11) -- (2,10) -- cycle;
\draw[thin] (2,10) -- (2,6) -- (6,6) -- (2,10) (2,4) -- (6,4);

\foreach \x in {(0,10),(0,6), (2,10), (2,6), (2,4), (3,11), (6,6), (6,4), (6,0), (7,7)} \filldraw[fill=black] \x circle (3pt);
\draw (1,8) node {\tiny $D$} (4,5) node {\tiny $C$} (3,7) node {\tiny $\T$} (5,2.5) node {\tiny $\T'$} (4.5,8.5) node {\tiny $E$};
\end{tikzpicture}
\end{minipage}
\begin{minipage}[t]{0.4\linewidth}
\centering
\begin{tikzpicture}[scale=0.4]
\draw[thin] (0,8)--(0,2) -- (2,2) -- (2,0) -- (8,0) -- (8,2) -- (10,4) -- (4,4) --(4,10) -- (2,8) -- cycle;
\draw[thin] (2,8) -- (2,2) -- (8,2) (2,2) -- (4,4);

\foreach \x in {(0,8),(0,2), (2,8), (2,2), (2,0), (4,10), (4,4), (8,2), (8,0), (10,4)} \filldraw[fill=black] \x circle (3pt);
\draw (1,5) node {\tiny $D$} (5,1) node {\tiny  $C$};
\draw (3,3.5) node {\tiny $s_1$} (3,9.5) node {\tiny $s_2$} (9,2.5) node {\tiny $s_2$};
\draw (6,3) node {\tiny $\P'$} (3,6) node {\tiny $\P''$};
\end{tikzpicture}
\end{minipage}
\caption{Disjoint cylinders on surfaces in $\H(1,1)$: one of $C$ and $D$ is simple.}
\label{fig:H11:CD:sim:split}
\end{figure}
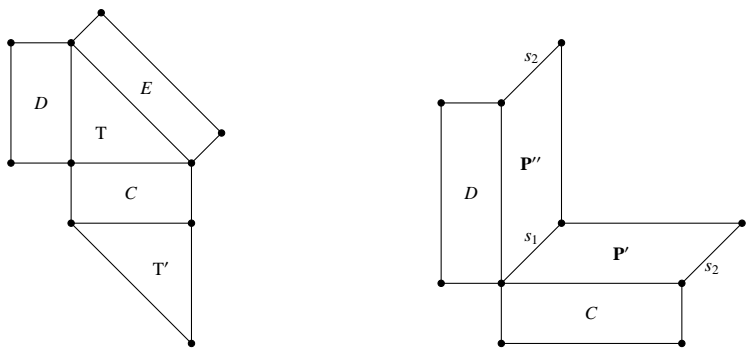

\begin{itemize}
\item[(i)] There is a simple cylinder $E$ disjoint from $C\cup D$ and the complement of $C\cup D \cup E$ is the union of two triangles $\T, \T'$ (see Figure~\ref{fig:H11:CD:sim:split} left). Since we have $\Aa(\T)+\Aa(\T')=\ell^2 < \Aa(X,\omega)=1$, it follows $\ell <1$. Hence we  can  use the same argument as in the case $(X,\omega) \in \H(2)$ to conclude that $\diam\hat{\Lc}_{C,D}(2L_1) \leq 2M_1+1$.\\

\item[(ii)] There is a pair of homologous saddle connections $s_1,s_2$ that decompose $X$ into a connected sum of two slit tori, $(X',\omega',s')$ containing $C$ and $(X'',\omega'',s'')$ containing $D$ (see Figure~\ref{fig:H11:CD:sim:split} right).

By construction, the complement of $C$ in $X'$  is an embedded parallelogram $\P'$  bounded by $s_1,s_2$  and the boundary of $C$.    Similarly, the complement of $D$ in $X''$ is also an embedded parallelogram $\P''$ bounded by $s_1,s_2$ and the boundary of $D$.     If $\ell \leq 1$ then we can conclude using the argument above. Suppose that we have $\ell \geq 1$. Let $\omega(s_i)=x + \imath y $.  Since we have $\Aa(\P')=|y|\ell$, and $\Aa(\P'')=|x|\ell$, it follows 
$$\max\{|x|,|y|\} \leq 1/\ell  \leq 1 \text{ and } |s_i|=\sqrt{x^2+y^2} \leq \sqrt{2}/\ell \leq \sqrt{2}.$$

\noindent  Set $A_1=\Aa(X',\omega'), \, A_2=\Aa(X'',\omega'')$, we have $A_1+A_2=1$.  Without loss of generality, let us suppose that $A_1 \geq 1/2$. For any $t \in \R$, the period of $s_i$ in $(X_t,\omega_t)$ is $(e^tx,e^{-t}y)$. Let $(X'_t,\omega'_t,s'_t)$ be the slit torus corresponding to $(X',\omega',s')$ in $(X_t,\omega_t)$. Recall that  we have chosen $L>\sqrt{2}$ and $L_1$ satisfies  \eqref{eq:L1:case:H11}. Let us choose a positive real number $L'\geq 1$ such that
$$
L \geq \sqrt{{L'}^2+1}.
$$
\begin{itemize}
\item[$\bullet$] For $ 0 \leq t \leq \log(\ell L')$, we have $ e^t|x| \leq L'$ and $e^{-t}|y| \leq |y| \leq 1$, thus $\ell_t(s_1) \leq \sqrt{{L'}^2+1} \leq L$. Rescaling $(X'_t,\omega'_t,s'_t)$ by $\frac{1}{\sqrt{A_1}}$, we get a torus of area one with a slit of length bounded by $\sqrt{2}L$. Using Lemma~\ref{lm:s:tor:cyl:large:wd}, we see that there exists in $\frac{1}{\sqrt{A_1}}\cdot X'_t$ a cylinder $E'_t$ disjoint from the slit of circumference bounded by $L_1$. Note that in $X'_t$, the circumference of $E'_t$ is at most $\sqrt{A_1}L_1 \leq L_1$. We have  $\dist(D,E'_t)=1$ and $E'_t\in \Lc^*_{C,D}(t,2L_1)$. Thus for any $E \in \Lc^*_{C,D}(t,2L_1)$ we have $\dist(D,E) \leq M_1+1$.

\item[$\bullet$] For $-\log(\ell L') \leq t \leq 0$, we have $e^t|x| \leq |x| \leq 1$ and $e^{-t}|y| \leq L'$, thus $ \ell_t(s_i) \leq \sqrt{{L'}^2+1} \leq L$. The same argument as the previous case then shows that $\dist(D,E) \leq M_1+1$, for any $E\in \Lc^*_{C,D}(t,2L_1)$.

\item[$\bullet$] For $t\geq \log(\ell L')$ we have $\ell_t(D)=e^{-t}\ell \leq 1/{L'}  \leq 1 \leq 2L_1$. Thus $D\in \Lc^*_{C,D}(t,2L_1)$ which implies that $\dist(D,E) \leq M_1$ for any $E \in \Lc^*_{C,D}(t,2L_1)$.

\item[$\bullet$] For $t \leq -\log(\ell L')$ we have $\ell_t(C) \leq 1/{L'} \leq 2L_1$, hence for any $E\in \Lc^*_{C,D}(t,2L_1)$,  $\dist(C,E) \leq M_1$, which implies that $\dist(D,E)\leq M_1+1$.
\end{itemize}
\end{itemize}
We can then conclude that for any $t\in \R$, and any $E\in \Lc^*_{C,D}(t,2L_1)$, we have $\dist(D,E)\leq M_1+1$. Hence $\diam\hat{\Lc}_{C,D} \leq 2(M_1+1)$.

\medskip

\noindent \underline{Case 2:} one of $C,D$ is non-degenerate and  not simple. Without loss of generality, we can assume that $C$ is neither simple nor degenerate. Lemma~\ref{lm:2cyl:inters} implies that $D$ is disjoint from $C$.  Since $C$ is not simple, the complement of $\overline{C}$ is either (a) empty, (b) a horizontal simple cylinder, (c) the union of two simple horizontal cylinders, or (d) another horizontal cylinder whose closure is a slit torus. Since there exists a vertical cylinder disjoint from $C$ (namely $D$), only (d) can occur. In this case, there are a pair of horizontal homologous saddle connection $\{s_1,s_2\}$ contained in the boundary of $C$ that decompose $(X,\omega)$ into the connected sum of two slit tori.  Let $(X',\omega',s')$ be the slit torus which is the closure of $C$, and $(X'',\omega'',s'')$ be the other one that contains $D$ (see Figure~\ref{fig:LCD:d1:H11:C:n:sim}).  
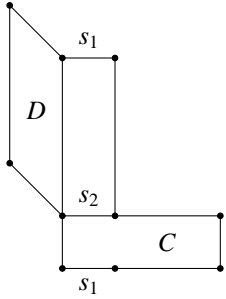
\begin{figure}[htb]
\begin{minipage}[t]{0.5\linewidth}
\centering
\begin{tikzpicture}[scale=0.35]
\draw[thin] (-2,10) -- (-2,4) -- (0,2) -- (0,0) -- (6,0) -- (6,2) -- (2,2) -- (2,8) -- (0,8) -- cycle;
\draw[thin] (0,8) -- (0,2) -- (2,2);

\foreach \x in {(-2,10), (-2,4), (0,8), (0,2), (0,0), (2,8), (2,2), (2,0), (6,2), (6,0)} \filldraw[fill=black] \x circle (3pt);
\draw (-1,6) node {\small $D$} (4,1) node {\small $C$} (1,8) node[above] {\small $s_1$} (1,2) node[above] {\small $s_2$} (1,0) node[below] {\small $s_1$};
\end{tikzpicture}
\end{minipage}
\caption{Disjoint cylinders on surfaces in $\H(1,1)$: $C$ is not simple nor degenerate.}
\label{fig:LCD:d1:H11:C:n:sim}
\end{figure}

Let $x=|s_1|=|s_2|$. Observe that $X''$ contains a rectangle bounded by $s_1,s_2$ and the saddle connections bordering $D$. Therefore we have $ x\ell \leq 1 \Leftrightarrow 0 \leq x \leq 1/\ell$. By the same arguments as the previous case, we also get $\diam\hat{\Lc}_{C,D}\leq 2(M+1)$.  \medskip

\noindent \underline{Case 3:} one of $C$ and $D$ is degenerate.   Let us assume that $C$ is degenerate. Using Lemma~\ref{lm:degen:cyl:deform}, we can find a family $(X_t,\omega_t), t \in [0,\eps)$, of surfaces in $\H(1,1)$ that are deformations of $(X,\omega)$, such that $C$ corresponds to a simple horizontal cylinder $C_t$ on $X_t$, for $t>0$, which has the same circumference. Note that the width of $C_t$ is $t$. Therefore $\Aa(X_t,\omega_t)=\Aa(X,\omega)+t\ell$.

By construction, $D$ corresponds to a cylinder $D_t$ on $X_t$ which is disjoint from $C_t$ (since we have $\iota(C_t,D_t)=\iota(C,D)=0$). By Lemma~\ref{lm:2djt:sim:cyl} we know that either (i) $(X_t,\omega_t)$ contains two embedded triangles $\T,\T'$ disjoint from $C_t$ and $D_t$, or (ii) there is a splitting of $(X_t,\omega_t)$ into  two slit tori $(X'_t,\omega'_t,s'_t)$ and $(X''_t,\omega''_t,s''_t)$ such that $C_t \subset X'_t$ and $D_t\subset X''_t$. 

If (i) occurs, then we have $\Aa(\T)=\Aa(T')=\ell^2/2 \leq 1/2$, which implies that $\ell \leq 1$. If (ii) occurs, then since the slits ($s'$ and $s''$) are disjoint from $C_t$, they persist  as we collapse $C_t$ to get back $(X,\omega)$. Thus, we have  the same splitting on $(X,\omega)$. In conclusion, we can use the same arguments as in Case 1 to handle this  case.  The proof of  Proposition~\ref{prop:LCD:dist:1} is now complete.
\end{proof}

\subsection{Slim triangle property for $\hat{\Lc}_{C,D}$}

We now prove  that the subgraphs $\hat{\Lc}_{C,D}(2L_1)$ satisfy the second property of Theorem~\ref{thm:hyp:crit:Ma-Sc}. The idea of the proof can found in \cite[Lemma 4.4]{Bow06}. To alleviate the notations, in what follows we will write $\hat{\Lc}_{C,D}$ instead of $\hat{\Lc}_{C,D}(2L_1)$.

\begin{Proposition}\label{prop:LCD:sim:tria}
There exists a constant $M_3$ such that for any triple of cylinders $\{C,D,E\}$ in $(X,\omega)$, we have $\hat{\Lc}_{C,D}$ is contained in the $M_3$-neighborhood of $\hat{\Lc}_{C,E}\cup\hat{\Lc}_{E,D}$ in $\hat{\Cc}_{\rm cyl}(X,\omega,f)$.
\end{Proposition}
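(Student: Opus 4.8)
The plan is to adapt Bowditch's proof of the slim-triangle property for the curve graph (the proof of \cite[Lemma 4.4]{Bow06}), replacing the hyperbolic plane there by the Teichm\"uller disk of $(X,\omega)$. First I would dispose of the degenerate configurations. If two of $C,D,E$ are parallel, say $C\parallel E$, then $\hat{\Lc}_{C,E}$ has diameter at most $2$ and contains $C$; moreover the bi-infinite Teichm\"uller geodesics underlying $\hat{\Lc}_{C,D}$ and $\hat{\Lc}_{E,D}$ have the same pair of ideal endpoints $\{\mathrm{dir}(C)=\mathrm{dir}(E),\ \mathrm{dir}(D)\}$ and hence coincide as subsets of the disk. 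Together with the ``large cylinder'' estimate below this shows $\bar{\Lc}_{C,D}(2L_1)$ and $\bar{\Lc}_{E,D}(2L_1)$ lie at bounded Hausdorff distance in $\hat{\Cc}_{\rm cyl}$, and the statement follows easily. So from now on I assume $C,D,E$ pairwise non-parallel and normalise by an element of $\SL(2,\R)$ (which does not affect $\hat{\Cc}_{\rm cyl}$) so that $C$ is horizontal, $D$ is vertical, and $E$ has some direction $\nu\in\RP^1$.

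The geometric core is the following. View the $\GL^+(2,\R)$-orbit of $(X,\omega)$, modulo rescaling and rotation, as a copy of $\mathbb{H}$; its ideal boundary is the circle $\RP^1$ of directions, and for cylinders $F,F'$ in distinct directions the bi-infinite geodesic underlying the construction of $\hat{\Lc}_{F,F'}$ is exactly the $\mathbb{H}$-geodesic $g_{F,F'}$ with ideal endpoints $\mathrm{dir}(F),\mathrm{dir}(F')$. Thus $g_{C,D},g_{C,E},g_{E,D}$ are the three sides of the ideal triangle with vertices $\mathrm{dir}(C),\mathrm{dir}(D),\mathrm{dir}(E)$, which is $\delta_0$-thin for a universal constant $\delta_0$; let $\Delta_0$ be the (universal) factor by which holonomy lengths can be distorted by a displacement of size $\delta_0$ inside the disk. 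Fix $t\in\R$. By Lemma~\ref{lm:LCD:t:diam:bound} the blob $\Lc^*_{C,D}(t,2L_1)$ has diameter at most $M_1$, so it suffices to locate the distinguished cylinder $C_{0,t}$, which has width $\ge K$, hence circumference $\le 1/K$, in $a_t\cdot(X,\omega)$. The surface $a_t\cdot(X,\omega)$ is a point $p\in g_{C,D}$, which is $\delta_0$-close to a point $q$ of $g_{C,E}\cup g_{E,D}$, say $q\in g_{C,E}$; then the circumference of $C_{0,t}$ at $q$ is at most $\Delta_0/K$, and by Lemma~\ref{lm:a_t:action:length} at most $2\Delta_0/K$ at the nearest lattice point $q'$ of the parametrisation of $g_{C,E}$ used to define $\hat{\Lc}_{C,E}$. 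At $q'$, Theorem~\ref{thm:cyl:large:width} provides a cylinder $E'\in\bar{\Lc}_{C,E}(2L_1)$ of width $\ge K$, whence $\iota(C_{0,t},E')\le (2\Delta_0/K)/K = 2\Delta_0/K^2$, and Theorem~\ref{thm:dist:n:inter} gives
$$
\dist\big(C_{0,t},\,\bar{\Lc}_{C,E}(2L_1)\big)\ \le\ 2K_1\Delta_0/K^2+K_2\ =:\ N_0,
$$
a constant depending only on $K,K_1,K_2,\Delta_0$. Letting $t$ range and using the $M_1$-bound on each blob, this yields $\bar{\Lc}_{C,D}(2L_1)\subset\mathcal{N}_{N_0+M_1}\big(\bar{\Lc}_{C,E}(2L_1)\cup\bar{\Lc}_{E,D}(2L_1)\big)\subset\mathcal{N}_{N_0+M_1}(\hat{\Lc}_{C,E}\cup\hat{\Lc}_{E,D})$.

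It remains to promote this ``blob-level'' inclusion to all of $\hat{\Lc}_{C,D}(2L_1)$, i.e.\ to the interior vertices of the geodesic segments $\Gamma(x,y)$. Here I would use Corollary~\ref{cor:LCD:in:hLCD}: by part~(a) two consecutive blobs are joined inside $\hat{\Lc}_{C,D}(2L_1)$ by a segment of length $\le 3M_1$, so the ``spine'' formed by the segments joining the $C_{0,k\log 2}$ for consecutive $k$ together with the within-blob segments (of length $\le M_1$) is a connected subgraph containing $C,D$ and lying within bounded distance of $\bar{\Lc}_{C,D}(2L_1)$; for this subgraph the slim-triangle inclusion is immediate from the previous paragraph. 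Since the Masur--Schleimer criterion (Theorem~\ref{thm:hyp:crit:Ma-Sc}) only requires \emph{some} connected subgraph joining each pair of vertices with the local and slim-triangle properties, this---together with Proposition~\ref{prop:LCD:dist:1}, which gives the local property for the same subgraphs---already proves Theorem~\ref{thm:cper:hyperbolic}; a further argument with Corollary~\ref{cor:LCD:in:hLCD} then yields the slim-triangle inclusion for $\hat{\Lc}_{C,D}(2L_1)$ itself.

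The main obstacle I anticipate is twofold. First, the length and intersection estimates of the second paragraph must be justified in the presence of \emph{degenerate} cylinders, among $C,D,E$ and among the cylinders counted in the blobs: such a cylinder has no genuine core curve, so one should either invoke Lemma~\ref{lm:degen:cyl:deform} to replace it by a thin simple cylinder on a nearby surface in the same stratum, on which all relevant lengths and intersection numbers are unchanged (cf.\ the remark after Lemma~\ref{lm:2cyl:inters}), or argue directly with its two invariant saddle connections. Second, the passage from the blob-level inclusion to $\hat{\Lc}_{C,D}(2L_1)$ itself is the delicate bookkeeping point, where one must be careful that the all-pairs definition of $\hat{\Lc}_{C,D}(2L_1)$ does not enlarge it beyond control; this is precisely where Corollary~\ref{cor:LCD:in:hLCD} and the uniform diameter bound of Lemma~\ref{lm:LCD:t:diam:bound} enter.
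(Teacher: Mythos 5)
Your proposal is correct and has the same overall architecture (reduce to the blobs $\Lc^*_{C,D}(t,2L_1)$ via Lemma~\ref{lm:LCD:t:diam:bound} and Corollary~\ref{cor:LCD:in:hLCD}, then locate the distinguished wide cylinder $C_{0,t}$ near $\hat{\Lc}_{C,E}\cup\hat{\Lc}_{E,D}$), but the core estimate is proved by a genuinely different route. The paper never invokes the thinness of ideal triangles in $\mathbb{H}$: after normalising so that $C$ is horizontal, $D$ is vertical and $E$ has slope one, it uses complete periodicity to decompose $X$ into cylinders $C=C_1,\dots,C_m$ and $E=E_1,\dots,E_n$ in the directions of $C$ and $E$, writes the period of a core curve of $C_{0,t}$ on the sheared surface $a_s\left(\begin{smallmatrix}1 & -1\\ 0 & 1\end{smallmatrix}\right)\cdot(X,\omega)$ in terms of the intersection numbers $x_j=\iota(C_{0,t},E_j)$ and $y_i=\iota(C_{0,t},C_i)$ and the widths of the $C_i$, $E_j$, and checks by direct computation that at $s=t$ (for $t\le 0$) this period has norm at most $\sqrt{3}L_1<2L_1$, so that $C_{0,t}$ itself lies in a blob of $\bar{\Lc}_{C,E}(2L_1)$. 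Your argument replaces this computation by the uniform $\delta_0$-thinness of the ideal triangle with vertices $\mathrm{dir}(C),\mathrm{dir}(D),\mathrm{dir}(E)$ together with the uniform bi-Lipschitz distortion of holonomy lengths under a Teichm\"uller displacement of size $\delta_0$; this is Bowditch's original mechanism, it is sound, and it has the minor advantage of not needing the cylinder decomposition in the direction of $E$ at this step (complete periodicity is still required elsewhere, e.g.\ for parallel degenerate pairs and for the finiteness of $\bar{\Lc}_{C,D}(2L_1)$). What the paper's computation buys is self-containedness --- no appeal to the isometric embedding of the disk or to a distortion constant $\Delta_0$ --- at the cost of an opaque calculation; what yours buys is conceptual clarity. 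Your closing concern about promoting the blob-level inclusion to the interior vertices of the segments $\Gamma(x,y)$ is well taken: the paper passes from the inclusion for $\Lc^*_{C,D}(L_1)$ to the inclusion for $\hat{\Lc}_{C,D}(2L_1)$ with only a citation of Corollary~\ref{cor:LCD:in:hLCD}, so your spine-based reading is, if anything, the more careful one.
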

\begin{proof}

If $C$ and $D$ are parallel then $\hat{\Lc}_{C,D}$ is contained in the $2$-neighborhood of $\hat{\Lc}_{C,E}\cup\hat{\Lc}_{D,E}$. From now on we assume that $C$ and $D$ are not parallel.\medskip

By Corollary~\ref{cor:LCD:in:hLCD}, we only need to show that $\Lc^*_{C,D}(L_1)$ is contained in the $M_3$-neighborhood of $\Lc^*_{C,E}(L_1)\cup\Lc^*_{E,D}(L_1)$. Remark that to define $\bar{\Lc}_{C,D}(2L_1)$ and $\hat{\Lc}_{C,D}(2L_1)$ one needs to specify an origin for the time $t$ by the condition that the circumferences of $C$ and $D$ are equal. On the other hand to define $\Lc^*_{C,D}(L_1)$ this normalization is not required. If $E$ is parallel to $C$ then $\Lc^*_{C,D}(L_1)=\Lc^*_{E,D}(L_1)$, and if $E$ is parallel to $D$ then $\Lc^*_{C,D}(L_1)=\Lc^*_{C,E}(L_1)$. In both of these cases we have nothing to prove.

Let us now assume that $E$ is neither parallel to $C$ nor to $D$. We can then renormalize (using $\SL(2,\R)$) such that $C$ is horizontal, $D$ is vertical, and $E$ has slope equal to one. Recall that for any $t\in \R, \,  (X_t,\omega_t)=a_t\cdot(X,\omega)$, $C_{0,t}$ is a cylinder of width bounded below by $K$ in $(X_t,\omega_t)$, and the constant $L_1$ is chosen so that $L_1 > 1/K$ (see  \eqref{eq:defn:L1}). \medskip

\noindent {\em Claim:} if $t \leq 0$ then $C_{0,t}$ is contained in the $M_1$-neighborhood of $\Lc^*_{C,E}(L_1)$.
\begin{proof}[Proof of the claim]
Since $(X,\omega)$ is completely periodic, it  decomposes into cylinders in both directions of $C$ and $E$.  Let us denote by $C=C_1,\dots,C_m$ the horizontal cylinders, and by $E=E_1,\dots,E_n$ the cylinders in the direction of $E$. As usual we denote by $\ell_t(C_i)$ (resp. $\ell_t(E_j)$) the circumference of $C_i$ (resp. of $E_j$) in $(X_t,\omega_t)$. Let $u_i(t)$ be the width of $C_i$, and $v_j(t)$ be the width of $E_j$ in $(X_t,\omega_t)$. Remark that
$$
\ell_t(C_i)=e^t\ell(C_i), \quad u_i(t)=e^{-t}u_i, \quad \ell_t(E_j)= \sqrt{\cosh(2t)}\ell(E_j), \quad v_j(t)=\frac{v_j}{\sqrt{\cosh(2t)}}
$$
Since $(X,\omega)$ has area one we have
\begin{equation}\label{eq:area:1}
1=\sum u_i\ell(C_i)=\sum v_j\ell(E_j).
\end{equation}
Let $x_j$ (resp. $y_i$) be the intersection number of a core curve of $C_{0,t}$ and a core curve of $E_j$ (resp. of $C_i$). Since the circumference of $C_{0,t}$ is bounded by $1/K  < L_1$, we have
\begin{equation}\label{eq:C0:bound:1}
\sum y_iu_i(t)=e^{-t}\sum y_iu_i \leq \ell(C_{0,t}) \leq L_1 \Rightarrow \sum y_iu_i \leq e^tL_1.
\end{equation}

Since the width of $C_{0,t}$ is bounded below by $K$, we have $x_jK \leq \ell_t(E_j)=\sqrt{\cosh(2t)}\ell(E_j)$. Since $t\leq 0$,  it follows
\begin{equation}\label{eq:C0:bound:2}
  x_j \leq \frac{\sqrt{\cosh(2t)}}{K}\ell(E_j) \leq \frac{e^{-t}}{K}\ell(E_j).
\end{equation}

Let $(X',\omega'):= U\cdot (X,\omega)$, where $U=\left(\begin{smallmatrix} 1 & -1 \\ 0 & 1 \end{smallmatrix} \right)$. Let $\ell'(C_i)$ and $u'_i$ (resp. $\ell'(E_j)$ and $v'_j$) be the circumference and the width of $C_i$ (resp. of $E_j$) in $(X',\omega')$. Note that $C_i$ is horizontal, and $E_j$ is vertical in $(X',\omega')$. Thus, $\ell'(C_i))=\ell(C_i), \, u'_i=u_i$, and $\ell'(E_j)=\ell(E_j)/\sqrt{2}, \, v'_j=\sqrt{2}v_j$.

For any $s \in \R$, let $(X'_s,\omega'_s):=a_s\cdot(X',\omega')$. Let $\ell'_s(C_i)$ and $u'_i(s)$ (resp. $\ell'_s(E_j)$ and $v'_j(s)$) be the circumference and the width of $C_i$ (resp. of $E_j$) in $(X'_s,\omega'_s)$.

Let $x+\imath y$ be the period of the core curves of $C_{0,t}$ in $(X'_s,\omega'_s)$. From \eqref{eq:C0:bound:2} we get
\begin{equation}\label{eq:C0:x:bound}
|x|=\sum x_jv'_j(s)=e^s\sum x_jv'_j=e^s\frac{\sqrt{2}e^{-t}}{K}\sum\ell(E_j)v_j = \frac{\sqrt{2}e^{s-t}}{K}
\end{equation}
From \eqref{eq:C0:bound:1}, we get
\begin{equation}\label{eq:C0:y:bound}
|y|=\sum y_iu'_i(s) =e^{-s}\sum y_iu_i \leq e^{t-s}L_1.
\end{equation}

\noindent Thus for $s=t$, the circumference of $C_{0,t}$ in $(X'_s,\omega'_s)$ is at most $\sqrt{3}L_1< 2L_1$. Let ${C'}_{0,s}$ be a cylinder of width bounded below by $K$ in $(X'_s,\omega'_s)$. We have $\dist({C'}_{0,s},C_{0,t}) \leq M_1$ by Lemma~\ref{lm:LCD:t:diam:bound}, which means that $C_{0,t}$ is contained in the $M_1$-neighborhood of $\Lc^*_{C,E}(L_1)$.
\end{proof}

It follows immediately from the claim that $\Lc^*_{C,D}(t,L_1)$ is contained in the $2M_1$-neighborhood of $\Lc^*_{C,E}(L_1)$ if $t \leq 0$. By similar arguments, one can also show that $\Lc^*_{C,D}(t,L_1)$ is contained in the $2M_1$-neighborhood of $\Lc^*_{E,D}(L_1)$ if $t\geq 0$. Therefore, we can conclude that $\Lc^*_{C,D}(L_1) =\cup_{t\in \R}\Lc^*_{C,D}(t,L_1)$ is contained in the $2M_1$-neighborhood of $\Lc^*_{C,E}(L_1)\cup\Lc^*_{E,D}(L_1)$, which implies that $\hat{\Lc}_{C,D}$ is contained in the $3M_1$-neighborhood of $\Lc^*_{C,E}(L_1)\cup\Lc^*_{E,D}(L_1)$.
\end{proof}

\subsection{Proof of Theorem~\ref{thm:cper:hyperbolic}}
\begin{proof}
From Proposition~\ref{prop:LCD:dist:1}, and Proposition~\ref{prop:LCD:sim:tria}, we see that $\hat{\Cc}_{\rm cyl}(X,\omega,f)$ with the family of subgraphs $\hat{\Lc}_{C,D}$ satisfies the two conditions of Theorem~\ref{thm:hyp:crit:Ma-Sc} with $M=\max\{M_2,M_3\}$. Therefore,  $\hat{\Cc}_{\rm cyl}(X,\omega,f)$ is Gromov hyperbolic.
\end{proof}

\section{Quotient by affine automorphisms}\label{sec:quotient}
In this section we investigate the quotient of $\hat{\Cc}_{\rm cyl}(X,\omega,f)$ by the group $\mathrm{Aff}^+(X,\omega)$. Our main focus is the case where $(X,\omega)$ is a Veech surface, that is when $\SL(X,\omega)$ is a lattice in $\SL(2,\R)$. Throughout this section $(X,\omega)$ is a fixed translation surface in $\H(2)\sqcup \H(1,1)$, and $\hat{\Cc}_{\rm cyl}$ is the cylinder graph of $(X,\omega)$ with some marking map. We denote by $\Gp$ the quotient graph  $\hat{\Cc}_{\rm cyl}\diagup\Aff^+(X,\omega)$, and by $\Vp$ and $\Ep$ the sets of vertices and edges of $\Gp$ respectively. Notice that an edge may join a vertex to itself (we then have a loop), and there may be more than one edges with the same endpoints. We use the notations $|\Vp|$ and $|\Ep|$ to designate the cardinalities of $\Ep$ and $\Vp$. We will show

\begin{Theorem}\label{thm:quotient:fin:ver}
Let $(X, \omega)$ be a surface in $\H(2)\sqcup\H(1,1)$. Then $(X,\omega)$ a Veech surface if and only if $|\Vp|$ is  finite.
\end{Theorem}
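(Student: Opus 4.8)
The plan is to prove the two implications separately, using Veech's dichotomy and Smillie's theorem on one side, and the structure theory of cylinder decompositions plus the bound in Theorem~\ref{thm:dist:n:inter} on the other. First I would establish the easy direction: if $(X,\omega)$ is \emph{not} a Veech surface, then $|\Vp| = \infty$. Here the idea is that a non-Veech surface has infinitely many "combinatorially distinct" cylinders. One clean way: by Smillie's theorem, the $\GL^+(2,\R)$-orbit of $(X,\omega)$ is not closed in the moduli space, so there is a sequence $g_n \cdot (X,\omega)$ degenerating; after normalizing areas, this forces the existence of cylinders whose moduli (ratio width/circumference) tend to $0$ or $\infty$, or whose number of boundary saddle connections is unbounded, etc. More robustly, I would argue directly: if only finitely many $\Aff^+$-orbits of cylinders existed, then the set of directions of cylinders would be a finite union of $\SL(X,\omega)$-orbits of directions in $\RP^1$; combined with the fact (Masur) that cylinder directions are dense, this would force $\SL(X,\omega)$ to act on $\RP^1$ with a dense orbit having finitely many $\SL(X,\omega)$-orbit-classes among parabolic/cylinder directions, and a counting/volume argument (the cylinders in a fixed periodic direction have circumferences in a discrete set determined by the surface, and the parabolic in that direction acts on the transverse data) would show $\SL(X,\omega)$ is a lattice — contradiction. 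I expect to phrase this as: finiteness of $|\Vp|$ $\Rightarrow$ the systole function on the orbit $\GL^+(2,\R)\cdot(X,\omega)$ is bounded below $\Rightarrow$ the orbit is closed (Mumford-type compactness + Smillie) $\Rightarrow$ Veech.

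For the converse — $(X,\omega)$ Veech $\Rightarrow |\Vp| < \infty$ — I would use that $\SL(X,\omega)$ is a lattice, hence acts cocompactly on the unit tangent bundle minus cusp neighborhoods, and that in a Veech surface every direction containing a cylinder is \emph{parabolic} (completely periodic), so the set of cylinder directions is a single $\SL(X,\omega)$-orbit of finitely many cusps' worth of directions, i.e. finitely many $\SL(X,\omega)$-orbits in $\RP^1$. Thus, up to the action of $\Aff^+(X,\omega)$, every cylinder (degenerate or not) can be brought into one of finitely many fixed periodic directions. It then remains to show that in each fixed periodic direction $\theta$, the cylinders (including degenerate ones) fall into finitely many $\Aff^+_\theta$-orbits, where $\Aff^+_\theta$ is the stabilizer of that direction — which contains the parabolic element fixing $\theta$. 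In a fixed periodic direction the surface decomposes into finitely many cylinders $C_1,\dots,C_k$ with finitely many boundary saddle connections; the degenerate cylinders in direction $\theta$ are concatenations of pairs of these invariant saddle connections (as in the Examples after Definition~\ref{def:degen:cyl}), so there are only finitely many of them. The non-degenerate cylinders in directions $\theta' \neq \theta$ that are $\Aff^+$-equivalent to something in direction $\theta$: apply the parabolic $P_\theta$; its powers $P_\theta^n$ move a cylinder crossing the decomposition, and a Dehn-twist/continued-fraction argument shows that every cylinder in a periodic direction is, up to a bounded power of the multitwist $P_\theta$ and the finite symmetry group, one of finitely many "primitive" ones — essentially because the twist parameter of any cylinder transverse to the decomposition can be reduced modulo the circumferences.

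The main obstacle I anticipate is the careful bookkeeping of \emph{degenerate} cylinders and of cylinders \emph{not} in one of the chosen periodic directions, and more seriously the finiteness-of-orbits-in-a-fixed-direction step: one must rule out infinitely many $\Aff^+$-inequivalent cylinders all sharing one periodic direction. I would handle this by the standard fact that for a Veech surface the multitwist in a periodic direction generates (together with the finite part) the full stabilizer of that direction in $\Aff^+$, so the orbit of any transverse cylinder under this stabilizer realizes all large twist parameters; combined with the discreteness of holonomies (the holonomy vectors of saddle connections form a discrete subset of $\R^2$, being a Veech surface) this confines the "primitive representatives" to a compact region, hence a finite set. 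Throughout, Theorem~\ref{thm:dist:n:inter} is not logically needed for \emph{this} theorem but the discreteness-of-holonomies / lattice input is; I would cite the Veech dichotomy and Smillie–Weiss for the closedness/discreteness facts, and assemble the two implications into the stated equivalence.
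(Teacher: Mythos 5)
Your ``Veech $\Rightarrow |\Vp|<\infty$'' direction is essentially the paper's argument: every cylinder direction on a Veech surface is periodic, periodic directions correspond to the finitely many cusps of $\mathbb{H}/\SL(X,\omega)$, and a fixed periodic direction on a genus-two surface carries at most $4$ (in $\H(2)$) or $5$ (in $\H(1,1)$) cylinders, degenerate ones included. Note that the extra machinery you propose there (continued fractions, reducing twist parameters modulo circumferences, ``primitive representatives'' in a compact region) is not needed: there is no issue of infinitely many cylinders sharing a single periodic direction, so the count is simply (number of cusps) times (at most $5$), and every cylinder is $\Aff^+$-equivalent to one in a representative direction because its own direction is already a cusp.

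The converse direction has a genuine gap. Your entire argument rests on the implication ``$|\Vp|<\infty$ $\Rightarrow$ the systole (equivalently, the minimal area of an embedded triangle) is bounded below on the $\GL^+(2,\R)$-orbit,'' and you assert this rather than prove it; but this implication is precisely where all the work lies. Finiteness of $|\Vp|$ gives you, a priori, only a finite set of possible cylinder \emph{areas} (areas are the $\Aff^+$-invariant data; circumferences and moduli are not invariant and degenerate along the orbit), and passing from ``finitely many cylinder areas'' to ``no short saddle connections / no small triangles'' is nontrivial. The paper does exactly this via the Smillie--Weiss criterion (Theorem~\ref{thm:V:no:small:triang}): it forms the finite sets $\Ap_1,\dots,\Ap_4$ of areas, differences and complements of areas of cylinders, and then shows by a case analysis on an embedded triangle $\Tb$ --- using the hyperelliptic involution, Lemma~\ref{lm:sc:no:inv:types}, and crucially Lemma~\ref{lm:s:tor:n:per:area}, which produces in any slit torus with non-periodic slit direction a sequence of cylinders whose areas accumulate at the area of the torus, contradicting finiteness of $\Ap_1$ --- that $\Aa(\Tb)\geq \eps/2$ for $\eps=\min(\Ap_1\cup\dots\cup\Ap_4)$. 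Your alternative sketch (dense cylinder directions forming finitely many $\SL(X,\omega)$-orbits, plus a ``counting/volume argument'') is not a proof either: surfaces with infinitely generated Veech groups have dense parabolic directions without being lattices, so density of a finite union of orbits in $\RP^1$ does not by itself force discreteness or finite covolume. To close the gap you would need to supply the bridge from the finite set of cylinder areas to a lower bound on triangle areas (or on the systole along the orbit), which in genus two is exactly the slit-torus accumulation argument of Lemma~\ref{lm:s:tor:n:per:area} and Corollary~\ref{cor:split:periodic}.
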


Theorem~\ref{thm:quotient:fin:ver} does not mean, when $(X,\omega)$ is a Veech surface,  that the quotient graph $\Gp$ is a finite graph, as we have

\begin{Proposition}\label{prop:Veech:q:infin:H11}
If $(X,\omega)$ is Veech surface in $\H(2)$ then $\Gp$ is a finite graph,  that is $|\Vp|$ and $|\Ep|$ are both finite. There  exist Veech surfaces in $\H(1,1)$ such that $|\Vp| < \infty$ but $|\Ep| =\infty$.
\end{Proposition}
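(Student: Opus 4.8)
The plan is to prove the two halves of Proposition~\ref{prop:Veech:q:infin:H11} by rather different arguments.

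\textbf{The finiteness statement in $\H(2)$.}
Assume $(X,\omega)\in\H(2)$ is a Veech surface. By Theorem~\ref{thm:quotient:fin:ver} we already know $|\Vp|<\infty$, so it remains to bound $|\Ep|$. The strategy I would take is to show that there is a uniform bound on the number of $\Aff^+(X,\omega)$-orbits of edges incident to a fixed vertex of $\Gp$; combined with $|\Vp|<\infty$ this gives $|\Ep|<\infty$. Fix a cylinder (possibly degenerate) $C$ representing a vertex of $\Gp$. An edge out of $[C]$ corresponds to a cylinder $D$ with $\iota(C,D)=0$, i.e. a cylinder disjoint from $C$, considered up to the action of the stabilizer $\mathrm{Stab}(C)\subset\Aff^+(X,\omega)$. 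Since $(X,\omega)$ is completely periodic (McMullen's result, quoted in Section~\ref{sec:hyperbolic}: in $\H(2)$ Veech $\Leftrightarrow$ completely periodic), the direction of $C$ is periodic, and $X$ decomposes into finitely many cylinders in that direction. Using the structural lemmas of Section~\ref{sec:preliminary} (in particular the fact that any triple of disjoint cylinders is impossible in $\H(2)$, Theorem~\ref{thm:exist:mult:curv}(a)), a cylinder $D$ disjoint from $C$ either is parallel to $C$ — and then lies in the same periodic decomposition, of which there are finitely many pieces — or is non-parallel, in which case the complement of $C$ is a slit torus and $D$ lives inside it disjoint from the slit. In the non-parallel case the key point is that $\mathrm{Stab}(C)$ contains the parabolic(s) fixing the direction of $C$, which acts on the slit torus by Dehn twists along $C$'s core; modulo this action there are only finitely many isotopy classes of cylinders in a once-slit torus of bounded geometry (a slit torus is, after scaling, a point in a lattice quotient, and cylinders of bounded circumference up to the twist are finite — this is essentially Lemma~\ref{lm:s:tor:cyl:large:wd} together with the lattice property of $\SL(X,\omega)$). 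Carrying this out carefully for both non-degenerate and degenerate $C$, and using the deformation Lemma~\ref{lm:degen:cyl:deform} to reduce the degenerate case to the non-degenerate one, yields the bound.

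\textbf{The example in $\H(1,1)$ with $|\Ep|=\infty$.}
Here the plan is to exhibit an explicit Veech surface in $\H(1,1)$ — the natural candidate is the regular decagon surface (or one of the known arithmetic/non-arithmetic Veech examples in $\H(1,1)$, e.g. a covering construction over a Veech surface in $\H(2)$), whose Veech group is a lattice, so that $|\Vp|<\infty$ by Theorem~\ref{thm:quotient:fin:ver}. To see $|\Ep|=\infty$ I would produce, for a single vertex $[C]\in\Vp$ (take $C$ a fixed simple cylinder in a periodic direction), an infinite family of edges out of $[C]$ that are pairwise inequivalent under $\mathrm{Stab}(C)$. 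Concretely: in $\H(1,1)$, by Theorem~\ref{thm:exist:mult:curv}(b) and Lemma~\ref{lm:2djt:sim:cyl}, the complement of $C$ can be a connected sum of two slit tori, $(X',\omega',s')\ni C$ and $(X'',\omega'',s'')$; every cylinder $D$ inside $X''$ disjoint from $s''$ is disjoint from $C$, hence gives an edge $[C]$--$[D]$. Now $\mathrm{Stab}(C)$ acts on the complementary slit torus $X''$ only through a restricted group (essentially the Dehn twist along $C$ and possibly the hyperelliptic involution, \emph{not} the full modular group of $X''$, because elements of $\Aff^+$ fixing $[C]$ must preserve the splitting and hence the slit $s''$, while they need not act affinely on $X''$ alone with full derivative freedom). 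Since a torus has infinitely many cylinder directions and the twist along $C$ only identifies finitely many of them into a single $\mathrm{Stab}(C)$-orbit — whereas the other generators do not move the direction at all — there are infinitely many $\mathrm{Stab}(C)$-inequivalent choices of $D\subset X''$, each producing a distinct edge of $\Gp$ incident to the single vertex $[C]$. Thus $|\Ep|=\infty$ while $|\Vp|<\infty$.

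\textbf{Main obstacle.}
The hard part will be the bookkeeping for $\mathrm{Stab}(C)$: precisely identifying which affine automorphisms fix the vertex $[C]$ and how they act on the complementary (slit-torus) pieces. In the $\H(2)$ finiteness proof I need that $\mathrm{Stab}(C)$ acts \emph{cofinitely} on the disjoint cylinders (reducing to finitely many orbits), which requires the lattice property to control both the periodic decomposition parallel to $C$ and the geometry of the transverse slit torus; in the $\H(1,1)$ example I need the opposite — that $\mathrm{Stab}(C)$ does \emph{not} act cofinitely, because it fails to realize the full modular group of the complementary torus. Making both of these statements rigorous, especially handling degenerate cylinders via Lemma~\ref{lm:degen:cyl:deform} and checking that the deformation does not change the relevant orbit counts, is where the real work lies; the rest is assembling the structural lemmas of Sections~\ref{sec:preliminary} and \ref{sec:reduce:inters:nb} together with Theorem~\ref{thm:quotient:fin:ver}.
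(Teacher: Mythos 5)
Your $\H(2)$ argument follows the same route as the paper: fix a vertex $[C]$, reduce to the case where $C$ is simple so that the complement of $\ol{C}$ is a slit torus $\ol{D}$, and show that the stabilizer of $[C]$ acts with finitely many orbits on the cylinders contained in $\ol{D}$. However, the step you attribute to ``Lemma~\ref{lm:s:tor:cyl:large:wd} together with the lattice property'' is not what that lemma provides (it produces \emph{one} cylinder of large area and bounded circumference, not a finiteness statement), and the genuine content here is quantitative: writing a cylinder disjoint from $C$ as $md+ne$ in $H_1(\ol{D},\Z)$, with $d$ the core of $D$ and $e$ a curve crossing $d$ once, the requirement that its core curve avoid the slit forces $|n|\leq n_0$ via the criterion $|\omega'(c')\wedge\omega'(s')|\leq \Aa(D)$ of \cite[Lem.~4.1]{Ng11}; only then does the parabolic $\phi=T_1^{m_1}\circ T_2^{m_2}$, acting by $m\mapsto m\pm m_2n$, reduce the remaining freedom in $m$ to finitely many orbits. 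Without the bound on $|n|$ the twist alone does not give finiteness. This is a fixable but real gap in the sketch.

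The $\H(1,1)$ half has a more serious flaw. Your mechanism --- the complementary torus $X''$ has infinitely many cylinder directions and $\mathrm{Stab}(C)$ ``only identifies finitely many of them into a single orbit'' --- proves too much: the complement of a simple cylinder in $\H(2)$ is also a slit torus with infinitely many cylinder directions, and there the argument of your first half shows the quotient is \emph{finite}. In the two-slit-tori configuration, the cylinders of $X$ contained in $X''$ are exactly the cylinders of the torus $X''$ avoiding the slit $s''$; these again have bounded transverse homology coordinate, and the parabolic in $\mathrm{Stab}(C)$ fixing the direction of $C$ twists them cofinitely, so this configuration does not obviously yield infinitely many edges. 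The paper instead takes a specific $6$-square surface with three horizontal cylinders $C_1,C_2,C_3$ of pairwise distinct circumferences and two vertical cylinders $D_1,D_2$, and the infiniteness comes from a different source: the cylinders $E_n=T_{D_2}^{2n}(C_2)$ lie in $\ol{D}_2$, hence are disjoint from $C_1$, and satisfy $\iota(E_n,C_2)=2n$, which is unbounded; the distinct circumferences force every affine map preserving $C_1$ to lie in the cyclic group generated by $f=T_{C_1}^6\circ T_{C_2}^3\circ T_{C_3}^2$, which preserves $C_2$ and hence the invariant $\iota(\cdot,C_2)$, so the edges $[C_1]$--$[E_n]$ are pairwise inequivalent. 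To repair your argument you would need an invariant of this kind (an unbounded intersection number preserved by the full stabilizer) together with a surface whose stabilizers are provably that small; proposing the regular decagon without verifying either point does not establish the claim.
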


\subsection{Proof of Theorem~\ref{thm:quotient:fin:ver}}
Recall that the $\SL(2,\R)$-orbit of a Veech surface $(X,\omega)$ projects to an algebraic curve in $\Mm_2$ isomorphic to $\Xcal:=\mathbb{H}\diagup\SL(X,\omega)$, this curve is called a {\em Teichm\"uller curve}. The direction of any saddle connection on $X$ is periodic, that is $X$ is decomposed into finitely many cylinders in this  direction. Moreover, there is a parabolic element in $\SL(X,\omega)$ that fixes this direction. Thus each cylinder in $X$ corresponds to a cusp in $\Xcal$.

Let $\theta$ be a periodic direction for $X$. Let $C_1,\dots,C_k$ be the cylinders of $X$ in the direction $\theta$, and $T_i$ be the Dehn twist about the core curves of $C_i$. Let $\gamma$ be the generator of the parabolic subgroup of $\SL(X,\omega)$ that fixes $\theta$. Then there exist some integers $m_1,\dots,m_k$ such that $\gamma$ is the differential of an element of $\Aff^+(X,\omega)$ isotopic to $T_1^{m_1}\circ\dots\circ T_k^{m_k}$.

\subsubsection{Proof that $(X,\omega)$  is Veech implies that $\Vp$ is finite}\hfill
\begin{proof}
If $(X,\omega)\in \H(2)$, then $X$ has one or two cylinders in the direction $\theta$. In the first case, we have three more degenerate ones, and in the second case there is no degenerate cylinder. Thus the total  number of cylinders (degenerate or not) in a periodic direction is at most $4$. If $(X,\omega) \in \H(1,1)$, then by similar arguments, we see that $X$ has at most $5$ cylinders in the direction $\theta$. We have seen that $\theta$ corresponds to  a cusp of $\Xcal$. Since $\Xcal$ has finitely many cusps, it follows that $X$ has finitely many cylinders up to action of $\Aff^+(X,\omega)$. Therefore, $\Vp$ is finite.
\end{proof}

\subsubsection{Proof that $\Vp$ is finite implies $(X,\omega)$ is Veech.}\hfill

In what follows, by  an {\em embedded triangle} in $X$, we mean the image of a triangle $\Tb$ in the plane by a map $\varphi: \Tb \rightarrow X$ which is locally isometric, injective in the interior of $\Tb$, and sending the vertices of $\Tb$ to the singularities of $X$. Note that $\varphi$ maps a side of $\Tb$ to a concatenation of some saddle connections. By a slight abuse of notation, we will also denote by $\Tb$ the image of $\varphi$ in $X$. To show that $(X,\omega)$  is a Veech surface, we will use the following characterization of Veech surfaces by Smillie-Weiss~\cite{SmiWei10}.

\begin{Theorem}[Smillie-Weiss]\label{thm:V:no:small:triang}
$(X,\omega)$ is a Veech surface if and only if there exists an $\eps >0$ such that the area of any embedded triangle $\Tb$ in $X$ is at least $\eps$.
\end{Theorem}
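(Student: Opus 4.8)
The plan is to derive the criterion from the theorem of Smillie quoted in the introduction: $(X,\omega)$ is a Veech surface if and only if its orbit $\Orb:=\SL(2,\R)\cdot(X,\omega)$ is closed in the stratum. The bridge is the quantity $A(X,\omega):=\inf\{\Aa(\Tb)\ :\ \Tb \text{ an embedded triangle in } X\}$. The first point is that $A$ is $\SL(2,\R)$-invariant: any $g\in\SL(2,\R)$ sends an embedded triangle to an embedded triangle and preserves area (since $\det g=1$), so $A(g\cdot(X,\omega))=A(X,\omega)$. Hence $A$ is constant along $\Orb$, and the statement ``there exists $\eps>0$'' is exactly the assertion $A(X,\omega)>0$. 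Thus the theorem becomes the equivalence $A(X,\omega)>0 \Longleftrightarrow \Orb \text{ closed}$. I also record the transport principle I will use repeatedly: if some surface $g\cdot(X,\omega)$ carries an embedded triangle of area $a$, then $g^{-1}$ maps it to an embedded triangle on $X$ of the \emph{same} area $a$; consequently $A(X,\omega)=0$ as soon as arbitrarily small embedded triangles appear anywhere on the orbit.

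For the direction ``Veech $\Rightarrow A>0$'' I would argue by contraposition. Suppose $A(X,\omega)=0$ and pick embedded triangles $\Tb_n$ with $\Aa(\Tb_n)\to 0$. Using $\SL(2,\R)$ I normalize each $\Tb_n$ to a near-equilateral shape by some $g_n$; since area is preserved, the three sides of $g_n(\Tb_n)$ then all have length comparable to $\sqrt{\Aa(\Tb_n)}\to 0$, lying in three pairwise distinct directions, at least two of which are transverse. Each side is a concatenation of saddle connections in its direction, so $g_n\cdot(X,\omega)$ carries short saddle connections in two transverse directions; in particular its systole tends to $0$, so the sequence leaves every compact set of the stratum. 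If $(X,\omega)$ were Veech, then $\Orb\cong\SL(2,\R)/\SL(X,\omega)$ is a closed orbit with $\SL(X,\omega)$ a lattice, and any escape to infinity is an approach to a cusp. At a cusp the surface decomposes into cylinders in a single periodic direction, and only the saddle connections in that one direction shrink to $0$; there cannot be two transverse short saddle connections. This contradiction yields $A(X,\omega)>0$.

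The reverse direction ``$A>0\Rightarrow$ Veech'' is the substantial one, and here I would again use Smillie's theorem to reduce to proving that $\Orb$ is closed, arguing by contraposition: if $\Orb$ is \emph{not} closed I must show $A(X,\omega)=0$. By the transport principle it suffices to exhibit, somewhere on the orbit, surfaces carrying arbitrarily small embedded triangles, which (by the balancing computation above, read backwards) is equivalent to producing a sequence $g_n\cdot(X,\omega)$ admitting two \emph{transverse} saddle connections both of length tending to $0$, i.e.\ a codimension-two degeneration. The plan is to extract such a configuration from the failure of closedness, combining the non-divergence/recurrence of the $\SL(2,\R)$-action (in the spirit of Minsky--Weiss and Masur's compactness criterion) with the absence of the lattice rigidity enjoyed by cusps of closed orbits.

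The main obstacle is precisely this \emph{codimension-one versus codimension-two} dichotomy. A single thin cylinder produces short saddle connections in only one direction and is perfectly compatible with a cusp of a closed (lattice) orbit, so short saddle connections \emph{per se} do not force small triangles; what is needed is the simultaneous appearance of two transverse short saddle connections. Showing that non-closedness \emph{necessarily} generates such two-dimensional degenerations — rather than merely escaping along a single rigid periodic direction as every lattice-surface cusp does — is the technical heart of the argument, and is exactly the content established by Smillie--Weiss. This is the step on which I would expect to spend essentially all of the effort, the two implications above being the soft, structural part of the equivalence.
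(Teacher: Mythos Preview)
This theorem is not proved in the paper; it is quoted from Smillie--Weiss \cite{SmiWei10} and used as a black box in the proof of Theorem~\ref{thm:quotient:fin:ver}. There is therefore no paper proof to compare your proposal against.

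Your sketch of the direction ``Veech $\Rightarrow A>0$'' is sound in outline: normalizing a hypothetical sequence of shrinking embedded triangles produces surfaces in the orbit carrying two transverse short saddle connections, and the cusp structure of a finite-volume quotient $\SL(2,\R)/\Gamma$ forbids this, since near a cusp only the saddle connections in the single periodic cusp direction become short while any transverse saddle connection must cross a cylinder of large width. One small point: you should first reduce to triangles each of whose sides is a single saddle connection, by subdividing at any singularity lying in the interior of a side; the paper makes exactly this reduction when it later applies the theorem.

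For the converse you give only a plan and explicitly defer the substantive step to Smillie--Weiss. That is an honest account of where the difficulty lies, but it means your proposal is not a proof of the theorem, only of its easy half. Your outline correctly isolates the obstacle---distinguishing mere single-direction thin parts (compatible with lattice cusps) from genuine two-dimensional degenerations---but does not resolve it; the argument in \cite{SmiWei10} for this implication passes through a chain of equivalent characterizations of lattice surfaces and is not short.
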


We now assume that $|\Vp|$ is finite.   If $v$ is a vertex of $\hat{\Cc}_{\rm cyl}$, we denote by $\bar{v}$ its equivalence class in $\Vp$.   Clearly, the group $\Aff^+(X,\omega)$ preserves the areas of the cylinders in $X$. Therefore, each element of $\Vp$ has a well-defined area (a degenerate cylinder has zero area).  Since $\Vp$ is finite, we can write $\Vp=\{\bar{v}_1, \dots,\bar{v}_n\}$, where $n=|\Vp|$.  Using $\GL^+(2,\R)$, we can normalize so that $\Aa(X,\omega)=1$. Let $a_i=\Aa(v_i)$, and define
\begin{eqnarray*}
\Ap_1 & = & \{a_1,\dots,a_n\}, \\
\Ap_2 & = & \{|a_i-a_j|, a_i \neq a_j\}, \\
\Ap_3 & = & \{1-(a_i+a_j), \, a_i+a_j < 1\}, \\
\Ap_4 & = & \{1-(a_i+a_j+a_k), \, a_i+a_j+a_k < 1\}.
\end{eqnarray*}
Set $\eps=\min\{ \Ap_1\cup\Ap_2\cup\Ap_3\cup \Ap_4\}$. We will need the following lemma on slit tori.

\begin{Lemma}\label{lm:s:tor:n:per:area}
 Let $(\hat{X},\hat{\omega},\hat{s})$ be a slit torus.  By a {\em cylinder} in $\hat{X}$, we will mean a connected component  of $X$ that is cut out by a pair of parallel simple closed geodesics passing through the endpoints of $\hat{s}$.

 Assume that $\hat{s}$ is not parallel to  any simple closed geodesic of $\hat{X}$. Then there exists a sequence of cylinders $\{\hat{C}_k\}_{k\in \N}$ such that $\hat{C}_k$ is disjoint from the slit $\hat{s}$ for all $k\in \N$, and $\Aa(\hat{C}_k) \overset{k\ra +\infty}{\lra} \Aa(\hat{X})$.
\end{Lemma}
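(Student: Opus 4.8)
The plan is to argue directly on the flat torus $\hat X\cong\R^2/\Lambda$, where $\Lambda\subset\R^2$ denotes the period lattice of $\hat\omega$, and to read off the desired cylinders from well-chosen lattice directions. Write $v_0:=\hat\omega(\hat s)\in\R^2\setminus\{0\}$ for the holonomy vector of the slit and let $P_1,P_2$ be its endpoints. The hypothesis that $\hat s$ is not parallel to any simple closed geodesic of $\hat X$ says precisely that $\R v_0\cap\Lambda=\{0\}$; in particular $P_1\neq P_2$, and $v_0\notin\R w$ for every primitive $w\in\Lambda$. For such a $w$ I would let $c_1(w)$, resp.\ $c_2(w)$, be the simple closed geodesic in direction $w$ through $P_1$, resp.\ through $P_2$; these are parallel, and together they cut $\hat X$ into two cylinders (in the sense of the lemma) whenever $c_1(w)\neq c_2(w)$. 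The strategy is to let $w$ run over lattice directions that become more and more parallel to $v_0$, and to show that in that regime one of these two cylinders is disjoint from the slit and has area close to $\Aa(\hat X)$.

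First I would carry out an elementary computation in the space of leaves of the foliation in direction $w$. Parametrising the leaf circle by signed perpendicular distance to the line $\R w$, one obtains a circle of length $\mu(w)=\Aa(\hat X)/|w|$, the closed leaves themselves having length $|w|$ because $w$ is primitive. On this circle $P_1$ and $P_2$ sit at positions differing by $\mathrm{dist}(v_0,\R w)$, and the slit, being a straight segment, projects monotonically onto the arc between them, of length $\mathrm{dist}(v_0,\R w)$. Hence, as soon as
$$
0<\mathrm{dist}(v_0,\R w)<\mu(w),
$$
the geodesics $c_1(w)$ and $c_2(w)$ are distinct, $\hat s$ lies in the closed cylinder over the short arc, and the complementary cylinder $\hat C(w)$ is disjoint from $\hat s$. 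Its area is its circumference times its perpendicular width,
$$
\Aa\bigl(\hat C(w)\bigr)=|w|\bigl(\mu(w)-\mathrm{dist}(v_0,\R w)\bigr)=\Aa(\hat X)-|\det(w,v_0)|=\Aa(\hat X)-|v_0|\,\mathrm{dist}(w,\R v_0),
$$
using $|w|\mu(w)=\Aa(\hat X)$ and $|w|\,\mathrm{dist}(v_0,\R w)=|\det(w,v_0)|=|v_0|\,\mathrm{dist}(w,\R v_0)$; note also that the admissibility condition $\mathrm{dist}(v_0,\R w)<\mu(w)$ is exactly $|v_0|\,\mathrm{dist}(w,\R v_0)<\Aa(\hat X)$.

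It then remains to produce primitive $w_k\in\Lambda$ with $\mathrm{dist}(w_k,\R v_0)\to 0$, and here the hypothesis enters a second time. Since $\R v_0\cap\Lambda=\{0\}$, the orthogonal projection $\R^2\to\R^2/\R v_0\cong\R$ carries $\Lambda$ injectively onto a rank-two free abelian subgroup of $\R$, which is therefore dense; so there exist nonzero $w_k\in\Lambda$ with $\mathrm{dist}(w_k,\R v_0)\to 0$, and replacing each $w_k$ by the primitive vector on the ray $\R_{>0}w_k$ only decreases this distance. As any bounded region contains finitely many points of $\Lambda$ and none but $0$ lies on $\R v_0$, one automatically gets $|w_k|\to\infty$; in particular $|v_0|\,\mathrm{dist}(w_k,\R v_0)<\Aa(\hat X)$ for all large $k$, so the construction above applies to $w_k$, and $\hat C_k:=\hat C(w_k)$ (for $k$ large) is a cylinder disjoint from $\hat s$ with $\Aa(\hat C_k)=\Aa(\hat X)-|v_0|\,\mathrm{dist}(w_k,\R v_0)\to\Aa(\hat X)$. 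The argument is elementary with no single deep obstacle; the part needing the most care is the leaf-space bookkeeping in the second step — correctly identifying which of the two cylinders cut out by $c_1(w),c_2(w)$ is the one disjoint from the slit, and checking the area formula — together with the passage, in the last step, from the density statement to primitive approximating directions whose norms tend to infinity. This also refines Lemma~\ref{lm:slit:tor:decomp}, whose conclusion is recovered, without the area control, by taking a single admissible direction $w$.
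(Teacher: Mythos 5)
Your proof is correct and follows essentially the same route as the paper: both arguments take primitive lattice directions $w_k$ approaching the slit direction, observe that the cylinder not containing the slit has area $\Aa(\hat X)-|\det(w_k,v_0)|$, and let this determinant tend to zero. The only differences are cosmetic: you reprove the disjointness criterion and area formula directly via the leaf-space computation (where the paper normalizes to the square torus and cites \cite[Lemma 4.1]{Ng11}), and you produce the approximating directions from the density of a rank-two subgroup of $\R$ rather than from Diophantine approximation of the irrational slope $\alpha$.
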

\begin{proof}
Using $\GL^+(2,\R)$, we can normalize so that $(\hat{X},\hat{\omega})=(\C/(\Z\oplus\imath\Z),dz)$. The slit $\hat{s}$ is then represented  by a segment $[0,(1+\imath\alpha)t]$, with $t \in (0,\infty)$ and $\alpha \in \R \setminus \Q$. In this setting, each simple closed geodesic $c$ of $\hat{X}$ corresponds to a vector $p+\imath q$ with $p,q \in \Z$ and $\gcd(p,q)=1$.  Let $c_1$ and $c_2$ be the simple geodesics parallel to $c$ which pass through the endpoints of $\hat{s}$. Note that $c_1,c_2$ cut $\hat{X}$ into two cylinders. By \cite[Lemma 4.1]{Ng11}, we know that one of the two cylinders is disjoint from $\hat{s}$ if and only if
$$
t|\det\left( \begin{smallmatrix} p & 1 \\ q & \alpha \end{smallmatrix} \right)|=t|p\alpha-q| < 1.
$$
\noindent Note that the quantity $t|p\alpha-q|$ is precisely the area of the cylinder that contains $\hat{s}$. Since $\alpha$ is an irrational number, one can find a sequence $\{(p_k,q_k)\}_{k\in \N}$ such that
$$
\gcd(p_k,q_k)=1, \, t|\alpha p_k-q_k| < 1, \text{ and } \lim_{k\ra \infty}|\alpha p_k-q_k|=0.
$$
For each $(p_k,q_k)$ in this sequence,  we have  a cylinder $\hat{C}_k$ in direction of $p_k+\imath q_k$ disjoint from $\hat{s}$ such that
$$
\Aa(\hat{C}_k) =1-t|\alpha p_k-q_k|.
$$
In particular, we have $\lim_{k\ra \infty}\Aa(\hat{C}_k)=1$, which proves the lemma.
\end{proof}

As a consequence of this lemma, we get
\begin{Corollary}\label{cor:split:periodic}
 Let $(s_1,s_2)$ be a pair of homologous saddle connections in $X$ that are exchanged by the hyperelliptic involution $\hinv$. If one of the connected components cut out by $(s_1,s_2)$ is a slit torus, then the direction of $s_1,s_2$ is periodic.
\end{Corollary}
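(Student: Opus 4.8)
The plan is to reduce the statement to the complete-periodicity of eigenforms together with the slit-torus analysis of Lemma~\ref{lm:s:tor:n:per:area}. First I would set up notation: let $(s_1,s_2)$ be the given pair of $\hinv$-invariant homologous saddle connections, and suppose one component $(X_1,\omega_1,s')$ cut out by them is a slit torus, where $s'$ is the image of $s_1$ (equivalently $s_2$) after gluing. The other component $(X_2,\omega_2,s'')$ is either another slit torus (case $\H(1,1)$, sub-case that $s_i$ joins the two distinct zeros, cf. Lemma~\ref{lm:sc:no:inv:types}) or it collapses so that $s_1\cup s_2$ bounds a simple cylinder; in the latter case $X$ is built from a torus by inserting a cylinder in the direction of $s_i$, and one argues directly. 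So the heart of the matter is the genuine connected-sum-of-two-slit-tori picture.

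The key dichotomy is whether the slit $s'$ is parallel to some simple closed geodesic of $X_1$. If it is, then that geodesic direction already exhibits $X_1$ as a single cylinder, and one checks that in this direction $X$ itself splits into cylinders (the cylinder of $X_1$, the saddle connections $s_1,s_2$ themselves being in a transverse direction or bounding, and the corresponding decomposition of $X_2$), so the direction is periodic — but this is actually not the direction of $s_1,s_2$, so I must instead argue the other way. Let me restructure: the claim concerns the direction $\theta$ of $s_1,s_2$. If $\theta$ is not periodic, then in particular the slit $s'$ in $X_1$ (which has direction $\theta$) is not parallel to any simple closed geodesic of $X_1$ — for otherwise, combining a cylinder decomposition of $X_1$ in direction $\theta$ (which exists since a torus is a single cylinder in any rational direction, and here we'd need $\theta$ rational for $X_1$) with the corresponding structure on $X_2$ would make $\theta$ periodic on $X$. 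So assume $\theta$ is irrational for $X_1$. Now apply Lemma~\ref{lm:s:tor:n:per:area}: there is a sequence of cylinders $\hat C_k$ in $X_1$, disjoint from $s'$, with $\Aa(\hat C_k)\to\Aa(X_1)$. Since each $\hat C_k$ is disjoint from the slit, it persists as a cylinder $C_k$ in $X$. These are honest cylinders on $X$, so by complete periodicity (here is where I invoke that $(X,\omega)$ completely periodic — wait, the Corollary as stated has no hypothesis; I expect it is stated in a context where $(X,\omega)$ is assumed completely periodic in the sense of Calta, consistent with Section~\ref{sec:hyperbolic}) the direction of $C_k$ is periodic, and $X$ decomposes into cylinders in that direction with total area $1$, of which $C_k$ occupies area $\Aa(C_k)\to\Aa(X_1)$. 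Hence the complementary cylinders in that direction have total area $\to\Aa(X_2)=1-\Aa(X_1)$ and are confined, for $k$ large, essentially inside $X_2$ plus a thin collar; in particular $X_2$ is forced to admit a cylinder decomposition whose directions converge, so $X_2$ is periodic in directions accumulating on $\theta$. A compactness/discreteness argument on periodic directions (periodic directions of a completely periodic surface in genus two with a given surface form a countable set, and cylinder circumferences are bounded below in each) then forces $\theta$ itself to be periodic.

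The main obstacle I anticipate is the last step: passing from "there are cylinders $C_k$ disjoint from $s'$ with area tending to $\Aa(X_1)$, each lying in a periodic direction $\theta_k$" to "$\theta$ is periodic." One cannot directly take a limit because periodic directions need not be closed. The clean way is: the $C_k$ have circumference bounded above (their area $\to \Aa(X_1)$ while their width stays bounded below away from $0$ only if... ) — instead I would argue that $X\setminus \overline{C_k}$ has area $1-\Aa(C_k)\to \Aa(X_2)$ and sits inside the fixed subsurface $X_2$ for all large $k$ once $\Aa(C_k)>\Aa(X_1)-\eta$; but $X_2$ is a fixed slit torus, so the directions $\theta_k$, being periodic directions of $X$ that cut $X_2$ into cylinders, are periodic directions of the fixed surface $X_2$, hence they lie in a discrete set relative to any compact set of directions bounded away from the (at most one) non-periodic behaviour — and since $\theta_k \to \theta$, eventually $\theta_k=\theta$, so $\theta$ is periodic and, restricting the decomposition, $s'$ is parallel to a simple closed geodesic of $X_1$, contradicting irrationality unless $\theta$ was periodic to begin with. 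Formalizing "periodic directions of the slit torus $X_2$ are discrete away from the slit's direction" via Lemma~\ref{lm:s:tor:n:per:area} and the classification of the complement will be the delicate bookkeeping, and I would handle the two strata $\H(2)$ and $\H(1,1)$ separately at that point, using Lemma~\ref{lm:sc:no:inv:types} to enumerate the possible shapes of $X_2$.
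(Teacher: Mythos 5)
You correctly identified the decomposition (slit torus plus simple cylinder in $\H(2)$, two slit tori in $\H(1,1)$), the reduction to showing that $s_i$ is parallel to a closed geodesic in each torus component, and Lemma~\ref{lm:s:tor:n:per:area} as the engine. But you guessed the wrong standing hypothesis, and your substitute for the punchline fails. The Corollary sits inside the proof that ``$|\Vp|$ finite implies $(X,\omega)$ is Veech'' in Section~\ref{sec:quotient}, so the implicit hypothesis is not complete periodicity but the finiteness of the vertex set of $\Gp=\hat{\Cc}_{\rm cyl}/\Aff^+(X,\omega)$; since affine automorphisms preserve area, this makes the set $\Ap_1$ of areas of cylinders of $X$ finite. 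The paper's argument is then immediate: if the slit were not parallel to a closed geodesic of a torus component, Lemma~\ref{lm:s:tor:n:per:area} would produce cylinders disjoint from the slit --- hence genuine cylinders of $X$ --- whose areas form a non-constant sequence converging to the area of that component, and a finite set cannot contain a non-constant convergent sequence.

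Your replacement for this last step is the claim that the periodic directions of the fixed slit torus $X_2$ are discrete away from the slit direction, so that $\theta_k\to\theta$ forces $\theta_k=\theta$ for large $k$. This is false: a slit torus is a torus, and its periodic directions are exactly the rational directions for its lattice, which are dense in $\RP^1$; nothing prevents the $\theta_k$ from being infinitely many distinct periodic directions accumulating at a non-periodic $\theta$. Moreover, complete periodicity in the sense of Calta only constrains directions that already contain a closed geodesic, so it would not by itself control the direction of a pair of saddle connections, and it is not the hypothesis in force here. The missing idea is the area obstruction coming from the finiteness of $\Ap_1$.
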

\begin{proof}
 If $(X,\omega) \in \H(2)$ then $X$ is  decomposed by $(s_1,s_2)$  into a simple cylinder and a slit torus, if $(X,\omega) \in \H(1,1)$ then $X$ is decomposed into two slit tori. Thus, it suffices to show that $s_i$ is  parallel to a closed geodesic in each slit torus. If this is not the case, then by Lemma~\ref{lm:s:tor:n:per:area}, we can find in this slit torus a sequence of cylinders disjoint from the slit whose area converges to the area of the torus. Note that such cylinders are also cylinders of $X$. Thus their areas belong to $\Ap_1$. Since $\Ap_1$ is finite, it cannot contain a non-constant converging sequence. Therefore, we can conclude that the direction of $(s_1,s_2)$ is periodic.
\end{proof}

Let $\Tb$ be an embedded triangle in $X$. We will show that $\Aa(\Tb) > \eps/2$.  We first remark that it suffices to consider the case where each side of $\Tb$ is a saddle connection, since otherwise there is another embedded triangle contained in $\Tb$ with this property.  Let $\hinv$ denote the hyperelliptic involution of $X$, and $\Tb'=\hinv(\Tb)$. Let  $s_1,s_2,s_3$ be the sides of $\Tb$ and $s'_i$ be the image of $s_i$ by $\hinv$. The proof is naturally splits into $2$ cases depending on the stratum of $(X,\omega)$.

\medskip

\begin{proof}[Case $(X,\omega)\in \H(2)$:] we need to consider the following two situations:

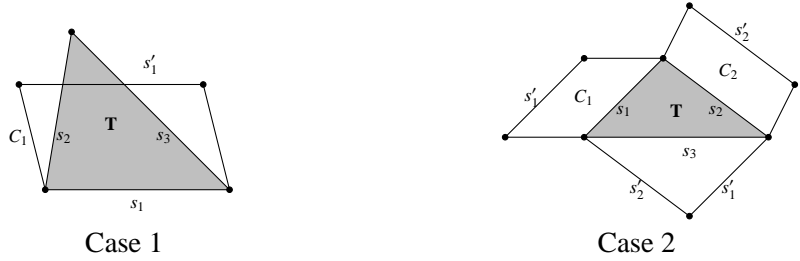
\begin{figure}[htb]
\begin{minipage}[t]{0.45\linewidth}
 \centering
 \begin{tikzpicture}[scale=0.35]
 \fill[gray!50] (1,0) -- (2,6) -- (8,0) -- cycle;
\draw (0,4) -- (1,0) -- (8,0) -- (7,4) -- cycle;
\draw (1,0) -- (2,6) -- (8,0);
\foreach \x in {(0,4),(1,0),(2,6),(7,4),(8,0)} \filldraw[fill=black] \x circle (3pt);

\draw (4.5,0) node[below] {\tiny $s_1$} (1.7,2) node {\tiny $s_2$} (5.5,2) node {\tiny $s_3$} (5,4) node[above] {\tiny $s'_1$} (3.5,2.5) node {\tiny $\Tb$} (0,2) node {\tiny $C_1$};

\draw (4,-2) node {Case 1};
 \end{tikzpicture}
\end{minipage}
\begin{minipage}[t]{0.45\linewidth}
 \centering
 \begin{tikzpicture}[scale=0.35]
 \fill[gray!50] (3,3) --(6,6) --(10,3) -- cycle;
  \draw (0,3) -- (3,3) -- (7,0) -- (10,3) -- (11,5) -- (7,8) --(6,6) -- (3,6) -- cycle;
  \draw (3,3) --(6,6) -- (10,3) -- cycle;
  \foreach \x in {(0,3),(3,6),(3,3),(6,6),( 7,8),(7,0),(10,3),(11,5)} \filldraw[fill=black] \x circle (3pt);

  \draw (4.5,4) node {\tiny $s_1$} (8,4) node {\tiny $s_2$} (7,3) node[below] {\tiny $s_3$} (1,4.5) node {\tiny $s'_1$} (8.5,1) node {\tiny $s'_1$} (9,7) node {\tiny $s'_2$} (5,1) node {\tiny $s'_2$} (3,4.5) node {\tiny $C_1$} (8.5,5.5) node {\tiny $C_2$} (6.5,4) node {\tiny $\Tb$};

  \draw (5,-1) node {Case 2};
 \end{tikzpicture}
\end{minipage}

\caption{Embedded triangles in a surface in $\H(2)$.}
\label{fig:embed:triang:H2}
\end{figure}

\begin{itemize}
\item[$\bullet$] \underline{Case 1:} none of the sides of $\Tb$ is invariant by $\hinv$.  From Lemma~\ref{lm:sc:no:inv:types}, $s_i$ and $s'_i$ bound a simple cylinder denoted by $C_i$. Let $h_i$ be length of the perpendicular segment from  the opposite  vertex  of $s_i$  in $\Tb$ to $s_i$. If $\inter(\Tb)\cap\inter(C_1) \neq \vide$, then both $s_2$ and $s_3$ cross $C_1$ entirely, which implies that the width of $C_1$ is is at most $h_1$ (see Figure~\ref{fig:embed:triang:H2} left). It follows that $\Aa(\Tb) \geq 1/2\Aa(C_1) >\min\Ap_1/2$. The same arguments apply in the cases $\inter(\Tb)$ intersects $\inter(C_2)$ or $\inter(C_3)$. 
If $\inter(\Tb)$ is disjoint from $\inter(C_i), \, i=1,2,3$, then we have three disjoint cylinders in $X$ (if $\inter(C_i)\cap\inter(C_j)\neq \varnothing$ then $s_i$ must cross $C_j$ entirely hence $\inter(\Tb)\cap\inter(C_j) \neq \vide$). Since $(X,\omega) \in \H(2)$, this situation cannot occur (see Theorem~\ref{thm:exist:mult:curv}). Hence, we can conclude that $\Aa(\Tb) \geq \eps/2$ in this case.

\item[$\bullet$]\underline{Case 2:} one of the sides of $\Tb$ is invariant by $\hinv$. In this case, the union of $\Tb$ and its image by $\hinv$ is an embedded parallelogram (see Lemma~\ref{lm:embd:par}). This means that there is a parallelogram $\Pb$ in the plane such that $\Tb$ is one of the two triangles cut out by a diagonal of $\Pb$, and there is a map $\varphi: \Pb \ra X$ locally isometric, injective in $\inter(\Tb)$, mapping the vertices of $\Pb$ to the singularity of $X$. Remark that all the sides of $\Tb$ cannot be invariant by $\hinv$ because this would imply that $X=\varphi(\Pb)$ is a torus.  If there are two sides of $\Tb$ that are invariant by $\hinv$, then $\varphi(\Pb)$ is a simple cylinder in $X$, hence $\Aa(\Tb) \geq \min\Ap_1/2$. If there is only one side invariant by $\hinv$, then the complement of $\varphi(\Pb)$ is the union of two disjoint simple cylinders $C_1,C_2$ (see Figure~\ref{fig:embed:triang:H2} right), which implies 
$\Aa(\Pb)=1-(\Aa(C_1)+\Aa(C_2))$. Therefore, we have $\Aa(\Tb) > \min\Ap_3/2 \geq \eps/2$.  This completes the  of Theorem~\ref{thm:quotient:fin:ver} for the case  $(X,\omega) \in \H(2)$.
\end{itemize}
\end{proof}

\begin{proof}[Case $(X,\omega) \in \H(1,1)$:] We consider the following situations:
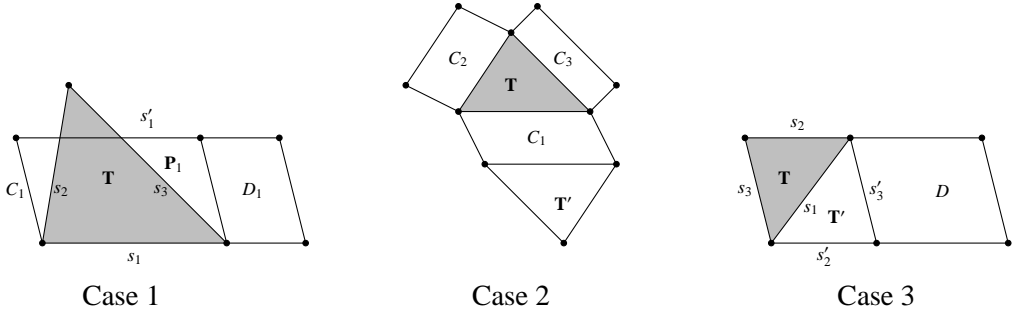
\begin{figure}[htb]
\begin{minipage}[t]{0.3\linewidth}
 \centering
 \begin{tikzpicture}[scale=0.35]
 \fill[gray!50] (1,0) -- (2,6) -- (8,0) -- cycle;
\draw (0,4) -- (1,0) -- (11,0) -- (10,4) -- cycle;
\draw (1,0) -- (2,6) -- (8,0) -- (7,4);
\foreach \x in {(0,4),(1,0),(2,6),(7,4),(8,0),(10,4),(11,0)} \filldraw[fill=black] \x circle (3pt);

\draw (4.5,0) node[below] {\tiny $s_1$} (1.7,2) node {\tiny $s_2$} (5.5,2) node {\tiny $s_3$} (5,4) node[above] {\tiny $s'_1$} (3.5,2.5) node {\tiny $\Tb$} (0,2) node {\tiny $C_1$} (9,2) node {\tiny $D_1$} (6,3) node {\tiny $\Pb_1$};

\draw (4,-2) node {Case 1};
 \end{tikzpicture}
\end{minipage}
\begin{minipage}[t]{0.3\linewidth}
 \centering
 \begin{tikzpicture}[scale=0.35]
 \fill[gray!50] (2,5) -- (4,8) -- (7,5) --  cycle;
 \draw (0,6) -- (2,5) -- (3,3) -- (6,0) -- (8,3) -- (7,5) -- (8,6) -- (5,9) -- (4,8) -- (2,9) -- cycle;
 \draw (2,5) -- (4,8) -- (7,5) -- (2,5) (3,3) -- (8,3);
 \foreach \x in {(0,6),(2,9),(2,5),(3,3),(4,8),(5,9),(6,0),(7,5),(8,6),(8,3)} \filldraw[fill=black] \x circle (3pt);
 \draw (5,4) node {\tiny $C_1$} (2,7) node {\tiny $C_2$}  (6,7) node {\tiny $C_3$} (4,6) node {\tiny $\Tb$} (6,1.5) node {\tiny $\Tb'$};
 \draw (4,-2) node {Case 2};
 \end{tikzpicture}
\end{minipage}
\begin{minipage}[t]{0.3\linewidth}
 \centering
 \begin{tikzpicture}[scale=0.35]
  \fill[gray!50] (0,4) -- (1,0) -- (4,4) -- cycle;
  \draw (0,4) -- (1,0) -- (10,0) -- (9,4) -- cycle;
  \draw (1,0) -- (4,4) -- (5,0);
  \foreach \x in {(0,4),(1,0),(4,4),(5,0),(9,4),(10,0)} \filldraw[fill=black] \x circle (3pt);
  \draw (2.5,1.3) node {\tiny $s_1$} (2,4.5) node {\tiny $s_2$} (0,2) node {\tiny $s_3$} (3,-0.5) node {\tiny $s'_2$} (5,2) node {\tiny $s'_3$} (7.5,2) node {\tiny $D$} (1.5,2.5) node {\tiny $\Tb$} (3.5,1) node {\tiny $\Tb'$};

  \draw (5,-2) node {Case 3};
 \end{tikzpicture}
\end{minipage}

\caption{Embedded triangles in a  surface in $\H(1,1)$}
\label{fig:embed:triang:H11}
\end{figure}

\begin{itemize}
 \item[$\bullet$] \underline{Case 1:}  there exists $i$ such that  $s'_i$ intersects $\inter(\Tb)$. Note that we must have $s'_i\neq s_i$.  Let us assume that $i=1$. Recall that $s_1$ and $s'_1$ either bound a simple cylinder, or decompose $X$ into two tori. In the first case, the same argument as in the case $(X,\omega)\in \H(2)$ shows that $\Aa(\Tb)\geq \min\Ap_1/2$. For the second case, observe that the intersection of $\Tb$ with one of the slit tori consists of a domain bounded by $s_1$ and  some subsegments of $s_2,s_3$ and $s'_1$ (see Figure~\ref{fig:embed:triang:H11}). Let $(X_1,\omega_1, \tilde{s}_1)$ denote this slit torus.

 We can assume that $s_1$ is horizontal. By Corollary~\ref{cor:split:periodic} we know that the horizontal direction is periodic for $X_1$, thus $X_1$ is the closure of a horizontal cylinder $C_1$.
 Remark that $X_1$ contains a transverse simple cylinder $D_1$ disjoint from $s_1\cup s'_1$, whose core curves cross  $C_1$ once. The complement of $D_1$ in $X_1$ is an embedded parallelogram $\Pb_1$ bounded by $s_1,s'_1$ and the boundary of $D_1$.  Clearly, we have $\Aa(\Tb) \geq \Aa(\Pb_1)/2$. By definition, we have
 $$
 \Aa(\Pb_1) =\Aa(C_1)-\Aa(D_1) \geq \min\Ap_2.
 $$
 Thus we have $\Aa(\Tb) \geq \eps/2$.
 \item[$\bullet$] \underline{Case 2:} none of $s'_i$ intersects $\inter(\Tb)$, and   $s'_i \neq s_i, \, i=1,2,3$.  It is not difficult to show that that this case only happens when $s_i$ and $s'_i$ bound a simple cylinder $C_i$ disjoint from $\inter(\Tb)\cup\inter(\Tb')$. Therefore, $X$ is decomposed into the union  of three cylinders  $C_1,C_2,C_3$, and $\Tb\cup \Tb'$ (see Figure~\ref{fig:embed:triang:H11}). Thus in this case, we have
%

 $$
 \Aa(\Tb)=\frac{1}{2}\left(1-(\Aa(C_1)+\Aa(C_2)+\Aa(C_3))\right) \geq \min \Ap_4/2 \geq \eps/2.
 $$
\item[$\bullet$] \underline{Case 3:} none of $s'_i$ intersects $\inter(\Tb)$ and one of $s_1,s_2,s_3$ is invariant by $\hinv$. Let us assume that $s'_1=s_1$. It follows that $\Tb\cup\Tb'$ is an embedded parallelogram $\Pb$. If both $(s_2,s'_2)$ and $(s_3,s'_3)$ are the boundaries of some simple cylinders $C_2$ and $C_3$ respectively, then $C_2$ and $C_3$ are disjoint, and $C_2\cup C_3$ is disjoint from $\Pb$. By construction we must have $X=\ol{\Pb}\cup\ol{C}_2\cup \ol{C}_3$, which is impossible since $(X,\omega) \in \H(1,1)$. Therefore, we can assume that $(s_2,s'_2)$ decompose $X$ into two slit tori. Let $X_1$  be the slit torus that contains $\Pb$. By Corollary~\ref{cor:split:periodic}, we know that the direction of $(s_2,s'_2)$ is periodic, which means that $X_1$ is the closure of a cylinder $C$. Observe that the complement of $\Pb$ in $X_1$ must be a cylinder $D$ bounded by $(s_3,s'_3)$ (see Figure~\ref{fig:embed:triang:H11}). Therefore
$$
 \Aa(\Tb)=\frac{1}{2}\Aa(\Pb)=\frac{1}{2}(\Aa(C)-\Aa(D)) \geq \frac{1}{2}\min\Ap_2 \geq \eps/2.
$$

 \item[$\bullet$] \underline{Case 4:} none of $s'_i$ intersects $\inter(\Tb)$ and two of $s_1,s_2,s_3$ are invariant by $\hinv$. In this case $\Tb\cup\Tb'$ is a simple cylinder. Therefore $\Aa(\Tb) \geq \min\Ap_1/2 \geq \eps/2$.
\end{itemize}

Since in all cases we have $\Aa(\Tb) \geq \eps/2$, it follows from Theorem~\ref{thm:V:no:small:triang} that $(X,\omega)$ is a Veech surface.
\end{proof}

\subsection{Proof of Proposition~\ref{prop:Veech:q:infin:H11}}\hfill
\subsubsection{Case $(X,\omega) \in \H(2)$} \hfill
\begin{proof}
We have shown that $\Vp$ is finite, it remains to show that $\Ep$ is also finite. Let $v$ be a vertex of $\hat{\Cc}_{\rm cyl}$, and $C$ be the corresponding cylinder in $X$. We denote by $\bar{v}$ the equivalence class of $v$ in $\Gp$. Using $\SL(2,\R)$, we can suppose that $C$ is horizontal.

If $C$ is a non-degenerate cylinder, then we have three cases: (a) $C$ is the unique horizontal cylinder, (b) $X$ has two horizontal cylinders and $C$ is not simple, (c) $C$ is a simple cylinder. In case (a), there are $3$ edges in $\hat{\Cc}_{\rm cyl}$ that have $v$ as an endpoint, those edges connect $v$ to three degenerate cylinders contained in the boundary of $C$. In case (b), there is only one edge in $\hat{\Cc}_{\rm cyl}$ having $v$ as an endpoint, this edge connects $C$ to the other horizontal simple cylinder. Thus in case (a) and case (b), there are only finitely many edges having $\bar{v}$ as an endpoint.

Assume now that we are in case (c). Let $D$ be the other horizontal cylinder of $X$. Observe that the closure of $D$ is a slit torus $(X',\omega',s')$ where $s'$ corresponds to the boundary of $C$.  Let $d$ be a core curve of $D$, and $e$  be a simple closed geodesic in $X'$ disjoint from the slit $s$' and crossing $d$ once. We consider $\{d,e\}$ as a basis of $H_1(X',\Z)$. If $C'$ is a cylinder in $X$ disjoint from $C$, then $C'$ must be entirely contained in $\ol{D}$. Thus the core curves of $C'$ are determined by a unique element of $H_1(X',\Z)$, and we can write $C'= md+ne$ with $m,n \in \Z$.

By assumption, a core curve $c'$ of $C'$ cannot cross the slit $s'$. The necessary and sufficient condition for this is that $|\omega'(c')\wedge\omega'(s')| \leq \Aa(X')=\Aa(D)$ (see~\cite[Lem. 4.1]{Ng11}). But $|\omega'(c')\wedge\omega'(s')|=|n||\omega'(e)\wedge\omega'(s')|$. Thus we can conclude that $|n|$ is bounded by  some constant $n_0$.

We have seen that $\Aff^+(X,\omega)$ contains an element $\phi=T_1^{m_1}\circ T_2^{m_2}$, where $T_1$ and $T_2$ are the Dehn twists about the core curves of $C$ and $D$ respectively. Observe that $\phi$ fixes the vertices of $\hat{\Cc}_{\rm cyl}$ corresponding to $C$ and $D$. The action of $\phi$ on the curves contained in $\ol{D}$ is given by
$$
\phi(md+ne)=(m \pm m_2n) d +ne.
$$
\noindent Thus up to action of $\{\phi^k\}_{k\in \Z}$,  any cylinder $C'$ contained in $\ol{D}$ belongs to the equivalence class of  a cylinder  $C''$ also contained in $\ol{D}$ whose core curves are represented by $md+nc$ with $|n| \leq |n_0|$ and $|m| \leq |m_2n|\leq |m_2||n_0|$. We can then conclude that there are finitely many edges in $\Ep$ which contains $\bar{v}$ as an endpoint.  

It remains to consider the case $C$ is degenerate. In this case $X$ has a unique non-degenerate cylinder in the horizontal direction, which contains  $C$ in its boundary. Remark that the complement of $C$ in $X$ can be isometrically identified with a flat torus with an embedded geodesic segment removed. Therefore, the arguments above also hold in this case. Since we have proved that  the set of vertices of $\Gp$ is finite, it follows that the set of edges of $\Gp$  is also  finite.
\end{proof}

\subsubsection{Case $(X,\omega)\in \H(1,1)$}\hfill
\begin{proof}
Let $(X,\omega)$ the surface constructed from $6$ squares as shown in Figure~\ref{fig:6square:H11}. This surface has $3$ horizontal cylinders denoted by  $C_1,C_2,C_3$, where $C_i$ is the cylinder with $i$ squares. It has two vertical cylinders denoted by $D_1$ and $D_2$, where the core curves of $D_1$ cross $C_1$ and $C_3$. Let $v$ be the vertex of $\hat{\Cc}_{\rm cyl}$ corresponding to $C_1$, and $w$ be the vertex corresponding to $C_2$. The fact that $\Gp$ has finitely many  vertices follows from Theorem~\ref{thm:quotient:fin:ver}. We will show that $\Gp$ has infinitely many edges.

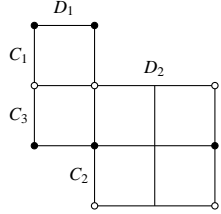
\begin{figure}[htb]
\begin{tikzpicture}[scale=0.4]
 \draw (0,6) --(0,2) -- (2,2) -- (2,0) -- (6,0) -- (6,4) -- (2,4) -- (2,6) -- cycle;
 \draw (0,4) -- (2,4) (2,2) -- (6,2);
 \draw (2,4) -- (2,2) (4,4) --(4,0);

 \foreach \x in {(0,6),(0,2),(2,6),(2,2),(6,2)} \filldraw[fill=black] \x circle (3pt);
 \foreach \x in {(0,4),(2,4),(2,0),(6,4),(6,0)} \filldraw[fill=white] \x circle (3pt);

 \draw (-0.5,5)  node {\tiny $C_1$} (-0.5,3) node {\tiny $C_3$} (1.5,1) node {\tiny $C_2$} (1,6) node[above] {\tiny $D_1$} (4,4) node[above] {\tiny $D_2$} ;
 \end{tikzpicture}

\caption{Example of square-tiled surface in $\H(1,1)$}
\label{fig:6square:H11}
\end{figure}

\noindent Given  a cylinder $C$  on $X$, we denote by $T_C$ the Dehn twist about the core curves of $C$.  Observe that $f=T^6_{C_1}\circ T^3_{C_2}\circ T^2_{C_3}$  and $g=T_{D_1}\circ T^2_{D_2}$ are two elements of $\Aff^+(X,\omega)$, their differentials are $\left(\begin{smallmatrix} 1 & 6 \\ 0 & 1\end{smallmatrix}\right)$ and $\left(\begin{smallmatrix} 1 & 0 \\ 2 & 1\end{smallmatrix}\right)$ respectively. If $h$ is an element of $\Aff^+(X,\omega)$ that preserves the horizontal direction, then $h$ must map a horizontal cylinder to a horizontal cylinder. Since $C_1,C_2,C_3$ have different circumferences, $h$ must preserve each of them, which implies that $h=f^k, \, k \in \Z$. We derive in particular that there is no affine homeomorphism that maps $C_2$ to $C_1$.

For any $n \in \N$, let $E_n$ be the image of $C_2$ by $g^n$. Remark that $E_n=T^{2n}_{D_2}(C_2)$, hence $E_n$ is contained in the closure $\ol{D}_2$ of $D_2$. In particular, $E_n$ is disjoint from $C_1$. Thus, there is an edge $\mathbf{e}_n$ in $\hat{\Cc}_{\rm cyl}$ connecting $v$ to the vertex $w_n$ corresponding to $E_n$. By definition, all the vertices $w_n$ belong to the equivalence class $\bar{w}$ of $w$ in $\Gp$. We will show that the edges $\{\mathbf{e}_n\}_{n\in \N}$ are all distinct up to action of $\Aff^+(X,\omega)$, which means that there are infinitely many edges in $\Ep$ between  $\bar{v}$ and $\bar{w}$.

Assume that there is an affine automorphism  $h\in \Aff^+(X,\omega)$ such that $h(\mathbf{e}_{n_1})=\mathbf{e}_{n_2}$, for some $n_1, n_2 \in \N$.  If $h(w_{n_i})=v$, then there is an element of $\Aff^+(X,\omega)$ that sends $w$ to $v$, or equivalently  $C_2$ to $C_1$. But we have already seen that such an element does not exist, thus this case cannot occur. Therefore, we must have $h(v)=v$ and $h(w_{n_1})=w_{n_2}$. Since any element of $\Aff^+(X,\omega)$ preserving $C_1$ belongs to the subgroup generated by $f$, we derive that $h$ also preserves $C_2$ and $C_3$. Observe that a core curve of $E_{n_i}$ crosses $C_2$ $2n_i$ times. Therefore, if $n_1\neq n_2$, then $h$ cannot exist. We can then conclude  that the projections of  all the edges $\mathbf{e}_n$ are distinct in $\Gp$, which proves the proposition.
\end{proof}

\section{Quotient graphs and McMullen's prototypes}\label{sec:prototype}
By the works of McMullen~\cite{McM07, McM_spin}, we know that closed $\SL(2,\R)$-orbits in $\H(2)$ are indexed by the discriminant $D$, that is a natural number $D \in\N$ such that $D \equiv 0,1 \mod 4$, together with the parity of the spin structure when $D \equiv 1 \mod 8$. 

\begin{figure}[htb]
\begin{minipage}[t]{0.3\linewidth}
\centering
\begin{tikzpicture}[scale=0.25, inner sep=0.2mm, vertex/.style={circle, draw=black, fill=blue!20, minimum size=1mm},
head/.style={rectangle, draw=black, minimum size=5mm},
>= stealth]
 \draw(0,8) -- (0,0) -- (5,0) -- (5,5) -- (3,5) -- (3,8) -- cycle;
 \draw (0,5) -- (3,5);
 \foreach \x in {(0,8),(0,5),(0,0),(3,8),(3,5),(3,0),(5,5),(5,0)}\filldraw[fill=black] \x circle (3pt);

 \node (title) at (5,11) [head] {$D=5$};
 \node (above) at (10,7) [vertex] {\tiny $C_1$};
 \node (center) at (10,4) [vertex] {\tiny $C_2$};

 \draw (above) to (center);
 \draw (center) .. controls (8,0) and (12,0) .. (center);

 \draw[<->] (-0.5,8) -- (-0.5,5);
 \draw[<->] (-0.5,5) -- (-0.5,0);
 \draw[<->] (0,-0.5) -- (5,-0.5);

 \draw(-0.5,6.5) node[left] {\tiny $\frac{\sqrt{5}-1}{2}$} (-0.5,2.5) node[left] {\tiny $1$} (2.5,-0.5) node[below] {\tiny $1$};
 \draw (1.5,6.5) node {\tiny $C_2$} (3,2.5) node {\tiny $C_1$};
 \draw  (5,-2.5) node {\tiny $(0,1,1,-1),\lbd=\frac{\sqrt{5}-1}{2}$};
\end{tikzpicture}
\end{minipage}
\begin{minipage}[t]{0.65\linewidth}
\centering
\begin{tikzpicture}[scale=0.25, inner sep=0.2mm, vertex/.style={circle, draw=black, fill=blue!20, minimum size=1mm},
head/.style={rectangle, draw=black, minimum size=5mm},
>= stealth]

\node (title) at (9,10) [head] {$D=8$};
\draw (0,7) -- (0,0) -- (6,0) -- (6,3) -- (4,3) -- (4,7) -- cycle;
\draw (0,3) -- (4,3);

\draw[<->] (-0.5, 7) -- (-0.5,3);
\draw[<->] (-0.5,3) -- (-0.5,0);
\draw[<->] (0,-0.5) -- (6,-0.5);

\foreach \x in {(0,7),(0,3),(0,0), (4,7),(4,3),(4,0),(6,3),(6,0)} \filldraw[fill=black] \x circle (3pt);

\draw (-0.5,5) node[left] {\tiny $\sqrt{2}$} (-0.5,1.5) node[left] {\tiny $1$} (3,-0.5) node[below] {\tiny $2$};

\draw (3,1.5) node {\tiny $C_1$} (2,5) node {\tiny $C_2$};

\draw (3,-2.5) node {\tiny $(0,2,1,0)$};
\draw (12,5) -- (12,7) -- (10,7) -- (10,0) -- (15,0) -- (15,5) -- (10,5);

\foreach \x in {(10,7),(10,5),(10,0), (12,7),(12,5),(12,0),(15,5),(15,0)} \filldraw[fill=black] \x circle (3pt);
\draw[<->] (9.5,7) -- (9.5,5);
\draw[<->] (9.5,5) -- (9.5,0);
\draw[<->] (10,-0.5) -- (15,-0.5);

\draw (9.5,6) node[left] {\tiny $\sqrt{2}-1$} (9.5,3) node[left] {\tiny $1$} (12.5,-0.5) node[below] {\tiny $1$};
\draw (12.5,2.5) node {\tiny $C_4$} (11,6) node {\tiny $C_3$};
\draw (12.5,-2.5) node {\tiny $(0,1,1,-2)$};
\node (top) at (19,9) [vertex] {\tiny $C_1$};
\node (center1) at (19,6) [vertex] {\tiny $C_2$};
\node (center2) at (19,2) [vertex] {\tiny $C_3$};
\node  (bottom) at (19,-1) [vertex] {\tiny $C_4$};

\draw (top) to (center1);
\draw (center1) to (center2);
\draw (center2) to (bottom);
\draw (center2) .. controls (23,0) and (23,4) .. (center2);
\end{tikzpicture}
\end{minipage}

\bigskip

\begin{minipage}[t]{0.75\linewidth}
\centering
\begin{tikzpicture}[scale=0.25, inner sep=0.2mm, vertex/.style={circle, draw=black, fill=blue!20, minimum size=1mm},
head/.style={rectangle, draw=black, minimum size=5mm},
>= stealth]
\node (title) at (12,9) [head] {$D=9$};

\draw (3,3) -- (3,6) -- (0,6) -- (0,0) -- (6,0) -- (6,3) -- (0,3);
\foreach \x in {(0,6), (0,3), (0,0), (3,6), (3,3), (3,0), (6,3), (6,0)} \filldraw[fill=black] \x circle (3pt);
\draw[<->] (-0.5,6) -- (-0.5,3);
\draw[<->] (-0.5,3) -- (-0.5,0);
\draw[<->] (0,-0.5) -- (6,-0.5);
\draw (-0.5,4.5) node[left] {\tiny $1$} (-0.5,1.5) node[left] {\tiny $1$} (3,-0.5) node[below] {\tiny $2$};
\draw (3,1.5) node {\tiny $C_1$} (1.5,4.5) node {\tiny $C_2$};
\draw (3,-2.5) node {\tiny $(0,2,1,-1)$};

\draw (10,3) -- (10,0) -- (19,0) -- (19,3) -- cycle;
\foreach \x in {(10,3), (10,0), (13,3), (13,0), (16,3), (16,0), (19,3), (19,0)} \filldraw[fill=black] \x circle (3pt);
\draw[<->] (9.5,3) -- (9.5,0);
\draw[<->] (10,-0.5) -- (19,-0.5);
\draw (9.5,1.5) node[left] {\tiny $1$} (14.5,-0.5) node[below] {\tiny $3$};

\draw[<->, dashed, >=angle 45] (10,3) .. controls (11,4) and (15,4) .. (16,3);
\draw (13,4) node[above] {\tiny $C_3$} (16,1.5) node {\tiny $C_4$};

\draw (11.5,3) +(0,-0.2) -- +(0,0.2);
\draw (14.5,3) +(-0.1,-0.2) -- +(-0.1,0.2) +(0.1,-0.2) -- +(0.1,0.2);
\draw (17.5,3) +(-0.2,-0.2) -- +(-0.2,0.2) +(0,-0.2) -- +(0,0.2) +(0.2,-0.2) -- +(0.2,0.2);

\draw (14.5,0) +(0,-0.2) -- +(0,0.2);
\draw (11.5,0) +(-0.1,-0.2) -- +(-0.1,0.2) +(0.1,-0.2) -- +(0.1,0.2);
\draw (17.5,0) +(-0.2,-0.2) -- +(-0.2,0.2) +(0,-0.2) -- +(0,0.2) +(0.2,-0.2) -- +(0.2,0.2);

\draw (14.5,-2.5) node {\tiny $C_3$ is degenerate};

\node (top) at (23,9) [vertex] {\tiny $C_1$};
\node (center1) at (23,6) [vertex] {\tiny $C_2$};
\node (center2) at (23,3) [vertex] {\tiny $C_3$};
\node (bottom) at (23,0) [vertex] {\tiny $C_4$};

\draw (top) to (center1);
\draw (center1) to (center2);
\draw (center2) to (bottom);
\draw (center1) .. controls  (27,4) and (27,8) .. (center1);
\end{tikzpicture}
\end{minipage}

\caption{Examples of $\Gp$ for some small values of $D$. For each two-cylinder decomposition, we provide the corresponding prototype $(a,b,c,e)$. A loop at some vertex represents a Butterfly move that does not change the prototype.}
 \label{fig:fin:gr:ex:H2}
\end{figure}
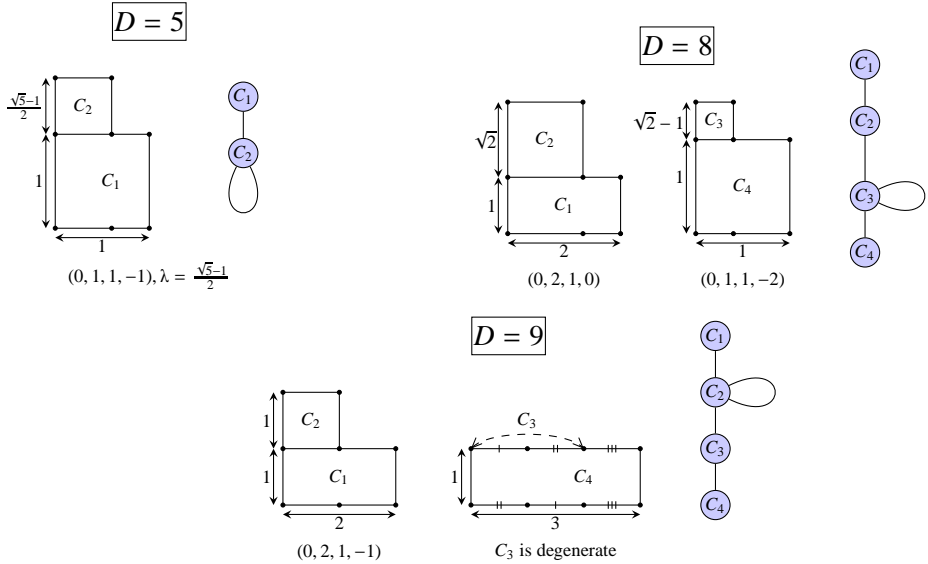

Let $(X,\omega)$ be an eigenform in $\Ecal_D\cap\H(2)$ for some fixed $D$. Following McMullen~\cite{McM_spin}, every two-cylinder decomposition of $X$ is encoded by a quadruple of integers $(a,b,c,e) \in \Z^4$ called {\em prototype} satisfying the following conditions
$$
(\mathcal{P}_D) \quad \left\{
\begin{array}{lll}
 b >0, & c>0, & \gcd(b,c) >a \geq 0,\\
 D=e^2+4bc, & b>c+e, & \gcd(a,b,c,e)=1.
\end{array}
\right.
$$
Set $\lbd=(e+\sqrt{D})/2$. Up to action of $\GL^+(2,\R)$, the decomposition of $X$ consists of two horizontal cylinders. The first one is simple and represented by a square of size $\lbd$. The other one  is non-simple and represented by a parallelogram constructed from the vectors $(b,0)$ and $(a,c)$. Note that we always have $b >\lbd$.

The quotient graph $\Gp$ turns out to be closely related to the set of McMullen's prototypes. Namely, each prototype corresponds to  a cluster of two vertices of $\Gp$ which represent the cylinders in the corresponding cylinder decomposition. Let $C_1,C_2$ be the cylinders in this decomposition, where $C_1$ is the simple one. Then the vertex corresponding to $C_2$ is only adjacent to the one corresponding to $C_1$ in $\Gp$. This is because any other cylinder of $X$ must cross $C_2$.

On the other hand, if there is an edge in $\Gp$ between two vertices representing two simple cylinders which are not parallel, then the two cylinder decompositions are related by a ``Butterfly move'' (see \cite[Sect. 7]{McM_spin} for the precise definitions). In other words, $\Gp$ can be viewed as a geometric object representing $\mathcal{P}_D$, each prototype is represented by two  vertices connected by an edge, and all the other edges of $\Gp$ represent Butterfly moves.

There is nevertheless a slight difference between the two notions.  The set $\mathcal{P}_D$ only parametrizes two-cylinder decompositions of $X$, while in  $\Gp$ we also have one-cylinder decompositions.  If $\sqrt{D}\not\in \N$, then any cylinder in $X$ is contained in a two-cylinder decomposition. Thus, the set of prototypes exhausts all the equivalence classes of cylinders in $X$ (hence it provides the complete list of cusps of the corresponding Teichm\"uller curve). But when $D$ is a square ({\em e.g.} $D=9$),  we need to take into account one-cylinder decompositions as well as degenerate cylinders. In Figure~\ref{fig:fin:gr:ex:H2}, we draw the quotient cylinder graphs  of surfaces corresponding to some small values of $D$.


\appendix
\section{Triangulations}\label{sec:triangulation}
In this section we construct triangulations of $(X,\omega)$ that are invariant by the hyperelliptic involution. The aim of these triangulations is to provide a ``preferred'' way to  represent $(X,\omega)$ as a polygon in $\R^2$ when we have a horizontal simple cylinder on $X$. The results of this section are certainly not new and known to most people in the field (see {\em e.g.} \cite{Veech92}). We present them here only for the sake of completeness.

In what follows, for any saddle connection $s$, we will denote by $\hor(s)$ the length  of the horizontal component of $s$, that is $\hor(s)=|\mathrm{Re}(\omega(s))|$. If $\Del$ is a triangle bounded by the saddle connections $s_1,s_2,s_3$, we define $\hor(\Del)=\max\{\hor(s_i), \; i=1,2,3\}$. Our main result in this section is the following


\begin{Proposition}\label{prop:polygon:construct}
 Let $(X,\omega)$  be a translation surface in $\H(2)\sqcup\H(1,1)$ having a simple horizontal cylinder $C$. Assume that every regular leaf of  the vertical foliation of $(X,\omega)$ crosses $C$.
 \begin{itemize}
  \item[(i)] If $(X,\omega) \in \H(2)$, then $(X,\omega)$ can be obtained by identifying the pairs of opposite sides of an octagon $\P=(P_0\dots P_3Q_0\dots Q_3) \subset \R^2$ (see Figure~\ref{fig:H2H11:hor:cyl:triang}), where the vertices are labelled  clockwise, such that
  \begin{itemize}
  \item[$\bullet$] $\overrightarrow{P_iP_{i+1}}=-\overrightarrow{Q_iQ_{i+1}}, \; i=0,1,2$, and $\overrightarrow{P_3Q_0}=-\overrightarrow{Q_3P_0}$,

  \item[$\bullet$] the diagonals $\ol{P_0P_3}$ and $\ol{Q_0Q_3}$ are horizontal, the parallelogram  $(P_0P_3Q_0Q_3)$ is contained in $\P$ and projects to $C \subset X$,

  \item[$\bullet$] for $i=1,2$, the vertical line through  $P_i$ (resp. $Q_i)$ intersects $\ol{P_0P_3}$ (resp. $\ol{Q_0Q_3}$), and the vertical segment from $P_i$ (resp. from $Q_i$) to the intersection is contained in $\P$.
 \end{itemize}
 
%
%
%
%
%
%
%

\item[(ii)] If $(X,\omega) \in \H(1,1)$, then $(X,\omega)$ can be obtained by identifying the pairs of opposite sides of a decagon $\P=(P_0\dots P_4 Q_0\dots Q_4)$ (see Figure~\ref{fig:H2H11:hor:cyl:triang}), where the vertices are labelled clockwise, such that
 \begin{itemize}
  \item[$\bullet$] $\overrightarrow{P_iP_{i+1}}=-\overrightarrow{Q_iQ_{i+1}}, \; i=0,\dots,3$, and $\overrightarrow{P_4Q_0}=-\overrightarrow{Q_4P_0}$,

  \item[$\bullet$] the diagonals $\ol{P_0P_4}$ and $\ol{Q_0Q_4}$ are horizontal, the parallelogram  $(P_0P_4Q_0Q_4)$ is contained in $\P$ and projects to $C \subset X$,

  \item[$\bullet$] for $i=1,2,3$, the vertical line through $P_i$ (resp. $Q_i$) intersects $\ol{P_0P_4}$ (resp. $\ol{Q_0Q_4}$), and the vertical segment from $P_i$ (resp. from $Q_i$) to the intersection is contained in $\P$.
 \end{itemize}
\end{itemize}
\end{Proposition}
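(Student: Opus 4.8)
The plan is to build the polygon by developing the closed cylinder $\ol C$ together with the two pieces of $X$ glued onto its boundary, and then to check by an Euler-characteristic argument that nothing has been left out. I only treat the generic situation; the degenerate configurations (where a developed side becomes vertical, or two developed vertices coincide) produce the various ``models'' of Figures~\ref{fig:H2:polygon} and~\ref{fig:H11:triang:models} and are handled by the same reasoning.

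\textbf{Step 1: developing the cylinder.} Since $C$ is horizontal and simple, its two boundary components are single saddle connections $s^+$ (top) and $s^-$ (bottom). Every cylinder in genus two is $\hinv$-invariant, and the derivative of $\hinv$ is $-\Id$, so $\hinv|_{\ol C}$ is the half-turn $(x,y)\mapsto(-x,h-y)$ in cylinder coordinates; in particular $s^-=\hinv(s^+)$, and (by Lemma~\ref{lm:sc:no:inv:types}, in the non-coincident case) $s^+$ and $s^-$ have disjoint interiors. Develop $\ol C$ as a parallelogram $R=(P_0P_nQ_0Q_n)\subset\R^2$, with $n=3$ in $\H(2)$ and $n=4$ in $\H(1,1)$, so that the horizontal side $\ol{P_0P_n}$ develops $s^+$, the horizontal side $\ol{Q_0Q_n}$ develops $s^-$, the slanted sides $\ol{P_nQ_0}$ and $\ol{Q_nP_0}$ are identified by translation by the (horizontal) circumference vector of $C$, and the central symmetry of $R$ about its centre realises $\hinv|_{\ol C}$. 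The vertices $P_0,P_n,Q_0,Q_n$ are lifts of the singular points of $X$ lying on $\partial C$.

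\textbf{Step 2: developing the caps.} Let $U$ be the closure in $X$ of a one-sided neighbourhood of $s^+$ lying outside $C$, and $L=\hinv(U)$. We develop $U$ on the side of $\ol{P_0P_n}$ opposite to $R$. The crucial input of the hypothesis is that the vertical separatrices issuing from the singular points of $s^+$ into $U$ must return to $\ol C$: otherwise, by the argument of Lemma~\ref{lm:embd:par}, one could embed a half-infinite vertical strip into $X$, producing a regular vertical leaf that misses $C$. Taking these returns of minimal length (again as in Lemma~\ref{lm:embd:par}) cuts $U$ into finitely many triangles with disjoint interiors, which develop to an embedded polygonal region $(P_0P_1\cdots P_n)$ on one side of $\ol{P_0P_n}$ such that for $1\le i\le n-1$ the vertical segment from $P_i$ to $\ol{P_0P_n}$ stays inside the region. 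An angle count at the singular point(s) pins down $n$: the total cone angle is $6\pi$ in $\H(2)$ (resp. $4\pi$ at each zero in $\H(1,1)$), of which exactly $2\pi$ is used up inside $\ol C$ — this is the sum $\angle P_0+\angle P_n+\angle Q_0+\angle Q_n$ of the four angles of the parallelogram $R$ — leaving $4\pi$ (resp. $6\pi$) split evenly between $U$ and $L$ by the $\hinv$-symmetry; hence the angle sum of the simple polygon $(P_0\cdots P_n)$ is $2\pi$ (resp. $3\pi$), forcing exactly $n-1$ new vertices $P_1,\dots,P_{n-1}$. Applying $\hinv$ gives the centrally symmetric region $(Q_0Q_1\cdots Q_n)$ below $\ol{Q_0Q_n}$, and since $\hinv$ is an isometry with derivative $-\Id$ we get $\overrightarrow{P_iP_{i+1}}=-\overrightarrow{Q_iQ_{i+1}}$.

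\textbf{Step 3: it is all of $X$, and the gluing.} Gluing $R$ to the two caps along $\ol{P_0P_n}$ and $\ol{Q_0Q_n}$ yields a polygon $\P=(P_0\cdots P_nQ_0\cdots Q_n)$ (octagon for $\H(2)$, decagon for $\H(1,1)$) together with a locally isometric map $\varphi:\P\to X$ which is injective on $\inter(\P)$, sends vertices to singularities, and maps each side of $\partial\P$ onto a saddle connection. By construction $\varphi(\P)$ is a closed neighbourhood of $\ol C$, it is $\hinv$-invariant (because $\varphi\circ(\text{central symmetry})=\hinv\circ\varphi$), and each of its boundary saddle connections is identified, under the central symmetry of $\P$, with another boundary side of $\P$; hence $\P/\!\!\sim$ is already a closed surface, which maps isometrically to the connected surface $X$, so $\varphi(\P)=X$. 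The identifications are then precisely $\ol{P_iP_{i+1}}\sim\ol{Q_iQ_{i+1}}$ for $0\le i\le n-1$ and $\ol{P_nQ_0}\sim\ol{Q_nP_0}$, with the stated translation relations. The remaining bulleted properties — $\ol{P_0P_n},\ol{Q_0Q_n}$ horizontal, $(P_0P_nQ_0Q_n)$ projecting to $C$, and the vertical-segment conditions at the $P_i$ (hence at the $Q_i$) — hold by construction.

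\textbf{Main obstacle.} The delicate point is Step 2: proving that the development of $U$ is genuinely embedded with no overlaps among its triangles, that its combinatorial type is exactly the asserted one rather than a larger polygon, and that the vertical segments from the $P_i$ really land inside $\P$. This is where one must combine the minimality of the separatrix returns (as in the proof of Lemma~\ref{lm:embd:par}) with the Gauss--Bonnet angle bookkeeping at the singularities, and then separately dispose of the degenerate configurations. Once these are in place the rest of the argument is routine bookkeeping with the developing map.
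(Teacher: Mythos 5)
Your Steps 1 and 3 draw the correct picture, and your ``Main obstacle'' paragraph correctly locates where all the work lies --- but that is exactly the problem: Step 2 \emph{is} the proposition, and you assert its conclusion rather than prove it. Three concrete points fail as written. First, the region $U$ is not actually defined: the complement of $\ol{C}$ in $X$ is a \emph{connected} subsurface attached to $C$ along both $s^+$ and $s^-$ (for $(X,\omega)\in\H(2)$ its closure is a slit torus), and splitting it into two pieces $U$, $L=\hinv(U)$ with disjoint interiors amounts to exhibiting the saddle connections $\ol{P_iP_{i+1}}$ along which they meet --- which is the content of the statement, not a starting point. Second, the natural dynamical recipe (flow $s^+$ upward until first return to $\ol{C}$) sweeps out the \emph{entire} complement, not half of it: in the octagon a vertical leaf leaving $s^+$ traverses the top region, crosses a side $\ol{P_iP_{i+1}}\sim\ol{Q_iQ_{i+1}}$, traverses the bottom region and re-enters through $s^-$; and the minimal vertical returns cut the region into triangles and trapezoids whose vertices include the non-singular feet of the vertical segments, so they do not by themselves yield the combinatorial type of $(P_0\cdots P_n)$ as a polygon bounded by saddle connections. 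Third, the Gauss--Bonnet count (``$2\pi$ inside $\ol{C}$, the remaining $4\pi$ split evenly, hence angle sum $2\pi$, hence $n-1$ new vertices'') presupposes that $\ol{C}\cup U\cup L=X$ and that $U$ develops as an embedded simple polygon with all singularities on its boundary --- precisely what your Step 3 is supposed to establish afterwards. As presented it is a consistency check on the answer, not a derivation of it.

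For comparison, the paper sidesteps the in-situ development entirely: it cuts off $C$ and identifies the two resulting boundary segments, producing a slit torus (case $\H(2)$) or a surface in $\H(2)$ with a marked saddle connection invariant under the hyperelliptic involution (case $\H(1,1)$), to which the hypothesis on vertical leaves transfers. The symmetric triangulations $\{\Del_i^\pm\}$ of these lower-complexity surfaces are then supplied by Lemma~\ref{lm:slit:tor:triang} and Lemma~\ref{lm:H2:triang}, whose proofs rest on the cylinder decompositions of Lemma~\ref{lm:slit:tor:decomp} and Lemma~\ref{lm:embd:par} and a short case analysis; the essential output is the horizontal-extent control $\hor(\Del_1^\pm)=\hor(s)$, $\hor(\Del_2^+)=\hor(a^+)$, etc., which is exactly what makes the successive gluing of $\Del_1^+,\dots,\Del_k^+$ onto the top of the parallelogram (and of $\Del_1^-,\dots,\Del_k^-$ onto the bottom) embedded, and what forces the vertical segments from the $P_i$ and $Q_i$ to land on $\ol{P_0P_n}$ and $\ol{Q_0Q_n}$ inside $\P$. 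If you wish to salvage a direct development, you would need to reprove that control inductively in the spirit of Lemma~\ref{lm:embd:par}; the angle count can then serve only as a final check.
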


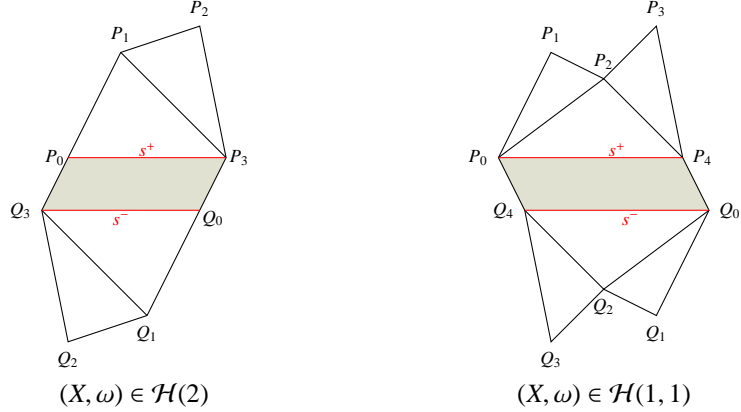
\begin{figure}[htb]
\begin{minipage}[t]{0.4\linewidth}
\centering
\begin{tikzpicture}[scale=0.35]
\fill[blue!40!yellow!30] (0,0) -- (6,0) -- (7,2) -- (1,2) -- cycle;

\draw (0,0) -- ( 1,-5) -- (4,-4) -- (6,0) -- (7,2) -- (6,7) -- (3,6) -- cycle;

\draw (3,6) -- (7,2)  (0,0) -- (4,-4); 

\draw[red] (1,2) -- (7,2) (0,0) -- (6,0);


\draw[red] (4,2.3) node {\tiny $s^+$} (3,-0.3) node {\tiny $s^-$};

\draw (0.5,2) node {\tiny $P_0$} (3,6) node[above] {\tiny $P_1$} (6,7) node[above] {\tiny $P_2$} (7.5,2) node {\tiny $P_3$} (6.5,-0.3) node {\tiny $Q_0$} (4,-4) node[below] {\tiny $Q_1$} (1,-5) node[below] {\tiny $Q_2$} (0,0) node[left] {\tiny $Q_3$};

\draw (3.5,-7) node {\small $(X,\omega)\in \H(2)$};
\end{tikzpicture}
\end{minipage}
\begin{minipage}[t]{0.4\linewidth}
 \centering
 \begin{tikzpicture}[scale=0.35]
 \fill[blue!40!yellow!30] (0,2) -- (1,0) -- (8,0) -- (7,2) -- cycle;
 \draw (0,2) -- (1,0) -- (2,-5) -- (4,-3) -- (6,-4) -- (8,0) -- (7,2) -- (6,7) -- (4,5) -- (2,6) -- cycle;
 \draw (0,2) -- (4,5) -- (7,2)  (1,0) -- (4,-3) -- (8,0);
 \draw[red] (0,2) -- (7,2) (1,0) -- (8,0);
 

 \draw[red] (4.5,2.3) node {\tiny $s^+$} (5,-0.3) node {\tiny $s^-$};
 
 \draw (0,2) node[left] {\tiny $P_0$} (2,6) node[above] {\tiny $P_1$} (4,5) node[above] {\tiny $P_2$} (6,7) node[above] {\tiny $P_3$} (7,2) node[right] {\tiny $P_4$} (8,0) node[right] {\tiny $Q_0$} (6,-4) node[below] {\tiny $Q_1$} (4,-3) node[below] {\tiny $Q_2$} (2,-5) node[below] {\tiny $Q_3$} (1,0) node[left] {\tiny $Q_4$};
 
 \draw (4,-7) node {\small $(X,\omega)\in \H(1,1)$};
 \end{tikzpicture}
\end{minipage}
%
%
%

\caption{Representations of surfaces in $\H(2)$ and $\H(1,1)$  symmetric polygons. The simple horizontal cylinder is represented by the colored parallelogram.}
\label{fig:H2H11:hor:cyl:triang}
\end{figure}

\begin{proof}
Cut off $C$ from $X$, and identify the geodesic segments in the boundary of the resulting surface, we then obtain either a slit torus (if $(X,\omega)\in \H(2)$) or a surface in $\H(2)$ with a marked saddle connection (if $(X,\omega)\in \H(1,1)$). Let $(X',\omega')$ denote the new surface, and $s'$ the marked saddle connection. If $(X',\omega')$ is a slit torus, then there is a unique involution of $X'$ acting by $-\Id$ on $H_1(X',\Z)$ and exchanges the endpoints of $s'$. By a slight abuse of notation, we will call this involution the hyperelliptic involution of $X'$. Thus, in both cases, $s'$ is invariant by the hyperelliptic involution. 

By assumption all the regular vertical leaves of  $X'$ intersect $s'$. Let $\{\Del_i^{\pm},\; i=1,\dots,k\}$ be the triangulation of $X'$ provided by Lemma~\ref{lm:slit:tor:triang} and Lemma~\ref{lm:H2:triang} ($k=2$ if $(X',\omega') \in \H(0,0)$, $k=3$ if $(X',\omega')\in \H(2)$). We can represent $C$ by a parallelogram in $\R^2$. The polygon $\P$ is obtained from this parallelogram by gluing successively the triangles $\Del_1^+,\dots,\Del_k^+$, then $\Del_1^-,\dots,\Del_k^-$.
\end{proof}

\begin{Lemma}\label{lm:slit:tor:triang}
Let $(X,\omega,s)$ be a slit torus. Let $\hinv$ be the elliptic involution of $X$ that exchanges the endpoints $P_1,P_2$ of $s$. Assume that all the leaves of vertical foliation meet $s$. Then there exists a unique triangulation of $X$ into $4$ triangles $\Del_1^\pm, \Del_2^\pm$ with vertices in $\{P_1,P_2\}$, such that
\begin{itemize}
\item[$\bullet$] $\Del_i^+$ and $\Del_i^-$ are exchanged by $\hinv$,

\item $s$ is contained in both $\Del^+_1$ and $\Del_1^-$,

\item for $i=1,2$, the union $\Del_i^+\cup \Del_i^-$ is a cylinder in $X$,

\item $\Del_1^+$ is adjacent to $\Del_1^-$ and $\Del_ 2^+$, $\Del_1^-$ is adjacent to $\Del_1^+$ and $\Del_2^-$,

\item[$\bullet$] $\hor(\Del_1^\pm)=\hor(s)$,  and $\hor(\Del_2^\pm)=\hor(c^+)$, where $c^+$ is the unique  common side of $\Del_2^+$ and  $\Del_1^+$.
\end{itemize}
There are two possible configurations for this triangulation which are shown in Figure~\ref{fig:sl:tor:triang}.
\begin{figure}[htb]
\begin{minipage}[t]{0.45\linewidth}
 \centering
 \begin{tikzpicture}[scale=0.35]

 \draw (0,5) -- (1,0) -- (4,1) -- (6,5) -- (5,10) -- (2,9) -- cycle;
 \draw (2,9) -- (6,5) (0,5) -- (4,1);
 \draw[red] (0,5) -- (6,5);

 \foreach \x in {(0,5),(1,0),(2,9),(4,1),(5,10), (6,5)} \filldraw[fill=black] \x circle (3pt);

 \draw[red] (3,5) node[above] {\tiny $s$};
 \draw (2,7) node {\tiny $\Delta^+_1$} (4,3) node {\tiny $\Delta^-_1$} (4.5,8) node {\tiny $\Delta^+_2$} (2,1.5) node {\tiny $\Delta^-_2$};
 \draw (0,5) node[left] {\tiny $P_1$} (6,5) node[right] {\tiny $P_2$} (2,9) node[left] {\tiny $P_2$} (4,1) node[right] {\tiny $P_1$} (5,10) node[above] {\tiny $P_1$} (1,0) node[below] {\tiny $P_2$};
 \end{tikzpicture}
\end{minipage}
\begin{minipage}[t]{0.45\linewidth}
 \centering
 \begin{tikzpicture}[scale=0.35]

 \draw (0,5) -- (-1,0) -- (-4,1) -- (-6,5) -- (-5,10) -- (-2,9) -- cycle;
 \draw (-2,9) -- (-6,5) (0,5) -- (-4,1);
 \draw[red] (0,5) -- (-6,5);

 \foreach \x in {(0,5),(-1,0),(-2,9),(-4,1),(-5,10), (-6,5)} \filldraw[fill=black] \x circle (3pt);

 \draw[red] (-3,5) node[above] {\tiny $s$};
 \draw (-2,7) node {\tiny $\Delta^+_1$} (-4,3) node {\tiny $\Delta^-_1$} (-4.5,8) node {\tiny $\Delta^+_2$} (-2,1.5) node {\tiny $\Delta^-_2$};
 \draw (-6,5) node[left] {\tiny $P_1$} (0,5) node[right] {\tiny $P_2$} (-2,9) node[right] {\tiny $P_1$} (-4,1) node[left] {\tiny $P_2$} (-5,10) node[above] {\tiny $P_2$} (-1,0) node[below] {\tiny $P_1$};
 \end{tikzpicture}
\end{minipage}

\caption{Triangulation of a slit torus.}
\label{fig:sl:tor:triang}
\end{figure}
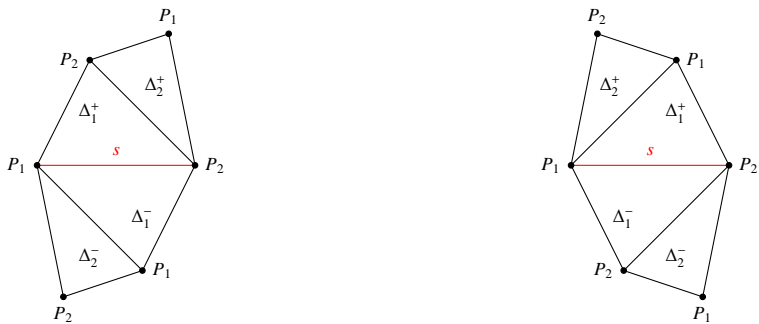
\end{Lemma}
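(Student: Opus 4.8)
The plan is to read off the triangulation from the embedded parallelogram of Lemma~\ref{lm:embd:par} applied to $s$, and to obtain uniqueness from the minimality clause~(d) of that lemma. First I would normalize: acting by a suitable element of $\SL(2,\R)$ fixing the vertical direction we may assume $s$ is horizontal while keeping every vertical leaf meeting $s$, so $\hor(s)=|s|>0$. Apply Lemma~\ref{lm:embd:par} to the $\hinv$--invariant saddle connection $s$ (legitimate, since a slit torus is a hyperelliptic translation surface in the sense used there): this produces an embedded parallelogram, which I relabel $\P=(P_1P_3P_2P_4)$ so that $\ol{P_1P_2}$ is the diagonal mapped isometrically onto $s$, with $P_3$ above and $P_4$ below $s$, together with a locally isometric $\varphi\colon\P\to X$ that embeds $\inter(\P)$ and satisfies $\hinv\circ\varphi=\varphi\circ\sigma$ for $\sigma$ the central symmetry of $\P$. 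Since $X$ is a torus carrying only the two cone points $P_1,P_2$, and since $\varphi(\P)=X$ would make $X$ a one--pointed torus, an inspection of how the four sides of $\P$ are glued (using Lemma~\ref{lm:sc:no:inv:types} in the stratum $\H(0,0)$, which also covers slit tori) shows that $\varphi$ identifies exactly one pair of opposite sides of $\P$; after relabelling this pair is $\ol{P_1P_3}\sim\ol{P_2P_4}$, so $\varphi(\P)$ is a cylinder $\mathcal C_1$ with boundary saddle connections $c^+:=\varphi(\ol{P_3P_2})$ and $c^-:=\varphi(\ol{P_4P_1})=\hinv(c^+)$.

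Cutting $\P$ along $s$ and pushing forward by $\varphi$, set $\Del_1^+:=\varphi(P_1P_3P_2)$ and $\Del_1^-:=\varphi(P_1P_4P_2)$: these are exchanged by $\hinv$, both contain $s$, their union is $\mathcal C_1$, $\Del_1^+$ is adjacent to $\Del_1^-$ (along $s$ and along $\varphi(\ol{P_1P_3})=\varphi(\ol{P_2P_4})$) and to no third triangle except across $c^+$. Moreover, by part~(a) of Lemma~\ref{lm:embd:par} the verticals through $P_3,P_4$ have their feet on $\ol{P_1P_2}$, so the horizontal lengths of the two non--$s$ sides of $\Del_1^\pm$ add up to $\hor(s)$; hence $\hor(\Del_1^\pm)=\hor(s)$.

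Next, the complement $\mathcal C_2:=\overline{X\setminus\mathcal C_1}$ is the complementary cylinder of the torus $X$ along $c^+\cup c^-$, with the same two boundary saddle connections. I would triangulate $\mathcal C_2$ by developing it as a parallelogram having $c^\pm$ as one pair of opposite sides and cutting along its unique $\hinv$--symmetric diagonal $\del$ compatible with the normalization $\hor(\del)\le\hor(c^+)$; existence and uniqueness of such a $\del$ again rest on the minimality in Lemma~\ref{lm:embd:par}(d), now applied inside $\mathcal C_2$. This gives $\Del_2^\pm$, exchanged by $\hinv$, with $\Del_2^+\cap\Del_1^+=c^+$ and $\Del_2^-\cap\Del_1^-=c^-$, so that $\Del_1^+$ is adjacent to $\Del_1^-$ and $\Del_2^+$, and $\Del_1^-$ to $\Del_1^+$ and $\Del_2^-$; by the same computation as in the previous paragraph one gets $\hor(\Del_2^\pm)=\hor(c^+)$. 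The four triangles $\{\Del_1^\pm,\Del_2^\pm\}$ cover $X$, realize all the stated adjacencies, and the two cases $\varphi(P_3)=P_2$ and $\varphi(P_3)=P_1$ produce exactly the two planar shapes of Figure~\ref{fig:sl:tor:triang}. This settles existence.

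For uniqueness, let $\{\widetilde\Del_1^\pm,\widetilde\Del_2^\pm\}$ be any triangulation satisfying the hypotheses. As all vertices lie in $\{P_1,P_2\}$ and $s\subset\widetilde\Del_1^+\cap\widetilde\Del_1^-$, the segment $s$ must be their common edge; thus $\widetilde\Del_1^+\cup\widetilde\Del_1^-$ is an $\hinv$--invariant cylinder with $s$ a diagonal and all remaining edges of horizontal length $\le\hor(s)$. Developing this cylinder with $s$ as diagonal, the two vertices off $s$ are forced to project vertically into $\ol{P_1P_2}$ (otherwise some edge is too long horizontally, or the triangles fail to embed) and to lie at vertical distance exactly $\eta$ from $s$, where $\eta$ is the number of Lemma~\ref{lm:embd:par}(d): a larger distance contradicts minimality of $\eta$, a smaller one contradicts embeddedness and $\hinv$--invariance. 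Hence $\widetilde\Del_1^+\cup\widetilde\Del_1^-=\mathcal C_1$ and $\{\widetilde\Del_1^+,\widetilde\Del_1^-\}=\{\Del_1^+,\Del_1^-\}$; the complementary cylinder is then $\mathcal C_2$, and the condition $\hor(\widetilde\Del_2^\pm)=\hor(c^+)$ pins down $\widetilde\Del_2^\pm=\Del_2^\pm$ by the same argument inside $\mathcal C_2$. The main obstacle I anticipate is exactly the step asserting that $\varphi(\P)$ is a cylinder and that $\hor(\Del_1^\pm)=\hor(s)$ selects precisely this cylinder: this needs the short case analysis on the side gluings of $\P$ (equivalently, distinguishing the two configurations of Figure~\ref{fig:sl:tor:triang}) together with clause~(d) of Lemma~\ref{lm:embd:par} to exclude competing cylinders, plus the analogous but slightly more delicate bookkeeping for $\del$ inside $\mathcal C_2$.
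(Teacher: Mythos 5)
Your construction is essentially the paper's own: the paper's proof invokes Lemma~\ref{lm:slit:tor:decomp} (which it derives from Lemma~\ref{lm:embd:par}, exactly as you do) to cut $X$ into the cylinder containing $s$ and the complementary cylinder, and then adds the diagonals. Your identification of $\varphi(\P)$ with the first cylinder, the verification that exactly one pair of opposite sides of $\P$ is identified, and the deduction of $\hor(\Del_1^\pm)=\hor(s)$ from clause~(a) of Lemma~\ref{lm:embd:par} are all correct; the citation of Lemma~\ref{lm:sc:no:inv:types} ``in $\H(0,0)$'' is not literally available (that lemma is stated for $\H(2)$ and $\H(1,1)$; only Lemma~\ref{lm:embd:par} is remarked to extend to $\H(0,0)$), but the fact you need --- two geodesic segments on a flat torus with the same endpoints and the same holonomy coincide, so the side joining $P_1$ to $P_2$ is identified with its $\hinv$-image while the loop-sides are not --- is immediate. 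So the existence half is sound and is considerably more detailed than the paper's one-line ``it is easy to see that we get the desired triangulation''.

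The uniqueness half, however, contains a step that fails as written: you force the apex of $\widetilde\Del_1^\pm$ to lie at vertical distance exactly $\eta$ from $s$ by asserting that ``a larger distance contradicts minimality of $\eta$'' and ``a smaller one contradicts embeddedness''. Clause~(d) of Lemma~\ref{lm:embd:par} is a \emph{lower} bound --- every vertical segment from a singular point to $s$ has length at least $\eta$ --- so it excludes an apex \emph{closer} than $\eta$ and says nothing against a farther one; your two justifications are swapped, and the substantive case (excluding farther apexes) is left resting on the wrong reason. To close it one must argue differently, e.g.\ that the admissible apexes form a coset of the lattice of the torus, that $\hor(\widetilde\Del_1^\pm)=\hor(s)$ confines their horizontal coordinate to lie over $\ol{P_1P_2}$, and that choosing a coset point of height $\eta'>\eta$ would leave the marked point realized at height $\eta$ in the interior of a face or edge of the proposed triangulation, contradicting that all vertices lie in $\{P_1,P_2\}$; an analogous check is needed for $\Del_2^\pm$ (where, if one of the crossing segments of the complementary cylinder is vertical, uniqueness is genuinely delicate). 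The paper avoids all of this by not proving uniqueness at all, so your attempt goes further, but this particular step does not go through.
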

\begin{proof}
By Lemma~\ref{lm:slit:tor:decomp}, we know that there exists a pair of simple closed geodesics $c^+,c^-$ passing through the endpoints of $s$  that cut $X$ into 2 cylinders satisfying $\hor(c^\pm) \leq \hor(s)$. One of the cylinders cut out by $c^\pm$ contains $s$, we denote it by $C_1$, the other one is denoted by $C_2$.  Note that we must have $\hor(c^\pm) >0$, otherwise there are vertical leaves that do not meet $s$. It is easy to see that we get the desired triangulation by adding some geodesic segments in  $C_1$ and $C_2$ joining the endpoints of $s$.
\end{proof}

%
%



\begin{Lemma}\label{lm:H2:triang}
Let $(X,\omega)$ be a surface in $\H(2)$ and $s$ be a saddle connection on $X$ invariant by the hyperelliptic involution $\hinv$. We assume that $s$ is horizontal and all the leaves of the vertical foliation meet $s$. Then we can triangulate $X$ into 6 triangles $\Del_i^\pm, \;  i=1,2,3$, whose sides are saddle connections, satisfying the following
\begin{itemize}
\item[$\bullet$] $\hinv(\Del_i^+)=\Del_i^-, \; i=1,2,3$,

\item[$\bullet$] $\Del_1^+$ and $\Del_1^-$ contain $s$, and $\hor(\Del_1^\pm)=\hor(s)$,

\item[$\bullet$] $\Del_2^+$ has a unique common side with $\Del_1^+$ which will be denoted by $a^+$, and $\hor(\Del_2^+)=\hor(a^+)$.

\item[$\bullet$] $\Del_3^+$ either has a unique common side $b^+$ with $\Del_1^+$ and $\hor(\Del_3^+)=\hor(b^+)$, or $\Del_3^+$ has a unique common side $c^+$ with $\Del_2^+$ and $\hor(\Del_3^+)=\hor(c^+)$.
\end{itemize}
This triangulation is unique. The configurations of the triangles $\Del_i^\pm, \; i=1,2,3$,  are shown in Figure~\ref{fig:H2:triang}.
\begin{figure}[htb]
\begin{minipage}[t]{0.3\linewidth}
 \centering
 \begin{tikzpicture}[scale=0.35]
  \draw (0,0) -- (1,-5) -- (3,-3) -- (5,-4) -- (7,0) -- (6,5) -- (4,3) -- (2,4)  -- cycle;
  \draw (0,0) -- (3,-3) -- (7,0) -- (4,3) -- cycle;
  \draw[red] (0,0) -- (7,0);

  \draw (4,1) node {\tiny $\Del^+_1$} (3,-1) node {\tiny $\Del^-_1$} (2,2.5) node {\tiny $\Del^+_2$} (5,-2.5) node {\tiny $\Del^-_2$} (5.5,3) node {\tiny $\Del^+_3$} (1.5,-3) node {\tiny $\Del^-_3$};
  \draw[red] (3,0) node[above] {\tiny $s$};
 \end{tikzpicture}
\end{minipage}
\begin{minipage}[t]{0.3\linewidth}
 \centering
 \begin{tikzpicture}[scale=0.35]
 \draw (0,0) -- (1,-3) -- (3,-5) -- (6,-2) -- (7,0) -- (6,3) -- (4,5) -- (1,2) -- cycle;
 \draw (6,3) -- (1,2) -- (7,0) (0,0) -- (6,-2) --  (1,-3);
 \draw[red] (0,0) -- (7,0);

 \draw (2,1) node {\tiny $\Del^+_1$} (5,-1) node {\tiny $\Del^-_1$} (5,1.5) node {\tiny $\Del^+_2$} (1.5,-2) node {\tiny $\Del^-_2$} (4,3.5) node {\tiny $\Del^+_3$} (3,-3.5) node {\tiny $\Del^-_3$};
 \draw[red] (3.5,0) node[above] {\tiny $s$};
 \end{tikzpicture}
\end{minipage}
\begin{minipage}[t]{0.3\linewidth}
 \centering
 \begin{tikzpicture}[scale=0.35]
 \draw (0,0) -- (-1,-3) -- (-3,-5) -- (-6,-2) -- (-7,0) -- (-6,3) -- (-4,5) -- (-1,2) -- cycle;
 \draw (-6,3) -- (-1,2) -- (-7,0) (0,0) -- (-6,-2) --  (-1,-3);
 \draw[red] (0,0) -- (-7,0);

 \draw (-2,1) node {\tiny $\Del^+_1$} (-5.5,-1) node {\tiny $\Del^-_1$} (-5,2) node {\tiny $\Del^+_2$} (-2,-2) node {\tiny $\Del^-_2$} (-4,3.5) node {\tiny $\Del^+_3$} (-3,-3.5) node {\tiny $\Del^-_3$};
 \draw[red] (-3.5,0) node[above] {\tiny $s$};
 \end{tikzpicture}
\end{minipage}

\caption{Triangulation of surfaces in $\H(2)$.}
\label{fig:H2:triang}
\end{figure}
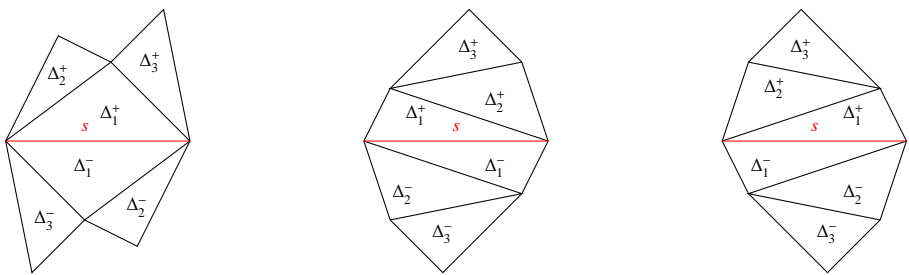
\end{Lemma}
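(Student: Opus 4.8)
The plan is to obtain the triangulation by developing $X$ outward from $s$, starting from the embedded parallelogram of Lemma~\ref{lm:embd:par} and continuing as in the proof of Lemma~\ref{lm:H2:inv:sc:2cyl}, the only new point being a careful bookkeeping of horizontal components. Normalize so that $s$ is horizontal and let $\varphi:\P\to X$ be the embedded parallelogram associated to $s$, with $\P=(P_1P_3P_2P_4)$, $P_3$ the highest and $P_4$ the lowest vertex, and $s=\varphi(\ol{P_1P_2})$. Put $\Delta_1^+:=\varphi((P_1P_3P_2))$ and $\Delta_1^-:=\varphi((P_1P_2P_4))$. Since the lift of $\hinv$ is the central symmetry about the midpoint of $\ol{P_1P_2}$, we get $\hinv(\Delta_1^+)=\Delta_1^-$; both triangles contain $s$, and their interiors are embedded because $\varphi|_{\inter(\P)}$ is. Property (a) of Lemma~\ref{lm:embd:par} says the vertical line through $P_3$ meets $\ol{P_1P_2}$, whence $\hor(\ol{P_3P_1}),\hor(\ol{P_3P_2})\le\hor(\ol{P_1P_2})=\hor(s)$, so $\hor(\Delta_1^\pm)=\hor(s)$.

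Next I extend across the two free sides $a_0^+:=\varphi(\ol{P_3P_1})$ and $b_0^+:=\varphi(\ol{P_3P_2})$ of $\Delta_1^+$. As in Lemma~\ref{lm:H2:inv:sc:2cyl}, $a_0^+$ and $b_0^+$ cannot both be $\hinv$-invariant, for otherwise $X=\varphi(\P)$ would be a torus. Using the hypothesis that every vertical leaf meets $s$, the minimal-segment argument of Lemma~\ref{lm:embd:par}, applied now to the relevant side of the boundary of $\varphi(\P)$, produces a triangle $\Delta_2^+$ all of whose vertices map to $x_0$, glued to $\Delta_1^+$ along one of $a_0^+,b_0^+$ (relabel this side $a^+$, as in the statement), with $\hor(\Delta_2^+)=\hor(a^+)$, embedded and meeting $\varphi(\P)$ only along $a^+$ (verified exactly as the embedding claim in Lemma~\ref{lm:embd:par}); then $\Delta_2^-:=\hinv(\Delta_2^+)$ is glued to $\Delta_1^-$ along $\hinv(a^+)$. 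Repeating the construction once more on the remaining free sides, namely $b^+$ (the other side of $\Delta_1^+$) and the side $c^+$ of $\Delta_2^+$ issuing from $P_3$, yields a triangle $\Delta_3^+$ glued either to $\Delta_1^+$ along $b^+$ (with $\hor(\Delta_3^+)=\hor(b^+)$) or to $\Delta_2^+$ along $c^+$ (with $\hor(\Delta_3^+)=\hor(c^+)$), together with $\Delta_3^-:=\hinv(\Delta_3^+)$; these are exactly the two configurations of Figure~\ref{fig:H2:triang}.

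It then remains to check that $X=\bigcup_{i=1}^3(\Delta_i^+\cup\Delta_i^-)$, i.e. that no further extension is possible. For this I count cone angle at $x_0$: each of the six triangles $\Delta_i^\pm$ has all three vertices at $x_0$ and hence contributes $\pi$ to the angle at $x_0$; since these angular sectors have disjoint interiors and the cone angle of $x_0$ is $6\pi$, the six triangles account for the entire neighbourhood of $x_0$, so no boundary edge is left free and the six triangles tile $X$. The stated adjacency relations and the fact that all edges are saddle connections are read off from the construction, and uniqueness holds because each step is canonical: the pair $\Delta_1^\pm$ is forced to be the (unique) embedded parallelogram of $s$ by the conditions that it contains $s$ and has horizontal span $\hor(s)$, and each subsequent extension across the prescribed non-invariant edge is then determined.

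The main obstacle is precisely the part sketched above: verifying, in the two extension steps, that the developed triangles really are embedded and overlap the previously built region only along the shared edge (this repeats, in a slightly richer situation, the embedding verification of Lemma~\ref{lm:embd:par}), and pinning down which side the new triangle is glued along together with the horizontal-component equalities --- this is where the ``valley'' shape forced by the minimality of the vertical segments enters, and where the split into the two cases of Figure~\ref{fig:H2:triang} arises. Once this local analysis is in place, the cone-angle count does the global work of bounding the number of triangles by six.
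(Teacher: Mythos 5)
Your construction of $\Del_1^\pm$ from the embedded parallelogram of Lemma~\ref{lm:embd:par} is exactly the paper's first step, and your overall strategy is sound, but from there the two arguments diverge. The paper does not develop triangle-by-triangle: it looks at the two non-horizontal sides $a^+,b^+$ of $\Del_1^+$ and applies Lemma~\ref{lm:sc:no:inv:types} to identify the complement of $\varphi(\P)$ globally --- either both sides are non-invariant and the complement is two disjoint simple cylinders bounded by $a^\pm$ and $b^\pm$ (first configuration of Figure~\ref{fig:H2:triang}), or one side is invariant, $\varphi(\P)$ is itself a simple cylinder, and the complement is a slit torus, which is then triangulated by quoting Lemma~\ref{lm:slit:tor:triang} (the other two configurations). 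This structural identification buys two things for free that your route has to earn by hand: (i) the new triangles live inside pieces already known to be embedded and disjoint from $\varphi(\P)$, so no further embedding or overlap verification is needed beyond what is in Lemma~\ref{lm:slit:tor:triang}; and (ii) the process visibly terminates, so no angle count is required. What your route buys in exchange is a uniform mechanism (minimal vertical segment across each free edge, then the count $6\cdot\pi=6\pi$ at the double zero) that does not need the case analysis of Lemma~\ref{lm:sc:no:inv:types}, and your cone-angle termination argument is correct and rather cleaner than checking exhaustion case by case.

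The one place where you should not wave at Lemma~\ref{lm:embd:par} is the step you yourself flag: the edge $a^+$ is in general neither horizontal nor $\hinv$-invariant, so you are extending by a single triangle rather than a centrally symmetric parallelogram, and the injectivity argument of Lemma~\ref{lm:embd:par} (which uses the central symmetry to find a vertex and an interior point differing by the offending translation vector) needs the additional observation that for a triangle $\T$ one still has $\T-\T=\bigcup_i\bigl((\T-V_i)\cup(V_i-\T)\bigr)$ over the vertices $V_i$; moreover you must separately rule out overlap of $\Del_2^+$ with the already-built $\Del_1^\pm$, which does not follow from embeddedness of $\Del_2^+$ alone (here the bigon/Gauss--Bonnet argument of Claim~\ref{clm:inter:cyl:bigon}, or the paper's identification of the complement, is the efficient fix). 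You should also state explicitly that the hypothesis that every vertical leaf meets $s$ is what guarantees both that $a^+,b^+$ are non-vertical and that the vertical rays from $x_0$ on the free side of each edge actually reach it. With those verifications supplied, your proof is complete and the uniqueness argument you sketch is at the same level of detail as the paper's.
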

\begin{proof}
From Lemma~\ref{lm:embd:par}, we see that there exist a parallelogram $\P \subset \R^2$ and a locally isometric map $\varphi :\P \ra X$ that maps a diagonal of $\P$ to $s$. By construction, $\varphi(\P)$ is decomposed into two embedded triangles $\Del_1^\pm$, where $\Del_1^+$ is the one above $s$,  both of which satisfy $\hor(\Del_1^\pm)=\hor(s)=|s|$. Note also that $\hinv(\Del_1^+)=\Del_1^-$.

Let us denote the non-horizontal sides of $\Del_1^+$ by $a^+$ and $b^+$, and their images by $\hinv$ by $a^-$ and $b^-$ respectively. If both of $a^+$ and $b^+$ are invariant by $\hinv$ then $X=\varphi(\P)$, which implies that $X$ is a torus, and we have a contradiction. Therefore,  we only have two cases:
\begin{itemize}
\item[a)] None of $a^+,b^+$ is invariant by $\hinv$. In this cases, by Lemma~\ref{lm:sc:no:inv:types} the complement of $\varphi(\P)$ is the disjoint union of two cylinders bounded by $a^\pm$ and $b^\pm$ respectively. Note that none of  $a^+$ and $b^+$ is vertical, otherwise there would be vertical leaves that do not meet $s$. We can then triangulate the cylinders bounded by $a^\pm$ and $b^\pm$ in the same way as in Lemma~\ref{lm:slit:tor:triang}.

\item[b)] One of $a^+,b^+$ is invariant by $\hinv$. We can assume that $b^+$ is invariant by $\hinv$. In this case $\varphi(\P)$ is actually a simple cylinder bounded by $a^\pm$. The complement of $\varphi(\P)$ is then a slit torus $(X_1,\omega_1,s_1)$, where $s_1$ is the identification of $a^\pm$. From the assumption that all the vertical leaves meet $s$, we derive that $a^\pm$ are not vertical. Thus we can follow the same argument as in Lemma~\ref{lm:slit:tor:triang} to get the desired triangulation.
\end{itemize}
\end{proof}


\section{Cylinders and decompositions}\label{sec:aux:lemmas}
In this section, we give the proofs of some lemmas which are used in Section~\ref{sec:hyperbolic}.
\begin{Lemma}\label{lm:cp:degen:cyl}
Let  $(X,\omega)\in \H(2)\sqcup\H(1,1)$ be a completely periodic surface in the sense of Calta. If $C$ is a  degenerate cylinder in $X$, then the direction of $C$ is periodic, that is $X$ is decomposed into cylinders in the direction of $C$.
\end{Lemma}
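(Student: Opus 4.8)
Let $C = s_1 \cup s_2$ be a degenerate cylinder, which we may assume is horizontal. By Definition~\ref{def:degen:cyl}, both $s_1$ and $s_2$ are invariant by the hyperelliptic involution $\hinv$. I would invoke Lemma~\ref{lm:degen:cyl:deform}: there is a continuous family $\{(X_t,\omega_t), \, t \in [0,\eps)\}$ in the same stratum, with $(X_0,\omega_0) = (X,\omega)$, such that for $t \in (0,\eps)$ the pair $s_1,s_2$ deforms to saddle connections $s_{1,t}, s_{2,t}$ whose union is freely homotopic to the core curves of a simple \emph{non-degenerate} cylinder $C_t$ in $X_t$. The point is that on the deformed surface $C_t$ is a genuine cylinder, so we can apply the hypothesis of complete periodicity there — but with care, since complete periodicity is a property of $(X,\omega)$, not of the $(X_t,\omega_t)$.

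\textbf{Key steps.} First I would use Lemma~\ref{lm:degen:cyl:H2:H11} to get a concrete geometric picture near $C$: in the $\H(2)$ case there is a pair of homologous saddle connections $r^\pm$ cutting out a slit torus containing $s$ such that every vertical leaf crossing $r^\pm$ meets $s$; in the $\H(1,1)$ case either the same holds, or there are two simple cylinders $C_1,C_2$ disjoint from $s$ whose vertical-leaf-crossings all meet $s$. Rotate so that $C$ is horizontal. Now observe that the complement $X \setminus \ol{C}$ (equivalently, the surface obtained by collapsing $C$, which is exactly $Y'$ in the proof of Lemma~\ref{lm:degen:cyl:H2:H11}) carries a genuine cylinder structure: $\ol{C}$ sits inside a subsurface of $X$ that is either a slit torus or, together with complementary simple cylinders, accounts for all of $X$. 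Next, I would use Lemma~\ref{lm:slit:tor:decomp} (and the argument that "almost every horizontal leaf crossing $s$ returns to $s$") to see that the horizontal direction restricted to this subsurface is itself periodic, i.e. the subsurface decomposes into horizontal cylinders plus the degenerate piece $s$. The genuine horizontal cylinders so produced are honest non-degenerate cylinders of $X$; by complete periodicity of $(X,\omega)$ the whole horizontal direction of $X$ is therefore periodic — meaning $X \setminus \{\text{horizontal saddle connections}\}$ is a union of finitely many horizontal cylinders. Finally, the horizontal saddle connections that remain are precisely the boundary saddle connections of those cylinders, and $s_1, s_2$ are among them; this is exactly the statement that the direction of $C$ is periodic.

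\textbf{The main obstacle.} The delicate point is bridging between "$(X,\omega)$ is completely periodic" (a hypothesis about genuine cylinders in $(X,\omega)$ itself) and the degenerate cylinder $C$, since a priori $C$ contributes no non-degenerate cylinder in its own direction on $X$. The clean way around this is to exhibit, on $X$ directly (not on a deformation), at least one genuine horizontal non-degenerate cylinder: indeed, by Lemma~\ref{lm:degen:cyl:H2:H11}(i) the slit torus cut out by $r^\pm$ contains $s$ and all its vertical-leaf-crossings meet $s$; applying Lemma~\ref{lm:slit:tor:decomp} to the \emph{horizontal} direction inside that slit torus produces a simple horizontal cylinder $E \subset X$ disjoint from $s$ (or in the $\H(1,1)$ case we already have $C_1,C_2$). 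Then $E$ is a non-degenerate cylinder of $X$, so by complete periodicity the horizontal direction of $X$ is periodic; since $C$ is horizontal, its direction is periodic. One then needs the elementary remark (which follows from the cylinder decomposition) that the union of horizontal saddle connections of a periodic direction contains every $\hinv$-invariant horizontal saddle connection, so in particular $s_1 \cup s_2 = C$ lies in the $1$-skeleton of the decomposition; this completes the proof. I would organize the write-up by splitting on whether $(X,\omega) \in \H(2)$ or $\H(1,1)$, handling the alternative (b) of Lemma~\ref{lm:degen:cyl:H2:H11}(ii) by noting that $C_1$ (a genuine cylinder) already forces periodicity of the horizontal direction.
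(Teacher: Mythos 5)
There is a genuine gap, and it sits exactly at the point you flag as "the main obstacle": producing a non-degenerate \emph{horizontal} cylinder on $X$ from the horizontal degenerate cylinder $C=s_1\cup s_2$. Your proposed bridge does not work. Lemma~\ref{lm:slit:tor:decomp} cannot be "applied to the horizontal direction'': it outputs a pair of closed geodesics in the direction dictated by the embedded-parallelogram construction of Lemma~\ref{lm:embd:par} (the sides of the parallelogram whose diagonal is the slit), not in a direction of your choosing, and a flat torus simply has no closed geodesic at all in a generic prescribed direction. Likewise, in alternative (b) of Lemma~\ref{lm:degen:cyl:H2:H11}(ii) the cylinders $C_1,C_2$ are \emph{not} horizontal (they come from the sides of the same parallelogram), so complete periodicity applied to them yields periodicity of their direction, which says nothing about the direction of $C$. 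The recurrence-type remark that "almost every horizontal leaf crossing $s$ returns to $s$'' also does not give periodicity (minimal interval exchanges recur). In fact the step you need is false without the completely periodic hypothesis: start from a one-cylinder horizontally periodic surface in $\H(2)$ with boundary saddle connections $s_1,s_2,s_3$ and perturb in period coordinates keeping $\omega(s_1),\omega(s_2)\in\R$ but making $\omega(s_3)\notin\R$; the degenerate cylinder $s_1\cup s_2$ persists (the angle condition is locally constant among deformations keeping $s_1,s_2$ parallel), yet since a direction in $\H(2)$ carries at most three saddle connections and $s_1,s_2$ are $\hinv$-invariant, no horizontal non-degenerate cylinder can exist on the perturbed surface. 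So any correct proof must inject the complete periodicity hypothesis \emph{before} a horizontal non-degenerate cylinder appears, whereas your argument only invokes it afterwards.

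For comparison, the paper's proof is not soft at this point. In $\H(2)$ it uses that completely periodic surfaces are Veech surfaces (Calta, McMullen), for which every saddle-connection direction is periodic. In $\H(1,1)$ it uses that completely periodic surfaces are eigenforms, hence their locus is invariant under the kernel foliation: either both $s_1,s_2$ fail the minimality condition $(\mathcal{S})$, in which case there are four horizontal saddle connections and the direction is forced to be periodic, or one collapses the two zeros along the shorter of $s_1,s_2$ to land on a Veech surface in $\H(2)$ on which $s_2$ survives as a horizontal saddle connection (its period $\omega(s_2)-\omega(s_1)$ being an unchanged absolute period), whence horizontal periodicity, which pulls back along the kernel foliation. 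Your write-up would need to be rebuilt around one of these mechanisms rather than around Lemma~\ref{lm:slit:tor:decomp}.
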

\begin{proof}
If $(X,\omega) \in \H(2)$ then $(X,\omega)$ is a Veech surface, thus the direction of any saddle connection is periodic and we are done. Assume now that  $(X,\omega) \in \H(1,1)$. In $\H(1,1)$, we have a local action of $\C$ which  only changes the relative periods and leaves the absolute periods invariant. Orbits of this local action are leaves of the kernel foliation. It is well known that the any eigenfom locus is invariant by this local action.


Let us  label the zeros of $\omega$ by $x_1,x_2$ and define the orientation of any path connecting  $x_1$ and $x_2$ to be from $x_1$ to $x_2$. Using this local action of $\C$, we can collapse the two zeros of $\omega$ as follows,  let $s$ be a saddle connection invariant by the hyperelliptic involution satisfying the following condition:
\begin{flushleft}
$(\mathcal{S})$ if there exists another saddle connection $s'$  joining $x_1$ and $x_2$ such that $\omega(s')=\lambda\omega(s)$ with $\lambda \in (0;+\infty)$,  then  we  have $\lambda >1$.
\end{flushleft}

We can then  reduce the length of $s$ to zero by moving in the kernel foliation leaf of $(X,\omega)$, the resulting surface is an eigenform in $\H(2)$ having the same absolute periods as $(X,\omega)$. The condition on $s$ implies that $x_1$ and $x_2$ do not collide before $s$ is reduced to a point. For a proof of this fact, we refer to \cite{KZ03, LN14}. Remark that the new surface in $\H(2)$ is a Veech surface. \medskip

Without loss of generality, we can assume that $C$ is horizontal. By definition, $C$ is the union of two saddle connections $s_1,s_2$ both invariant by the hyperelliptic involution, and up to a renumbering we have $\omega(s_1) \in \R_{>0}, \omega(s_2) \in \R_{<0}$.

Assume that  neither of $s_1,s_2$ satisfies $(\mathcal{S})$, then there exist two other saddle  connections $s'_1,s'_2$ such that $\omega(s'_i)=\lambda_i\omega(s_i)$, with $\lambda_i \in (0;1)$. This implies that there are four horizontal saddle connections on $X$. Since $(X,\omega) \in \H(1,1)$, there are at most 4 saddle connections in a fixed direction, and this maximal number is realized if and only if the direction is periodic. Thus, in this case we can conclude that $X$ is horizontally periodic.

Let us now assume that one of $s_1,s_2$, say $s_1$, satisfies the condition $(\mathcal{S})$.  We can then collapse $x_1,x_2$ along $s_1$ to get a Veech surface $(X_0,\omega_0)\in \H(2)$.  Since $\omega(s_2)-\omega(s_1)$ is an absolute period, it stays unchanged along the collapsing procedure. Therefore, $s_2$ persists in $X_0$, and we have $\omega_0(s_2)=\omega(s_2)-\omega(s_1) \in \R$.  In particular, $(X_0,\omega_0)$ has a horizontal saddle connection, and because $(X_0,\omega_0)$ is a Veech surface, it must be horizontally periodic. It follows that $(X,\omega)$ is also horizontally periodic. This completes the proof of the lemma.
\end{proof}

\begin{Lemma}\label{lm:2djt:sim:cyl}
Let  $(X,\omega) \in \H(1,1)$. Let $C$ be a horizontal (possibly degenerate) cylinder in $X$, and $D$ be a vertical simple cylinder disjoint from $C$. Then either
\begin{itemize}
\item[(a)] there is another  simple cylinder $E$ disjoint from $C\cup D$ such that the complement of $C\cup D \cup E$ is the union of two embedded triangles, or

\item[(b)] there exist a pair of homologous saddle connections $s_1,s_2$ that decompose $X$ into two two slit tori $(X',\omega',s')$ and $(X'',\omega'',s'')$ such that $C$ is contained in $X'$ and $D$ is contained in $X''$.
\end{itemize}

\end{Lemma}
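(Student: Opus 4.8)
The plan is to reduce everything to a short combinatorial list of configurations and then read off (a) or (b) from each. Throughout, using $\GL^+(2,\R)$ we normalise so that $C$ is horizontal and $D$ is vertical. The first step is to dispose of the case where $C$ is degenerate: by Lemma~\ref{lm:degen:cyl:deform} we may deform $(X,\omega)$ to a nearby surface $(X_t,\omega_t)$ in which $C$ becomes a genuine simple horizontal cylinder $C_t$ of small width, while the (vertical) boundary saddle connections of $D$ persist, so $D$ becomes a vertical cylinder $D_t$; it is still simple, and since $\iota(C_t,D_t)=\iota(C,D)=0$, Lemma~\ref{lm:2cyl:inters} shows $D_t$ is disjoint from $C_t$. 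The cylinders and saddle connections occurring in conclusions (a) and (b) are all disjoint from $C_t$, hence survive the limit $t\to 0$; so it suffices to prove the statement when $C$ itself is non-degenerate.

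Next I analyse $D$. Since $D$ is simple, $\partial D$ consists of two saddle connections $\sigma_1,\sigma_2$ exchanged by the hyperelliptic involution $\hinv$; they are homologous, and since $\hinv$ acts by $-\Id$ on $H_1(X;\Z)$ this forces $[\sigma_i]=0$, so each $\sigma_i$ separates. By Lemma~\ref{lm:sc:no:inv:types} the case where $\sigma_i$ joins the two zeros is impossible (it would split $X$ into a connected sum of two slit tori, leaving no room for the genus-zero region $D$), so each $\sigma_i$ is a geodesic loop at one zero. Now cut $D$ out of $X$ and glue $\sigma_1$ to $\sigma_2$: exactly as in the proof of Theorem~\ref{thm:exist:mult:curv} for $\H(1,1)$, this produces a surface $(\hat X,\hat\omega)\in\H(2)$ carrying a marked, $\hinv$-invariant (vertical) saddle connection $\hat s$, while $C$, being disjoint from $D$, descends to a horizontal cylinder $\hat C$ in $\hat X$ whose core curve is disjoint from $\hat s$ (possibly $\hat s\subset\partial\hat C$). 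This puts us in $\H(2)$, where we have strong structural control: the embedded parallelogram and hexagon of Lemma~\ref{lm:embd:par} and Lemma~\ref{lm:H2:inv:sc:2cyl} describe the neighbourhood of $\hat s$, and, after a further normalisation, the symmetric-polygon model of Proposition~\ref{prop:polygon:construct} represents $\hat X$ explicitly; together they produce a short finite list of the possible positions of the horizontal cylinder $\hat C$ relative to the $\hinv$-invariant saddle connection $\hat s$ (one separating, along the way, whether $\hat C$ is a simple cylinder or a non-simple one, in which latter case $\overline{\hat X\smallsetminus\hat C}$ is itself a slit torus).

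For each configuration on this list I re-insert $D$ (cut $\hat X$ along $\hat s$ and glue back in the flat cylinder $D$) and read off the conclusion. When $\hat C$ is simple and, together with $\hat s$ and two $\hinv$-symmetric triangles, exhausts $\hat X$, re-inserting $D$ produces a third simple cylinder $E$ disjoint from $C\cup D$ with $X\smallsetminus(C\cup D\cup E)$ a union of two embedded triangles — conclusion (a). When instead the complement of $\hat C$ in $\hat X$ is (part of) a slit torus, the cut along $\hat s$ turns the data into a pair of homologous saddle connections joining the two zeros which splits $X$ into two slit tori, one containing $C$ and one containing $D$ — conclusion (b); the pictures are exactly those of Figure~\ref{fig:H11:CD:sim:split}. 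The main obstacle is precisely this enumeration: one must be careful to list all sub-cases (notably the split according to simplicity of $C$, and the degenerations of the polygon in which a side becomes vertical or $\hat s$ lies in $\partial\hat C$) and to check that no further configuration arises; once the list is complete, assigning each entry to (a) or (b) is routine.
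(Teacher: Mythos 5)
Your reduction steps are sound, and they form a genuinely dual route to the paper's: you excise the simple cylinder $D$ and land in $\H(2)$ with a marked $\hinv$-invariant vertical saddle connection $\hat s$, whereas the paper excises $C$, lists the five possible complements of $\ol{C}$, observes that the presence of $D$ rules out all but ``torus with a horizontal slit'' (which is already conclusion (b)) and ``$\H(2)$ surface with a marked horizontal saddle connection'', and then works inside that $\H(2)$ surface. Your treatment of the degenerate case via Lemma~\ref{lm:degen:cyl:deform} and passage to the limit is exactly the paper's, and your argument that the boundary saddle connections of $D$ must be loops at a single zero is correct.

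The problem is that the dichotomy (a)/(b) is the entire content of the lemma, and you have not established it: you assert that Lemma~\ref{lm:embd:par}, Lemma~\ref{lm:H2:inv:sc:2cyl} and Proposition~\ref{prop:polygon:construct} ``produce a short finite list of the possible positions of $\hat C$'' and then name the completion of that list as the main obstacle. As cited, these results do not deliver the list: Lemma~\ref{lm:H2:inv:sc:2cyl} produces two \emph{particular} cylinders disjoint from $\hat s$, not a classification of \emph{all} horizontal cylinders disjoint from $\hat s$; and Proposition~\ref{prop:polygon:construct} assumes a simple cylinder met by every regular leaf of the perpendicular foliation, a hypothesis you have not arranged for $\hat C$ (which need not even be simple). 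The missing argument runs as follows. Let $\varphi(\P)$ be the embedded parallelogram of $\hat s$, swept out by \emph{horizontal} segments since $\hat s$ is vertical; every horizontal leaf through $\inter\varphi(\P)$ meets $\hat s$, so $\hat C\cap\inter\varphi(\P)=\vide$. By the dichotomy in the proof of Lemma~\ref{lm:H2:inv:sc:2cyl}, either the complement of $\varphi(\P)$ is a disjoint union of two simple cylinders --- in which case $\hat C$, being an open cylinder contained in their union, must coincide with one of them, the other is $E$, and $\varphi(\P)\smallsetminus\hat s$ gives the two triangles of (a) --- or $\varphi(\P)$ is itself a simple cylinder whose complement is a slit torus containing $\hat C$, which yields (b) once one checks that the two boundary saddle connections of $\varphi(\P)$ become, back in $X$, a homologous pair joining the two distinct zeros, so that the piece $\varphi(\P)\cup D$ is again a slit torus rather than a cylinder. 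Without these steps the proposal is a plan for a proof rather than a proof.
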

\begin{proof}
We first consider the case $C$ is not degenerate. In this case, the complement of $\ol{C}$ in $X$ is either (1) empty, (2) a horizontal simple cylinder, (3) the disjoint union of two horizontal simple cylinders, (4) a torus with a horizontal slit, or (5) a surface $(\hat{X},\hat{\omega})\in \H(2)$ with a marked horizontal saddle connection $s$. Since we have a vertical simple cylinder disjoint from $C$, only (4) and (5) can occur. In case (4), we automatically have two slit tori, one of which is the closure of $C$, and the other one must contain $D$. Therefore we get case (b) of the statement of the lemma.

Let us now assume that we are in case (5).  In this case $C$ must be a simple horizontal cylinder, and the saddle connection $s$ in $\hat{X}$ corresponds to the boundary of $C$. Note that $s$ is invariant by the hyperelliptic involution $\hat{\hinv}$ of $\hat{X}$. Let $\varphi: \P \ra \hat{X}$ be the embedded parallelogram associated to $s$. Let $a^\pm$ and $b^\pm$ be the images by $\varphi$ of the sides of $\P$ such that $\hat{\hinv}(a^+)=a^-$ and $\hat{\hinv}(b^+)=b^-$. Remark that $D$ must be disjoint from $\varphi(\P)$ since any vertical geodesic intersecting $\varphi(\inter(\P))$ must intersect $\inter(s)$, and hence $C$, but we have assumed that $D$ is  disjoint from $C$.

If  $a^+=a^-$ and $b^+=b^-$ then $\hat{X}$ must be a torus, and we have a contradiction. Therefore, we only have two cases:
\begin{itemize}
\item[$\bullet$]  $a^+\neq a^-$ and $b^+\neq b^-$. In this case, the complement of $\varphi(\P)$ is the disjoint union of two simple cylinders. Since $D$ is contained in this union, $D$ must be one of the two. Let us denote the other one by $E$. To obtain $(X,\omega)$ from $(\hat{X},\hat{\omega})$, we need to slit open $s$ and glue back $C$. Consequently, we see that $(X,\omega)$ has three disjoint simple cylinders $C,D,E$. The complement of $C\cup D \cup E$ is the union of two embedded triangles, which are the images of the triangles in $\P$ cut out by $s$. Thus, we get case (a) of the statement of the lemma.

\item[$\bullet$] $a^+=a^-$ and $b^+\neq b^-$. In this case, $\varphi(\P)$ is a simple cylinder bounded by $b^\pm$. The complement of  $\varphi(\P)$ is then a slit torus $(X'',\omega'',s'')$ with the slit $s''$ corresponding to $b^\pm$.  We can view $(X'',\omega'',s'')$  as  a subsurface of $X$. Observe that  $D$ must be contained in $(X'',\omega'')$ and disjoint from  the slit $s''$, since otherwise a core curve of $D$ must cross $C$.  The complement of $(X'',\omega'',s'')$ is another slit torus $(X',\omega',s')$  which is obtained by slitting $\varphi(\P)$ along $s$ and gluing back $C$.   Therefore, we get case (b) of the statement of the lemma.
\end{itemize}

\medskip


Assume now $C$ is degenerate. By Lemma~\ref{lm:degen:cyl:deform}, there exist deformations $(X_t,\omega_t), \, t \in [0,\eps)$, of $(X,\omega)$ such that $C$ corresponds to a simple horizontal cylinder $C_t$ in $X_t$, which has the same circumference as $C$ and height equal to $t$. By construction, $D$ corresponds to a simple vertical cylinder $D_t$ in $X_t$ which is disjoint from $C_t$. Observe that $C_t$ and $D_t$ satisfy case (5) above. Therefore,  by the preceding arguments, the conclusion of the lemma is true for $C_t$ and $D_t$. In either case, the corresponding decomposition of $X_t$ persists as $t \ra 0$, which implies that we have the same decomposition on $(X,\omega)$.
\end{proof}

In what follows if $u=(u_1,u_2)$ and $v=(v_1,v_2)$ are two vectors in $\R^2$, we denote $ u\wedge v:= \det\left(\begin{smallmatrix} u_1 & v_1 \\ u_2 & v_2 \end{smallmatrix}\right)$, and $|u|,|v|$ are the Euclidean norms of $u$ and $v$ respectively.

\begin{Lemma}\label{lm:s:tor:cyl:large:wd}
Given a constant $L>0$, let
\begin{equation}\label{eq:def:L1:b}
L_1:=3\max\{f(L),f(2\del)\}
\end{equation}
\noindent where $f(x)=\sqrt{x^2+1/x^2}$, and $\del:= (3/4)^{\frac{1}{4}}$. Then for any slit torus $(X,\omega,s)$ with $\Aa(X,\omega)=1$, and $|s| < L$, there exists in $X$ a cylinder disjoint from $s$  with  area at least $1/2$ and circumference bounded above by $L_1$.
\end{Lemma}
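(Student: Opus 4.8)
The plan is to realise $(X,\omega)$ as $\R^2/\Lambda$ with $\Lambda$ a lattice of covolume $1$ (so that $\Aa(X,\omega)=1$), to set $\sigma:=\omega(s)\in\R^2$ — so $|\sigma|=|s|<L$ — and, after a rotation (which alters neither areas nor lengths), to assume $\sigma=(|s|,0)$ is horizontal. For a \emph{primitive} $v\in\Lambda$ the two simple closed geodesics in the direction of $v$ through the endpoints of $s$ cut $X$ into cylinders, and by the criterion recalled in the proof of Lemma~\ref{lm:s:tor:n:per:area} (see \cite[Lem.~4.1]{Ng11}) one of them is disjoint from the slit as soon as $|v\wedge\sigma|<1$; that cylinder then has area $1-|v\wedge\sigma|$ and circumference equal to $|v|$, the Euclidean length of the primitive class $v$. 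Hence the whole statement reduces to producing a primitive $v\in\Lambda$ with
$$ |v\wedge\sigma|\le \frac{1}{2} \qquad\text{and}\qquad |v|\le L_1 . $$

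First I would treat the case $|s|\le 1/2$. Apply Minkowski's convex body theorem to the square $Q=\{(x,y):|x|\le 1,\ |y|\le 1\}$, whose area is $4=4\det\Lambda$, to get a nonzero $v\in Q\cap\Lambda$; if $v=kv_0$ with $k\ge1$ and $v_0\in\Lambda$ primitive, then $v_0=\frac{1}{k}v+\frac{k-1}{k}\cdot 0$ lies in $Q$ by convexity, so we may take $v$ itself primitive. Then $|v\wedge\sigma|=|s|\,|y_v|\le |s|\le 1/2$ and $|v|\le\sqrt2=f(1)\le L_1$, the last inequality because $f$ attains its minimum $\sqrt2$ at $1$ and $L_1\ge 3f(L)\ge 3\sqrt2$. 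In the remaining case $|s|>1/2$ I would instead apply Minkowski to the rectangle $R=\{|x|\le 2|s|,\ |y|\le \frac{1}{2|s|}\}$, again of area $4$, and (primitivising as before) obtain $v$ with $|v\wedge\sigma|=|s|\,|y_v|\le |s|\cdot\frac{1}{2|s|}=\frac{1}{2}$ and $|v|\le\sqrt{4|s|^2+\frac{1}{4|s|^2}}=f(2|s|)$. Since $1<2|s|<2L$ and $f$ is increasing on $[1,\infty)$ this gives $|v|\le f(2L)$, and $f(2L)^2=4L^2+\frac{1}{4L^2}\le 4\bigl(L^2+1/L^2\bigr)=4f(L)^2$, so $|v|\le 2f(L)\le L_1$. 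In either case the required $v$ is found and the lemma follows.

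I do not expect a real obstacle here: the argument is Minkowski's theorem together with the slit-torus bookkeeping already used in this paper. The one point needing a word of care is the degenerate possibility that the Minkowski vector $v$ is parallel to $\sigma$; then $v\wedge\sigma=0$, the two geodesics through the endpoints of $s$ coincide, and the ``cylinder disjoint from the slit'' is the complement in $X$ of a single closed geodesic — but this is still covered by the criterion (as $0<1$) and yields a cylinder of area $1$ and circumference $|v|$, so nothing breaks. Finally one should note that the constant $L_1=3\max\{f(L),f(2\delta)\}$ in the statement is comfortably larger than the bound $2f(L)$ obtained above, since $f(L)\ge\sqrt2$; the term $f(2\delta)$ (with $1/\delta=(4/3)^{1/4}$ the largest possible length of a shortest vector of a unimodular planar lattice) is harmless extra slack, and would arise naturally were one to run the argument starting from a shortest vector of $\Lambda$ rather than from Minkowski's theorem.
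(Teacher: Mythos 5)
Your argument is correct, and it takes a genuinely different route from the paper's for the main case. Both proofs rest on the same disjointness criterion (quoted in the proof of Lemma~\ref{lm:s:tor:n:per:area} from \cite[Lem.~4.1]{Ng11}): for a primitive $v\in\Lambda$ the cylinder containing the slit has area $|v\wedge\sigma|$, so the complementary cylinder is disjoint from $s$, has area $1-|v\wedge\sigma|$ and circumference $|v|$. You reduce the whole lemma to producing a primitive $v$ with $|v\wedge\sigma|\le 1/2$ and $|v|\le L_1$, and you get it uniformly from Minkowski's theorem applied to a symmetric rectangle of area $4$ adapted to $|s|$ (the square when $|s|\le 1/2$, the rectangle $|x|\le 2|s|$, $|y|\le 1/(2|s|)$ otherwise); your handling of the degenerate cases (the Minkowski vector parallel to $\sigma$, and the two boundary geodesics coinciding -- which, given $|v\wedge\sigma|\le 1/2<1$ and $v$ primitive, forces $\sigma\parallel v$) is also sound. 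The paper instead splits on the size of $|s|$: for $|s|\le 1/(2\del)$ it uses the shortest lattice vector together with the same criterion (essentially your first case), but for $|s|\ge 1/(2\del)$ it abandons the lattice-point viewpoint and works geometrically inside the torus, using the embedded parallelogram of Lemma~\ref{lm:embd:par} to produce a cylinder $C\supset s$ of circumference at most $f(|s|)\le L_1/3$, and then, when the complementary cylinder has area below $1/2$, performing an explicit cut-and-paste with parallelograms $(P_1P_2P_3P_4)$ and $(P_2P_3P_5P_6)$ to manufacture a closed geodesic $c_1$ or $c_2$ of length at most $L_1$ whose complementary cylinder has area at least $1/2$. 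Your version buys uniformity, elementarity, and a sharper bound ($|v|\le\max\{\sqrt2,\,2f(L)\}$, well inside $L_1$); the paper's second case buys nothing extra for this lemma but reuses machinery (the embedded parallelogram) already developed for other purposes. The $f(2\del)$ term in $L_1$, which your construction never needs, is indeed just slack inherited from the paper's shortest-vector normalization.
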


\begin{proof}
Let $\Lambda$ be the lattice in $\C$ such that $(X,\omega)$ can be identified with $(\C/\Lambda, dz)$.   Since $\Lambda$ has co-volume one, there exists a vector $v \in \Lambda$ such that $|v| \leq \del$.  Set $u=\omega(s)\in \C \simeq \R^2$. \medskip

Let us first consider the case $|u| \leq \frac{1}{2\del}$. We then have
$$
|u\wedge v|\leq  |u||v| \leq 1/2.
$$
The vector $v$ corresponds to  a simple closed geodesic $c$ on $X$. The inequality above implies that there exist a pair of simple closed geodesics parallel to $c$ cutting $X$ into two cylinders, one of which contains $s$ denoted by $C$, the other one denoted by $C'$ consists of closed geodesics parallel to $c$ that do not intersect $s$ (see \cite[Lem. 4.1]{Ng11} or \cite[Th. 7.2]{McM_spin}).  Note that the circumferences of both $C$ and $C'$ are $|v| \leq \del$. Since $\Aa(C)=|u\wedge v| \leq 1/2$, we have $\Aa(C') \geq 1/2$. Thus $C'$ has the required  properties. \medskip

We can now turn into the case $\frac{1}{2\del} \leq |s| \leq L$. By definition, we have $f(|s|) \leq L_1/3$. By multiplying $\omega$ by a complex number of module $1$, which does not change the area of $X$ and the length of $s$, we can assume that $s$ is horizontal. From Lemma~\ref{lm:embd:par}, we know that there exists a local isometry $\varphi$ from a parallelogram $\P \subset \R^2$ into $X$ such that a horizontal diagonal  of $\P$ is mapped to $s$. Since $X$ is a torus, $C:=\varphi(\P)$ is actually a cylinder in $X$.  Let $\eta$ be the distance from the highest point of $\P$ to its horizontal diagonal. By construction, we have $\Aa(C)=\Aa(\P)=\eta|s| \leq \Aa(X,\omega)=1$. Thus $\eta \leq 1/|s|$. Remark that the boundary components of $C$ are the images by $\varphi$ of two opposite sides of $\P$. Hence the circumference of $C$ is bounded by $\sqrt{|s|^2+\eta^2} \leq f(|s|) \leq L_1/3$.

Observe that the complement of $C$ is another cylinder $C'$ in $X$ sharing the same boundary with $C$. If $\Aa(C') \geq 1/2$ then we are done. Let us consider the case $\Aa(C') < 1/2$, which means that $\Aa(C) > 1/2 > \Aa(C')$. By cutting and pasting, we can also realize $C$ as a parallelogram ${\bf Q}=(P_1P_2P_3P_4)$ with two horizontal sides $\ol{P_1P_2}$ and $\ol{P_4P_3}$ identified with $s$.  Note that the distance between $\ol{P_1P_2}$ and $\ol{P_4P_3}$ is $\eta$. We can then realize $C'$ as a parallelogram ${\bf Q}'=(P_2P_3P_5P_6)$ adjacent to ${\bf Q}$ where $P_5$ is contained in the horizontal stripe bounded by the lines supporting $\ol{P_1P_2}$ and $\ol{P_4P_3}$ (see Figure~\ref{fig:slit:tor:cyl}). Let $P'_6$ and $P'_5$ be the intersections of the line supporting $\ol{P_5P_6}$ and the lines supporting $\ol{P_1P_2}$ and $\ol{P_4P_3}$ respectively.
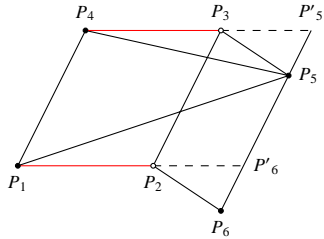
\begin{figure}[htb]
\begin{tikzpicture}[scale=0.3]
\draw  ( 6,0) -- (9,-2) -- (13,6)  (0,0) --  (3,6) -- (12,4) -- (0,0)  (6,0) -- (9,6) -- (12,4);
\draw[red] (0,0) -- (6,0) (3,6) -- (9,6);
\draw[dashed] (9,6) -- (13,6) (6,0) -- (10,0);

\foreach \x in{(0,0), (3,6), (9,-2), (12,4)} \filldraw[fill=black] \x circle (3pt);
\foreach \x in {(6,0), (9,6)} \filldraw[fill=white] \x circle (3pt);

\draw (0,0) node[below] {\tiny $P_1$} (6,0) node[below] {\tiny $P_2$} (9,-2) node[below] {\tiny $P_6$} (10,0) node[right] {\tiny ${P'}_6$} (12,4) node[right] {\tiny $P_5$} (13,6) node[above] {\tiny ${P'}_5$} (9,6) node[above] {\tiny $P_3$} (3,6) node[above] {\tiny $P_4$};
\end{tikzpicture}
\caption{Cylinder with bounded circumference and area $\geq 1/2$ in a slit torus.}
\label{fig:slit:tor:cyl}
\end{figure}

Clearly we have $\Aa(C')=\Aa({\bf Q}')=\Aa((P_2P_3{P'}_5{P'}_6))$.  Since $\Aa(C') < \Aa(C)$, we have $|\ol{P_2{P'}_6}| < |\ol{P_1P_2}|$, and $|\ol{P_1{P'}_6}|< 2|\ol{P_1P_2}|\leq 2L$. If ${P'_6}\equiv P_6$, then $X$ has a horizontal cylinder $C_0$ with circumference equal $|\ol{P_1{P'}_6}|$ and area equal $1$. Clearly the core curves of $C_0$ do not intersect $s$, therefore $C_0$ has the required properties. If $P_6\neq {P'}_6$, then by construction, $\ol{P_1P_5}$ and $\ol{P_4P_5}$ project to two simple closed geodesics in $X$, denoted by $c_1$ and $c_2$ respectively. These closed geodesics  meet $s$ only at one of its endpoints. Let $d_1$ and $d_2$ be respectively the simple closed geodesics parallel to $c_1$ and $c_2$ passing through the  other endpoint of $s$. Observe that $c_1$ and $d_1$ (resp. $c_2$ and $d_2$) cut $X$ into 2 cylinders, one of which contains $s$  will be denoted by $C_1$ (resp. $C_2$), the other one is denoted by $C'_1$ (resp. $C'_2$). Now, remark that
$$
\Aa(C_1)=|\overrightarrow{P_1P_5}\wedge\overrightarrow{P_1P_2}|, \text{ and } \Aa(C_2)=|\overrightarrow{P_4P_5}\wedge\overrightarrow{P_4P_3}|.
$$
\noindent Since
$$
|\overrightarrow{P_1P_5}\wedge\overrightarrow{P_1P_2}|+|\overrightarrow{P_4P_5}\wedge\overrightarrow{P_4P_3}| =|\overrightarrow{P_1P_2}\wedge\overrightarrow{P_1P_4}|=\Aa(C) \leq 1,
$$
\noindent we have either $\Aa(C_1) \leq 1/2$, or $\Aa(C_2) \leq 1/2$. Assume that $\Aa(C_1) \leq 1/2$, then $\Aa(C'_1) \geq 1/2$. Remark that
$$
|c_1|=|\ol{P_1P_5}| \leq |\ol{P_1{P'}_6}|+|\ol{{P'}_6P_5}| \leq 2L_1/3+L_1/3=L_1.
$$
\noindent Thus we can conclude that $C'_1$ satisfies the statement of the lemma. In the case $\Aa(C_2) \leq 1/2$, the same argument shows that the complement $C'_2$ of $C_2$ has the required properties. The proof of the lemma is now complete.
\end{proof}

\end{document}